\definecolor{newpurple}{RGB}{50, 0, 90}
\theoremstyle{definition}
\newtheorem{theorem}{Theorem}[section]
\newtheorem{lemma}[theorem]{Lemma}
\newtheorem{definition}[theorem]{Definition}
\newtheorem*{definition*}{Definition}
\newtheorem*{proposition*}{Proposition}
\newtheorem{proposition}[theorem]{Proposition}
\newtheorem{remark}[theorem]{Remark}
\newtheorem{notation}[theorem]{Notation}
\newtheorem*{notation*}{Notation}
\newtheorem*{theorem*}{Theorem}
\newtheorem*{lemma*}{Lemma}
\newtheorem*{convention*}{Convention}
\newtheorem*{example*}{Example}
\newtheorem{corollary}[theorem]{Corollary}
\newtheorem{construction}[theorem]{Construction}
\numberwithin{equation}{section}
\newcommand{\Tau}{\mathrm{T}}
\newcommand{\Ar}{\text{Ar}}
\newcommand{\PSh}{\text{PSh}}
\newcommand{\Set}{\text{Set}}
\newcommand{\Ob}{\text{Ob}}
\newcommand{\Fun}{\text{Fun}}
\newcommand{\Cat}{\text{Cat}}
\newcommand{\GAT}{\text{GAT}}
\newcommand{\yo}{\mathscr{Y}}
\newcommand{\vdashcustom}{\mkern4mu\vdash\mkern4mu}
\newcommand{\varotimes}{\mathrel{\tikz \fill (0,0) circle (0.65ex);}}
\newcommand{\btimes}{\mathrel{\tikz[baseline=-0.05ex]\draw (0,0) rectangle (1.15ex,1.15ex);}}
\newcommand{\varbtimes}{\mathrel{\tikz[baseline=-0.05ex]\fill (0,0) rectangle (1.2ex,1.2ex);}}
\newcommand{\doverline}[1]{\overline{\overline{#1}}}
\newcommand{\Cont}{\text{Cont}}
\newcommand{\iiCont}{\mathbf{Cont}}
\newcommand{\Mod}{\text{Mod}}
\renewcommand{\lim}{\underleftarrow{\text{lim }}}
\newcommand{\colim}{\underrightarrow{\text{lim }}}
\newcommand{\Type}{\mathcal S}
\newcommand{\tp}{\;\;\mathsf{sort}}
\newcommand{\tm}{\;\;\mathsf{term}}
\newcommand{\Ht}{\textsf{ht}}
\newcommand{\ctx}{\;\;\textsf{ctx}}
\newcommand{\Fam}{\text{Fam}}
\newcommand{\bbA}{\mathbb A}
\newcommand{\bbB}{\mathbb B}
\newcommand{\bbC}{\mathbb C}
\newcommand{\bbE}{\mathbb E}
\newcommand{\bbI}{\mathbb I}
\newcommand{\bbK}{\mathbb K}
\newcommand{\bbN}{\mathbb N}
\newcommand{\bbO}{\mathbb O}
\newcommand{\bbS}{\mathbb S}
\newcommand{\bbT}{\mathbb T}
\newcommand{\bbl}{\boldsymbol{\ell}}
\newcommand{\vertequiv}{\mathbin{\rotatebox[origin=c]{90}{$\equiv$}}}
\newcommand{\squa}[8]{\begin{tikzcd}[ampersand replacement=\&]
		{#1} \& {#2} \\
		{#3} \& {#4}
		\arrow["{#7}"', from=1-1, to=2-1]
		\arrow["{#6}"', from=2-1, to=2-2]
		\arrow["{#8}", from=1-2, to=2-2]
		\arrow["{#5}", from=1-1, to=1-2]
	\end{tikzcd}
}
\newcommand{\widesqua}[8]{\begin{tikzcd}[ampersand replacement=\&,column sep=2.5cm]
		#1 \arrow[swap]{d}{#7} \arrow[]{r}{#5} \& #2 \arrow[]{d}{#8}\\
		#3 \arrow[swap]{r}{#6} \& #4
	\end{tikzcd}
}
\newcommand*\uphalfsquaredraw[1][]{\tikz[#1]{ 
		\draw [line width=0.05em] (-0.25em,0.25em) rectangle (0.25em,-0.25em); 
		\draw [line width=0em, fill] (-0.25em,0em) rectangle (0.25em,0.25em);
}}
\newcommand{\uphalfsquare}{\mathrel{\uphalfsquaredraw}}
\begin{document}

\title{A monoidal category of dependently sorted algebraic theories\\
I: \emph{syntax}}

\author{Daniel Almeida\thanks{email: ddeal056@uottawa.ca}}

\date{}

\maketitle

\begin{abstract}
This is the first of a pair of papers where we construct and investigate a closed monoidal structure on the category of generalized algebraic theories (in the sense of Cartmell).

In the present text, as a starting point, we define the \emph{tensor product}, $\mathbb A \otimes \mathbb B$, between two generalized algebraic theories $\mathbb A$ and $\mathbb B$. This is done syntactically via an algorithm that uses the axioms of $\mathbb A$ and $\mathbb B$ in a recursive manner to produce those of $\mathbb A \otimes \mathbb B$. We provide examples of known structures that are recovered by our construction, such as tensor products of Lawvere theories, ``cellular" products of dependent type signatures, and theories of diagrams and of displayed structures. It will be verified in the second volume that, as suggested by these special cases, the category of family-valued models $\text{Mod}(\mathbb A \otimes \mathbb B,\text{Fam})$ is isomorphic to $\text{Mod}(\mathbb A,\textbf{Mod}(\mathbb B))$ and to $\text{Mod}(\mathbb B,\textbf{Mod}(\mathbb A))$ for certain contextual categories $\textbf{Mod}(\mathbb A)$ and $\textbf{Mod}(\mathbb B)$ whose underlying categories are equivalent to $\text{Mod}(\mathbb A,\text{Fam})$ and to $\text{Mod}(\mathbb B,\text{Fam})$, respectively. Moreover, the cellular structure of the tensor product is obtained by combining, via a pushout-product operation, those of the two theories.

We also construct a functor $\otimes_{\mathbb A,\mathbb B}:\mathcal C(\mathbb A) \times \mathcal C(\mathbb B) \rightarrow \mathcal C(\mathbb A \otimes \mathbb B)$ comparing the associated contextual categories, and describe isomorphisms of the forms $(\mathbb A \otimes \mathbb B) \otimes \mathbb{C} \cong \mathbb A \otimes (\mathbb B \otimes \mathbb{C})$ and $\mathbb{A} \otimes \mathbb{B} \cong \mathbb{B} \otimes \mathbb{A}$. In the sequel paper we will describe a universal property of $\otimes_{\mathbb A,\mathbb B}$, which will induce functoriality of the tensor product and thus allow us to check the monoidal category conditions.
\end{abstract}

\setcounter{tocdepth}{2}
\tableofcontents

\section{Introduction}

Many kinds of algebraic structures used in mathematics, such as groups, rings, and vector spaces over a given field, can be described as models of equational theories: they can be specified from a set of operation symbols and a set of equations between pairs of expressions built recursively from operation symbols and variables. The study of such classes of structures is the main theme of classical universal algebra, as developed by A. N. Whitehead, G. Birkhoff and others from the late 19th century to the first half of the 20th century. In 1963, in his doctoral thesis (reprinted as \cite{Law04}), W. Lawvere recast the foundations of the subject in terms of category theory by showing how equational theories and their semantics can be encoded via a certain class of categories; these were originally named \emph{algebraic theories}, and later also became known as \emph{Lawvere theories}. 

This approach is motivated by the correspondence, for a theory $\bbA$, between (1) homomorphisms between $\bbA$-models free on a finite number of generators, and (2) finite sequences of ``definable operations" (constructed from variables and the theory's basic operations) taken up to provable equality (using the theory's axioms). A consequence of this is that the category of models $\Mod(\bbA)$ can be reconstructed from its full subcategory, say $A$, whose objects are coproducts of finitely many copies of the free model on one generator. More precisely, $\Mod(\bbA)$ is equivalent to the full subcategory of $\Set^{A^{op}}$ spanned by the functors $M:A^{op} \rightarrow \Set$ that preserve finite products. At the level of objects, this equivalence is related to the fact that a $\bbA$-model is specified by two pieces of data: its underlying set, say $X$, and a coherent choice of a map $X^n \rightarrow X$ for each definable $n$-ary operation, with the requirement that variables be appropriately interpreted as cartesian projection maps. Now, it still makes sense to consider these data if, instead of a set, $X$ is an object of an arbitrary category with finite products $C$. This is an \emph{$\bbA$-model in $C$}, which can also be described by a finite-product-preserving functor $A^{op} \rightarrow C$. Thus one can, for some purposes, replace the syntactic treatment of the given class of structures by a purely semantic and presentation-independent one; and this allows for a simple characterization, in categorical language, of $\bbA$-models and homomorphisms valued in categories other than $\Set$.

\subsubsection*{Notions of finite-limit theory}

The categorical perspective on model theory was further developed, in particular, through the study of sketches (initiated by Ehresmann in the 1960s; see e.g. \cite{BasEhr72,MakPar89,AdaRos94}), which are devices used for specifying mathematical structures internal to a category $C$ as functors $I \rightarrow C$, where $I$ is a directed graph, that map certain diagrams (resp. cones, cocones) in $I$ to commutative diagrams (resp. limit cones, colimit cocones) in $C$. A particularly relevant example is that of \emph{finite-limit sketches}: those where one only specifies in $I$, in addition to a set of ``to-be-commutative" diagrams, a class of finite cones (and no cocones). In \cite{GabUlm71}, Gabriel and Ulmer characterized (in different terminology) the class of categories that occur as categories of $\Set$-valued models of finite-limit sketches: those are the \emph{locally finitely presentable} categories, which are the (cocomplete) categories that occur by completing some small finitely cocomplete category under filtered colimits. In fact, a locally finitely presentable category $\mathcal M$ occurs as the category of finite-limit-preserving functors $\mathcal M_{lp}^{op} \rightarrow \Set$ where $\mathcal M_{lp} \subset \mathcal M$ is the full subcategory spanned by the finitely presentable objects; moreover, this construction extends to an equivalence of 2-categories $\textbf{Lex}^{op} \simeq \textbf{LPr}_\omega$, known as Gabriel-Ulmer duality, where $\textbf{Lex}$ consists of finitely complete categories, finite-limit-preserving functors, and natural transformations, and $\textbf{LPr}_\omega$ of locally finitely presentable categories, filtered-colimit-preserving right adjoint functors, and natural transformations. A finite-limit sketch is then one particular way of presenting a locally finitely presentable category, and this passage only retains the semantic aspects of the sketch (more precisely, it retains enough information to define models of the sketch, as well as morphisms between them, in any category with finite limits). Around the same time, Freyd (see \cite{Fre72}) also considered a notion of ``essentially algebraic" theory, which generalize multisorted algebraic theories by allowing partial operations whose domain can be specified via equations involving previously constructed terms. Up to equivalence, the categories of models of these are also the locally finitely presentable categories (see \cite{AdaRos94}, 3.D). Further classes of logical structures that have the same categories of models as finite-limit theories, such as partial Horn theories, have been studied in \cite{PalVic07}.

\subsubsection*{Generalized algebraic theories and contextual categories}

Generalized algebraic theories (\textsc{gat}s for short), introduced by J. Cartmell in his doctoral thesis (\cite{Car78}; see also \cite{Car86}), make use of the structural rules of Martin-Löf's type theory to extend the framework of multisorted algebraic theories by allowing dependent sorts (also known as dependent types), that is, sorts (often thought of as representing sets or spaces) that depend on parameters -- given by terms (often thought of as representing elements or points) -- coming from other dependent sorts. As explained in Cartmell's work, the category of models of any essentially algebraic theory can be obtained, up to equivalence, as the category of models of a generalized algebraic theory, and vice versa. In this particular sense, finite-limit sketches, essentially algebraic theories and generalized algebraic theories have the same expressive power. However, using the latter to describe a given kind of structure is often natural due to how, in many cases, one specified the structure in terms of (total) operations and equations involving elements of sets parameterized by previously introduced elements. For example, one can construct a category by specifying a set $\Ob$ of objects, a set $\Ar$ of arrows endowed with domain and codomain maps $\Ar \rightarrow \Ob$, a composition operation $\circ:\Ar \times_\Ob \Ar \rightarrow \Ar$ such that the diagram
\[\begin{tikzcd}
	{\Ar \times_\Ob (\Ar \times_\Ob \Ar)} & {(\Ar \times_\Ob \Ar) \times_\Ob \Ar} & {\Ar \times_\Ob \Ar} \\
	{\Ar \times_\Ob \Ar} && \Ar
	\arrow["\cong", from=1-1, to=1-2]
	\arrow["{id \times \circ}"', from=1-1, to=2-1]
	\arrow["{\circ \times id}", from=1-2, to=1-3]
	\arrow["\circ", from=1-3, to=2-3]
	\arrow["\circ"', from=2-1, to=2-3]
\end{tikzcd}\]
commutes, etc. In the framework of generalized algebraic theories, one can specify a set $\Ob$ objects, a set of arrows $\Ar(x,y)$ for each $x$, $y \in \Ob$, an element $g \circ f \in \Ar(x,z)$ for each $f \in \Ar(x,y)$, $g \in \Ar(y,z)$, etc.; the associativity axiom becomes the equality $h \circ (g \circ f) = (h \circ g) \circ f$ where $f \in \Ar(x,y)$, $g \in \Ar(y,z)$, $h \in \Ar(z,w)$. Formally, this is done by considering \emph{judgments} that tell us which sorts, terms and equalities can be obtained from a list of variables of previously constructed sorts. In the above example, we could take
\begin{align*}
	& \vdashcustom \Ob \tp,\\
	x, y:\Ob & \vdashcustom \Ar(x,y) \tp,\\
	x, y, z:\Ob, f:\Ar(x,y), g:(y,z) & \vdashcustom \circ(x,y,z,f,g): \Ar(x,z),
\end{align*}
and, abbreviating $\circ(x,y,z,f,g)$ as $g \circ f$,
$$
x, y, z, w:\Ob, f:\Ar(x,y), g:\Ar(y,z), h:\Ar(z,w) \vdashcustom h \circ (g \circ f) \equiv (h \circ g) \circ f : \Ar(x,w).
$$
A list of variables which is correctly typed (in a sense to be made precise), such as the one preceding $\vdashcustom$ in each case, is called a \emph{context}. Using sequences of terms to give an appropriate definition of morphism, contexts can be assembled into a category similarly to how one constructs the Lawvere theory associated with an equational theory.

One of the main achievements of Cartmell's work was the identification of a class of algebraic structures, called \emph{contextual categories}, that encode the essential structure needed to study interpretations between different generalized algebraic theories (which, in turn, is sufficient to encode the usual set-valued models of a theory). In more detail, each theory $\bbA$ has a {syntactic category}, denoted by $\mathcal C(\bbA)$, whose objects are contexts and whose arrows are context morphisms, both taken up to provable equality. This category inherits more structure from $\bbA$, namely: (i) each object has a \emph{length}, given by the length of a corresponding context; (ii) dependent sorts define a certain class of arrows, the (length-$1$) \emph{display maps}; (iii) substitution of dependent sorts along context morphisms define certain pullback squares in $\mathcal C(\bbT)$, which we refer to as (length-$1$) \emph{distinguished squares}. A contextual category is a category with additional structure intended to encapsulate (i)-(iii) above (as well as several related properties) in an axiomatic way. By suitably assembling the classes of generalized algebraic theories and of contextual categories into categories $\GAT$ and $\Cont$, respectively, the assignment $\bbA \mapsto \mathcal C(\bbA)$ extends to an equivalence of categories $\mathcal C:\GAT \rightarrow \Cont$.

\subsubsection*{Tensor products of theories}

Shortly after Lawvere's introduction of algebraic theories, P. Freyd (\cite{Fre66}) described a way of combining two such theories $\bbA$ and $\bbB$ into a theory $\bbA \otimes \bbB$ such that for any category with finite products $C$,
$$
\Mod(\bbA, \Mod(\bbB,C)) \simeq \Mod(\bbA \otimes \bbB, C) \simeq \Mod(\bbB, \Mod(\bbA,C)).
$$
In summary, $\bbA \otimes \bbB$ contains (1) one ($n$-ary) operation for each ($n$-ary) operation in $\bbA$ or in $\bbB$, (2) all axioms from $\bbA$ or from $\bbB$, and (3) for an $m$-ary operation $f$ in $\bbA$ and an $n$-ary operation $g$ in $\bbB$, an axiom
$$
f(g(x_{11}, ..., x_{1n}), ..., g(x_{m1}, ..., x_{mn})) = g(f(x_{11}, ..., x_{m1}), ..., f(x_{1n}, ..., x_{mn}))
$$
between $mn$-operations expressed in terms of variables $x_{ij}$ for $1 \le i \le m$ and $1 \le j \le n$. This equality states a form of commutativity between $f$ and $g$; by making $f$ and $g$ range over all possible operations, we conclude that a model of $\bbA \otimes \bbB$ in a category with finite limits $C$ is an object $X$ endowed with a structure of $\bbA$-model and a structure of $\bbB$-model such that if $f$ (resp. $g$) is as above, then $f:X^m \rightarrow X$ (resp. $g:X^n \rightarrow X$) is a morphism of $\bbB$-models (resp. of $\bbA$-models).

A similar construction is given for finitely cocomplete categories (an in fact for much more general classes of enriched categories) by Kelly in \cite{Kel82}, and its counterpart for locally finitely presentable categories is discussed in Bird's doctoral thesis \cite{Bir84}.

A tensor product of sketches, possibly specifying both limits and colimits, has also been defined in the same vein; see \cite{Age92}, \cite{AdaRos94} (Exercise 1.l), \cite{Ben97}. For sketches $I$ and $J$, one has equivalences $\Mod(I,\Mod(J,C)) \simeq \Mod(I \otimes J,C) \simeq \Mod(J,\Mod(I,C))$ provided that the co/limits specified by $I$ and by $J$ commute in an appropriate sense, and that condition always holds when $I$, $J$ are limit sketches. If $I$, $J$ are finite-limit sketches, then Bird's tensor product applied to the respective categories of $\Set$-valued models satisfies $\Mod(I) \otimes \Mod(J) \simeq \Mod(I \otimes J)$.

\subsection*{Contributions of the present work}

In this article, we define the \emph{tensor product} $\bbA \otimes \bbB$ of two generalized algebraic theories $\bbA$ and $\bbB$. To motivate the construction, it is useful to think of it as an intended joint generalization of
\begin{itemize}
	\item the tensor product of Lawvere theories (\cite{Fre66});

	\item the cartesian product of locally finite direct categories, which are a categorical model of generalized algebraic theories that only have sort axioms (the \emph{dependent type signatures} from \cite{Sub21}).
\end{itemize}

Let us first explain the second point. A locally finite direct category (lfdc for short) $C$ corresponds to a generalized algebraic theory whose contextual category is (up to equivalence) opposite to the category $\PSh_{\text{fp}}(C)$ of finitely presentable presheaves on $C$ (which, in this case, are the presheaves whose category of elements is finite). Each object $c$ encodes a sort axiom whose context is the \emph{boundary} presheaf $\partial c$ obtained by removing the element $id_c$ from $\yo(c) = C(-,c)$. The corresponding display map is opposite to the inclusion $\partial c \rightarrow \yo(c)$. In concrete settings, we usually view $C$ as a category of basic combinatorial shapes to be used as building blocks for more general ones, namely, the objects of $\PSh_{\text{fp}}(C)$: a presheaf $X$ on $C$ is finitely presentable precisely if it fits into a finite sequence $\varnothing = X_0 \rightarrow X_1 \rightarrow \cdots \rightarrow X_n = X$ where each $X_i \rightarrow X_{i+1}$ is a pushout of $\partial c \rightarrow \yo(c)$ for some $c \in C$.\footnote{All small presheaves on $C$ can be obtained in this way if we allow $n$ to be any ordinal. However, the finiteness assumption is needed to establish the connection with generalized algebraic theories as they only support finitary sorts and operations (at least classically; see \cite{Sub21}, Remark 1.2.9, and the infinitary theories from \cite{BarHen25}).}

For lfdcs $C$ and $D$, the category $C \times D$ is also locally finite and direct, and the canonical functor $\PSh(C) \times \PSh(D) \rightarrow \PSh(C \times D)$\footnote{It is obtained by two applications of pointwise left Kan extension: we take the image of the Yoneda embedding $\yo:C \times D \rightarrow \PSh(C \times D)$ along the chain $\PSh(C \times D)^{C \times D} \cong (\PSh(C \times D)^C)^D \simeq \textbf{LPr}(\PSh(C),\PSh(C \times D))^D \simeq \textbf{LPr}(\PSh(D), \textbf{LPr}(\PSh(C),\PSh(C \times D))) \subset (\PSh(C \times D)^{\PSh(C)})^{\PSh(D)} \cong \PSh(C \times D)^{\PSh(C) \times \PSh(D)}$.} restricts to a functor
$$
\otimes:\PSh_{\text{fp}}(C) \times \PSh_{\text{fp}}(D) \rightarrow \PSh_{\text{fp}}(C \times D).
$$
Notably, for $c \in C$, $d \in D$, $X \in \PSh_{\text{fp}}(C)$, $Y \in \PSh_{\text{fp}}(D)$ we have
\begin{align*}
	&\yo(c) \otimes \yo(d) \cong \yo(c, d) \\
	&X \otimes 0 \cong 0 \cong 0 \otimes Y\\
	&\partial(c,d) \cong \partial c \otimes \yo(d) \sqcup_{\partial c \otimes \partial d} \yo(c) \otimes \partial d \tag{Leibniz formula}
\end{align*}
This implies in particular that, writing $|-|$ for the number of elements of a presheaf, $|X \otimes Y| = |X||Y|$ for $X \in \PSh_{\text{fp}}(C)$, $Y \in \PSh_{\text{fp}}(D)$.

When applied to theories that only have sort axioms, we expect the tensor product of \textsc{gat}s to recover $\PSh_{\text{fp}}(C \times D)$ not in terms of the cartesian product $C \times D$, but by using the Leibniz formula as a first principle. This is accomplished in \S\ref{subsec: loc fin dir cat}. In fact, these features will be present in the tensor product of two arbitrary \textsc{gat}s $\bbA$ and $\bbB$: for contexts $\textbf{X}$, $\textbf{Y}$ in $\bbA$, $\bbB$, resp., we will have a context $\textbf{X} \otimes \textbf{Y}$ in $\bbA \otimes \bbB$ satisfying the following equality in the contextual category $\mathcal C(\bbA \otimes \bbB)$:
$$
[\partial(\textbf{X} \otimes \textbf{Y})] = [\partial \textbf{X} \otimes \textbf{Y}] \times_{[\partial \textbf{X} \otimes \partial \textbf{Y}]} [\textbf{X} \otimes \partial \textbf{Y}].
$$
Together with $\textbf{X} \otimes \varnothing = \varnothing = \varnothing \otimes \textbf{Y}$, where $\varnothing$ denotes the empty context, this implies that $\ell(\textbf{X} \otimes \textbf{Y}) = \ell(\textbf{X})\ell(\textbf{Y})$, and that if $\textbf{X}'$ (resp. $\textbf{Y}'$) is an extension of $\textbf{X}$ (resp. $\textbf{Y}$), then $\textbf{X}' \otimes \textbf{Y}'$ is an extension of $\textbf{X} \otimes \textbf{Y}$ up to variable reordering.

This is achieved by introducing for sort symbols $S$, $T$ in $\bbA$, $\bbB$ a sort symbol $ST$ in $\bbA \otimes \bbB$; the context of its axiom is defined via a recursive algorithm described in \S\ref{sec: tensor product of generalized algebraic theories}.

\vspace{0.5em}

On the other hand, operations in $\bbA$ and in $\bbB$ induce operations or equalities in $\bbA \otimes \bbB$ similarly to how operations and equalities are introduced in the tensor product of two Lawvere theories. Suppose given an operation in $\bbA$, say encoded by a judgment $\textbf{X} \vdashcustom u:U$ where $\textbf{X} = (x_1:X_1, ..., x_m:X_m)$. Letting $\textbf{X}' = (\textbf{X},x:U)$ and $\textbf{u} = (x_1, ..., x_m,u)$, we have a section $[\textbf{u}]:[\textbf{X}] \rightarrow [\textbf{X}']$ of the display map $[\textbf{X}'] \twoheadrightarrow [\textbf{X}]$. For a context $\textbf{Y} = (y_1:Y_1, ..., y_n:Y_n)$ in $\bbB$, we will have in $\mathcal C(\bbA \otimes \bbB)$ a section-retraction pair
$$
[\textbf{X} \otimes \textbf{Y}] \overset{[\textbf{u} \otimes \textbf{Y}]}{\longrightarrow} [\textbf{X}' \otimes \textbf{Y}] \twoheadrightarrow [\textbf{X} \otimes \textbf{Y}].
$$
The sequence $\textbf{u} \otimes \textbf{Y}$ inducing the left arrow contains, in particular, terms $u \otimes y_1$, ..., $u \otimes y_n$ (see \S\ref{subsec: expressions from pairs of judgments}) that can be thought of as versions of $u$ whose sorts are distinct but depend coherently on the $Y_i$. This is simplified if $\bbB$ is a Lawvere theory: $Y_1 = \cdots = Y_n$ implies that $u \otimes y_1$, ..., $u \otimes y_n$ only differ from each other by a change of variables, so they can be viewed as distinct instances of the same term.

An symmetric story can be told regarding how we combine contexts in $\bbA$ and operations in $\bbB$.

To achieve this, we introduce (i) for each term symbol $s$ in $\bbA$ and each sort symbol $T$ in $\bbB$, a term symbol $sT$ in $\bbA \otimes \bbB$, and (ii) for each sort symbol $S$ in $\bbA$ and each term symbol $t$ in $\bbB$, a term symbol $St$ in $\bbA \otimes \bbB$. We explain in \S\ref{sec: tensor product of generalized algebraic theories} how, by combining these data, we obtain enough term expressions to implement the ideas outlined above.

\vspace{0.5em}

Again inspired by the tensor product of Lawvere theories, we introduce a term equality axiom for each pair $(s,t)$ consisting of term symbols $s$, $t$ in $\bbA$, $\bbB$, respectively. In fact, in \S\ref{sec: tensor product of generalized algebraic theories} we describe how to combine a pair of derivable term judgments, one in $\bbA$ and one in $\bbB$, into a term equality judgment in $\bbA \otimes \bbB$ which, by Theorem \ref{th: tensor product is a theory}, is derivable. This corresponds (see Lemma \ref{lem: tensor product of morphisms}) to the statement that if $\textbf{f}:\textbf{X} \rightarrow \textbf{X}'$, $\textbf{g}:\textbf{Y} \rightarrow \textbf{Y}'$ are context morphisms in $\bbA$, $\bbB$, then the following diagram in $\mathcal C(\bbA \otimes \bbB)$ commutes:
\[
\squa{[\textbf{X} \otimes \textbf{Y}]}{[\textbf{X} \otimes \textbf{Y}']}{[\textbf{X}' \otimes \textbf{Y}]}{[\textbf{X}' \otimes \textbf{Y}'].}{[\textbf{X} \otimes \textbf{g}]}{[\textbf{X}' \otimes \textbf{g}]}{[\textbf{f} \otimes \textbf{Y}]}{[\textbf{f} \otimes \textbf{Y}']}
\]

We also introduce in $\bbA \otimes \bbB$ equality judgments corresponding to \emph{some} pairs $(J,J')$ where $J$ (resp. $J'$) is an axiom in $\bbA$ (resp. $\bbB$) and either $J$ or $J'$ is an equality judgment.\footnote{In certain cases, for example when $J$ is a term axiom and $J'$ is a term equality axiom, we view the result of combining $J$ and $J'$ as being degenerate. This may be related to the fact that, when dealing with \textsc{gat}s with no additional structure, sorts in a given context and terms of a given sort form a set (by taking the quotient by the judgmental equality relation) rather than a higher-categorical structure. It could be worth exploring how to improve the construction by working with an intensional notation of equality.} This is summarized in Table \ref{table: 1}.

\subsubsection*{Categorical aspects}

We will verify in a sequel text (\cite{Alm26}) that this operation defines a closed monoidal structure on $\GAT$, which in turn transfers to $\Cont$; in fact, writing $\iiCont$ for the strict $2$-category obtained by endowing $\Cont$ with the natural transformations between contextual functors as $2$-cells, this operation will turn $\iiCont$ into a closed monoidal $\Cat$-enriched category.

While the purely syntactic treatment in the present article provides an algorithmic description of a tensor product of \textsc{gat}s $\bbA \otimes \bbB$, it has important limitations. It did not allow us, for example, to describe a universal property of the tensor product, to conclude that the tensor product defines a functor $\otimes:\GAT \times \GAT \rightarrow \GAT$, or to present the exponential theories $\bbB^\bbA$. These gaps will be filled in \cite{Alm26}, where we work almost entirely within the framework of contextual categories. Nonetheless, in \S\ref{sec: comparison functor} we construct a comparison functor
$$
\otimes_{\bbA,\bbB}:\mathcal C(\bbA) \times \mathcal C(\bbB) \rightarrow \mathcal C(\bbA \otimes \bbB)
$$
which will play a crucial role in the follow-up text -- it will be, in a sense to be defined, a universal \emph{double map} of contextual categories.

\vspace{0.5em}

Denoting by $\Fam$ the contextual category of families of sets described in \cite{Car86}, the aforementioned closed monoidal structure leads (up to the fact that $\Fam$ is not small, which requires us to work with contextual categories internal to a larger universe) to isomorphisms of categories
$$
\iiCont(\mathcal A,\Fam^\mathcal B) \simeq \iiCont(\mathcal A \otimes \mathcal B,\Fam) \simeq \iiCont(\mathcal B,\Fam^\mathcal A).
$$
for $\mathcal A$, $\mathcal B \in \Cont$. The contextual category $\Fam^\mathcal A$ encodes, in particular, the category of models $\iiCont(\mathcal A,\Fam)$ as its full subcategory spanned by the length-$1$ objects. A more detailed study of its features, however, is left for future work.\footnote{It is natural to ask whether we have a chain of equivalences $\Mod(\mathcal A,\Mod(\mathcal B,\Set)) \simeq \Mod(\mathcal A \otimes \mathcal B,\Set) \simeq \Mod(\mathcal B,\Mod(\mathcal A, \Set))$ where $\Mod(-,-)$ denotes the category of morphisms between two display map categories (for a contextual category, we take the underlying display map category, and for a finitely complete category, such as $\Mod(\mathcal A,\Set)$, we let all morphisms be display maps). This question, which will be studied in a future article, is subtle and cannot be answered with our current tools. The key point is that it is not true in general that $\iiCont(\mathcal A,\Fam) \simeq \Mod(\mathcal A, \Set)$; see \cite{BarHen25}, remarks 2.17 and B.54. For this equivalence to hold, we expect it to be sufficient that $\mathcal A \cong \mathcal C(\bbA)$ for some $\bbA$ that does not have sort equality axioms.}

We expect that under appropriate assumptions on the factors $\mathcal A$ and $\mathcal B$ (see footnote 4 above), our construction will correctly encode the tensor product of locally finitely presentable categories in the sense that $\Mod(\mathcal A \otimes \mathcal B) \simeq \Mod(\mathcal A) \otimes \Mod(\mathcal B)$ (where we write $\Mod(-)$ for the category of $\Set$-models of the underlying display map category). But more information is retained: $\mathcal A$ (and similarly for $\mathcal B$) endows $\Mod(\mathcal A)$ with a distinguished class of arrows, namely, the essential image under the Yoneda embedding $\mathcal A^{op} \rightarrow \Mod(\mathcal A)$ of the class of length-$1$ display maps. These special maps of models can be regarded as ``basic cofibrations" that allow us to view $\Mod(\mathcal A)$ as possessing a sort of cellular structure -- for instance, by taking the weak factorization system generated by those maps via the small object argument (in particular, if $\mathcal A = \mathcal C(\bbA)$, contexts in $\bbA$ correspond to cellular objects with respect to the basic cofibrations). This point of view is adopted, for instance, in \cite{Hen16,Fre25,BarHen25}.

To relate this extra structure to the tensor product of \textsc{gat}s, we borrow a construction from the theory of combinatorial model categories: for two pairs $(M_1,L_1)$, $(M_2,L_2)$ where $M_i$ is a locally presentable category and $L_i$ is the left class of a cofibrantly generated weak factorization system on $M_i$, we have a pair $(M_1 \otimes M_2, L_1 \otimes L_2)$ such that the canonical functor $M_1 \times M_2 \rightarrow M_1 \otimes M_2$ is (the ``cofibration part" of) a Quillen bifunctor from $((M_1,L_1), (M_2,L_2))$ to $(M_1 \otimes M_2, L_1 \otimes L_2)$, and is initial among Quillen bifunctors out of $((M_1,L_1), (M_2,L_2))$. The class $L_1 \otimes L_2$ is generated by the pushout-products $i \hat{\otimes} j$ where $i$ (resp. $j$) is a generator of $L_1$ (resp. of $L_2$); see \cite{RieVer14}, \cite{Bar20}. This is analogous to (and motivates) how the sort axioms of $\bbA \otimes \bbB$ are constructed from those of $\bbA$ and $\bbB$, as outlined in the above discussion of the Leibniz formula: semantically, i.e. by viewing derivable sort judgments as cofibrations of models, the sort axioms in $\bbA \otimes \bbB$ correspond precisely to the pushout-products of sort axioms in $\bbA$ with ones in $\bbB$. Hence as long as $\mathcal A$, $\mathcal B$ are such that $\Mod(\mathcal A \otimes \mathcal B) \simeq \Mod(\mathcal A) \otimes \Mod(\mathcal B)$, we also have $(\Mod(\mathcal A \otimes \mathcal B), L_{\mathcal A \otimes \mathcal B}) \simeq (\Mod(\mathcal A),L_\mathcal A) \otimes (\Mod(\mathcal B), L_\mathcal B)$ where $L_-$ is the weak factorization system generated by the sort axioms.

\subsubsection*{A brief overview of the construction}

For \textsc{gat}s $\bbA$ and $\bbB$, the tensor product $\bbA \otimes \bbB$ is constructed, in summary, as follows:
\begin{enumerate}[label=(\arabic*)]
	\item Firstly, we use the alphabets $\Sigma(\bbA)$ and $\Sigma(\bbB)$ to produce an alphabet $\Sigma'$. A sort symbol in $\Sigma'$ will be a pair $(S,T)$ consisting of a sort symbol of $\bbA$ and one of $\bbB$. A term symbol in $\Sigma'$ will be either $(s,T)$ where $s$ is a term symbol in $\bbA$ and $T$ is a sort symbol in $\bbB$, or $(S,t)$ where $S$ is a sort symbol in $\bbA$ and $t$ is a term symbol in $\bbB$.
	
	\item We provide an algorithm which, from derivable judgments $J$ and $J'$ in $\bbA$ and $\bbB$, respectively, constructs a judgment $J \odot J'$ in the alphabet $\Sigma'$ in accordance with Table \ref{table: 1}. The key ingredient is a procedure that produces sort/term expressions in $\Sigma'$ by combining sort/term expressions in $\Sigma(\bbA)$ with ones in $\Sigma(\bbB)$.
	
	\item We define a pretheory (in the sense of \cite{Car86}) structure on $\Sigma'$, denoted by $\bbA \otimes \bbB$, by putting as axioms all tensor products between an axiom in $\bbA$ and one in $\bbB$.
	
	\item We prove that $\bbA \otimes \bbB$ is a theory by checking that every axiom is well-formed.
\end{enumerate}

The main source of difficulty is the recursive nature of the definition of a generalized algebraic theory, which reflects both on the construction of $\bbA \otimes \bbB$ as a pretheory (essentially, on checking that our algorithm terminates for any given input expressions or judgments) and on the proof that it is a theory. While dealing with that, we are led to try to perform the required constructions and proofs by induction on the derivations of judgments, or perhaps on some numerical parameter encoding the complexity of judgments. We choose the second option for a technical reason which, I believe, is unessential: it allows us to only specify derivations (or steps thereof) when it is essential to do so.

A natural measure of the complexity of a (derivable) judgment $J$ is its \emph{height}: recursively, it is the smallest integer $n$ such that $J$ can be derived from a list of judgments of height at most $n-1$ by employing one of the inference rules. It is not, however, intrinsic to the judgment: there are multiple possible collections of inference rules for a generalized algebraic theory (i.e. that lead to the same derivable judgments as in \cite{Car86}), and different such collections induce different ``height functions". On the other hand, our proofs require the height function to have several properties that, in a way, state that the height of a judgment or expression is compatible with its complexity when measured as a mere string or tree of symbols. For example, we would like the height of a context $x_1:X_1, ..., x_n:X_n$ be to be strictly larger than that of $x_1:X_1, ..., x_{n-1}:X_{n-1}$; and the height of a derivable judgment $\textbf{X} \vdashcustom T(t_1, ..., t_n) \tp$ to be strictly larger, for $1 \le i \le n$, than that of some judgment of the form $\textbf{X} \vdashcustom t_i:U$. It turns out that not all of the desired conditions hold for the inference rules in \cite{Car86}, so we use a suitable modification of the latter. The rules that we use are presented in the appendix along with a proof that they induce the same derivable judgments as in Cartmell's approach.

\vspace{0.5em}

Deferred to \cite{Alm26}, as already mentioned, are descriptions of the functoriality of the tensor product and of a ($1$-categorical) universal property of the functors $\otimes_{\bbA,\bbB}:\mathcal C(\bbA) \times \mathcal C(\bbB) \rightarrow \mathcal C(\bbA \otimes \bbB)$. Still, in Remark \ref{rem: towards functoriality} we state a condition on the $\otimes_{\bbA,\bbB}$, which will later follow from their universal property, that yields functoriality of the tensor product.

Still, we outline a construction, for theories $\bbA$, $\bbB$, $\bbC$, of an isomorphism $(\bbA \otimes \bbB) \otimes \bbC \cong \bbA \otimes (\bbB \otimes \bbC)$. In fact,  it can be thought of as an ``equality up to change of notation": we have a canonical bijection between the alphabets $(\bbA \otimes \bbB) \otimes \bbC$ and $\bbA \otimes (\bbB \otimes \bbC)$, and, after identifying the two alphabets under this bijection, the two theories have the same derivable judgments. We also describe, in a similar way, an isomorphism $\bbA \otimes \bbB \cong \bbB \otimes \bbA$.

\subsection*{Organization of the text}

The text is structured as follows:
\begin{itemize}
	\item In Section 2, we construct the tensor product $\bbA \otimes \bbB$ of two generalized algebraic theories $\bbA$ and $\bbB$. We start by presenting an algorithm for combining sort/term expressions from $\bbA$ and $\bbB$ into expressions written in a suitable ``tensor product of alphabets" $\Sigma(\bbA) \otimes \Sigma(\bbB)$. Then we extend it to an algorithm which, from derivable judgments $J$, $J'$ in $\bbA$, $\bbB$, resp., produces a judgment $J \odot J'$ in $\Sigma(\bbA) \otimes \Sigma(\bbB)$. We let $\bbA \otimes \bbB$ be the pretheory whose signature is $\Sigma(\bbA) \otimes \Sigma(\bbB)$ and whose axioms are the judgments $J \odot J'$ where $J$ is an axiom in $\bbA$ and $J'$ is one in $\bbB$.
	
	\item In Section 3, before proving that $\bbA \otimes \bbB$ is a theory, we present several examples of our algorithm. We believe that this section can be quite useful for having a practical understanding of the construction and for recognizing some instances already present in the literature.
	
	\item Sections 4 and 5 are dedicated to proving that $\bbA \otimes \bbB$ is a theory. In order to prove that $J \odot J'$ is well-formed whenever $J$, $J'$ are axioms in $\bbA$, $\bbB$, resp., we are led to verify the more general statement that $J \odot J'$ is derivable for any derivable judgments $J$, $J'$. We prove by induction on $n \ge 0$ that this is the case whenever $\Ht(J)\Ht(J') = n$, where $\Ht$ is the height function described in the appendix.
	
	As an auxiliary definition, we say that the pair of theories $(\bbA,\bbB)$ is \emph{$h$-derivable}, where $h \ge 0$, if $J \odot J'$ is derivable whenever $\Ht(J)\Ht(J') \le h$. In Section 4, we establish several consequences of $h$-derivability, which we think of as placing us halfway between $h$-derivability and $(h+1)$-derivability. As we will see in \S\ref{sec: comparison functor}, tensoring judgments, contexts, or context morphisms (or certain combinations of these) admits a natural interpretation in the contextual category $\mathcal C(\bbA \otimes \bbB)$ by means of a canonical functor $\otimes_{\bbA,\bbB}:\mathcal C(\bbA) \times \mathcal C(\bbB) \rightarrow \mathcal C(\bbA \otimes \bbB)$. The role of Section 4 is, informally, to give us tools to reason categorically in a fragment of $\mathcal C(\bbA \otimes \bbB)$ before the latter has been constructed in its totality.
	
	In Section 5, we use the results from the previous section to prove that $(\bbA,\bbB)$ is $(h+1)$-derivable whenever it is $h$-derivable. This is done in several cases and requires that we take into account both the kind of judgment and what we call an \emph{initial inference} of $J$, $J'$, meaning that the premises have height strictly smaller than that of the conclusion.
	
	\item In Section 6, we verify a certain ``two-sided" substitution property of the tensor product $\bbA \otimes \bbB$ (propositions \ref{prop: two-sided substitution (morphism version)} and \ref{prop: two-sided substitution}), and as a consequence we obtain a comparison functor $\otimes_{\bbA,\bbB}:\mathcal C(\bbA) \times \mathcal C(\bbB) \rightarrow \mathcal C(\bbA \otimes \bbB)$.
	
	\item In Sections 7 and 8 we sketch a description of isomorphisms of the forms $(\bbA \otimes \bbB) \otimes \bbC \cong \bbA \otimes (\bbB \otimes \bbC)$ and $\bbA \otimes \bbB \cong \bbB \otimes \bbA$, respectively.
	
	\item In the appendix we give an introduction to generalized algebraic theories. Our main goal here is to discuss the particular set of inference rules used to obtain a well-behaved height function on judgments. Some technical work is required to verify the desired properties of the height function and compare our inference rules with Cartmell's. We also fix some notation and terminology regarding \textsc{gat}s that might differ from the usual ones. Readers who are familiar with the topic may consult the appendix as needed.
\end{itemize}

\subsection*{Acknowledgement}

I am extremely grateful to Simon Henry, my doctoral advisor, who introduced me to this topic, helped me navigate it through countless and insightful discussions, and provided valuable feedback on this text.

\section{The tensor product of generalized algebraic theories}

\label{sec: tensor product of generalized algebraic theories}

In this section we describe an algorithm that constructs what we will call the \emph{tensor product} of two generalized algebraic theories $\bbA$ and $\bbB$, denoted by $\bbA \otimes \bbB$. As explained in the introduction, our approach in the present text will be purely syntactic. While an abstract presentation in terms of contextual categories can be given (as will be discussed in the sequel article), the syntactic approach is suitable for explicit calculations and encodes certain data about tensor products that otherwise would not be readily available. This reflects the fact that one may regard generalized algebraic theories as \emph{presentations} of contextual categories. Thus here, as often happens with monoidal categories of algebraic structures, if $X$, $Y$ are two objects with given presentations by generators and relations, we would like to obtain a presentation for $X \otimes Y$ by explicitly combining those for $X$ and $Y$. For \textsc{gat}s $\bbA$, $\bbB$, the tensor product $\bbA \otimes \bbB$ will be a particular presentation of $\mathcal C(\bbA) \otimes \mathcal C(\bbB) \cong \mathcal C(\bbA \otimes \bbB)$ constructed directly from the axioms of $\bbA$, $\bbB$.

\vspace{0.5em}

We will define $\bbA \otimes \bbB$ in several steps. Firstly, for sort-and-term alphabets (see Definition \ref{def: alphabet}) $\Sigma$ and $\Tau$, we define an alphabet $\Sigma \otimes \Tau$. Then, specializing to the alphabets $\Sigma(\bbA)$, $\Sigma(\bbB)$ of $\bbA$, $\bbB$, we describe how to combine derivable sorts/terms in $\bbA$ with ones in $\bbB$ to obtain sort/term expressions in $\Sigma(\bbA) \otimes \Sigma(\bbB)$. This is used, in particular, to define a length-$mn$ precontext $\textbf{X} \otimes \textbf{Y}$ in $\Sigma(\bbA) \otimes \Sigma(\bbB)$ where $\textbf{X}$ (resp. $\textbf{Y}$) is a length-$m$ (resp. $n$) context in $\bbA$ (resp. $\bbB$).

Next, we define for certain (see Table \ref{table: 1}) derivable judgments $J$, $J'$ in $\bbA$, $\bbB$, resp., a judgment $J \odot J'$ in $\Sigma(\bbA) \otimes \Sigma(\bbB)$. Now, we let $\bbA \otimes \bbB$ be the pretheory (in the sense of \cite{Car86}) structure on $\Sigma(\bbA) \otimes \Sigma(\bbB)$ whose axioms are $J \odot J'$ (when it is defined) where $J$, $J'$ are axioms in $\bbA$, $\bbB$, resp. Most of the technical work in the remainder of the article will be devoted to proving:

\begin{theorem*}
Let $\bbA$, $\bbB$ be generalized algebraic theories. For any derivable judgments $J$, $J'$ in $\bbA$, $\bbB$, resp., if the judgment $J \odot J'$ is defined, then it is derivable in the pretheory $\bbA \otimes \bbB$. In particular, $\bbA \otimes \bbB$ is a theory.
\end{theorem*}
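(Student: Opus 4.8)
The plan is to reduce the claim that $\bbA \otimes \bbB$ is a theory to the first assertion. A pretheory is a theory exactly when every axiom is a derivable judgment, and by construction the axioms of $\bbA \otimes \bbB$ are precisely the judgments $J \odot J'$ with $J$, $J'$ axioms of $\bbA$, $\bbB$; since axioms are in particular derivable, the ``in particular'' clause follows at once from the main statement. Everything therefore rests on proving that $J \odot J'$ is derivable whenever it is defined, for all derivable $J$ in $\bbA$ and $J'$ in $\bbB$. I would prove this by induction on the product $n = \Ht(J)\Ht(J')$, formulated as: $(\bbA, \bbB)$ is $h$-derivable for every $h \ge 0$. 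The use of the height function from the appendix is crucial here, because the modified inference rules are arranged so that $\Ht$ is compatible with syntactic subexpression: the premises of the last rule used to obtain $J$ (resp. $J'$) have strictly smaller height than $J$ (resp. $J'$). In the base case $n = 0$ one of the two heights vanishes, so $J$ or $J'$ is a most basic judgment of its theory and the recursion defining $\odot$ returns a trivially derivable judgment --- for instance the empty context, via $\textbf{X} \otimes \emptyset = \emptyset = \emptyset \otimes \textbf{Y}$.

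For the inductive step I would assume $h$-derivability and deduce $(h+1)$-derivability. Before the case analysis, it is convenient to package the inductive hypothesis into the consequences of $h$-derivability gathered in Section 4: granting that $J_0 \odot J_0'$ is derivable for all $J_0$, $J_0'$ with $\Ht(J_0)\Ht(J_0') \le h$, one already has, up to this complexity bound, enough derivable judgments to form the contexts $\textbf{X} \otimes \textbf{Y}$ together with their display maps and boundaries, the distinguished (pullback) squares realizing the Leibniz formula $[\partial(\textbf{X} \otimes \textbf{Y})] = [\partial \textbf{X} \otimes \textbf{Y}] \times_{[\partial \textbf{X} \otimes \partial \textbf{Y}]} [\textbf{X} \otimes \partial \textbf{Y}]$, the section-retraction pairs attached to operations, and the squares associated with pairs of context morphisms. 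These let me reason categorically inside the fragment of $\mathcal C(\bbA \otimes \bbB)$ constructed so far, which is exactly what assembling $J \odot J'$ demands.

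The inductive step then divides into cases according to the kinds of $J$ and $J'$ (sort, term, sort-equality, term-equality, and context judgments) and, within each, according to the initial inference of $J$ and of $J'$, whose premises have strictly smaller height. In every case the recursive definition of $J \odot J'$ parallels these derivations, building $J \odot J'$ from pieces $J_0 \odot J_0'$ whose height-products are at most $h$ and which are thus derivable by hypothesis; the work is to exhibit a genuine derivation of $J \odot J'$ from these pieces using the distinguished-square and substitution results of Section 4. I expect the main obstacle to be precisely this assembly: because $\odot$ recurses simultaneously in both factors, one must track how an initial inference on the $\bbA$-side meshes with the entire derivation on the $\bbB$-side (and symmetrically), and check that the pushout-product decomposition of boundaries combines coherently with the section-retraction data. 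The hardest subcases are those where $J$ and $J'$ are both substantive sort or term introductions, where the Leibniz squares and sections must be glued consistently; the degenerate combinations flagged in Table \ref{table: 1} need separate but lighter arguments. Once the induction closes, $(\bbA, \bbB)$ is $h$-derivable for all $h$, so every defined $J \odot J'$ is derivable, and specializing to axioms shows that $\bbA \otimes \bbB$ is a theory.
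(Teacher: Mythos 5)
Your proposal follows essentially the same route as the paper: induction on $\Ht(J)\Ht(J')$ packaged as $h$-derivability, with the consequences of $h$-derivability (the $Cont$, $Sub$, $Mor$ statements of Section 4) serving as the intermediate toolkit, and the step from $h$- to $(h+1)$-derivability carried out by cases on the kinds of $J$, $J'$ and their initial inferences. The only small correction is the base case: since every derivable standard judgment has height at least $1$ (Proposition \ref{prop: properties height}(a)), $0$-derivability holds vacuously rather than by reduction to empty contexts.
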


While reading this section, the reader is invited to compare each step of the construction with the examples in \S\ref{sec: examples}, especially the theory of strict double categories (\ref{subsec: strict double categories}) as it exhibits many of the features present in the general definition.

\subsection{Expressions from pairs of judgments}

\label{subsec: expressions from pairs of judgments}

We refer the reader to the appendix for the definitions of the syntactic structures that we use below -- sort-and-term alphabets, sort/term expressions, the height function, the canonical sort of a term, etc.

\begin{definition}
	\label{def: tensor product of alphabets}
	Let $\Sigma$ and $\Tau$ be sort-and-term alphabets. We define the alphabet $\Sigma \otimes \Tau$ as having the following sets of variables\footnote{We admit the possibility of different \textsc{gat}s having different (countably infinite) sets of variables; see the Appendix for the definition we will use, and Remark \ref{rem: variables of tensor product} for motivation for this convention.}, sort symbols, and term symbols:
	\begin{align*}
		(\Sigma \otimes \Tau)^{\text{var}} & = \Sigma^{\text{var}} \times \Tau^{\text{var}}\\
		(\Sigma \otimes \Tau)^{\text{sort}} & = \Sigma^{\text{sort}} \times \Tau^{\text{sort}}\\
		(\Sigma \otimes \Tau)^{\text{term}} & = (\Sigma^{\text{sort}} \times \Tau^{\text{term}}) \sqcup (\Sigma^{\text{term}} \times \Tau^{\text{sort}}).
	\end{align*}
	In each of these sets, we will write $ab$ for an ordered pair $(a,b)$ (or, by abuse of notation, for the image of a pair $(a,b)$ under the canonical maps from $\Sigma^{\text{sort}} \times \Tau^{\text{term}}$ and $\Sigma^{\text{term}} \times \Tau^{\text{sort}}$ to the above coproduct).
\end{definition}

\begin{remark}
\label{rem: variables of tensor product}
While the forms of $(\Sigma \otimes \Tau)^{\text{sort}}$ and $(\Sigma \otimes \Tau)^{\text{term}}$ are essential, that of $(\Sigma \otimes \Tau)^{\text{var}}$ can be seen as a notational convenience. Indeed, we could replace $\Sigma^{\text{var}} \times \Tau^{\text{var}}$ by any other countably infinite set, say $V$, but to perform some of the constructions that will appear in this section it is helpful to choose some (arbitrary) bijection $\Sigma^{\text{var}} \times \Tau^{\text{var}} \cong K$.
\end{remark}

We now construct operations that assign to certain pairs of judgments, one in $\bbA$ and one in $\bbB$, a ``tensor" expression in $\Sigma(\bbA) \otimes \Sigma(\bbB)$.

We let $\text{Der}_s(\bbA)$ and $\text{Der}_t(\bbA)$, respectively (and similarly for $\bbB$), be the sets of derivable sort judgments and of derivable term judgments in $\bbA$; also, we denote by $\text{Der}^+_s(\bbA)$ the set of pairs $(J,x)$ where $J \in \text{Der}_s(\bbA)$ and $x \in \Sigma(\bbA)^{\text{var}}$ does not occur in the context of $J$. Note that the elements of $\text{Der}^+_s(\bbA)$, which we think of as ``augmented" sort judgments, are in canonical bijection with the nonempty contexts of $\bbA$.\footnote{Although we could directly use the nonempty contexts of $\bbA$, this alternative form will be useful for the particular use we will make of such structures. In fact, Notation \ref{not: abuses of notation} suggests us to think of $(\textbf{X} \vdashcustom U \tp,x)$ as a structure on the variable $x$ itself, so that $x$ functions informally as a placeholder for a ``generic" element of sort $U$ in context $\textbf{X}$ (rather than, for example, the collection of all elements of that sort).}

The sets of sort expressions and of term expressions of $\Sigma(\bbA)$ (and similarly for $\Sigma(\bbB)$) will be denoted by $\text{Exp}_s(\Sigma(\bbA))$ and $\text{Exp}_t(\Sigma(\bbA))$, respectively.

Firstly, we will define maps
$$
\otimes_t: (\text{Der}_s^+(\bbA) \cup \text{Der}_t(\bbA)) \times (\text{Der}_s^+(\bbB) \cup \text{Der}_t(\bbB)) \longrightarrow \text{Exp}_t(\Sigma(\bbA) \otimes \Sigma(\bbB)),
$$
$$
\varotimes_t: (\text{Der}_s^+(\bbA) \cup \text{Der}_t(\bbA)) \times (\text{Der}_s^+(\bbB) \cup \text{Der}_t(\bbB)) \longrightarrow \text{Exp}_t(\Sigma(\bbA) \otimes \Sigma(\bbB))
$$
recursively with respect to the product of the heights of the arguments (see Definition \ref{def: derivable judgment, height, initial inference}).

\begin{remark}
This implies (using the notation introduced below) that we will combine derivable terms $u$, $v$ of sorts $U$, $V$ in $\bbA$, $\bbB$, respectively, in two different ways: as $u \otimes_t v$ and as $u \varotimes_t v$. They correspond, as reflected in the formulas $\lozenge$, $\blacklozenge$ below and Remark \ref{rem: matrix form of tensor product of terms - not variables}, that we would like to have two distinct ways of making the operations $u$, $v$ act on a matrix of variables: one where we apply $u$ to each column and then $v$ to the resulting list, and one where we apply $v$ to each row and then $u$ to the resulting list.

These expressions generalize to the dependently sorted setting the left- and right-hand sides of the equality axiom, discussed in the introduction, stating the commutativity of two operations $f$, $g$ in the tensor product of two Lawvere theories. (See also \S\ref{subsec: tensor product of lawvere theories}.) Accordingly, the equality $u \otimes_t v \equiv u \varotimes_t v$ will be derivable in $\bbA \otimes \bbB$; in fact, it will be an axiom if the term judgments attached to $u$, $v$ are axioms.
\end{remark}

This is a good point to introduce some conventions and abuses of notation that will be convenient throughout the text:

\begin{notation}
\label{not: abuses of notation}
\leavevmode
\begin{enumerate}[label=(\arabic*)]
	\item Let $J$ be a derivable term judgment $\textbf{X} \vdashcustom u:U$ in $\bbA$. Whenever it is used in one of the operations $\otimes_t$, $\varotimes_t$ below, we will denote it by $u^U$, with the context $\textbf{X}$ implicit. Similarly, if $J$ is a derivable judgment $\textbf{X} \vdashcustom U \tp$ in $\bbA$, we will denote a pair $(J,x) \in \text{Der}_s^+(\bbA)$ by $x^U$.
	
	Note that, due to the omission of the context, $x^U$ -- where $x$ is a variable -- can represent a term judgment $\textbf{X} \vdashcustom x:U$ or a sort judgment $\textbf{X} \vdashcustom U \tp$. This ambiguity only vanishes when the set of variables in the context is taken into account: if $x$ occurs in the context, we have a term judgment; if not, we have a sort judgment (or, more precisely, an element of $\text{Der}_s^+(\bbA)$).
	
	\item When dealing with a judgment of the form $\textbf{X} \vdashcustom u:\Type(u)$ (see \ref{subsec: other kinds of judgment}), we write $u$ instead of $u^{\Type(u)}$. This will occur most notably in the following two cases:
	\begin{itemize}
		\item For a derivable judgment $\textbf{X} \vdashcustom S(s_1, ..., s_k) \tp$, where $S$ is a sort symbol and $s_1$, ..., $s_k$ are sort expressions, for $1 \le i \le k$ we will -- when performing the operations $\otimes_t$ and $\varotimes_t$ -- use $s_i$ as a notation for the (derivable) judgment $\textbf{X} \vdashcustom s_i:\Type(s_i)$.
		
		\item For a derivable judgment $\textbf{X} \vdashcustom s(s_1, ..., s_k):U$, where $s$ is a term symbol and $s_1$, ..., $s_k$ are sort expressions, we denote $\textbf{X} \vdashcustom s_i:\Type(s_i)$ by $s_i$ for $1 \le i \le k$ (again, only when dealing with $\otimes_t$ and $\varotimes_t$).
	\end{itemize}
	
	\item When using the above notation for sort or term judgments, we will usually write $u^U \otimes v^V$ (resp. $u^U \varotimes v^V$) for $u^U \otimes_t v^V$ (resp. for $u^U \varotimes_t v^V$).
	
	\item Suppose given finite sequences $(u_1^{U_1}, ..., u_m^{U_m})$ in $\text{Der}_s^+(\bbA) \cup \text{Der}_t(\bbA)$ and $(v_1^{V_1}, ..., v_n^{V_n})$ in $\text{Der}_s^+(\bbB) \cup \text{Der}_t(\bbB)$. We will denote by
	$$
	(u_1^{U_1}, ..., u_m^{U_m}) \otimes (v_1^{V_1}, ..., v_n^{V_n})
	$$
	the length-$mn$ sequence $(u_1^{U_1} \otimes v_1^{V_1}, ..., u_1^{U_1} \otimes v_n^{V_n}, ..., u_m^{U_m} \otimes v_1^{V_1}, ..., u_m^{U_m} \otimes v_n^{V_n})$. Precisely, it is the family $\{1, ..., mn\} \rightarrow \text{Exp}_t(\Sigma(\bbA) \otimes \Sigma(\bbB))$ obtained by composing $(u_i^{U_i} \otimes v_j^{V_j})_{(i,j) \in \{1, ..., m\} \times \{1, ..., n\}}$ with the bijection $\{1, ..., mn\} \cong \{1, ..., m\} \times \{1,...,n\}$ corresponding to the lexicographic order on the cartesian product. We will also use the matrix notation
	$$
	\begin{pmatrix}
		u_1^{U_1} \otimes v_1^{V_1} & \cdots & u_1^{U_1} \otimes v_n^{V_n}\\
		\vdots & \ddots & \vdots\\
		u_m^{U_m} \otimes v_1^{V_1} & \cdots & u_m^{U_m} \otimes v_n^{V_n}
	\end{pmatrix}.
	$$
	(Although the matrix represents $(u_i^{U_i} \otimes v_j^{V_j})_{(i,j) \in \{1, ..., m\} \times \{1, ..., n\}}$, we use it as a notation for the reindexed family $\{1, ..., mn\} \rightarrow \text{Exp}_t(\Sigma(\bbA) \otimes \Sigma(\bbB))$.)
	
	\item For $(J,x)$ in $\text{Der}_s^+(\bbA)$ or $\text{Der}_s^+(\bbB)$, we let $\Ht(J,x) = \Ht(J)$.
\end{enumerate}
\end{notation}

\subsubsection{Construction of tensor term expressions}

Given $h \ge 0$, suppose that $J \otimes_t J'$ has been constructed whenever $J \in \text{Der}_s^+(\bbA) \cup \text{Der}_t(\bbA)$ and $J' \in \text{Der}_s^+(\bbB) \cup \text{Der}_t(\bbB)$ satisfy $\Ht(J)\Ht(J') < h$. Then, if $\Ht(J)\Ht(J') = h$, we define $J \otimes_t J'$ as follows:

\begin{enumerate}[label=(\roman*)]
	\item In the notation described above, suppose that $J$ is $x^U$ and $J'$ is $y^U$ where $x$, $y$ are variables. Then we let $J \otimes_t J'$ (or $x^U \otimes y^V$) be $xy$.
	
	More precisely, we assume that either $J$ is $(\textbf{X} \vdashcustom U \tp, x)$ where $x$ is a variable not contained in $\textbf{X}$, or it is $\textbf{X} \vdashcustom x:U$. Similarly, either $J'$ is $(\textbf{Y} \vdashcustom V \tp, y)$, where $y$ is a variable not in $\textbf{Y}$, or it is $\textbf{Y} \vdashcustom y:V$.
	
	\item Suppose that $J$ is either $(\textbf{X} \vdashcustom U \tp, x)$ or $\textbf{X} \vdashcustom x:U$, where $x$ is a variable, and $J' = (\textbf{Y} \vdashcustom v:V)$ where $v$ is not a variable. Write $U = S(s_1, ..., s_k)$\footnote{When used between two expressions built from a given sort-and-term alphabet, the symbol $=$ always indicates syntactic equality. We use $\equiv$ for the judgmental equality between sorts or terms in a (pre)theory.}, where $S$ is a sort symbol and $s_1$, ..., $s_k$ are term expressions, and $v = t(t_1, ..., t_\ell)$, where $t$ is a term symbol and $t_1$, ..., $t_k$ are term expressions. We let $J \otimes_t J'$ (or $x^U \otimes v^V$) be
	$$
	St((s_1^{\Type(s_1)}, ..., s_k^{\Type(s_k)},x^U) \otimes (t_1^{\Type(t_1)}, ..., t_\ell^{\Type(t_\ell)})).
	$$
	Following the above conventions, we write it as $St((s_1, ..., s_k,x^U) \otimes (t_1, ..., t_\ell))$ or, in matrix notation,
	$$
	St
	\begin{pmatrix}
		s_1 \otimes t_1 & \cdots & s_1 \otimes t_\ell\\
		\vdots & \ddots & \vdots\\
		s_k \otimes t_1 & \cdots & s_k \otimes t_\ell\\
		x^U \otimes t_1 & \cdots & x^U \otimes t_\ell
	\end{pmatrix}.
	$$
	Note that this only requires the use of products already constructed in the recursion process. Indeed, by Proposition \ref{prop: properties height} we have
	$$
	\Ht(\textbf{X} \vdashcustom s_i:\Type(s_i)) < \Ht(\textbf{X} \vdashcustom U \tp), \qquad\qquad
	\Ht(\textbf{Y} \vdashcustom t_j:\Type(t_j)) < \Ht(\textbf{Y} \vdashcustom v:V)
	$$
	for $1 \le i \le k$ and $1 \le j \le l$.
	
	\item Suppose that $J = (\textbf{X} \vdashcustom u:U)$, where $u$ is not a variable, and $J'$ is either $(\textbf{Y} \vdashcustom V \tp, y)$ or $\textbf{Y} \vdashcustom y:V$ where $y$ is a variable. Writing $u = s(s_1, ..., s_k)$ and $V = T(t_1, ..., t_\ell)$, we let $J \otimes_t J'$ (or $u^U \otimes y^V$) be
	$$
	sT((s_1^{\Type(s_1)}, ..., s_k^{\Type(s_k)}) \otimes (t_1^{\Type(t_1)}, ..., t_\ell^{\Type(t_\ell)}, y^V)).
	$$
	As in the previous case, we write it as $sT((s_1, ..., s_k) \otimes (t_1, ..., t_\ell, y^V))$ or, in matrix notation,
	$$
	sT
	\begin{pmatrix}
		s_1 \otimes t_1 & \cdots & s_1 \otimes t_\ell & s_1 \otimes y^V\\
		\vdots & \ddots & \vdots & \vdots\\
		s_k \otimes t_1 & \cdots & s_k \otimes t_\ell & s_k \otimes y^V
	\end{pmatrix}.
	$$
	Since, by Proposition \ref{prop: properties height},
	$$
	\Ht(\textbf{X} \vdashcustom s_i:\Type(s_i)) < \Ht(\textbf{X} \vdashcustom u:U), \qquad\qquad \Ht(\textbf{Y} \vdashcustom t_j:\Type(t_j)) < \Ht(\textbf{Y} \vdashcustom V \tp),
	$$
	this only requires products already constructed in the recursion process.
	
	\item Suppose that $J = (\textbf{X} \vdashcustom u:U)$ and $J' = (\textbf{Y} \vdashcustom v:V)$ where $u$, $v$ are not variables. If $\textbf{Y} = (y_1:Y_1, ..., y_n:Y_n)$ and $x$ is any variable not in $\textbf{X}$, we let $J \otimes_t J'$ (or $u^U \otimes v^V$) be
	\[
	\tag{$\lozenge$}
	(x^U \otimes v^V)[u^U \otimes y_1^{Y_1} \mid xy_1, ..., u^U \otimes y_n^{Y_n} \mid xy_n]
	\]
	where $y_i^{Y_i}$ denotes the pair $(\partial_{i-1} \textbf{Y} \vdashcustom Y_i \tp, y_i) \in \text{Der}_s^+(\bbB)$ for $1 \le i \le n$. Recall that ($\lozenge$) is the expression obtained from $x^U \otimes v^V$ by simultaneously replacing every occurrence of $xy_1$, ..., $xy_n$ by $u^U \otimes y_1^{Y_1}$, ..., $u^U \otimes y_n^{Y_n}$, respectively. It can be checked by induction that ($\lozenge$) does not depend on the choice of $x$.
	
	By Proposition \ref{prop: properties height} we have
	\begin{align*}
		&\Ht(\textbf{X} \vdashcustom U \tp, x) = \Ht(\textbf{X} \vdashcustom U \tp) < \Ht(\textbf{X} \vdashcustom u:U), \\
		&\Ht(\partial_{i-1}\textbf{Y} \vdashcustom Y_i \tp, y_i) = \Ht(\partial_i\textbf{Y}) \le \Ht(\textbf{Y}) < \Ht(\textbf{Y} \vdashcustom v:V),
	\end{align*}
	so only products that have already been constructed are required for ($\lozenge$).
\end{enumerate}

This concludes the definition of $\otimes_t$. On the other hand, $\varotimes_t$ is defined as follows:
\begin{enumerate}[label=(\roman*)]
	\item Suppose that $J = (\textbf{X} \vdashcustom u:U)$ and $J' = (\textbf{Y} \vdashcustom v:V)$ where $u$, $v$ are not variables. If $\textbf{X} = (x_1:X_1, ..., x_m:X_m)$ and $y$ is any variable not in $\textbf{Y}$, we let $J \varotimes_t J'$ (or $u^U \varotimes v^V$) be
	\[
	\tag{$\blacklozenge$}
	(u^U \otimes y^V)[x_1^{X_1} \otimes v^V \mid x_1y, ..., x_m^{X_m} \otimes v^V \mid x_my]
	\]
	where $x_i^{X_i}$ denotes $(\partial_{i-1}\textbf{X} \vdashcustom X_i, x_i) \in \text{Der}_s^+(\bbA)$ for $1 \le i \le m$. It can be verified by induction that ($\blacklozenge$) does not depend on the choice of $y$.
	
	\item In all other cases, i.e. if $J$ is of the form $x^U$ for a variable $x$ or $J'$ is of the form $y^V$ for a variable $y$, we let $J \varotimes_t J' = J \otimes_t J'$.
\end{enumerate}

\begin{remark}
Although ($\lozenge$) is only used to define $\otimes_t$ on pairs of terms that are not variables, it actually computes $\otimes_t$ in greater (but not full) generality. This will be convenient later when calculating tensor products of judgments. To study when ($\lozenge$) correctly computes $\otimes_t$, consider
$$
J = (\textbf{X} \vdashcustom u:U), \text{ where } \textbf{X} = (x_1:X_1, ..., x_m:X_m); \qquad J' = (\textbf{Y} \vdashcustom v:V), \text{ where } \textbf{X} = (y_1:Y_1, ..., y_n:Y_n).
$$

Firstly, suppose that $u = x_i$ for some $i \in \{1, ..., m\}$ (whereas $v$ can be a variable or not). Then ($\lozenge$) becomes
$$
(x^U \otimes v^V)[x_iy_1 \mid xy_1, ..., x_iy_n \mid xy_n] = x_i^U \otimes v^V.
$$
On the other hand, if $v = y_j$ for some $j \in \{1, ..., n\}$, then ($\lozenge$) is
$$
(xy_j)[u^U \otimes y_1^{Y_1} \mid xy_1, ..., u^U \otimes y_n \mid xy_n] = u^U \otimes y_j^{Y_j},
$$
which is not equal in general to $u^U \otimes y_j^V$; it is when $U = Y_j$.

Similarly, ($\blacklozenge$) computes $\varotimes_t$ in some cases other than those where it does by definition. Let $J$ and $J'$ be as above. If $v = y_j$ for some $j \in \{1, ..., n\}$, then ($\blacklozenge$) equals
$$
(u^U \otimes y^V)[x_1y_j \mid x_1y, ..., x_my_j \mid x_my] = u^U \otimes y_j^V = u^U \varotimes y_j^V.
$$

If $u = x_i$ for some $i \in \{1, ..., m\}$, then ($\blacklozenge$) is
$$
x_iy[x_1^{X_1} \otimes v^V \mid x_1y, ..., x_m^{X_m} \otimes v^V \mid x_my] = x_i^{X_i} \otimes v^V = x_i^{X_i} \varotimes v^V,
$$
which equals $x_i^U \varotimes v^V$ if $U = X_i$.
\end{remark}

For future reference, we state as a lemma the case we will be interested in:

\begin{lemma}
\label{lem: formulas for tensor products of terms}
Formula ($\lozenge$) applied to $u^U \in \text{Der}_t(\bbA)$ and $v^V \in \text{Der}_t(\bbB)$ calculates $u^U \otimes_t v^V$ when $V = \Type(v)$, and formula ($\blacklozenge$) applied to the same judgments calculates $u^U \varotimes_t v^V$ when $U = \Type(u)$.
\end{lemma}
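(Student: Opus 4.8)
The plan is to deduce both assertions from the computations already carried out in the Remark immediately preceding the statement, organized by a case analysis according to whether $u$ and $v$ are variables. I will spell out the argument for ($\lozenge$); the one for ($\blacklozenge$) is entirely symmetric, interchanging the roles of $\bbA$ and $\bbB$ and using the hypothesis $U = \Type(u)$ in place of $V = \Type(v)$.

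Write $u^U = (\textbf{X} \vdash u:U)$ and $v^V = (\textbf{Y} \vdash v:V)$ with $\textbf{Y} = (y_1:Y_1, \ldots, y_n:Y_n)$; a derivable variable judgment forces $u = x_i$ (resp. $v = y_j$) to be an entry of the relevant context. First I would dispose of the case in which neither $u$ nor $v$ is a variable: here ($\lozenge$) is, by definition, case (iv), so it computes $u^U \otimes_t v^V$ with nothing to check. Next, when $u = x_i$ is a variable (and $v$ arbitrary), the Remark shows that ($\lozenge$) reduces to $x_i^U \otimes v^V$, which by the conventions of Notation \ref{not: abuses of notation} is exactly $x_i^U \otimes_t v^V = u^U \otimes_t v^V$; no hypothesis on $V$ is needed. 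The one remaining genuine case is $u$ not a variable and $v = y_j$: there the Remark gives ($\lozenge$) $= u^U \otimes y_j^{Y_j}$, and since the canonical sort $\Type(v)$ of the variable $y_j$ is $Y_j$, the hypothesis $V = \Type(v)$ lets me replace $y_j^{Y_j}$ by $y_j^V$, whereupon case (iii) identifies the expression with $u^U \otimes_t y_j^V = u^U \otimes_t v^V$. (When $u$ and $v$ are both variables either of the two collapses applies, and case (i) gives $x_iy_j = x_i^U \otimes_t y_j^V$ regardless of the sort carried.)

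The substantive point, and the step I expect to require real care, is the pair of substitution identities that the Remark invokes: that replacing $xy_1, \ldots, xy_n$ by $u^U \otimes y_1^{Y_1}, \ldots, u^U \otimes y_n^{Y_n}$ in $x^U \otimes v^V$ reproduces the expressions listed above. To justify these I would argue, by an induction made well-founded through the height inequalities of Proposition \ref{prop: properties height}, that in $x^U \otimes v^V$ every occurrence of the placeholder variable $x$ sits inside a subterm of the form $xy_k$ -- equivalently, $x$ is only ever tensored against a context variable of $\textbf{Y}$. Granting this structural fact, the simultaneous substitution behaves as a clean renaming, and the collapses asserted in the Remark follow; the hypothesis $V = \Type(v)$ then enters precisely to ensure that the sort decoration attached to $y_j$ after substitution agrees syntactically with the sort $V$ that $\otimes_t$ reads off in case (iii).
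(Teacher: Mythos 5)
Your proof is correct and follows essentially the same route as the paper, which simply reads the lemma off from the case analysis in the preceding Remark (the only case needing the hypothesis $V = \Type(v)$ being $v = y_j$ with $u$ not a variable, where $(\lozenge)$ yields $u^U \otimes y_j^{Y_j}$ and one uses $\Type(y_j) = Y_j$). Your closing paragraph merely makes explicit the inductive renaming argument that the paper leaves implicit (cf.\ its remark that $(\lozenge)$ is independent of the choice of $x$), so there is no substantive difference.
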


\begin{remark}
\label{rem: matrix form of tensor product of terms - not variables}
Using the matrix form can be quite useful for working with formulas ($\lozenge$) and ($\blacklozenge$). Let $J = (\textbf{X} \vdashcustom u:U)$ and $J' = (\textbf{Y} \vdashcustom v:V)$ where $u$, $v$ are not variables. Write $\textbf{X} = (x_1:X_1, ..., x_m:X_m)$ and $\textbf{Y} = (y_1:Y_1, ..., y_n:Y_n)$, and let $x$, $y$ be the chosen variables not in $\textbf{X}$, $\textbf{Y}$, respectively.

If $U = S(\sigma_1, ..., \sigma_K)$ and $v = t(t_1, ..., t_\ell)$, then $u^U \otimes v^V$ can be written as
$$
St
\begin{pmatrix}
	\sigma_1 \otimes t_1 & \cdots & \sigma_1 \otimes t_\ell\\
	\vdots & \ddots & \vdots\\
	\sigma_K \otimes t_1 & \cdots & \sigma_K \otimes t_\ell\\
	x^U \otimes t_1 & \cdots & x^U \otimes t_\ell
\end{pmatrix}
[u^U \otimes y_1 \mid xy_1, ..., u^U \otimes y_n \mid xy_n].
$$
As $xy_1$, ..., $xy_n$ do not occur in the expressions $\sigma_i \otimes t_j$, by Lemma \ref{lem: formulas for tensor products of terms} the above expression equals
$$
St
\begin{pmatrix}
	\sigma_1 \otimes t_1 & \cdots & \sigma_1 \otimes t_\ell\\
	\vdots & \ddots & \vdots\\
	\sigma_K \otimes t_1 & \cdots & \sigma_K \otimes t_\ell\\
	\overline{x^U \otimes t_1} & \cdots & \overline{x^U \otimes t_\ell}
\end{pmatrix}
=
St
\begin{pmatrix}
	\sigma_1 \otimes t_1 & \cdots & \sigma_1 \otimes t_\ell\\
	\vdots & \ddots & \vdots\\
	\sigma_K \otimes t_1 & \cdots & \sigma_K \otimes t_\ell\\
	u^U \otimes t_1 & \cdots & u^U \otimes t_\ell
\end{pmatrix}
$$
where we use an upper bar to indicate an application of $[u^U \otimes y_1 \mid xy_1, ..., u^U \otimes y_n \mid xy_n]$.

On the other hand, if $u = s(s_1, ..., s_k)$ and $V = T(\tau_1, ..., \tau_\ell)$, then $u^U \varotimes v^V$ can be written as
$$
sT
\begin{pmatrix}
	s_1 \otimes \tau_1 & \cdots & s_1 \otimes \tau_\ell & s_1 \otimes y^V\\
	\vdots & \ddots & \vdots & \vdots\\
	s_k \otimes \tau_1 & \cdots & s_k \otimes \tau_\ell & s_k \otimes y^V
\end{pmatrix}
[x_1^{X_1} \otimes v^V \mid x_1y, ..., x_m^{X_m} \otimes v^V \mid x_my].
$$
As $x_1y$, ..., $x_my$ do not occur in the expressions $s_i \otimes \tau_j$, by Lemma \ref{lem: formulas for tensor products of terms} the above expression equals
$$
sT
\begin{pmatrix}
	s_1 \otimes \tau_1 & \cdots & s_1 \otimes \tau_\ell & \overline{s_1 \otimes y^V}\\
	\vdots & \ddots & \vdots & \vdots\\
	s_k \otimes \tau_1 & \cdots & s_k \otimes \tau_\ell & \overline{s_k \otimes y^V}
\end{pmatrix}
=
sT
\begin{pmatrix}
	s_1 \otimes \tau_1 & \cdots & s_1 \otimes \tau_\ell & s_1 \varotimes v^V\\
	\vdots & \ddots & \vdots & \vdots\\
	s_k \otimes \tau_1 & \cdots & s_k \otimes \tau_\ell & s_k \varotimes v^V
\end{pmatrix}
$$
where the upper bar now indicates $[x_1^{X_1} \otimes v^V \mid x_1y, ..., x_m^{X_m} \otimes v^V \mid x_my]$.
\end{remark}

\subsubsection{Construction of tensor sort expressions}

Now, we use $\otimes_t$ to define an operation
$$
\otimes_s:\text{Der}_s^+(\bbA) \otimes \text{Der}_s^+(\bbB) \longrightarrow \text{Exp}_s(\Sigma(\bbA) \otimes \Sigma(\bbB))
$$
that combines two derivable (augmented) sort judgments to produce a sort expression in the alphabet $\Sigma(\bbA) \otimes \Sigma(\bbB)$.

\begin{notation}
For a finite sequence $\alpha = (a_1, ..., a_n)$ in a given set, we let $\partial \alpha$ be its restriction $(a_1, ..., a_{n-1})$ if $n \ge 1$, and the empty sequence if $n = 0$ (i.e. if $\alpha$ is empty).
\end{notation}

Given $J = (\textbf{X} \vdashcustom U \tp, x) \in \text{Der}_s(\bbA)$ and $J' = (\textbf{Y} \vdashcustom V \tp, y) \in \text{Der}_s(\bbB)$, where
$$
U = S(s_1, ..., s_k), \qquad\qquad V = T(t_1, ..., t_\ell),
$$
we define $J \otimes_s J'$, also denoted by $U^x \otimes_s V^y$ with $\textbf{X}$, $\textbf{Y}$ implicit, as the expression
$$
ST(\partial((s_1^{\Type(s_1)}, ..., s_k^{\Type(s_k)}, x^U) \otimes_t (t_1^{\Type(t_1)}, ..., t_\ell^{\Type(t_\ell)}, y^V))).
$$
We will usually drop the subscript ``$s$" from $\otimes_s$ and, when this causes no ambiguity, also the superscript variables from $U^x$ and $V^y$.\footnote{Note that $U^x$ denotes the same structure as $x^U$ (see Notation \ref{not: abuses of notation}). However, the former is used exclusively when applying the operations $\otimes_t$ and $\varotimes_t$, and the latter only when applying $\otimes_s$.}

Following our conventions for $\otimes_t$ we write simply $ST(\partial((s_1, ..., s_k,x^U) \otimes (t_1, ..., t_\ell,y^V)))$. If we were to spell it out in the usual way, we would obtain something like
$$
ST(s_1 \otimes t_1, ..., s_1 \otimes t_\ell, s_1 \otimes y^V, ..., s_k \otimes t_1, ..., s_k \otimes t_\ell, s_k \otimes y^V, x^U \otimes t_1, ..., x^U \otimes t_\ell),
$$
which can be ambiguous. Instead, we use the ``incomplete matrix"
$$
ST
\begin{pmatrix}
	s_1 \otimes t_1 & \cdots & s_1 \otimes t_\ell & s_1 \otimes y^V\\
	\vdots & \ddots & \vdots & \vdots\\
	s_k \otimes t_1 & \cdots & s_k \otimes t_\ell & s_k \otimes y^V\\
	x^U \otimes t_1 & \cdots & x^U \otimes t_\ell & -
\end{pmatrix}
$$
(while keeping in mind that the entries are ordered lexicographically).

\subsubsection{Construction of tensor precontexts}

We use the above construction to define, for contexts $\textbf{X}$ and $\textbf{Y}$ in $\bbA$ and $\bbB$, respectively, a ``tensor" precontext $\textbf{X} \otimes \textbf{Y}$ in $\Sigma(\bbA) \otimes \Sigma(\bbB)$. If $\textbf{X} = (x_1:X_1, ..., x_m:X_m)$ and $\textbf{Y} = (y_1:Y_1, ..., y_n:Y_n)$, we let $\textbf{X} \otimes \textbf{Y}$ be the sequence
\[
\tag{\texttt{*}}
(x_1y_1:X_1^{x_1} \otimes Y_1^{y_1}, ..., x_1y_n:X_1^{x_1} \otimes Y_n^{y_n}, ..., x_my_1:X_m^{x_m} \otimes Y_1^{y_1}, ..., x_my_n:X_m^{x_m} \otimes Y_n^{y_n})
\]
obtained by lexicographically ordering the pairs $(x_iy_j:X_i^{x_i} \otimes Y_j^{y_j})$ for $1 \le i \le m$ and $1 \le j \le n$. Again, it will be convenient to use a matrix form:
\[
\begin{pmatrix}
	x_1y_1:X_1 \otimes Y_1 & \cdots & x_1y_n:X_1 \otimes Y_n \\
	\vdots & \ddots &\vdots \\
	x_my_1:X_m \otimes Y_1 & \cdots & x_my_n:X_m \otimes Y_n
\end{pmatrix}.
\]
The reason is that we will often consider precontexts obtained by dropping the last entry of (\texttt{*}), i.e.
$$
\partial(\textbf{X} \otimes \textbf{Y}) = (x_1y_1:X_1 \otimes Y_1, ..., x_1y_n:X_1 \otimes Y_n, ..., x_my_1:X_m \otimes Y_1, ..., x_my_n:X_m \otimes Y_{n-1}).
$$
We denote the latter by the ``incomplete matrix"
\[
\begin{pmatrix}
	x_1y_1:X_1 \otimes Y_1 & \cdots & x_1y_{n-1}:X_1 \otimes Y_{n-1} & x_1y_n:X_1 \otimes Y_n \\
	\vdots & \ddots &\vdots & \vdots \\
	x_{m-1}y_1:X_{m-1} \otimes Y_1 & \cdots & x_{m-1}y_{n-1}:X_{m-1} \otimes Y_{n-1} & x_{m-1}y_n:X_{m-1} \otimes Y_n\\
	x_my_1:X_m \otimes Y_1 & \cdots & x_my_{n-1}:X_m \otimes Y_{n-1} & -
\end{pmatrix}.
\]

\subsection{The tensor product of judgments}

\label{subsec: tensor product of judgments}

We will now define, for theories $\bbA$ and $\bbB$, an operation that assigns to \emph{some} pairs $(J,J')$, where $J$ (resp. $J'$) is a derivable judgment in $\bbA$ (resp. $\bbB$), a judgment $J \odot J'$ in the alphabet $\Sigma(\bbA) \otimes \Sigma(\bbB)$. This will be done according to the following table:
\begin{table}[h!]
\centering
\renewcommand{\arraystretch}{1.2}
\begin{tabular}{|c|c|c|c|c|}
	\hline
	\diagbox{Judgment in $\bbA$}{Judgment in $\bbB$} & \text{sort} & \text{term} & \text{sort eq.} & \text{term eq.} \\
	\hline
	\text{sort} & \text{sort} & \text{term} & \text{sort eq.} & \text{term eq.} \\
	\hline
	\text{term} & \text{term} & \text{term eq.} & \text{term eq.} & - \\
	\hline
	\text{sort equality} & \text{sort eq.} & \text{term eq.} & - & - \\
	\hline
	\text{term equality} & \text{term eq.} & - & - & - \\
	\hline
\end{tabular}
\caption{Rules for the tensor product of judgments}
\label{table: 1}
\end{table}

For instance, $J \odot J'$ will be a term equality judgment when $J$ (resp. $J'$) is a derivable term judgment in $\bbA$ (resp. a derivable sort equality judgment in $\bbB$), but it will not be defined when both $J$, $J'$ are term equality judgments.

\vspace{0.5em}

In what follows, we assume that every context in $\bbA$ or $\bbB$ is endowed with a variable not contained in it.\footnote{While this choice is required for the operations below to be defined, it is not essential in the sense that, once we construct the pretheory $\bbA \otimes \bbB$ and prove that it is a theory, different choices will yield sets of axioms that are equal up to variable renaming. Moreover, we can include such a choice function as part of the definition of a generalized algebraic theory without changing the definition of a theory morphism (hence without modifying the category $\GAT$ in an essential way).}

In all cases, we consider contexts $\textbf{X} = (x_1:X_1, ..., x_m:X_m)$ and $\textbf{Y} = (y_1:Y_1, ..., y_n:Y_n)$ in $\bbA$ and $\bbB$, respectively. Also, we let $x$ (resp. $y$) be the chosen variable not contained in $\textbf{X}$ (resp. $\textbf{Y}$).

\vspace{0.5em}

\newpage

\begin{center}
\textcolor{newpurple}{\textbf{(1) \hspace{0.3em}\normalsize{\underline{sort $\odot$ sort}}}}
\end{center}

\vspace{0.5em}
	
	$J$ is $\textbf{X} \vdashcustom U \tp$ and $J'$ is $\textbf{Y} \vdashcustom V \tp$. Let $\textbf{X}' = (\textbf{X}, x:U)$ and $\textbf{Y}' = (\textbf{Y}, y:U)$. We define $J \odot J'$ as
	$$
	\partial(\textbf{X}' \otimes \textbf{Y}') \vdashcustom U \otimes V \tp.
	$$
	More explicitly, writing $U = S(s_1, ..., s_k)$ and $V = T(t_1, ..., t_\ell)$, we have
	$$
	\begin{pmatrix}
		x_1y_1:X_1 \otimes Y_1 & \cdots & x_1y_n:X_1 \otimes Y_n & x_1y: X_1 \otimes V\\
		\vdots & \ddots & \vdots & \vdots\\
		x_my_1:X_m \otimes Y_1 & \cdots & x_my_n:X_m \otimes Y_n & x_my: X_m \otimes V\\
		xy_1:U \otimes Y_1 & \cdots & xy_n:U \otimes Y_n & -
	\end{pmatrix}
	\vdashcustom ST
	\begin{pmatrix}
		s_1 \otimes t_1 & \cdots & s_1 \otimes t_\ell & s_1 \otimes y^V\\
		\vdots & \ddots & \vdots & \vdots\\
		s_k \otimes t_1 & \cdots & s_k \otimes t_\ell & s_k \otimes y^V\\
		x^U \otimes t_1 & \cdots & x^U \otimes t_\ell & -
	\end{pmatrix}
	\tp.
	$$
	In particular, if $J$ and $J'$ are axioms with $U = S(x_1, ..., x_m)$ and $V = T(y_1, ..., y_n)$, then the above judgment is
	$$
	\begin{pmatrix}
		x_1y_1:X_1 \otimes Y_1 & \cdots & x_1y_n:X_1 \otimes Y_n & x_1y: X_1 \otimes V\\
		\vdots & \ddots & \vdots & \vdots\\
		x_my_1:X_m \otimes Y_1 & \cdots & x_my_n:X_m \otimes Y_n & x_my: X_m \otimes V\\
		xy_1:U \otimes Y_1 & \cdots & xy_n:U \otimes Y_n & -
	\end{pmatrix}
	\vdashcustom ST
	\begin{pmatrix}
		x_1y_1 & \cdots & x_1y_n & x_1y\\
		\vdots & \ddots & \vdots & \vdots\\
		x_my_1 & \cdots & x_my_n & x_my\\
		xy_1 & \cdots & xy_n & -
	\end{pmatrix}
	\tp.
	$$
	
	\vspace{0.5em}
	
	\begin{center}
		\textcolor{newpurple}{\textbf{(2) \hspace{0.3em}\normalsize{\underline{sort $\odot$ term}}}}
	\end{center}
	
	\vspace{0.5em}
	
	$J$ is $\textbf{X} \vdashcustom U \tp$ and $J'$ is $\textbf{Y} \vdashcustom v:V$. Let $\textbf{X}' = (\textbf{X}, x:U)$. We define $J \odot J'$ as
	$$
	\textbf{X}' \otimes \textbf{Y} \vdashcustom x^U \otimes v^V: (U \otimes V)[x_1^{X_1} \otimes v^V \mid x_1y, ..., x_m^{X_m} \otimes v^V \mid x_my].
	$$
	To describe it explicitly, write $U = S(\sigma_1, ..., \sigma_K)$ and $V = T(\tau_1, ..., \tau_L)$. Then $U \otimes V$ equals
	$$
	ST
	\begin{pmatrix}
		\sigma_1 \otimes \tau_1 & \cdots & \sigma_1 \otimes \tau_L & \sigma_1 \otimes y^V\\
		\vdots & \ddots & \vdots & \vdots\\
		\sigma_K \otimes \tau_1 & \cdots & \sigma_K \otimes \tau_L & \sigma_K \otimes y^V\\
		x^U \otimes \tau_1 & \cdots & x^U \otimes \tau_L & -
	\end{pmatrix}
	$$
	and, since $x_1y$, ..., $x_my$ do not occur in the terms $\sigma_i \otimes \tau_j$ and $x^U \otimes \tau_j$, by Lemma \ref{lem: formulas for tensor products of terms} we have
	$$
	(U \otimes V)[x_1^{X_1} \otimes v^V \mid x_1y, ..., x_m^{X_m} \otimes v^V \mid x_my] =
	ST
	\begin{pmatrix}
		\sigma_1 \otimes \tau_1 & \cdots & \sigma_1 \otimes \tau_L & \sigma_1 \varotimes v^V\\
		\vdots & \ddots & \vdots & \vdots\\
		\sigma_K \otimes \tau_1 & \cdots & \sigma_K \otimes \tau_L & \sigma_K \varotimes v^V\\
		x^U \otimes \tau_1 & \cdots & x^U \otimes \tau_L & -
	\end{pmatrix}.
	$$
	It follows that if $v = t(t_1, ..., t_\ell)$, then $J \odot J'$ equals
	$$
	\begin{pmatrix}
		x_1y_1:X_1 \otimes Y_1 & \cdots & x_1y_n:X_1 \otimes Y_n\\
		\vdots & \ddots & \vdots\\
		x_my_1:X_m \otimes Y_1 & \cdots & x_my_n:X_m \otimes Y_n\\
		xy_1:U \otimes Y_1 & \cdots & xy_n:U \otimes Y_n
	\end{pmatrix}
	\vdashcustom
	St
	\begin{pmatrix}
		\sigma_1 \otimes t_1 & \cdots & \sigma_1 \otimes t_\ell\\
		\vdots & \ddots & \vdots\\
		\sigma_K \otimes t_1 & \cdots & \sigma_K \otimes t_\ell\\
		x^U \otimes t_1 & \cdots & x^U \otimes t_\ell
	\end{pmatrix}
	:ST
	\begin{pmatrix}
		\sigma_1 \otimes \tau_1 & \cdots & \sigma_1 \otimes \tau_L & \sigma_1 \varotimes v^V\\
		\vdots & \ddots & \vdots & \vdots\\
		\sigma_K \otimes \tau_1 & \cdots & \sigma_K \otimes \tau_L & \sigma_K \varotimes v^V\\
		x^U \otimes \tau_1 & \cdots & x^U \otimes \tau_L & -
	\end{pmatrix}.
	$$
	Otherwise, if $v$ is a variable $y_j$, we replace the expression $St(\cdots)$ by $xy_j$ in the above judgment.
	
	Note that if $J$ and $J'$ are axioms, say with $U = S(x_1, ..., x_m)$ and $v = t(y_1, ..., y_n)$, then the expression $St(\cdots)$ becomes
	$$
	St
	\begin{pmatrix}
		x_1y_1 & \cdots & x_1y_n\\
		\vdots & \ddots & \vdots\\
		x_my_1 & \cdots & x_my_n\\
		xy_1 & \cdots & xy_n
	\end{pmatrix}.
	$$
	
	\vspace{0.5em}
	
	\newpage
	
	\begin{center}
		\textcolor{newpurple}{\textbf{(3) \hspace{0.3em}\normalsize{\underline{term $\odot$ sort}}}}
	\end{center}
	
	\vspace{0.5em}
	
	$J$ is $\textbf{X} \vdashcustom u:U$ and $J'$ is $\textbf{Y} \vdashcustom V \tp$. Let $\textbf{Y}' = (\textbf{Y}, y:V)$. We define $J \odot J'$ as
	$$
	\textbf{X} \otimes \textbf{Y}' \vdashcustom u^U \otimes y^V: (U \otimes V)[u^U \otimes y_1^{Y_1} \mid xy_1, ..., u^U \otimes y_n^{Y_n} \mid xy_n].
	$$
	To describe it explicitly, write $U = S(\sigma_1, ..., \sigma_K)$ and $V = T(\tau_1, ..., \tau_L)$. Then $U \otimes V$ is as in the previous item and, since $xy_1$, ..., $xy_n$ do not occur in the terms $\sigma_i \otimes \tau_j$ and $\sigma_i \otimes y^V$, by Lemma \ref{lem: formulas for tensor products of terms} we have
	$$
	(U \otimes V)[u^U \otimes y_1^{Y_1} \mid xy_1, ..., u^U \otimes y_n^{Y_n} \mid xy_n] =
	ST
	\begin{pmatrix}
		\sigma_1 \otimes \tau_1 & \cdots & \sigma_1 \otimes \tau_L & \sigma_1 \otimes y^V\\
		\vdots & \ddots & \vdots & \vdots\\
		\sigma_K \otimes \tau_1 & \cdots & \sigma_K \otimes \tau_L & \sigma_K \otimes y^V\\
		u^U \otimes \tau_1 & \cdots & u^U \otimes \tau_L & -
	\end{pmatrix}.
	$$
	It follows that if $u = s(s_1, ..., s_k)$, then $J \odot J'$ equals
	\begin{samepage}
	$$
	\begin{pmatrix}
		x_1y_1: X_1 \otimes Y_1 & \cdots & x_1y_n: X_1 \otimes Y_n & x_1y: X_1 \otimes V\\
		\vdots & \ddots & \vdots & \vdots\\
		x_my_1: X_m \otimes Y_1 & \cdots & x_my_n: X_m \otimes Y_n & x_my: X_m \otimes V
	\end{pmatrix}
	$$
	$$
	\top
	$$
	$$
	sT
	\begin{pmatrix}
		s_1 \otimes \tau_1 & \cdots & s_1 \otimes \tau_L & s_1 \otimes y^V\\
		\vdots & \ddots & \vdots & \vdots\\
		s_k \otimes \tau_1 & \cdots & s_k \otimes \tau_L & s_k \otimes y^V
	\end{pmatrix}
	:ST
	\begin{pmatrix}
		\sigma_1 \otimes \tau_1 & \cdots & \sigma_1 \otimes \tau_L & \sigma_1 \otimes y^V\\
		\vdots & \ddots & \vdots & \vdots\\
		\sigma_K \otimes \tau_1 & \cdots & \sigma_K \otimes \tau_L & \sigma_K \otimes y^V\\
		u^U \otimes \tau_1 & \cdots & u^U \otimes \tau_L & -
	\end{pmatrix}.
	$$
	\end{samepage}
	If, instead, $u$ is a variable $x_i$, we replace the expression $sT(\cdots)$ by $x_iy$ in the above judgment.
	
	Note that if $J$ and $J'$ are axioms, say with $u = s(x_1, ..., x_m)$ and $V = T(y_1, ..., y_n)$, then the expression $sT(\cdots)$ becomes
	$$
	sT
	\begin{pmatrix}
		x_1y_1 & \cdots & x_1y_n & x_1y\\
		\vdots & \ddots & \vdots & \vdots\\
		x_my_1 & \cdots & x_my_n & x_my
	\end{pmatrix}.
	$$
	
	\vspace{0.5em}
	
	\begin{center}
		\textcolor{newpurple}{\textbf{(4) \hspace{0.3em}\normalsize{\underline{term $\odot$ term}}}}
	\end{center}
	
	\vspace{0.5em}
	
	$J$ is $\textbf{X} \vdashcustom u:U$ and $J'$ is $\textbf{Y} \vdashcustom v:V$. We define $J \odot J'$ as
	$$
	\textbf{X} \otimes \textbf{Y} \vdashcustom u^U \otimes v^V \equiv u^U \varotimes v^V: \doverline{U \otimes V}
	$$
	where the double bar indicates an application of
	$$
	[x_1^{X_1} \otimes v^V \mid x_1y, ..., x_m^{X_m} \otimes v^V \mid x_my, ..., u^U \otimes y_1^{Y_1} \mid xy_1, ..., u^U \otimes y_n^{Y_n} \mid xy_n].
	$$
	To obtain a more explicit description, write $U = S(\sigma_1, ..., \sigma_K)$ and $V = T(\tau_1, ..., \tau_L)$. Then $U \otimes V$ is as in the previous items and, by using Lemma \ref{lem: formulas for tensor products of terms} to describe the passage from $U \otimes V$ to $\doverline{U \otimes V}$, we conclude that $J \odot J'$ is
	$$
	\begin{pmatrix}
		x_1y_1: X_1 \otimes Y_1 & \cdots & x_1y_n: X_1 \otimes Y_n\\
		\vdots & \ddots & \vdots\\
		x_my_1: X_m \otimes Y_1 & \cdots & x_my_n: X_m \otimes Y_n
	\end{pmatrix}
	\vdashcustom
	u^U \otimes v^V \equiv u^U \varotimes v^V: ST
	\begin{pmatrix}
		\sigma_1 \otimes \tau_1 & \cdots & \sigma_1 \otimes \tau_L & \sigma_1 \varotimes v^V\\
		\vdots & \ddots & \vdots & \vdots\\
		\sigma_K \otimes \tau_1 & \cdots & \sigma_K \otimes \tau_L & \sigma_K \varotimes v^V\\
		u^U \otimes \tau_1 & \cdots & u^U \otimes \tau_L & -
	\end{pmatrix}.
	$$
	While we cannot give a concise general description of $u^U \otimes v^V$ and $u^U \varotimes v^V$, we note that, following the previous subsection:
	\begin{itemize}
		\item[--] If $u$ or $v$ is a variable, $u^U \otimes v^V$ and $u^U \varotimes v^V$ are equal to each other by definition. If both are variables, say $x_i$ and $y_j$, their product is $x_iy_j$.
		
		\item[--] If neither $u$ nor $v$ is a variable, then $u^U \otimes v^V$ and $u^U \varotimes v^V$ have explicit matrix forms given by Remark \ref{rem: matrix form of tensor product of terms - not variables}. If $J$ and $J'$ are axioms, say with $u = s(x_1, ..., x_m)$ and $v = t(y_1, ..., y_n)$, then $u^U \otimes v^V \equiv u^U \varotimes v^V$ becomes
		$$
		St
		\begin{pmatrix}
			\sigma_1 \otimes y_1 & \cdots & \sigma_1 \otimes y_n\\
			\vdots & \ddots & \vdots\\
			\sigma_K \otimes y_1 & \cdots & \sigma_K \otimes y_n\\
			u^U \otimes y_1 & \cdots & u^U \otimes y_n
		\end{pmatrix}
		\equiv
		sT
		\begin{pmatrix}
			x_1 \otimes \tau_1 & \cdots & x_1 \otimes \tau_L & x_1 \otimes v^V\\
			\vdots & \ddots & \vdots & \vdots\\
			x_m \otimes \tau_1 & \cdots & x_m \otimes \tau_L & x_m \otimes v^V\\
		\end{pmatrix}.
		$$
	\end{itemize}
	
	\vspace{0.5em}
	
	\newpage
	
	\begin{center}
		\textcolor{newpurple}{\textbf{(5) \hspace{0.3em}\normalsize{\underline{sort $\odot$ sort equality}}}}
	\end{center}
	
	\vspace{0.5em}
	
	$J$ is $\textbf{X} \vdashcustom U \tp$ and $J'$ is $\textbf{Y} \vdashcustom V \equiv V' \tp$. Let $\textbf{X}' = (\textbf{X}, x:U)$ and $\textbf{Y}' = (\textbf{Y}, y:V)$. We define $J \odot J'$ as
	$$
	\partial(\textbf{X}' \otimes \textbf{Y}') \vdashcustom U^x \otimes V^y \equiv U^{'x} \otimes V^y \tp.
	$$
	To describe it explicitly, write $U = S(s_1, ..., s_k)$, $V = T(t_1, ..., t_\ell)$, and $V' = T'(t'_1, ..., t'_{\ell'})$. Then, proceeding as in the ``sort $\odot$ sort" case, $J \odot J$ equals
	$$
	\begin{pmatrix}
		x_1y_1:X_1 \otimes Y_1 & \cdots & x_1y_n:X_1 \otimes Y_n & x_1y: X_1 \otimes V\\
		\vdots & \ddots & \vdots & \vdots\\
		x_my_1:X_m \otimes Y_1 & \cdots & x_my_n:X_m \otimes Y_n & x_my: X_m \otimes V\\
		xy_1:U \otimes Y_1 & \cdots & xy_n:U \otimes Y_n & -
	\end{pmatrix}
	$$
	$$
	\top
	$$
	$$
	ST
	\begin{pmatrix}
		s_1 \otimes t_1 & \cdots & s_1 \otimes t_\ell & s_1 \otimes y^V\\
		\vdots & \ddots & \vdots & \vdots\\
		s_k \otimes t_1 & \cdots & s_k \otimes t_\ell & s_k \otimes y^V\\
		x^U \otimes t_1 & \cdots & x^U \otimes t_\ell & -
	\end{pmatrix}
	\equiv
	ST'
	\begin{pmatrix}
		s_1 \otimes t'_1 & \cdots & s_1 \otimes t'_{\ell'} & s_1 \otimes y^{V'}\\
		\vdots & \ddots & \vdots & \vdots\\
		s_k \otimes t'_1 & \cdots & s_k \otimes t'_{\ell'} & s_k \otimes y^{V'}\\
		x^U \otimes t'_1 & \cdots & x^U \otimes t'_{\ell'} & -
	\end{pmatrix}
	\tp.
	$$
	
	\vspace{0.5em}
	
	\begin{center}
		\textcolor{newpurple}{\textbf{(6) \hspace{0.3em}\normalsize{\underline{sort equality $\odot$ sort}}}}
	\end{center}
	
	\vspace{0.5em}
	
	$J$ is $\textbf{X} \vdashcustom U \equiv U' \tp$ and $J'$ is $\textbf{Y} \vdashcustom V \tp$. Let $\textbf{X}' = (\textbf{X}, x:U)$ and $\textbf{Y}' = (\textbf{Y}, y:V)$. We define $J \odot J'$ as
	$$
	\partial(\textbf{X}' \otimes \textbf{Y}') \vdashcustom U^x \otimes V^y \equiv U^{'x} \otimes V^y \tp.
	$$
	Writing $U = S(s_1, ..., s_k)$, $U' = S'(s'_1, ..., s'_{k'})$, and $V = T(t_1, ..., t_\ell)$, similarly to the previous case we have that the above judgment equals
	\begin{samepage}
	$$
	\begin{pmatrix}
		x_1y_1:X_1 \otimes Y_1 & \cdots & x_1y_n:X_1 \otimes Y_n & x_1y: X_1 \otimes V\\
		\vdots & \ddots & \vdots & \vdots\\
		x_my_1:X_m \otimes Y_1 & \cdots & x_my_n:X_m \otimes Y_n & x_my: X_m \otimes V\\
		xy_1:U \otimes Y_1 & \cdots & xy_n:U \otimes Y_n & -
	\end{pmatrix}
	$$
	$$
	\top
	$$
	$$
	ST
	\begin{pmatrix}
		s_1 \otimes t_1 & \cdots & s_1 \otimes t_\ell & s_1 \otimes y^V\\
		\vdots & \ddots & \vdots & \vdots\\
		s_k \otimes t_1 & \cdots & s_k \otimes t_\ell & s_k \otimes y^V\\
		x^U \otimes t_1 & \cdots & x^U \otimes t_\ell & -
	\end{pmatrix}
	\equiv
	S'T
	\begin{pmatrix}
		s'_1 \otimes t_1 & \cdots & s'_1 \otimes t_\ell & s'_1 \otimes y^V\\
		\vdots & \ddots & \vdots & \vdots\\
		s'_{k'} \otimes t_1 & \cdots & s'_{k'} \otimes t_\ell & s'_{k'} \otimes y^V\\
		x^{U'} \otimes t_1 & \cdots & x^{U'} \otimes t_\ell & -
	\end{pmatrix}
	\tp.
	$$
	\end{samepage}
		
	\vspace{0.5em}
	
	\begin{center}
		\textcolor{newpurple}{\textbf{(7) \hspace{0.3em}\normalsize{\underline{sort $\odot$ term equality}}}}
	\end{center}
	
	\vspace{0.5em}
	
	$J$ is $\textbf{X} \vdashcustom U \tp$ and $J'$ is $\textbf{Y} \vdashcustom v \equiv v': V$. Let $\textbf{X}' = (\textbf{X}, x:U)$. We define $J \odot J'$ as
	$$
	\textbf{X}' \otimes \textbf{Y} \vdashcustom x^U \otimes v^V \equiv x^U \otimes v^{'V}: (U \otimes V)[x_1^{X_1} \otimes v^V \mid x_1y, ..., x_m^{X_m} \otimes v^V \mid x_my].
	$$
	
	Let us describe it explicitly in the case where $v$ and $v'$ are not variables (otherwise, $x^U \otimes v^V$ or $x^U \otimes v^{'V}$ will be a variable). Suppose that
	$$
	U = S(\sigma_1, ..., \sigma_K), \quad V = T(\tau_1, ..., \tau_L), \quad v = t(t_1, ..., t_\ell), \quad v' = t'(t'_1, ..., t'_{\ell'}).
	$$
	Then, proceeding as in the ``sort $\odot$ term" case, $J \odot J'$ equals
	$$
	\begin{pmatrix}
		x_1y_1:X_1 \otimes Y_1 & \cdots & x_1y_n:X_1 \otimes Y_n\\
		\vdots & \ddots & \vdots\\
		x_my_1:X_m \otimes Y_1 & \cdots & x_my_n:X_m \otimes Y_n\\
		xy_1:U \otimes Y_1 & \cdots & xy_n:U \otimes Y_n
	\end{pmatrix}
	$$
	$$
	\top
	$$
	$$
	St
	\begin{pmatrix}
		\sigma_1 \otimes t_1 & \cdots & \sigma_1 \otimes t_\ell\\
		\vdots & \ddots & \vdots\\
		\sigma_K \otimes t_1 & \cdots & \sigma_K \otimes t_\ell\\
		x^U \otimes t_1 & \cdots & x^U \otimes t_\ell
	\end{pmatrix}
	\equiv
	St'
	\begin{pmatrix}
		\sigma_1 \otimes t'_1 & \cdots & \sigma_1 \otimes t'_{\ell'}\\
		\vdots & \ddots & \vdots\\
		\sigma_K \otimes t'_1 & \cdots & \sigma_K \otimes t'_{\ell'}\\
		x^U \otimes t'_1 & \cdots & x^U \otimes t'_{\ell'}
	\end{pmatrix}
	:ST
	\begin{pmatrix}
		\sigma_1 \otimes \tau_1 & \cdots & \sigma_1 \otimes \tau_L & \sigma_1 \varotimes v^V\\
		\vdots & \ddots & \vdots & \vdots\\
		\sigma_K \otimes \tau_1 & \cdots & \sigma_K \otimes \tau_L & \sigma_K \varotimes v^V\\
		x^U \otimes \tau_1 & \cdots & x^U \otimes \tau_L & -
	\end{pmatrix}.
	$$
	
	\vspace{0.5em}
	
	\begin{center}
		\textcolor{newpurple}{\textbf{(8) \hspace{0.3em}\normalsize{\underline{term equality $\odot$ sort}}}}
	\end{center}
	
	\vspace{0.5em}
	
	$J$ is $\textbf{X} \vdashcustom u \equiv u':U$ and $J'$ is $\textbf{Y} \vdashcustom V \tp$. Let $\textbf{Y}' = (\textbf{Y}, y:V)$. We define $J \odot J'$ as
	$$
	\textbf{X} \otimes \textbf{Y}' \vdashcustom u^U \otimes y^V \equiv u^{'U} \otimes y^Y: (U \otimes V)[u^U \otimes y_1^{Y_1} \mid xy_1, ..., u^U \otimes y_n^{Y_n} \mid xy_n].
	$$
	Let us describe explicitly in the case where $u$ and $u'$ are not variables. Suppose that
	$$
	U = S(\sigma_1, ..., \sigma_K), \quad V = T(\tau_1, ..., \tau_L), \quad u = s(s_1, ..., s_k), \quad u' = s'(s'_1, ..., s'_{k'}).
	$$
	Then, proceeding as in the ``term $\odot$ sort" case, $J \odot J'$ equals
	\begin{samepage}
		$$
		\begin{pmatrix}
			x_1y_1: X_1 \otimes Y_1 & \cdots & x_1y_n: X_1 \otimes Y_n & x_1y: X_1 \otimes V\\
			\vdots & \ddots & \vdots & \vdots\\
			x_my_1: X_m \otimes Y_1 & \cdots & x_my_n: X_m \otimes Y_n & x_my: X_m \otimes V
		\end{pmatrix}
		$$
		$$
		\top
		$$
		$$
		sT
		\begin{pmatrix}
			s_1 \otimes \tau_1 & \cdots & s_1 \otimes \tau_L & s_1 \otimes y^V\\
			\vdots & \ddots & \vdots & \vdots\\
			s_k \otimes \tau_1 & \cdots & s_k \otimes \tau_L & s_k \otimes y^V
		\end{pmatrix}
		\equiv
		s'T
		\begin{pmatrix}
			s'_1 \otimes \tau_1 & \cdots & s'_1 \otimes \tau_L & s'_1 \otimes y^V\\
			\vdots & \ddots & \vdots & \vdots\\
			s'_{k'} \otimes \tau_1 & \cdots & s'_{k'} \otimes \tau_L & s'_{k'} \otimes y^V
		\end{pmatrix}
		:ST
		\begin{pmatrix}
			\sigma_1 \otimes \tau_1 & \cdots & \sigma_1 \otimes \tau_L & \sigma_1 \otimes y^V\\
			\vdots & \ddots & \vdots & \vdots\\
			\sigma_K \otimes \tau_1 & \cdots & \sigma_K \otimes \tau_L & \sigma_K \otimes y^V\\
			u^U \otimes \tau_1 & \cdots & u^U \otimes \tau_L & -
		\end{pmatrix}.
		$$
	\end{samepage}

	\begin{center}
		\textcolor{newpurple}{\textbf{(9) \hspace{0.3em}\normalsize{\underline{sort equality $\odot$ term}}}}
	\end{center}
	
	\vspace{0.5em}
	
	$J$ is $\textbf{X} \vdashcustom U \equiv U' \tp$ and $J'$ is $\textbf{Y} \vdashcustom v:V$. Let $\textbf{X}' = (\textbf{X}, x:U)$. We define $J \odot J'$ as
	$$
	\textbf{X}' \otimes \textbf{Y} \vdashcustom x^U \otimes v^V \equiv x^{U'} \otimes v^V: (U \otimes V)[x_1^{X_1} \otimes v^V \mid x_1y, ..., x_m^{X_m} \otimes v^V \mid x_my].
	$$
	Let us describe it explicitly in the case where $v$ is not a variable. Suppose that
	$$
	U = S(\sigma_1, ..., \sigma_K), \quad U' = S'(\sigma'_1, ..., \sigma'_{K'}), \quad V = T(\tau_1, ..., \tau_L), \quad v = t(t_1, ..., t_\ell).
	$$
	Then, proceeding as in the ``sort $\odot$ term" case, $J \odot J'$ equals
	\begin{samepage}
	$$
	\begin{pmatrix}
		x_1y_1:X_1 \otimes Y_1 & \cdots & x_1y_n:X_1 \otimes Y_n\\
		\vdots & \ddots & \vdots\\
		x_my_1:X_m \otimes Y_1 & \cdots & x_my_n:X_m \otimes Y_n\\
		xy_1:U \otimes Y_1 & \cdots & xy_n:U \otimes Y_n
	\end{pmatrix}
	$$
	$$
	\top
	$$
	$$
	St
	\begin{pmatrix}
		\sigma_1 \otimes t_1 & \cdots & \sigma_1 \otimes t_\ell\\
		\vdots & \ddots & \vdots\\
		\sigma_K \otimes t_1 & \cdots & \sigma_K \otimes t_\ell\\
		x^U \otimes t_1 & \cdots & x^U \otimes t_\ell
	\end{pmatrix}
	\equiv
	S't
	\begin{pmatrix}
		\sigma'_1 \otimes t_1 & \cdots & \sigma'_1 \otimes t_\ell\\
		\vdots & \ddots & \vdots\\
		\sigma'_{K'} \otimes t_1 & \cdots & \sigma'_{K'} \otimes t_\ell\\
		x^{U'} \otimes t_1 & \cdots & x^{U'} \otimes t_\ell
	\end{pmatrix}
	:ST
	\begin{pmatrix}
		\sigma_1 \otimes \tau_1 & \cdots & \sigma_1 \otimes \tau_L & \sigma_1 \varotimes v^V\\
		\vdots & \ddots & \vdots & \vdots\\
		\sigma_K \otimes \tau_1 & \cdots & \sigma_K \otimes \tau_L & \sigma_K \varotimes v^V\\
		x^U \otimes \tau_1 & \cdots & x^U \otimes \tau_L & -
	\end{pmatrix}.
	$$
	\end{samepage}
	
	\vspace{0.5em}
	
	\begin{center}
		\textcolor{newpurple}{\textbf{(10) \hspace{0.3em}\normalsize{\underline{term $\odot$ sort equality}}}}
	\end{center}
	
	\vspace{0.5em}
	
	$J$ is $\textbf{X} \vdashcustom u:U$ and $J'$ is $\textbf{Y} \vdashcustom V \equiv V' \tp$. Let $\textbf{Y}' = (\textbf{Y}, y:V)$. We define $J \odot J'$ as
	$$
	\textbf{X} \otimes \textbf{Y}' \vdashcustom u^U \otimes y^V \equiv u^U \otimes y^{V'}: (U \otimes V)[u^U \otimes y_1^{Y_1} \mid xy_1, ..., u^U \otimes y_n^{Y_n} \mid xy_n].
	$$
	Let us describe it explicitly in the case where $u$ is not a variable. Suppose that
	$$
	U = S(\sigma_1, ..., \sigma_K), \quad V = T(\tau_1, ..., \tau_L), \quad V' = T'(\tau'_1, ..., \tau'_{L'}), \quad u = s(s_1, ..., s_k).
	$$
	Then, proceeding as in the ``term $\odot$ sort" case, $J \odot J'$ equals
	\begin{samepage}
		$$
		\begin{pmatrix}
			x_1y_1: X_1 \otimes Y_1 & \cdots & x_1y_n: X_1 \otimes Y_n & x_1y: X_1 \otimes V\\
			\vdots & \ddots & \vdots & \vdots\\
			x_my_1: X_m \otimes Y_1 & \cdots & x_my_n: X_m \otimes Y_n & x_my: X_m \otimes V
		\end{pmatrix}
		$$
		$$
		\top
		$$
		$$
		sT
		\begin{pmatrix}
			s_1 \otimes \tau_1 & \cdots & s_1 \otimes \tau_L & s_1 \otimes y^V\\
			\vdots & \ddots & \vdots & \vdots\\
			s_k \otimes \tau_1 & \cdots & s_k \otimes \tau_L & s_k \otimes y^V
		\end{pmatrix}
		\equiv
		sT'
		\begin{pmatrix}
			s_1 \otimes \tau'_1 & \cdots & s_1 \otimes \tau'_{L'} & s_1 \otimes y^{V'}\\
			\vdots & \ddots & \vdots & \vdots\\
			s_k \otimes \tau'_1 & \cdots & s_k \otimes \tau'_{L'} & s_k \otimes y^{V'}
		\end{pmatrix}
		:ST
		\begin{pmatrix}
			\sigma_1 \otimes \tau_1 & \cdots & \sigma_1 \otimes \tau_L & \sigma_1 \otimes y^V\\
			\vdots & \ddots & \vdots & \vdots\\
			\sigma_K \otimes \tau_1 & \cdots & \sigma_K \otimes \tau_L & \sigma_K \otimes y^V\\
			u^U \otimes \tau_1 & \cdots & u^U \otimes \tau_L & -
		\end{pmatrix}.
		$$
	\end{samepage}

\subsection{The tensor product of two theories (as a pretheory)}

\begin{definition}
\label{def: tensor product of theories}
Let $\bbA$ and $\bbB$ be generalized algebraic theories. Their \emph{tensor product}, denoted by $\bbA \otimes \bbB$, is the following pretheory (in the sense of \cite{Car86}):
\begin{itemize}
	\item its alphabet is $\Sigma(\bbA) \otimes \Sigma(\bbB)$ (see Definition \ref{def: tensor product of alphabets});
	
	\item its axioms are the judgments $J \odot J'$ where $J$ is an axiom in $\bbA$ and $J'$ is an axiom in $\bbB$.
\end{itemize}
\end{definition}

The correctness of this definition, i.e. each sort/term symbol corresponding to exactly one sort/term axiom and that axiom having the required form, follows from Table \ref{table: 1} and the explicit descriptions of $J \odot J'$ in the ``sort axiom $\odot$ sort axiom", ``sort axiom $\odot$ term axiom" and ``term axiom $\odot$ sort axiom" cases.

We will later prove (see Theorem \ref{th: tensor product is a theory}) that $\bbA \otimes \bbB$ is a theory, meaning that every axiom is \emph{well-formed} in the sense of \cite{Car86}. Due to the recursive nature of $\bbA$ and $\bbB$, this is a technically involved task: verifying that an axiom $J \odot J'$ is well-formed corresponds to checking the derivability of other judgments that are, generally, not axioms. Thus our strategy will be to prove not only the desired statement on the axioms, but also, along the way, that $J \odot J'$ is derivable for any derivable judgments $J$, $J'$. In fact, we will prove by induction on $h \ge 0$ that $J \odot J'$ is derivable whenever $\Ht(J)\Ht(J') = h$. For that reason, we use the following auxiliary definition:

\begin{definition}
\label{def: h-derivable}
Given $h \ge 0$, we say that the pair of generalized algebraic theories $(\bbA,\bbB)$ is \emph{$h$-derivable} if for any derivable judgments $J$, $J'$ in $\bbA$, $\bbB$, respectively, with $\Ht(J)\Ht(J') \le h$, we have that $J \odot J'$ is derivable.
\end{definition}

We will prove, in other words, that any pair of \textsc{gat}s $(\bbA,\bbB)$ is $h$-derivable for all $h \ge 0$.

\section{Examples}
\label{sec: examples}

In this section we describe a few instances of the tensor product of generalized algebraic theories as in Definition \ref{def: tensor product of theories}.

\subsection{An illustrative example: strict double categories}
\label{subsec: strict double categories}

One of the classical examples of a \textsc{gat} is the theory $\bbT_{\text{cat}}$ of categories, given by the following axioms\footnote{We follow the usual practice of omitting multiple consecutive occurrences of a sort in a context. For example, we write $x, y:O$ instead of $x:O, y:O$.} (see \cite{Car86}):
\begin{align*}
	&\vdashcustom O \tp \tag{sort of objects}\\[1em]
	x, y:O &\vdashcustom A(x,y) \tp \tag{sort of arrows from $x$ to $y$}\\[1em] x,y,z:O, f:A(x,y), g:A(y,z) & \vdashcustom \circ (x,y,z,f,g): A(x,z) \tag{composition}\\[1em]
	x:O &\vdashcustom id(x):A(x,x) \tag{identity arrows}\\[1em]
	x,y,z,w:O, f:A(x,y), g:A(y,z), h:A(z,w) & \vdashcustom \circ(x,y,w,f,\circ(y,z,w,g,h))\\
	& \equiv \circ(x,z,w,\circ(x,y,z,f,g),h):A(x,w) \tag{associativity}\\[1em]
	x,y:O, f:A(x,y) & \vdashcustom \circ(x,x,y,id(x),f) \equiv f: A(x,y) \tag{left unitality}\\[1em]
	x,y:O, f:A(x,y) & \vdashcustom \circ(x,y,y,f,id(y)) \equiv f: A(x,y) \tag{right unitality}
\end{align*}
Let us calculate $\bbT_{\text{cat}} \otimes \bbT_{\text{cat}}$, which will turn out to be the theory whose $\Set$-models are small strict double categories, i.e. (up to equivalence) categories internal to the $1$-category $\Cat$.

To organize the construction, in this case and in others, it can be helpful to start by identifying sub-theories of each of the factors and calculating the tensor product of those. In this case, note that the first two axioms above, which introduce $O$ and $A$, define the theory $\bbT_{\text{graph}}$ of directed graphs. Hence we will first describe $\bbT_{\text{graph}} \otimes \bbT_{\text{graph}}$, then extend it to two theories $\bbT_{\text{cat}} \otimes \bbT_{\text{graph}}$ and $\bbT_{\text{graph}} \otimes \bbT_{\text{cat}}$, and, finally, extend $(\bbT_{\text{cat}} \otimes \bbT_{\text{graph}}) \cup (\bbT_{\text{graph}} \otimes \bbT_{\text{cat}})$ to $\bbT_{\text{cat}} \otimes \bbT_{\text{cat}}$.

\subsubsection{Calculating $\bbT_{\text{graph}} \otimes \bbT_{\text{graph}}$}

Firstly, tensoring $\vdashcustom O \tp$ with itself yields
\[
\tag{Axiom 1}
\vdashcustom OO \tp.
\]
Now, letting $x$ and $k$ be the variables associated with the empty context and with $x,y:O$, respectively (recall that we choose for each context a variable not contained in it), by tensoring $\vdashcustom O \tp$ and $x,y:O \vdashcustom A(x,y) \tp$ we obtain
\[
\tag{Axiom 2}
xx:OO, xy: OO \vdashcustom OA(xx,xy) \tp.
\]
Similarly, from $x,y:O \vdashcustom A(x,y) \tp$ and $\vdashcustom O \tp$ we get
\[
\tag{Axiom 3}
xx, yx:OO \vdashcustom AO(xx,yx) \tp.
\]
The last axiom of $\bbT_{\text{graph}} \otimes \bbT_{\text{graph}}$ is obtained by tensoring $x,y:O \vdashcustom A(x,y) \tp$ with itself:
\[
\tag{Axiom 4}
\begin{pmatrix*}[l]
	xx:OO & xy:OO & xk:OA(xx,xy)\\
	yx:OO & yy:OO & yk:OA(yx,yy)\\
	kx:AO(xx,yx) & ky:AO(xy,yy) & -
\end{pmatrix*}
\vdashcustom AA
\begin{pmatrix*}[l]
	xx & xy & xk\\
	yx & yy & yk\\
	kx & ky & -
\end{pmatrix*}.
\]

Here, $OO$ can be thought of as the sort of objects; $OA(a,b)$ as the sort of ``horizontal arrows from $a$ to $b$"; $AO(a,b)$ as the sort of ``vertical arrows from $a$ to $b$"; and
$$
AA
\begin{pmatrix}
	a & b & f\\
	c & d & g\\
	h & i & -
\end{pmatrix}
$$
as the sort of ``($2$-dimensional) squares" $s$ filling the diagram that consists of the horizontal arrows $f$, $g$ and the vertical arrows $h$, $i$:
\[\begin{tikzcd}
	a & b \\
	c & d.
	\arrow["f", from=1-1, to=1-2]
	\arrow["h"', from=1-1, to=2-1]
	\arrow["s"{description}, draw=none, from=1-1, to=2-2]
	\arrow["i", from=1-2, to=2-2]
	\arrow["g"', from=2-1, to=2-2]
\end{tikzcd}\]

If we denote $OO$, $OA$, $AO$, $AA$ by $O$, $A^H$, $A^V$, $S$, respectively, rename variables, permute the sorts in the context\footnote{Note that these operations do not affect the set of derivable judgments of the resulting theory.}, and use the ``informal syntax" described in \cite{Car78} (in which one omits variables that are implicit from the context), the axioms above become
\begin{align*}
	& \vdashcustom O \tp \tag{Axiom 1'}\\
	& a,b:O \vdashcustom A^H(a,b) \tag{Axiom 2'}\\
	& a,b:O \vdashcustom A^V(a,b) \tag{Axiom 3'}\\
	& a,b,c,d:O, f:A^H(a,b), g:A^H(c,d), h:A^V(a,c), i:A^V(b,d) \vdashcustom S(f,g,h,i) \tag{Axiom 4'}
\end{align*}

Let $D$ be the category having two objects, say $0$ and $1$, and two non-identity arrows $i$, $j:0 \rightarrow 1$. Then $\Mod(\bbT_{\text{graph}} \otimes \bbT_{\text{graph}})$ is equivalent to to the category of presheaves $\PSh(D \times D) = \Fun((D \times D)^{op},\Set)$. We will call such a presheaf a \emph{double (directed) graph}.

\begin{remark}
At this point, it is possible to observe a pattern which, in general, can be verified by induction in a straightforward way: a product of derivable judgments $(\vdashcustom S \tp) \odot J$, where $\vdashcustom S \tp$ and $J$ come from theories $\bbS$ and $\bbT$, resp., can be obtained from $J$ by replacing (1) each sort symbol $T$ by $ST$, (2) each term symbol $t$ by $St$, and (3) each variable $y$ by $xy$, where $x$ is the variable associated with the empty context in $\bbT$. Similarly, the judgment $J \odot (\vdashcustom S \tp)$ is obtained from $J$ by replacing (1) each $T$ by $TS$, (2) each $t$ by $tS$, and (3) each $y$ by $yx$.
\end{remark}

\subsubsection{Calculating $\bbT_{\text{cat}} \otimes \bbT_{\text{graph}}$ and $\bbT_{\text{graph}} \otimes \bbT_{\text{cat}}$}

To describe $\bbT_{\text{cat}} \otimes \bbT_{\text{graph}}$, we must add to $\bbT_{\text{graph}} \otimes \bbT_{\text{graph}}$, as axioms, all $J \odot J'$ where $J$ is a non-sort axiom in $\bbT_{\text{cat}}$ and $J'$ is an axiom in $\bbT_{\text{graph}}$. Following the above remark on how to express products involving $\vdashcustom O \tp$, we directly obtain the following axioms, where we write $\circ^V$ for $(\circ,O)$ and $id^V$ for $(id,O)$ (the superscript $V$ stands for ``vertical"):
\begin{align*}
	&xx,yx,zx:OO,fx:AO(xx,yx),gx:AO(yx,zx) \vdashcustom \circ^V(xx,yx,zx,fx,gx):AO(xx,zx) \tag{Axiom 5}\\[1em]
	&xx:OO \vdashcustom id^V(xx): AO(xx,xx)\tag{Axiom 6}\\[1em]
	&xx,yx,zx,wx:OO, fx:AO(xx,yx), gx:AO(yx,zx), hx:AO(zx,wx) \vdashcustom\tag{Axiom 7}\\
	&\circ^V(xx,yx,wx,fx,\circ^V(yx,zx,wx,gx,hx)) \equiv \circ^V(xx,zx,wx, \circ^V(xx,yx,zx,fx,gx),hx): AO(xx,wx)\\[1em]
	&xx,yx:OO,fx:AO(xx,yx) \vdashcustom \circ^V(xx,xx,yx,id^V(xx),fx) \equiv fx:AO(xx,yx)\tag{Axiom 8}\\[1em]
	&xx,yx:OO,fx:AO(xx,yx) \vdashcustom \circ^V(xx,yx,yx,fx,id^V(yx)) \equiv fx:AO(xx,yx) \tag{Axiom 9}
\end{align*}

We think of these axioms as prescribing, respectively, the following structures/properties: composition of vertical arrows (``vertical composition"), vertical identities, associativity of vertical composition, vertical left unitality, and vertical right unitality.

In fact, by using the informal syntax, representing $\circ^V$ in infix notation, and renaming the symbols $OO$, $OA$, etc. as in the description of $\bbT_{\text{graph}} \otimes \bbT_{\text{graph}}$, we obtain the following list of axioms:
\begin{align*}
	&a,b,c:O,f:A^V(a,b),g:A^V(b,c) \vdashcustom \circ^V(f,g):A^V(a,c) \tag{Axiom 5'}\\[1em]
	&a:O \vdashcustom id^V(a): A^V(a,a)\tag{Axiom 6'}\\[1em]
	&a,b,c,d:O, f:A^V(a,b), g:A^V(b,c), h:A^V(c,d) \vdashcustom f \circ^V (g \circ^V h) \equiv (f \circ^V g) \circ^V h: A^V(a,d)\tag{Axiom 7'}\\[1em]
	&a,b:O,f:A^V(a,b) \vdashcustom id^V(a) \circ^V f \equiv f:A^V(a,b)\tag{Axiom 8'}\\[1em]
	&a,b:O,f:A^V(a,b) \vdashcustom f \circ^V id^V(b) \equiv f:A^V(a,b) \tag{Axiom 9'}
\end{align*}

Now, we add the tensor products $J \odot (x,y:O \vdashcustom A(x,y) \tp)$ where $J$ is a non-sort axiom in $\bbT_{\text{cat}}$.

When $J$ is the axiom for composition, we obtain, denoting $(\circ,A)$ by $\square^V$,
$$
\begin{pmatrix*}[l]
	xx:OO & xy:OO & xk:A(xx,xy)\\
	yx:OO & yy:OO & yk:A(yx,yy)\\
	zx:OO & zy:OO & zk:A(zx,zy)\\
	fx:AO(xx,yx) & fy:AO(xy,yy) & fk:AA(xx,xy,xk,yx,yy,yk,fx,fy)\\
	gx:AO(yx,zx) & gy:AO(yy,zy) & gk:AA(yx,yy,yk,zx,zy,zk,gx,gy)
\end{pmatrix*}
$$
$$
\top
$$
\[
\tag{Axiom 10}
\square^V
\begin{pmatrix}
xx & xy & xk\\
yx & yy & yk\\
zx & zy & zk\\
fx & fy & fk\\
gx & gy & gk
\end{pmatrix}
:AA
\begin{pmatrix*}[l]
	xx & xy & xk\\
	zx & zy & zk\\
	\circ(x,y,z,f,g)^{A(x,z)} \otimes x^O & \circ(x,y,z,f,g)^{A(x,z)} \otimes y^O & -
\end{pmatrix*}.
\]
The latter sort expression, in turn, equals
$$
AA
\begin{pmatrix*}[l]
	xx & xy & xk\\
	zx & zy & zk\\
	\circ^V(xx,yx,zx,fx,gx) & \circ^V(xy,yy,zy,fy,gy) & -
\end{pmatrix*}.
$$
We view $\square^V$ as encoding vertical composition of 2-cells: using the informal syntax, permuting the variables and writing $\square^V$ in infix notation, this axiom becomes
\[
\tag{Axiom 10'}
\begin{pmatrix*}[l]
	a:O & a':O & f:A^H(a,b)\\
	b:O & b':O & g:A^H(c,d)\\
	c:O & c':O & h:A^H(e,f)\\
	i:A^V(a,c) & j:A^V(b,d) & s:S(f,g,i,j)\\
	k:A^V(c,e) & l:A^V(d,f) & t:S(g,h,k,l)
\end{pmatrix*}
\vdashcustom
s \square^V t: S(f,h,i \circ^V k,j \circ^V l).
\]

Next, let us compute $J \odot (x,y:O \vdashcustom A(x,y) \tp)$ where $J$ is the axiom that introduces $id$: writing $id_\square^V$ (``vertical identity of a horizontal arrow") for $(id,A)$, we obtain
\[
\tag{Axiom 11}
xx:OO, xy:OO, xk:OA(xx,xy) \vdashcustom id_\square^V(xx,xy,xk):AA
\begin{pmatrix*}[l]
	xx & xy & xk\\
	xx & xy & xk\\
	id(x)^{A(x,x)} \otimes x^O & id(x)^{A(x,x)} \otimes y^O & -
\end{pmatrix*},
\]
and the latter sort expression simplifies to
$$
AA
\begin{pmatrix*}[l]
	xx & xy & xk\\
	xx & xy & xk\\
	id^V(xx) & id^V(xy) & -
\end{pmatrix*}.
$$
In the informal syntax we have
\[
\tag{Axiom 11'}
x,y:O, f:A^H(x,y) \vdashcustom id_\square^V(f): S(f,f,id(a),id(b)).
\]

Now, we calculate $J \odot (x,y:O \vdashcustom A(x,y) \tp)$ where $J$ is the associativity axiom:

$$
\begin{pmatrix*}[l]
	xx:OO & xy:OO & xk:OA(xx,xy)\\
	yx:OO & yy:OO & yk:OA(yx,yy)\\
	zx:OO & zy:OO & zk:OA(zx,zy)\\
	wx:OO & wy:OO & wk:OA(wx,wy)\\
	fx:AO(xx,yx) & fy:AO(xy,yy) & fk:AA(xx,xy,xk,yx,yy,yk,fx,fy)\\
	gx:AO(yx,zx) & gy:AO(yy,zy) & gk:AA(yx,yy,yk,zx,zy,zk,gx,gy)\\
	hx:AO(zx,wx) & hy:AO(zy,wy) & hk:AA(zx,zy,zk,wx,wy,wk,hx,hy)\\
\end{pmatrix*}
$$
$$
\top
$$
$$
\square^V
\begin{pmatrix*}[l]
	xx & xy & xk\\
	yx & yy & yk\\
	wx & wy & wk\\
	fx & xy & fk\\
	\circ(y,z,w,g,h)^{A(y,w)} \otimes x^O & \circ(y,z,w,g,h)^{A(y,w)} \otimes y^O & \circ(y,z,w,g,h)^{A(y,w)} \otimes k^{A(x,y)}
\end{pmatrix*}
$$
$$
\vertequiv
$$
$$
\square^V
\begin{pmatrix*}[l]
	xx & xy & xk\\
	zx & zy & zk\\
	wx & wy & wk\\
	\circ(x,y,z,f,g)^{A(x,z)} \otimes x^O & \circ(x,y,z,f,g)^{A(x,z)} \otimes y^O & \circ(x,y,z,f,g)^{A(x,z)} \otimes k^{A(x,y)}\\
	hx & hy & hk
\end{pmatrix*}
$$
\vspace{1mm}
$$
\cdot \cdot
$$
$$
AA
\begin{pmatrix*}[l]
	xx & xy & xk\\
	wx & wy & wk\\
	\circ(x,y,w,f,\circ(y,z,w,g,h))^{A(x,w)} \otimes x^O & \circ(x,y,w,f,\circ(y,z,w,g,h))^{A(x,w)} \otimes y^O & -
\end{pmatrix*}.
$$
\[
\tag{Axiom 12}
\]

To complete the description, we note that the terms involving ``$\otimes$" are as follows:
\begin{align*}
	\circ(y,z,w,g,h)^{A(y,w)} \otimes x^O & = \circ^V(yx,zx,wx,gx,hx)\\
	\circ(y,z,w,g,h)^{A(y,w)} \otimes y^O & = \circ^V(yy,zy,wy,gy,hy)\\
	\circ(y,z,w,g,h)^{A(y,w)} \otimes k^{A(x,y)} & = \square^V(yx,yy,yk,zx,zy,zk,wx,wy,wk,gx,gy,gk,hx,ky,hk)\\
	\circ(x,y,z,f,g)^{A(x,z)} \otimes x^O & = \circ^V(xx,yx,zx,fx,gx)\\
	\circ(x,y,z,f,g)^{A(x,z)} \otimes y^O & = \circ^V(xy,yy,zy,fy,gy)\\
	\circ(x,y,z,f,g)^{A(x,z)} \otimes k^{A(x,y)} & = \square^V(xx,xy,xk,yx,yy,yk,zx,zy,zk,fx,fy,fk,gx,gy,gk)\\
	\circ(x,y,w,f,\circ(y,z,w,g,h))^{A(x,w)} \otimes x^O & = \circ^V(xx,yx,wx,fx,\circ^V(yx,zx,wx,gx,hx))\\
	\circ(x,y,w,f,\circ(y,z,w,g,h))^{A(x,w)} \otimes y^O & = \circ^V(xy,yy,wy,fy,\circ^V(yy,zy,wy,gy,hy)).
\end{align*}

The counterpart of Axiom 12 in the informal syntax is
	$$
	\begin{pmatrix*}[l]
		a_1:O & b_1:O & f_1:A^H(a_1,b_1)\\
		a_2:O & b_2:O & f_2:A^H(a_2,b_2)\\
		a_3:O & b_3:O & f_3:A^H(a_3,b_3)\\
		a_4:O & b_4:O & f_4:A^H(a_4,b_4)\\
		g_1:A^V(a_1,a_2) & h_1:A^V(b_1,b_2) & s_1:S(f_1,f_2,g_1,h_1)\\
		g_2:A^V(a_2,a_3) & h_2:A^V(b_2,b_3) & s_2:S(f_2,f_3,g_2,h_2)\\
		g_3:A^V(a_3,a_4) & h_3:A^V(b_3,b_4) & s_3:S(f_3,f_4,g_3,h_3)\\
	\end{pmatrix*}
	$$
	$$
	\top
	$$
	$$
	s_1 \square^V (s_2 \square^V s_3) \equiv (s_1 \square^V s_2) \square^V s_3: S(f_1,f_4,g_1 \circ^V (g_2 \circ^V g_3), h_1 \circ^V (h_2 \circ^V h_3)).
	$$
	\[
	\tag{Axiom 12'}
	\]

Now, when $J$ is the left unitality axiom, $J \odot (x,y:O \vdashcustom A(x,y))$ equals
$$
\begin{pmatrix}
	xx:OO & xy:OO & xk:OA(xx,xy) \\
	yx:OO & yy:OO & yk:OA(yx,yy)\\
	fx:AO(xx,yx) & fy:AO(xy,yy) & fk:S(xx,xy,xk,yx,yy,yk,fx,fy)
\end{pmatrix}
$$
$$
\top
$$
\[
\tag{Axiom 13}
\circ(x,x,y,id(x),f) \otimes k^{A(x,y)} \equiv fk: S(xx,xy,xk,yx,yy,yk,fx,fy),
\]
and the remaining expression is given by
$$
\circ(x,x,y,id(x),f) \otimes k^{A(x,y)}
=
\square^V
\begin{pmatrix}
xx & xy & xk\\
xx & xy & xk\\
yx & yy & yk\\
id^V(xx) & id^V(xy) & id_\square^V(xx,xy,xk)\\
fx & fy & fk
\end{pmatrix}.
$$
In the informal syntax we have
\[
\tag{Axiom 13'}
\begin{pmatrix}
	a:O & b:OO & f:A^V(a,b) \\
	c:O & d:O & g:A^V(c,d)\\
	h:A^V(a,c) & i:A^V(b,d) & s:S(a,b,f,c,d,g,h,i)
\end{pmatrix}
\vdashcustom id_\square^V(f) \square^V s \equiv s: S(f,g,h,i).
\]
When $J$ is the right unitality axiom, $J \odot (x,y:O \vdashcustom A(x,y))$ is calculated similarly: we have
$$
\begin{pmatrix}
	xx:OO & xy:OO & xk:OA(xx,xy) \\
	yx:OO & yy:OO & yk:OA(yx,yy)\\
	fx:AO(xx,yx) & fy:AO(xy,yy) & fk:S(xx,xy,xk,yx,yy,yk,fx,fy)
\end{pmatrix}
$$
$$
\top
$$
\[
\tag{Axiom 14}
\circ(x,y,y,f,id(y)) \otimes k^{A(x,y)} \equiv fk: S(xx,xy,xk,yx,yy,yk,fx,fy),
\]
and the remaining expression is given by
$$
\circ(x,y,y,f,id(y)) \otimes k^{A(x,y)}
=
\square^V
\begin{pmatrix}
	xx & xy & xk\\
	yx & yy & yk\\
	yx & yy & yk\\
	fx & fy & fk\\
	id^V(yx) & id^V(yy) & id_\square^V(yx,yy,yk)
\end{pmatrix}.
$$
The corresponding judgment in the informal syntax is
\[
\tag{Axiom 14'}
\begin{pmatrix}
	a:O & b:O & f:A^V(a,b) \\
	c:O & d:O & g:A^V(c,d)\\
	h:A^V(a,c) & i:A^V(b,d) & s:S(a,b,f,c,d,g,h,i)
\end{pmatrix}
\vdashcustom
s \square^V id_\square^V(g) \equiv g:S(f,g,h,i).
\]
This concludes the description of $\bbT_{\text{cat}} \otimes \bbT_{\text{graph}}$. We can view axioms 5-14 as extending $\bbT_{\text{graph}} \otimes \bbT_{\text{graph}}$ with operations that endow a given double graph with a category structure whose objects are the horizontal arrows and whose morphisms are the squares.

\vspace{0.5em}

The description of $\bbT_{\text{graph}} \otimes \bbT_{\text{cat}}$ is symmetric to that of $\bbT_{\text{cat}} \otimes \bbT_{\text{graph}}$: the axioms will be the judgments $J \odot J'$ where $J$ is an axiom in $\bbT_{\text{graph}}$ and $\bbT$ is a non-sort axiom in $\bbT_{\text{cat}}$. We leave the full description of these axioms (15 to 24) to the interested reader. Models of $\bbT_{\text{graph}} \otimes \bbT_{\text{cat}}$ will be double graphs with a category structure having the vertical arrows as its objects and the squares as its morphisms; thus in this case we depict squares as composing horizontally. In what follows, we will denote the symbols $(O,\circ)$, $(O,id)$, $(A,\circ)$, $(A,id)$ by $\circ^H$, $id^H$, $\square^H$, $id_\square^H$, respectively.

\subsubsection{Calculating $\bbT_{\text{cat}} \otimes \bbT_{\text{cat}}$}

The theory $\bbT_{\text{cat}} \otimes \bbT_{\text{graph}} \cup \bbT_{\text{graph}} \otimes \bbT_{\text{cat}}$ has as models double graphs endowed with two category structures, each having the squares as its morphisms: one in which squares compose vertically (having horizontal arrows are their sources/targets), and one where they compose vertically (having vertical arrows as their sources/targets). However, there is no other required relation between these two categories. That role will be played by the axioms added via the extension $\bbT_{\text{cat}} \otimes \bbT_{\text{graph}} \cup \bbT_{\text{graph}} \otimes \bbT_{\text{cat}} \rightarrow \bbT_{\text{cat}} \otimes \bbT_{\text{cat}}$, which establishes a kind of ``parameterized commutativity" between the composition and identity-assigning operations on both sides.

\hspace{0.5em}

We must add as axioms the judgments $J \odot J'$ where $J$, $J'$ are non-sort axioms in $\bbT_{\text{cat}}$. In fact, it suffices to consider a pair of term axioms.

Firstly, suppose that both $J$, $J'$ are the axiom that introduces $\circ$. Then $J \odot J'$ is
\begin{small}
$$
\begin{pmatrix*}[l]
	xx:OO & xy:OO & xz:OO & xf:OA(xx,xy) & xg:OA(xy,xz)\\
	yx:OO & yy:OO & yz:OO & yf:OA(yx,yy) & yg:OA(yy,yz)\\
	zx:OO & zy:OO & zz:OO & zf:OA(zx,zy) & zg:OA(zy,zz)\\
	fx:AO(xx,yx) & fy:AO(xy,yy) & fz:AO(xz,yz) & ff:AA(xx,xy,xk,yx,yy,yk,kx,ky) & fg:AA(xy,xz,xk,yy,yz,yk,ky,kz)\\
	gx:AO(yx,zx) & gy:AO(yy,zy) & gz:AO(yz,zz) & gf:AA(yx,yy,yk,zx,zy,zk,kx,ky) & gg:AA(yy,yz,yk,zy,zz,zk,ky,kz)
\end{pmatrix*}
$$
$$
\top
$$
$$
\square^H
\begin{pmatrix}
	xx & xy & xz & xf & xg \\
	zx & zy & zz & zf & zg\\
	\circ(x,y,z,f,g) \otimes x^O & \circ(x,y,z,f,g) \otimes y^O & \circ(x,y,z,f,g) \otimes z^O & \circ(x,y,z,f,g) \otimes f^{A(x,y)} & \circ(x,y,z,f,g) \otimes g^{A(y,z)}
\end{pmatrix}
$$
$$
\vertequiv
$$
$$
\square^V
\begin{pmatrix}
	xx & xz & x^O \otimes \circ(x,y,z,f,g)\\
	yx & yz & y^O \otimes \circ(x,y,z,f,g)\\
	zx & zz & z^O \otimes \circ(x,y,z,f,g)\\
	fx & fz & f^{A(x,y)} \otimes \circ(x,y,z,f,g)\\
	gx & gz & g^{A(y,z)} \otimes \circ(x,y,z,f,g)
\end{pmatrix}
$$
$$
\cdot \cdot
$$
\[
\tag{Axiom 25}
AA
\begin{pmatrix}
	xx & xz & x^O \otimes \circ(x,y,z,f,g)\\
	zx & zz & z^O \otimes \circ(x,y,z,f,g)\\
	\circ(x,y,z,f,g) \otimes x^O & \circ(x,y,z,f,g) \otimes z^O & -
\end{pmatrix}.
\]
\end{small}

The terms involving ``$\otimes$" can be calculated in a straightforward way; for example, we have
$$
\circ(x,y,z,f,g) \otimes f^{A(x,y)} = \square^V
\begin{pmatrix}
	xx & xy & xf\\
	yx & yy & yf\\
	zx & zy & zf\\
	fx & fy & ff\\
	gx & gy & gf
\end{pmatrix},
\qquad
\circ(x,y,z,f,g) \otimes g^{A(y,z)} = \square^V
\begin{pmatrix}
	xy & xz & xg\\
	yy & yz & yg\\
	zy & zz & zg\\
	fy & fz & fg\\
	gy & gz & gg
\end{pmatrix},
$$
$$
f^{A(x,y)} \otimes \circ(x,y,z,f,g) = \square^H
\begin{pmatrix}
	xx & xy & xz & xf & xg\\
	yx & yy & yz & yf & yg\\
	fx & fy & fz & ff & fg
\end{pmatrix},
\qquad
g^{A(y,z)} \otimes \circ(x,y,z,f,g) = \square^H
\begin{pmatrix}
	yx & yy & yz & yf & yg\\
	zx & zy & zz & zf & zg\\
	gx & gy & gz & gf & gg
\end{pmatrix}.
$$
In the informal syntax, we have
\begin{small}
	$$
	\begin{pmatrix*}[l]
		a_{11}:O & a_{12}:O & a_{13}:O & h_{11}:A^H(a_{11},a_{12}) & h_{12}:A^H(a_{12},a_{13})\\
		a_{21}:O & a_{22}:O & a_{23}:O & h_{21}:A^H(a_{21},a_{22}) & h_{22}:A^H(a_{22},a_{23})\\
		a_{31}:O & a_{31}:O & a_{33}:O & h_{31}:A^H(a_{31},a_{32}) & h_{32}:A^H(a_{32},a_{33})\\
		v_{11}:A^V(a_{11},a_{21}) & v_{12}:A^V(a_{12},a_{22}) & v_{13}:A^V(a_{13},a_{23}) & s_{11}:S(h_{11},h_{21},v_{11},v_{12}) & s_{12}:S(h_{12},h_{22},v_{12},v_{13})\\
		v_{21}:A^V(a_{21},a_{31}) & v_{22}:A^V(a_{22},a_{32}) & v_{23}:A^V(a_{23},a_{33}) & s_{21}:S(h_{21},h_{31},v_{21},v_{22}) & s_{22}:S(h_{22},h_{32},v_{22},v_{23})
	\end{pmatrix*}
	$$
	$$
	\top
	$$
	\[
	(s_{11} \square^V s_{21}) \square^H (s_{12} \square^V s_{22}) \equiv (s_{11} \square^H s_{12}) \square^V (s_{21} \square^H s_{22}): S(h_{11} \circ h_{12}, h_{31} \circ h_{32},v_{11} \circ v_{21}, v_{13} \circ v_{23}).
	\]
	\[
	\tag{Axiom 25'}
	\]
\end{small}
We view this judgment as the assertion that the $2$-dimensional diagram below on the left has an unambiguous pasting as a square whose boundary is as on the right:
\[\begin{tikzcd}
	{a_{11}} & {a_{12}} & {a_{13}} &&& {a_{11}} && {a_{13}} \\
	{a_{21}} & {a_{22}} & {a_{23}} \\
	{a_{31}} & {a_{32}} & {a_{33}} &&& {a_{31}} && {a_{33}.}
	\arrow["{h_{11}}", from=1-1, to=1-2]
	\arrow["{v_{11}}"', from=1-1, to=2-1]
	\arrow["{s_{11}}"{description}, draw=none, from=1-1, to=2-2]
	\arrow["{h_{12}}", from=1-2, to=1-3]
	\arrow["{v_{12}}"{description}, from=1-2, to=2-2]
	\arrow["{s_{12}}"{description}, draw=none, from=1-2, to=2-3]
	\arrow["{v_{13}}", from=1-3, to=2-3]
	\arrow["{h_{11} \circ^H h_{12}}", from=1-6, to=1-8]
	\arrow["{v_{11} \circ^V v_{21}}"', from=1-6, to=3-6]
	\arrow["{v_{13} \circ^V v_{23}}", from=1-8, to=3-8]
	\arrow["{h_{21}}"{description}, from=2-1, to=2-2]
	\arrow["{v_{21}}"', from=2-1, to=3-1]
	\arrow["{s_{21}}"{description}, draw=none, from=2-1, to=3-2]
	\arrow["{h_{22}}"{description}, from=2-2, to=2-3]
	\arrow["{v_{22}}"{description}, from=2-2, to=3-2]
	\arrow["{s_{22}}"{description}, draw=none, from=2-2, to=3-3]
	\arrow["{v_{23}}", from=2-3, to=3-3]
	\arrow["{h_{31}}"', from=3-1, to=3-2]
	\arrow["{h_{32}}"', from=3-2, to=3-3]
	\arrow["{h_{31} \circ^H h_{32}}"', from=3-6, to=3-8]
\end{tikzcd}\]

Now, suppose that $J$ is the axiom that introduces $\circ$ and $J'$ is the one that introduces $id$. Then $J \odot J'$ equals
\[
\tag{Axiom 26}
\begin{pmatrix}
	xx:OO\\
	yx:OO\\
	zx:OO\\
	fx:AO(xx,yx)\\
	gx:AO(yx,zx)
\end{pmatrix}
\]
$$
\top
$$
$$
id_\square^H
\begin{pmatrix}
	xx\\
	\circ(x,y,z,f,g) \otimes x^O
\end{pmatrix}
\equiv
\square^V
\begin{pmatrix}
	xx & xx & x^O \otimes id(x)\\
	yx & yx & y^O \otimes id(x)\\
	zx & zx & z^O \otimes id(x)\\
	fx & fx & f^{A(x,y)} \otimes id(x)\\
	gx & gx & g^{A(y,z)} \otimes id(x)
\end{pmatrix}
:AA
\begin{pmatrix}
	xx & xx & x^O \otimes id(x)\\
	zx & zx & z^O \otimes id(x)\\
	\circ(x,y,z,f,g) \otimes x^O & \circ(x,y,z,f,g) \otimes x^O & -
\end{pmatrix}.
$$
In the informal syntax, renaming the variables and sort/term symbols, we have the judgment
\[
a_1, a_2, a_3:O, v_1:A^V(a_1, a_2), v_2:A^V(a_2,a_3)
\]
$$
\top
$$
\[
\tag{Axiom 26'}
id_\square^H(v_1 \circ^V v_2) \equiv id_\square^H(v_1) \square^V id_\square^H(v_2): S(id^H(a_1), id^H(a_3), v_1 \circ^V v_2, v_1 \circ^V v_2)
\]
Similarly, if $J$ is the axiom that introduces $id$ and $J'$ is the one that introduces $\circ$, then $J \odot J'$ equals
$$
\begin{pmatrix}
	xx:OO & xy:OO & xz:OO & xf:OA(xx,xy) & xg:OA(xy,xz)
\end{pmatrix}
$$
$$
\top
$$
$$
\square^H
\begin{pmatrix}
	xx & xy & xz & xf & xg\\
	xx & xy & xz & xf & xg\\
	id(x) \otimes x^O & id(x) \otimes y^O & id(x) \otimes x^O & id(x) \otimes f^{A(x,y)} & id(x) \otimes g^{A(y,z)}
\end{pmatrix}
\equiv
id^V
\begin{pmatrix}
	xx & xy & x^O \otimes \circ(x,y,z,f,g)^{A(x,z)}
\end{pmatrix}
$$
$$
\cdot \cdot
$$
\[
\tag{Axiom 27}
AA
\begin{pmatrix}
	xx & xz & x^O \otimes \circ(x,y,z,f,g)^{A(x,z)}\\
	xx & xz & x^O \otimes \circ(x,y,z,f,g)^{A(x,z)}\\
	id(x) \otimes x^O & id(x) \otimes z^O & -
\end{pmatrix}.
\]
In the informal syntax we obtain
$$
a_1, a_2, a_3:O, h_1:A^H(a_1, a_2), h_2:A^H(a_2, a_3)
$$
$$
\top
$$
\[
\tag{Axiom 27'}
id_\square^V(h_1) \square^H id_\square^V(h_2) \equiv id^V(h_1 \circ^H h_2):S(h_1 \circ^H h_2, h_1 \circ^H h_2,id^V(a_1), id^V(a_3)).
\]
Finally, if $J$ and $J'$ are both the axiom that introduces $id$, then $J \odot J'$ is
\[
\tag{Axiom 28}
xx:OO \vdashcustom id_\square^H(xx,xx,id(x) \otimes x^O) \equiv id_\square^V(xx,xx,x^O \otimes id(x)):AA
\begin{pmatrix}
	xx & xx & x^O \otimes id(x)\\
	xx & xx & x^O \otimes id(x)\\
	id(x) \otimes x^O & id(x) \otimes x^O & -
\end{pmatrix}.
\]
In the informal syntax we have
\[
\tag{Axiom 28'}
x:O \vdashcustom id_\square^H(id^V(x)) \equiv id_\square^V(id^H(x)):S(id^H(x),id^H(x),id^V(x),id^V(x)).
\]
This concludes the description of $\bbT_{\text{cat}} \otimes \bbT_{\text{cat}}$.

\subsection{The tensor product of (multisorted) Lawvere theories}

\label{subsec: tensor product of lawvere theories}

A Lawvere theory can be presented by a generalized algebraic theory having a single sort symbol, possibly term and term equality axioms, and no sort equality axioms. To see this, note first that, by restricting Cartmell's equivalence $\GAT \simeq \Cont$, such theories correspond up to isomorphism to the contextual categories that have a single length $1$ object. On the other hand, as verified in \cite{FioVoe20}, the full subcategory of $\Cont$ spanned by the latter is isomorphic to the category of Lawvere theories.

In particular, for a \textsc{gat} of that form, say whose sort symbol is $O$, all contexts are of the form $x_1:O, ..., x_n:O$ and all term axioms are of the form $x_1:O, ..., x_n:O \vdashcustom t(x_1, ..., x_n):O$.

\vspace{0.5em}

Suppose that $\bbA$ and $\bbB$ are (presentations of) Lawvere theories in the above sense. Writing $O$ for the sort symbol in both cases, $\bbA \otimes \bbB$ is as follows:
\begin{itemize}
	\item It has a single sort axiom $OO$, which we will also denote by $O$.
	
	\item Its term symbols are $sO := (s,O)$ where $s \in \Sigma(\bbA)^{\text{term}}$ and $Ot := (O,t)$ where $t \in \Sigma(\bbB)^{\text{term}}$. By abuse of notation, we will also write $s$ for $sO$ and $t$ for $Ot$.

	The axiom of $sO$ in $\bbA \otimes \bbB$ is, after performing the above symbol identifications, the same as the axiom of $s$ in $\bbA$. Indeed, tensoring $x_1:O, ..., x_n:O \vdashcustom s(x_1, ..., x_n):O$ with the axiom $\vdashcustom O \tp$ from $\bbB$ gives
	$$
	x_1y:OO, ..., x_ny:OO \vdashcustom sO(x_1y, ..., x_my):OO
	$$
	(where $y$ is the variable associated with the empty context of $\bbB$), which we rewrite as
	$$
	x_1y:O, ..., x_ny:O \vdashcustom s(x_1y, ..., x_my):O.
	$$
	Similarly, the axiom of $Ot$ in $\bbA \otimes \bbB$ is the same, up to symbol renaming, as the axiom of $t$ in $\bbB$.
	
	\item For each term equality axiom $J$ in $\bbA$, we have a term equality axiom $J \odot (\vdashcustom O \tp)$ in $\bbA \otimes \bbB$. Following the above conventions, the latter judgment can be viewed as $J$ itself. Similarly, every term equality axiom in $\bbB$ can be viewed as an axiom in $\bbA \otimes \bbB$.
	
	\item We also have a term equality axiom for each $s \in \Sigma(\bbA)^{\text{term}}$ and $t \in \Sigma(\bbB)^{\text{term}}$ obtained by tensoring the respective axioms, say $x_1, ..., x_m:O \vdashcustom s(x_1, ..., x_m):O$ and $y_1, ..., y_n:O \vdashcustom t(y_1, ..., y_n):O$:
	$$
	\begin{pmatrix}
		x_1y_1:OO & \cdots & x_1y_n: OO\\
		\vdots & \ddots & \vdots \\
		x_my_1:OO & \cdots & x_my_n:OO
	\end{pmatrix}
	$$
	$$
	\top
	$$
	\[
	\tag{\texttt{*}}
	Ot
	\begin{pmatrix}
		s(x_1, ..., x_m)^O \otimes y_1^O & \cdots & s(x_1, .., x_m)^O \otimes y_n^O
	\end{pmatrix}
	\equiv
	sO
	\begin{pmatrix}
		x_1^O \otimes t(y_1, ..., y_n)^O\\
		\vdots\\
		x_m^O \otimes t(y_1, ..., y_n)^O
	\end{pmatrix}
	:OO.
	\]
	But note that for $1 \le i \le m$ and $1 \le j \le n$ we have
	$$
	x_i^O \otimes t(y_1, ..., y_n)^O = Ot(x_iy_1, ..., x_iy_n),
	$$
	$$
	s(x_1, ..., x_m)^O \otimes y_j^O = sO(x_1y_j, ..., x_my_j),
	$$
	so, after writing $O$ for $OO$, etc., we conclude that (\texttt{*}) equals
	$$
	\begin{pmatrix}
		x_1y_1:O & \cdots & x_1y_n: O\\
		\vdots & \ddots & \vdots \\
		x_my_1:O & \cdots & x_my_n:O
	\end{pmatrix}
	\vdashcustom
	t
	\begin{pmatrix}
		s\begin{pmatrix}
			x_1y_1 \\
			\vdots \\
			x_my_1
		\end{pmatrix} & \cdots & s\begin{pmatrix}
		x_1y_n \\
		\vdots \\
		x_my_n
		\end{pmatrix}
	\end{pmatrix}
	\equiv
	s
	\begin{pmatrix}
		t(x_1y_1 \;\cdots\; x_1y_n)\\
		\vdots\\
		t(x_my_1 \;\cdots\; x_my_n)
	\end{pmatrix}
	:O.
	$$
	In fact, since every term is of sort $O$, it causes no ambiguity to simply write
	$$
	t(s(x_1y_1, ..., x_my_1), ..., s(x_1y_n, ..., x_my_n)) \equiv s(t(x_1y_1, ..., x_1y_n), ..., t(x_my_1, ..., x_my_n)).
	$$
\end{itemize}

Hence $\bbA \otimes \bbB$ coincides with the tensor product between $\bbA$ and $\bbB$ as in \cite{Fre66}. The above axioms can be viewed in the following way: a model of $\bbA \otimes \bbB$ is a set $X$ (given by the sort axiom) endowed with an $\bbA$-model structure (given by the terms and axioms from $\bbA$) and a $\bbB$-model structure (given by the terms and axioms of $\bbB$) such that the action on $X$ of every $n$-ary operation from $\bbB$ is an $\bbA$-model morphism $X^n \rightarrow X$. The latter condition can be replaced by an analogous one: the action on $X$ of every $n$-ary operation from $\bbA$ is a $\bbB$-model morphism $X^n \rightarrow X$.

\vspace{0.5em}

We can also consider the tensor product of two \emph{multisorted Lawvere theories}: generalized algebraic theories in which every sort axiom is of the form $\vdashcustom S \tp$, and which have no sort equality axioms. If $\bbA$ and $\bbB$ are two such theories, it can be checked $\bbA \otimes \bbB$ is given by the following data up to variable renaming:
\begin{itemize}
	\item For sort symbols $S$, $T$ in $\bbA$, $\bbB$, respectively, we have a sort symbol $ST$ introduced by
	$$
	\vdashcustom ST \tp.
	$$
	
	\item Suppose that $T$ is a sort symbol in $\bbB$.
	
	For a term symbol $s$ in $\bbA$, say introduced by $x_1:S_1, ..., x_m:S_m \vdashcustom s(x_1, ..., x_m):S$, we have a term symbol $sT$ introduced by
	$$
	x_1y:S_1T, ..., x_my:S_mT \vdashcustom sT(x_1y, ..., x_my):ST.
	$$
	Similarly, a term equality axiom $J$ in $\bbA$ yields an axiom $J \otimes (\vdashcustom T \tp)$ in $\bbA \otimes \bbB$. The latter judgment is obtained from $J$ by replacing each sort symbol $S$ by $ST$, each term symbol $s$ by $sT$, and each variable $x$ by $xy$.
	
	\item For a sort symbol $S$ in $\bbA$, we have an axiom $(\vdashcustom S \tp) \otimes J$ in $\bbA \otimes \bbB$ whenever $J$ is a term or term equality axiom in $\bbB$. An explicit description can be given as in the previous item.
	
	\item Suppose given term axioms
	\begin{align*}
		x_1:S_1, ..., x_m:S_m & \vdashcustom s(x_1, ..., x_m):S\\
		y_1:T_1, ..., y_n:T_n & \vdashcustom t(y_1, ..., y_n):T
	\end{align*}
	in $\bbA$, $\bbB$, respectively. Then we have the following axiom in $\bbA \otimes \bbB$:
	$$
	(x_iy_j:S_iT_j)_{\substack{1 \le i \le m \\ 1 \le j \le n}}
	\vdashcustom
	St
	\begin{pmatrix}
		sT_1\begin{pmatrix}
			x_1y_1 \\
			\vdots \\
			x_my_1
		\end{pmatrix} & \cdots & sT_n\begin{pmatrix}
			x_1y_n \\
			\vdots \\
			x_my_n
		\end{pmatrix}
	\end{pmatrix}
	\equiv
	sT
	\begin{pmatrix}
		S_1t(x_1y_1 \;\cdots\; x_1y_n)\\
		\vdots\\
		S_mt(x_my_1 \;\cdots\; x_my_n)
	\end{pmatrix}
	:ST.
	$$
\end{itemize}

\begin{remark}
\label{rem: categories as multisorted lawvere theories}
Note that multisorted Lawvere theories where every term symbol is unary, i.e. is introduced by an axiom of the form $x:X \vdashcustom f(x):Y$, can be used as presentations of categories. In fact, denoting the full subcategory of $\GAT$ spanned by such theories by $\GAT_{\text{cat}}$, we have an equivalence of categories $F:\GAT_{\text{cat}} \rightarrow \Cat$ that sends $\bbB$ to the full subcategory of $\mathcal C(\bbB)$ spanned by the length-$1$ objects. Moreover, $\GAT_{\text{cat}}$ is closed in $\GAT$ under $\otimes$ and, for $\bbA$, $\bbB \in \GAT_{\text{cat}}$, we have a canonical isomorphism $F(\bbA \otimes \bbB) \cong F(\bbA) \times F(\bbB)$.
\end{remark}

\subsection{Locally finite direct categories}

\label{subsec: loc fin dir cat}

Generalized algebraic theories that only have sort introduction axioms admit a convenient combinatorial model, namely, \emph{locally finite direct categories} (which we will abbreviate as lfdc). We recall (see \cite{Sub21}, Chapter 1) that a category $D$ is said to be
\begin{itemize}
	\item \emph{Locally finite} if for every object $a$ the slice category $D/a$ is finite, i.e. it has finitely many morphisms.
	
	\item \emph{Direct} if it has no infinite sequence of non-identity morphisms of the form
	$$
	\cdots \longrightarrow a_{n+1} \longrightarrow a_n \longrightarrow \cdots \longrightarrow a_1 \longrightarrow a_0.
	$$
\end{itemize}
As explained in \cite{Sub21}, 1.6.1, if $\bbT$ is a theory that only has sort axioms -- there called a \emph{type signature} --, the contextual category $\mathcal C(\bbT)$ is equivalent (as a category) to $\PSh_{\text{fp}}(D)^{op}$ where $\PSh_{\text{fp}}(D)$ is the category of finitely presentable presheaves on a lfdc $D$. Precisely, $D$ can be taken as the full subcategory of $\mathcal C(\bbT)^{op}$, say $D(\bbT)$, whose objects are the equivalence classes of contexts $\textbf{X},x:U$ where $\textbf{X} \vdashcustom U \tp$ is an axiom. On the other hand, any category of the form $\PSh_{\text{fp}}(D)^{op}$, where $D$ is a ldfc, is equivalent to $\mathcal C(\bbT)$ for some type signature $\bbT$ whose sort symbols correspond bijectively to objects of $D$.\footnote{Note, however, that we have not stated an equivalence between a category of lfdcs and one of type signatures.}

To better understand the lfdc $D(\bbT)$, we will consider the following characterization of lfdcs, which can be obtained from \cite{Sub21}, Lemma 1.1.17 by well-ordering the set of dimension $n$ objects for each $n \ge 0$ (we have omitted the details):

\begin{lemma}
\label{lem: characterization lfdc}
	A small category $D$ is locally finite and direct if and only if there exist a well-ordered set $(L,\le)$ with a maximum element $\top$ and a functor $D_*:(L,\le) \rightarrow \Cat$ such that
	\begin{enumerate}[label=(\roman*)]
		\item $D \cong D_\top$.
		
		\item For every limit element $n \in L$, the canonical functor $\colim_{m < n}D_m \rightarrow D_n$ is an isomorphism. (In particular, $D_\bot = \varnothing$ where $\bot$ is the smallest element of $L$.)
		
		\item If $n \in L$ has a successor $n^+$ (hence if $n \neq \top$), there exists a presheaf $F:D_n^{op} \rightarrow \Set$ such that $\bigsqcup_{a \in D_n}F(a)$ is finite, and the functor $\iota:D_n \rightarrow D_{n^+}$ from the diagram fits into a pushout square
		\[
		\squa{\int F}{D_n}{(\int F)^+}{D_{n^+}}{\pi}{\pi'}{}{\iota}
		\]
		where $\pi:\int F \rightarrow D_n$ is the canonical projection from the category of elements of $F$, and $(\int F)^+$ is obtained from $\int F$ by freely adjoining a terminal object.
	\end{enumerate}
\end{lemma}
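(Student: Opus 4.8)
The plan is to prove the two implications separately, treating the characterization as a staged ``cell attachment'' presentation. Before either direction, I would isolate one structural observation that drives everything: in any presentation as in (i)--(iii), each transition functor $D_m \to D_{m'}$ (for $m \le m'$ in $L$) is a \emph{full sieve inclusion}, i.e.\ it is injective on objects, full onto its image, and any non-identity morphism whose codomain lies in $D_m$ already has its domain in $D_m$. Indeed, in $(\int F)^+$ the adjoined object $\ast$ is terminal, so it admits no outgoing non-identity morphism; hence $\{\ast\}$ is a cosieve and $\int F \hookrightarrow (\int F)^+$ is a full sieve inclusion that adds exactly one object. Pushing this out along $\pi$ keeps $\iota:D_n \to D_{n^+}$ a full sieve inclusion adding the single new object $\pi'(\ast)$, which has no non-identity morphism out of it inside $D_{n^+}$; and the property passes through limit stages because by (ii) a limit stage is the union (filtered colimit) of the earlier ones. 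I would record this as a preliminary lemma and use it on both sides.

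For the backward direction, assume (i)--(iii) and define, for each object $a$ of $D_\top$, its \emph{birth stage} $s(a) \in L$: since objects are added only at successor steps, there is a unique $m$ with $a \in D_{m^+} \setminus D_m$, and $s(a) := m$. The sieve property forces any non-identity morphism $a \to b$ to satisfy $s(a) < s(b)$ (the target of a fresh object's incoming arrows lies below it), so an infinite chain $\cdots \to a_1 \to a_0$ would give a strictly descending sequence in the well-order $L$ — impossible. Thus $D_\top$ is direct. Local finiteness I would prove by transfinite induction on $n\in L$ that each $D_n$ is locally finite: at a successor step the slice over the new object $c=\pi'(\ast)$ has as its objects only $\mathrm{id}_c$ together with the finite set $\bigsqcup_{a} F(a)$ of cone legs, and its morphisms are controlled by the (inductively finite) slices of $D_n$; slices over old objects are unchanged by the sieve property; at a limit step each slice is already attained at the birth stage of its codomain, hence finite by induction. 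Therefore $D \cong D_\top$ is locally finite and direct.

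For the forward direction, let $D$ be locally finite and direct. The relation ``there is a non-identity morphism $a \to b$'' is well-founded (an infinite descending chain would violate directness), so it carries a rank function $\dim$ valued in $\mathbb{N}$ — the values are finite because each object receives only finitely many morphisms, by local finiteness. I would then well-order the objects of each fixed dimension and concatenate by increasing dimension to enumerate $\Ob(D) = \{c_\beta : \beta < \lambda\}$ so that a non-identity morphism $c_\beta \to c_\gamma$ forces $\beta < \gamma$ (it strictly raises dimension). Set $L = \lambda + 1$, $\top = \lambda$, and let $D_\alpha$ be the full subcategory of $D$ on $\{c_\beta : \beta < \alpha\}$, with inclusions as transition functors; then $D_\top = D$ and (ii) holds since $D_\alpha$ is literally the union of the earlier $D_\beta$ at a limit $\alpha$. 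For (iii) at $\alpha^+ = \alpha+1$, take $F:D_\alpha^{op} \to \Set$ with $F(b) = \Hom_D(b, c_\alpha)$; then $\int F$ is the comma category of maps from $D_\alpha$ into $c_\alpha$, the set $\bigsqcup_b F(b)$ of non-identity arrows into $c_\alpha$ is finite by local finiteness, and the evident cocone ($\iota$ the inclusion, $\pi'$ sending $\ast \mapsto c_\alpha$ and each leg $(b,f)\to\ast$ to $f$) exhibits $D_{\alpha+1}$ as the pushout of $D_\alpha \xleftarrow{\pi} \int F \hookrightarrow (\int F)^+$. Verifying this pushout via its universal property uses precisely that $c_\alpha$ has no non-identity morphism out of it within $D_{\alpha+1}$, which is exactly the compatibility of the enumeration with the direct structure.

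The delicate point in both directions is the same sieve/cosieve bookkeeping: that \emph{all morphisms into an object are fixed at its birth stage, and nothing born later maps into it}. This single fact simultaneously makes the one-object extensions genuine pushouts in $\Cat$ (forward) and propagates directness and local finiteness through successor and limit stages (backward); the explicit pushout computation and the stabilization of slice categories at limits are where I expect the real care to be needed. I would also note that one may instead deduce the statement from \cite{Sub21}, Lemma 1.1.17, which already supplies a presentation stratified by dimension; the only extra work is then to refine that stratification into a single well-order by well-ordering the objects of each dimension, after which conditions (i)--(iii) follow as above.
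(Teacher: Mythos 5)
The paper does not actually prove this lemma: it states that the characterization ``can be obtained from \cite{Sub21}, Lemma 1.1.17 by well-ordering the set of dimension $n$ objects for each $n \ge 0$ (we have omitted the details)''. Your argument is correct and fills in exactly the omitted details along the intended route --- the forward direction via the dimension/rank function and a well-order refining it, the backward direction via the birth-stage and sieve-inclusion bookkeeping that the paper itself echoes in the remarks following the lemma --- so this is essentially the same approach, just carried out in full.
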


The presheaf $F$ can be recovered from $\iota$ up to isomorphism: writing $1$ for the terminal object of $(\int F)^+$, so that $\pi'(1) \in D_{n^+}$ is the only object not in the image of $\iota$, we have a natural isomorphism from $F$ to $D_{n^+}(\iota -, \pi'(1)):D_n^{op} \rightarrow \Set$ that maps $x \in F(a)$ to the image of of the unique arrow $(a,x) \rightarrow 1$ under $\pi'$. Conversely, this remark allows us to characterize when $D_n \rightarrow D_{n^+}$ satisfies condition (iii): $\iota$ is full, faithful and injective on objects, $D_{n^+}$ has a single object not in the image of $\iota$, and, denoting that object by $b$, its only endomorphism is the identity and there exists no morphism $b \rightarrow a$ for $a$ in the image of $\iota$.

For an object $a$ of a lfdc $D$, the category $D/a$ being finite implies that the presheaf $Y(a) = D(-,a):D^{op} \rightarrow \Set$ has finitely many elements. The \emph{boundary} of $a$, denoted by $\partial a$, is defined as the sub-presheaf of $Y(a)$ given by $(\partial a)(a') = D(a',a)$ if $a' \neq a$, and $(\partial a)(a) = \varnothing$ (in other words, it is obtained from $Y(a)$ by removing the element $id_a \in Y(a)(a)$).

In item (iii), the object $b$ adjoined via $\iota:D_n \rightarrow D_{n^+}$ is such that $\partial b = F$ (more precisely, we refer to the restriction of $\partial b$ along $\iota$). Hence Lemma \ref{lem: characterization lfdc} states, informally, that $D$ can be obtained by successively adding objects with a specified finite presheaf as its boundary.

\vspace{0.5em}

Now, given a type signature $\bbT$, we can choose a well-ordered set $(L,\le)$ with a maximum element $\top$, and a family of theories $(\bbT_n)_{n \in L}$ such that\footnote{We assume, without loss of generality, that there exists an enumeration $(x_i)_{i \ge 0}$ of the set of variables of $\bbT$ such that the contexts of all axioms are of the form $x_1:X_1, x_2:X_2, ..., x_k:X_k$.}
\begin{itemize}
	\item $\bbT = \bbT_\top$.
	
	\item For a limit element $n \in L$ we have $\bbT_n = \bigcup_{m<n}\bbT_m$.
	
	\item If $n \in L$ has a successor $n^+$, then $\bbT_{n^+}$ is obtained from $\bbT_n$ by adjoining a sort symbol, say $S_n$, introduced by
	$$
	x_1:X^n_1, ..., x_{k(n)}:X^n_{k(n)} \vdashcustom S_n(x_1, ..., x_{k(n)}) \tp.
	$$
\end{itemize}

The induced diagram of categories $(D(\bbT_n))_{n \in L}$, where $D(\bbT_n)$ is as described previously, satisfies condition (ii). This follows from the fact that the underlying-category functor $\Cont \rightarrow \Cat$ preserves filtered colimits -- in particular, colimits indexed by linearly ordered sets.

To check that (iii) is also satisfied, note that $D(\bbT_{n^+})$ is the full subcategory of $\mathcal C(\bbT_{n^+})^{op}$ obtained from the image of the functor $D(\bbT_n) \rightarrow \mathcal C(\bbT_n) \rightarrow \mathcal C(\bbT_{n^+})$ (which is full and faithful) by adjoining a single object: the equivalence class $[\textbf{X}^n]$ of the context
$$
\textbf{X}^n = (x_1:X^n_1, ..., x_{k(n)}:X^n_{k(n)}, x_{k(n)+1}:S_n(x_1, ..., x_{k(n)})).
$$
But a morphism added via $D(\bbT_n) \rightarrow D(\bbT_{n^+})$ is either $id_{[\textbf{X}^n]}$ or opposite to one of the form $[\textbf{X}_n] \rightarrow [\textbf{X}_m]$ with $m < n$. Indeed, by working syntactically it can be verified that the only endomorphism of $\textbf{X}_n$ is the identity, and that there exists no context morphism from $\textbf{X}_m$ to $\textbf{X}_n$ for $m < n$. Moreover, the set of context morphisms
$$
\{\textbf{f}:\textbf{X}_n \rightarrow \textbf{X}_m \mid m < n\}
$$
is finite since every term symbol that occurs in $\textbf{X}_m$ must also occur in $\textbf{X}_n$.

This shows that $D(\bbT)$ is locally finite and direct. In \cite{Sub21}, it is shown that $D(\bbT)$ determines $\mathcal C(\bbT)$ in the following sense: letting $\iota:D(\bbT) \rightarrow \mathcal C(\bbT)^{op}$ be inclusion, the functor
$$
N_\bbT:\mathcal C(\bbT)^{op} \longrightarrow \PSh_{\text{fp}}(D(\bbT))
$$
given by $N_\bbT([\textbf{X}]) = \mathcal C(\bbT)([\textbf{X}],\iota -)$ (which is the restricted Yoneda embedding associated with $\iota$) is an equivalence of categories. Moreover, it is natural with respect to extensions of type signatures: if $\varepsilon:\bbT \rightarrow \bbT'$ adds a set of sort axioms to $\bbT$, the following diagram commutes up to isomorphism:
\[
\squa{\mathcal C(\bbT)^{op}}{\PSh_{\text{fp}}(D(\bbT))}{\mathcal C(\bbT')^{op}}{\PSh_{\text{fp}}(D(\bbT')),}{N_\bbT}{N_{\bbT'}}{\mathcal C(\varepsilon)^{op}}{D(\varepsilon)_!}
\]
where $D(\varepsilon)_!$ is given  by left Kan extension along $D(\varepsilon)^{op}:D(\bbT)^{op} \rightarrow D(\bbT')^{op}$ (more explicitly, $D(\varepsilon)^{op}(F)$ maps $[\textbf{X}]$ to $F([\textbf{X}])$ if $[\textbf{X}]$ is in the image of $\varepsilon$, and to $\varnothing$ otherwise).

\vspace{0.5em}

Let $\bbS$ and $\bbT$ be type signatures. Then, by definition, $\bbS \otimes \bbT$ is a type signature whose sort symbols are pairs $ST := (S,T)$ where $S$ and $T$ are sort symbols in $\bbS$ and $\bbT$, respectively. This implies that $\Ob(D(\bbS \otimes \bbT)) \cong \Ob(D(\bbS)) \times \Ob(D(\bbT))$. We will now verify that, in fact, $D(\bbS \times \bbT) \cong D(\bbS) \times D(\bbT)$.

This relies on a structure that will be introduced later: in Construction \ref{const: universal functor}, we obtain a functor
$$
\otimes_{\bbS,\bbT}:\mathcal C(\bbS) \times \mathcal C(\bbT) \rightarrow \mathcal C(\bbS \otimes \bbT)
$$
given on objects by $[\textbf{X}] \otimes_{\bbS,\bbT} [\textbf{Y}] = [\textbf{X} \otimes \textbf{Y}]$, and on arrows by $[\textbf{f}] \otimes_{\bbS,\bbT} [\textbf{g}] = [\textbf{f} \otimes \textbf{g}]$ where, if $\textbf{f} = (f_1, ..., f_m)$ and $\textbf{g} = (g_1, ..., g_n)$ are context morphisms, then $\textbf{f} \otimes \textbf{g}$ is obtained by lexicographically reindexing the family of term expressions $(f_i^{\Type(f_i)} \otimes g_j^{\Type(g_j)})_{i \le m,\;j\le n}$.

Given axioms $\textbf{X} \vdashcustom U \tp$ and $\textbf{Y} \vdashcustom V \tp$ in $\bbS$ and $\bbT$, respectively, and choosing contexts $\textbf{X}' = (\textbf{X}, x:S(x_1, ..., x_m))$ and $\textbf{Y}' = (\textbf{Y},y:T(y_1, ..., y_n))$, we have that $[\textbf{X}'] \otimes_{\bbS,\bbT} [\textbf{Y}']$ is the equivalence class of the context $(\partial(\textbf{X}' \otimes \textbf{Y}'), z:ST(\cdots))$ obtained from the axiom that introduces $ST$. This implies that $\otimes_{\bbS,\bbT}$ restricts to a bijective-on-objects functor
$$
\otimes_{\bbS,\bbT}^D:D(\bbS) \times D(\bbT) \longrightarrow D(\bbS \otimes \bbT).
$$
Our goal is to prove that it is an isomorphism. As a consequence, using the equivalence $\mathcal C(\bbA)^{op} \simeq \PSh_{\text{fp}}(D(\bbA))$, we will then have $\mathcal C(\bbS \otimes \bbT)^{op} \simeq \PSh_{\text{fp}}(D(\bbS) \times D(\bbT))$.

\vspace{0.5em}

Firstly, let us consider extensions of type signatures $\bbS \subset \bbS'$ and $\bbT \subset \bbT'$, each of which adds a single sort axiom. Denoting $D(\bbS)$, $D(\bbS')$, $D(\bbT)$, $D(\bbT')$ by $D$, $D'$, $E$, $E'$, respectively, consider the diagram
\[
\tag{\texttt{*}}
\begin{tikzcd}
	{\mathcal C(\bbS \otimes \bbT)^{op}} &&& {\PSh_{\text{fp}}(D \times E)} \\
	& {\mathcal C(\bbS' \otimes \bbT)^{op}} &&& {\PSh_{\text{fp}}(D' \times E)} \\
	\\
	{\mathcal C(\bbS \otimes \bbT')^{op}} &&& {\PSh_{\text{fp}}(D \times E')}
	\arrow["{N_{\bbS,\bbT}}"{description}, from=1-1, to=1-4]
	\arrow["{\mathcal C(\iota')^{op}}"{description}, from=1-1, to=2-2]
	\arrow["{\mathcal C(\iota)^{op}}"{description}, from=1-1, to=4-1]
	\arrow["{I'_!}"{description}, from=1-4, to=2-5]
	\arrow["{I_!}"{description, pos=0.6}, from=1-4, to=4-4]
	\arrow["{N_{\bbS',\bbT}}"{description, pos=0.3}, from=2-2, to=2-5]
	\arrow["{N_{\bbS,\bbT'}}"{description}, from=4-1, to=4-4]
\end{tikzcd}\]
where
\begin{itemize}
	\item $\iota:\bbS \otimes \bbT \rightarrow \bbS \otimes \bbT'$ and $\iota':\bbS \otimes \bbT \rightarrow \bbS' \otimes \bbT$ are the trivial interpretations.
	
	\item $N_{\bbS,\bbT}$ maps a context class $[\textbf{A}]$ to the presheaf $\mathcal C(\bbS \otimes \bbT)(- \otimes_{\bbS,\bbT}^D -,[\textbf{A}]):(D \times E)^{op} \rightarrow \Set$. In other words, $N_{\bbS,\bbT}$ is the restricted Yoneda embedding associated with the composite
	$$
	D \times E \overset{\otimes_{\bbS,\bbT}^D}{\longrightarrow} D(\bbS \otimes \bbT) \hookrightarrow \mathcal C(\bbS \otimes \bbT)^{op}.
	$$
	Similarly for $N_{\bbS,\bbT'}$ and $N_{\bbS',\bbT}$.
	
	\item $I:D \times E \rightarrow D \times E'$ and $I':D \times E \rightarrow D' \times E$ are the inclusion functors, and the subscript $!$ indicates the left Kan extension functor restricted to finitely presentable presheaves.
\end{itemize}

Now, assume that $\otimes^D_{\bbS,\bbT}$, $\otimes^D_{\bbS,\bbT'}$ and $\otimes^D_{\bbS',\bbT}$ are isomorphisms. By pseudo-naturality of $N_\bbA:\mathcal C(\bbA)^{op} \rightarrow \PSh_{\text{fp}}(D(\bbA))$ with respect to extensions of type signatures, the above diagram commutes up to isomorphism.

Write $\partial(\bbS' \otimes \bbT')$ for the theory $(\bbS' \otimes \bbT) \cup (\bbS \otimes \bbT')$. Since the extensions in
\[
\squa{\mathcal C(\bbS \otimes \bbT)}{\mathcal C(\bbS' \otimes \bbT)}{\mathcal C(\bbS \otimes \bbT')}{\mathcal C(\partial(\bbS' \otimes \bbT'))}{}{}{}{}
\]
are full and faithful, and the canonical functor
$$
(D' \times E) \times_{D \times E} (D \times E') \cong D(\bbS' \otimes \bbT) \bigsqcup_{D(\bbS \otimes \bbT)} D(\bbS \otimes \bbT') \longrightarrow D(\partial(\bbS' \otimes \bbT'))
$$
is bijective on objects, the latter is an isomorphism. This implies that the diagram
\[\begin{tikzcd}
	{\mathcal C(\bbS \otimes \bbT)^{op}} &&& {\PSh_{\text{fp}}(D \times E)} \\
	& {\mathcal C(\bbS' \otimes \bbT)^{op}} &&& {\PSh_{\text{fp}}(D' \times E)} \\
	\\
	{\mathcal C(\bbS \otimes \bbT')^{op}} &&& {\PSh_{\text{fp}}(D \times E')} \\
	& {\mathcal C(\partial(\bbS' \otimes \bbT'))^{op}} &&& {\PSh_{\text{fp}}((D' \times E) \sqcup_{D \times E} (D \times E'))}
	\arrow["{N_{\bbS,\bbT}}"{description}, from=1-1, to=1-4]
	\arrow["{\mathcal C(\iota')^{op}}"{description}, from=1-1, to=2-2]
	\arrow["{\mathcal C(\iota)^{op}}"{description}, from=1-1, to=4-1]
	\arrow["{I'_!}"{description}, from=1-4, to=2-5]
	\arrow["{I_!}"{description, pos=0.6}, from=1-4, to=4-4]
	\arrow["{N_{\bbS',\bbT}}"{description, pos=0.3}, from=2-2, to=2-5]
	\arrow[from=2-2, to=5-2]
	\arrow[from=2-5, to=5-5]
	\arrow["{N_{\bbS,\bbT'}}"{description, pos=0.7}, from=4-1, to=4-4]
	\arrow[from=4-1, to=5-2]
	\arrow[from=4-4, to=5-5]
	\arrow["{N_{\partial(\bbS' \otimes \bbT')}}"{description}, from=5-2, to=5-5]
\end{tikzcd}\]
extending ($\texttt *$) in the evident way commutes up to isomorphism, and $N_{\partial(\bbS' \otimes \bbT')}$ is an equivalence of categories.

Next, let $\textbf{X} = (\partial \textbf{X}, x:U)$ and $\textbf{Y} = (\partial \textbf{Y}, y:V)$ be contexts in $\bbS'$ and $\bbT'$, respectively, such that $\partial\textbf{X} \vdashcustom U \tp$ and $\partial\textbf{Y} \vdashcustom V \tp$ are the axioms added via $\bbS \subset \bbS'$ and $\bbT \subset \bbT'$. Then $N_{\partial(\bbS' \otimes \bbT')}$ induces isomorphisms
\begin{align*}
	N_{\partial(\bbS' \otimes \bbT')}([\textbf{X} \otimes \partial\textbf{Y}] \times_{[\partial\textbf{X} \otimes \partial\textbf{Y}]} [\partial\textbf{X} \otimes \textbf{Y}]) & \cong N_{\partial(\bbS' \otimes \bbT')}[\textbf{X} \otimes \partial\textbf{Y}] \bigsqcup_{N_{\partial(\bbS' \otimes \bbT')}[\partial\textbf{X} \otimes \partial\textbf{Y}]} N_{\partial(\bbS' \otimes \bbT')}[\partial\textbf{X} \otimes \textbf{Y}]\\
	& \cong Y([\textbf{X}], [\partial \textbf{Y}]) \bigsqcup_{Y([\partial \textbf{X}], [\partial \textbf{Y}])} Y([\partial \textbf{X}], [\textbf{Y}])\\
	& \cong \partial([\textbf{X}],[\textbf{Y}])
\end{align*}
between presheaves on $(D' \times E) \sqcup_{D \times E} (D \times E')$. Here, $\partial([\textbf{X}],[\textbf{Y}])$ denotes the boundary presheaf of $([\textbf{X}],[\textbf{Y}]) \in D' \times E'$ viewed, by restriction, as a presheaf on $(D' \times E) \sqcup_{D \times E} (D \times E')$. As the extension $\partial(\bbS' \otimes \bbT') \subset \bbS' \otimes \bbT'$ is given by adjoining a sort axiom in context $[\textbf{X} \otimes \partial\textbf{Y}] \times_{[\partial\textbf{X} \otimes \partial\textbf{Y}]} [\partial \textbf{X} \otimes \textbf{Y}]$ and $D' \times E'$ fits into a pushout square
\[
\squa{\int \partial([\textbf{X}],[\textbf{Y}])}{(D' \times E) \sqcup_{D \times E} (D \times E')}{\int (\partial([\textbf{X}],[\textbf{Y}]))^+}{D' \times E',}{}{}{}{}
\]
we obtain an isomorphism $D(\bbS' \otimes \bbT') \simeq D' \times E'$.

\vspace{0.5em}

This result can be used as the induction step in a proof that $D(\bbS) \times D(\bbT) \rightarrow D(\bbS \otimes \bbT)$ is an isomorphism for all type signatures $\bbS$, $\bbT$. In more detail, choose a family of theories $(\bbT_n)_{n \in L}$ presenting $\bbT$ in the way considered previously (with $(L,\le)$ a well-ordered set, etc.), and, analogously, choose a family $(\bbS_m)_{m \in K}$ presenting $\bbS$.

To conclude that $D(\bbS_m) \times D(\bbT_n) \rightarrow D(\bbS_m \otimes \bbT_n)$ is an isomorphism -- call this statement $P(m,n)$ -- for all $m \in K$, $n \in L$, as the poset $K \times L$ is well-founded it suffices to verify the following: for all $(m,n) \in K \times L$, if $P(m',n')$ holds for all $(m',n')<(m,n)$, then $P(m,n)$ holds. We consider two cases:

\begin{itemize}
	\item $m$ and $n$ are successor elements, say of $m'$ and $n'$, respectively. Since, by assumption, $P(m',n')$, $P(m,n')$ and $P(m',n)$ hold, it follows from the above discussion that so does $P(m,n)$.
	
	\item $m$ or $n$ is a limit element. As $\bbS_m \otimes \bbT_n = \bigcup_{(m',n') < (m,n)}\bbS_{m'} \otimes \bbT_{n'}$, the desired functor $D(\bbS_m) \times D(\bbT_n) \rightarrow D(\bbS_m \otimes \bbS_n)$ is an isomorphism, being the colimit
	$$
	\colim_{(m',n') < (m,n)}D(\bbS_{m'}) \times D(\bbT_{n'}) \longrightarrow \colim_{(m',n') < (m,n)}D(\bbS_{m'} \otimes \bbT_{n'})
	$$
	of the isomorphisms given by the induction hypothesis. (Details have been omitted.)
\end{itemize}

\subsection{The theory of morphisms vs the theory of displayed structures}

Let $\bbI$ be the \textsc{gat} given by the axioms
\begin{align*}
	& \vdashcustom S \tag{``source" sort}\\
	& \vdashcustom T \tp \tag{``target" sort}\\
	x:S & \vdashcustom \varphi(x):T \tag{``arrow" term} 
\end{align*}
A $\Fam$-model of $\bbI$ is a triple consisting of small sets $X$, $Y$, and a function $f:X \rightarrow Y$. On the other hand, let $\bbE$ be the \textsc{gat} given by a single dependent sort over a ``base" constant sort:
\begin{align*}
	& \vdashcustom B\\
	x:B & \vdashcustom E(x)
\end{align*}
A $\Fam$-model of $\bbE$ consists of a small set $X$ and, for each $x \in X$, a small set $Y(x)$.

Given a theory $\bbA$, we define its \emph{theory of morphisms} as $\bbI \otimes \bbA$, and its \emph{theory of displayed structures} as $\bbE \otimes \bbA$. They will not be studied in the present paper, in particular since the machinery from \cite{Alm26} will be more appropriate for discussing their semantics. For the moment, let us calculate them in the particular case where $\bbA$ is the theory $\bbT_{\text{cat}}$ from \S\ref{subsec: strict double categories}; we will see that $\bbI \otimes \bbT_\text{cat}$ is the theory of functors, and $\bbE \otimes \bbT_\text{cat}$ is the theory of displayed categories in the sense of \cite{AhrLum19}. More generally, the form of dependency in models of $\bbE \otimes \bbA$ is similar to the one encoded by the displayed algebras from \cite{KapKovAlt19}, but we have not established a formal connection between the two concepts.

\subsubsection{Calculating $\bbI \otimes \bbT_{\text{cat}}$}

For $\bbI \otimes \bbT_{\text{cat}}$, we start by observing that tensoring $\vdashcustom S \tp$ and $\vdashcustom T \tp$ with the axioms of $\bbT_{\text{cat}}$ has the effect of creating two disjoint copies of $\bbT_{\text{cat}}$. We will represent them by adding a subscript $S$ or $T$ to each sort/term symbol: up to variable renaming, we will have axioms

\begin{align*}
	&\vdashcustom O_S \tp \tag{sort of objects -- source}\\
	&\cdots\\
	x,y:O_S, f:A_S(x,y) & \vdashcustom \circ_S(x,y,y,f,id_S(y)) \equiv f: A_S(x,y) \tag{right unitality -- source}\\[2em]
	&\vdashcustom O_T \tp \tag{sort of objects -- target}\\
	&\cdots\\
	x,y:O_T, f:A_T(x,y) & \vdashcustom \circ_T(x,y,y,f,id_T(y)) \equiv f: A_T(x,y) \tag{right unitality -- target}
\end{align*}

We now tensor, if applicable, $x:S \vdashcustom \varphi(x):T$ with each axiom of $\bbT_{\text{cat}}$. Writing $\varphi_O$ for $\varphi O$ and $\varphi_A$ for $\varphi A$, we have
\begin{align*}
	&x:O_S \vdashcustom \varphi_O(x):O_T\\[1em]
	&x, y:O_S, f:A_S(x,y) \vdashcustom \varphi_A(x,y,f):A_T(\varphi_O(x),\varphi_O(x))\\[1em]
	&x,y,z:O_S, f:A_S(x,y), g:A_S(y,z) \vdashcustom\\
	&\circ_T(\varphi_O(x),\varphi_O(y),\varphi_O(z), \varphi_A(x,y,f), \varphi_A(y,z,g)) \equiv \varphi_A(x,z,\circ_S(x,y,z,f,g)) : A_T(\varphi_O(x),\varphi_O(z))\\[1em]
	&x:O_S \vdashcustom id_T(\varphi_O(x)) \equiv \varphi_A(x,x,id_S(x)):A_T(\varphi_O(x),\varphi_O(x))
\end{align*}
These can be expressed in the informal syntax as
\begin{align*}
	x:O_S & \vdashcustom \varphi_O(x):O_T\\
	x,y:O_S, f:A_S(x,y) & \vdashcustom \varphi_A(f):A_T(\varphi_O(x),\varphi_O(y))\\
	x,y,z:O_S, f:A_S(x,y), g:A_S(y,z) & \vdashcustom \varphi_A(f) \circ_T \varphi_A(g) \equiv \varphi_A(f \circ_S g):A_T(\varphi_O(x),\varphi_O(z))\\
	x:O_S & \vdashcustom id_T(\varphi_O(x)) \equiv \varphi_A(id_S(x)):A_T(\varphi_O(x),\varphi_O(x)).
\end{align*}
We thus recognize $\bbI \otimes \bbT_{\text{cat}}$ as a \textsc{gat} whose $\Fam$-models are triples consisting of small categories ($\Fam$-models of $\bbT_{\text{cat}}$) $X$, $Y$, and a functor $f:X \rightarrow Y$.

\begin{remark}
\label{rem: theory of diagrams}
$\bbI$ plays the role of the poset category $\{0 \rightarrow 1\}$. More generally, any category $K$ induces, as in Remark \ref{rem: categories as multisorted lawvere theories}, a \textsc{gat} $\bbK$, from which we obtain for each $\bbA$ a theory $\bbK \otimes \bbA$ encoding diagrams of shape $K$ in $\Mod(\bbA)$.
\end{remark}

\subsubsection{Calculating $\bbE \otimes \bbT_{\text{cat}}$}

Tensoring $\vdashcustom B \tp$ with the axioms of $\bbT_{\text{cat}}$ yields a copy of $\bbT_{\text{cat}}$. We represent it by adding the subscript $B$ to each sort/term symbol:
\begin{align*}
	&\vdashcustom O_B \tp \tag{sort of objects -- base}\\
	&\cdots\\
	x,y:O_B, f:A_B(x,y) & \vdashcustom \circ_B(x,y,y,f,id_B(y)) \equiv f: A_B(x,y) \tag{right unitality -- base}
\end{align*}
Now, by tensoring $x:B \vdashcustom E(x) \tp$ with the axioms of $\bbT_{\text{cat}}$ we obtain, in the informal syntax,
\begin{align*}
	 &x:O_B \vdashcustom O_E(x) \tp\\[1em]
	 &\begin{pmatrix*}[l]
		x:O_B & y:O_B & f:A_B(x,y)\\
		x':O_E(x) & y':O_E(y) & -
	\end{pmatrix*}
	\vdashcustom A_E(f,x',y') \tp\\[1em]
	&\begin{pmatrix*}[l]
		x:O_B & y:O_B & z:O_B & f:A_B(x,y) & g:A_B(y,z)\\
		x':O_E(x) & y':O_E(y) & z':O_E(z) & f':A_E(f,x',y') & g':A_E(g',y',z')
	\end{pmatrix*}
	 \vdashcustom f' \circ_E g': A_E(f \circ_B g, x', z')\\[1em]
	 &\begin{pmatrix*}[l]
	 	x:O_B\\
	 	x':O_E(x)
	 \end{pmatrix*}
	 \vdashcustom id_E(x'): A_E(id_B(x),x',x')
\end{align*}	 
\begin{align*}	 
&\begin{pmatrix*}[l]
	x:O_B & y:O_B & z:O_B & w:O_B & f:A_B(x,y) & g:A_B(y,z) & h:A_B(z,w)\\
	x':O_E(x) & y':O_E(y) & z':O_E(z) & w':O_E(w) & f':A_E(f,x',y') & g':A_E(g,y',z') & h':A_E(h,z',w')
\end{pmatrix*}\\
& \vdashcustom f' \circ_E (g' \circ_E h') \equiv (f' \circ_E g') \circ_E h': A_E(f \circ_B (g \circ_B h), x',w')\\[1em]
&\begin{pmatrix*}[l]
	x:O_B & y:O_B & f:A_B(x,y)\\
	x':O_E(x) & y':O_E(y) & f':A_E(f,x',y')
\end{pmatrix*}
\vdashcustom id_E(x') \circ_E f' \equiv f': A_E(id_B(x) \circ_B f, x',y')\\[1em]
&\begin{pmatrix*}[l]
	x:O_B & y:O_B & f:A_B(x,y)\\
	x':O_E(x) & y':O_E(y) & f':A_E(f,x',y')
\end{pmatrix*}
\vdashcustom f' \circ_E id_E(y') \equiv f': A_E(f \circ_B id_B(y), x', y')
\end{align*}

These correspond precisely to the axioms for a \emph{displayed category} in the sense of Ahrens--Lumsdaine (\cite{AhrLum19}): a model of $\bbE \otimes \bbT_{\text{cat}}$ is a pair consisting of a ``base" small category $X$ (a model of the copy of $\bbT_{\text{cat}}$ corresponding to the sort $B$) and a category $Y$ whose data is split into small sets suitably indexed by configurations of objects and arrows in $X$. Up to isomorphism, this amounts to an arbitrary functor $Y \rightarrow X$. Compare with the correspondence, due to Bénabou (see \cite{Ben00}), between functors $Y \rightarrow X$ and normal lax functors from $X$ to the bicategory of distributors.

However, $\bbI \otimes \bbT_{\text{cat}}$ and $\bbE \otimes \bbT_{\text{cat}}$ induce distinct structures on their categories of models. As mentioned in the introduction, the display maps of a contextual category $\mathcal A$ generate, via the Yoneda embedding and the small object argument, a weak factorization on $\Mod(\mathcal A)$.

In $\Mod(\bbI \otimes \bbT_{\text{cat}})$, the cofibrations, i.e. the arrows in the left class of the wfs, are generated by the commutative squares
\[
\squa{X}{X'}{Y}{Y',}{i}{j}{f}{f'}
\]
viewed as a morphism $(i,j):f \rightarrow f'$, where (1) $i = id_X$ and $j$ freely adds an object to $Y$, or (2) $i = id_X$ and $j$ freely adds an arrow to $Y$, or (3) $i$ freely adds an object to $X$ and the square is a pushout, or (4) $i$ freely adds an arrow to $X$ and the square is a pushout.

On the other hand, in $\Mod(\bbE \otimes \bbT_{\text{cat}})$, the cofibrations are generated by the squares as above where (1) $i = id_X$ and $j$ freely adds an object to $Y$, or (2) $i = id_X$ and $j$ freely adds an arrow to $Y$, or (3) $j = id_Y$ and $i$ freely adds an object that is sent by $f'$ to a specified object of $Y'$, or (4) $j = id_Y$ and $i$ freely adds an arrow that is sent by $f'$ to a specified arrow in $Y'$.

\vspace{0.5em}

We refer the reader to \cite{BarHen25}, particularly \S3.9 (which uses a variant of $\bbT_{\text{cat}}$), for a discussion of homotopy-theoretic implications of this distinction.

\section{Some consequences of $h$-derivability}
\label{sec: consequences h-derivability}

Recall from Definition \ref{def: h-derivable} that a pair of \textsc{gat}s $(\bbA,\bbB)$ is \emph{$h$-derivable}, where $h \ge 0$, if $J \odot J'$ is derivable in the pretheory $\bbA \otimes \bbB$ whenever $J$, $J'$ are derivable judgments in $\bbA$, $\bbB$, resp., such that $\Ht(J)\Ht(J') \le h$.

In this section, we will prove several consequences of $(\bbA,\bbB)$ being $h$-derivable. Roughly, some of them mean that, under certain numerical constraints on the heights of the structures involved, tensoring expressions/contexts is compatible with equality or substitution in each of the two arguments separately. Others allow us to construct contexts or morphisms in $\bbA \otimes \bbB$ by tensoring contexts or morphisms in $\bbA$ with ones in $\bbB$. They are instances of the general results from $\S\ref{sec: comparison functor}$.

We will use Proposition \ref{prop: properties height}, which lists some of the basic properties of the height function for judgments, contexts, etc., several times without explicit reference. We believe doing that will not create confusion as that proposition contains (by design) all of the properties of the height function that will be referred to throughout the text.

\vspace{0.5em}

We start by introducing some notation:

\begin{definition}
Let $\bbA$ and $\bbB$ be \textsc{gat}s. Suppose given context morphisms
\begin{align*}
	\textbf{f} = (f_1, ..., f_M):\textbf{X} = (x_1:X_1, ..., x_m:X_m) & \longrightarrow \textbf{A} = (a_1:A_1, ..., a_M:A_M),\\
	\textbf{g} = (g_1, ..., g_N):\textbf{Y} = (y_1:Y_1, ..., y_n:Y_n) & \longrightarrow \textbf{B} = (b_1:B_1, ..., b_N:B_N)
\end{align*}
in $\bbA$, $\bbB$, respectively. Recall that $\textbf{A} \otimes \textbf{B}$ is the length-$mn$ sequence obtained by lexicographically reindexing the family $(x_iy_j:X_i \otimes Y_j)_{(i,j) \in \{1, ..., M\} \times \{1, ..., N\}}$. Similarly, using the notation from \ref{not: abuses of notation}, we let
\begin{align*}
	\textbf{f} \otimes \textbf{B} & = (f_1, \ldots, f_M) \otimes (b_1, \ldots, b_N)\\
	& = (f_1^{\Type(f_1)} \otimes b_1^{B_1}, \ldots, f_1^{\Type(f_1)} \otimes b_N^{B_N}, \ldots, f_M^{\Type(f_M)} \otimes b_1^{B_1}, \ldots, f_M^{\Type(f_M)} \otimes b_N^{B_N}),\\
	& \\
	\textbf{A} \otimes \textbf{g} & = (a_1, \ldots, a_M) \otimes (g_1, \ldots, g_N) \\
	& = (a_1^{A_1} \otimes g_1^{\Type(g_1)}, \ldots, a_1^{A_1} \otimes g_N^{\Type(g_N)}, \ldots, a_M^{A_M} \otimes g_1^{\Type(g_1)}, \ldots, a_M^{A_M} \otimes g_N^{\Type(g_N)}), \\
	& \\
	\textbf{f} \otimes \textbf{g} & = (f_1, \ldots, f_M) \otimes (g_1, \ldots, g_N)\\
	& = (f_1^{\Type(f_1)} \otimes g_1^{\Type(g_1)}, \ldots, f_1^{\Type(f_1)} \otimes g_N^{\Type(g_N)}, \ldots, f_M^{\Type(f_M)} \otimes g_1^{\Type(g_1)}, \ldots, f_M^{\Type(f_M)} \otimes g_N^{\Type(g_N)}).
\end{align*}
\end{definition}

\subsection{First set of statements}

Consider a pair of \textsc{gat}s $(\bbA,\bbB)$ and a natural number $k$. Below, when we talk without further qualification about a derivable judgment, a context, or a context morphism, we mean so in $\bbA$, $\bbB$, or the pretheory $\bbA \otimes \bbB$.

We will consider the following statements:

\begin{itemize}	
	\item[\colorbox{white!90!black}{$Cont(k)$}] For all contexts $\textbf{X}$ in $\bbA$ and $\textbf{Y}$ in $\bbB$, if $\Ht(\textbf{X})\Ht(\textbf{Y}) = k$, then $\textbf{X} \otimes \textbf{Y}$ is a context.
	
	\item[\colorbox{white!90!black}{$Sub^1_{t,t}(k)$}] Suppose given in $\bbA$ a context morphism $\textbf{f}:\textbf{X} \rightarrow \textbf{A}$ and a derivable judgment $\textbf{A} \vdashcustom u \tm$ in $\bbA$, and in $\bbB$ a derivable judgment $\textbf{Y} \vdashcustom v \tm$.
	
	If ${\Ht(\textbf{f}:\textbf{X} \rightarrow \textbf{A})\Ht(\textbf{Y} \vdashcustom v \tm)}$, ${\Ht(\textbf{A} \vdashcustom u \tm)\Ht(\textbf{Y} \vdashcustom v \tm) \le k}$, then the following is derivable:
	$$
	\textbf{X} \otimes \textbf{Y} \vdashcustom (u \otimes v)[\textbf{f} \otimes \textbf{Y}] \equiv u[\textbf{f}] \otimes v \tm.
	$$
	
	\item[\colorbox{white!90!black}{$Sub^1_{t,s}(k)$}] Suppose given in $\bbA$ a context morphism $\textbf{f}:\textbf{X} \rightarrow \textbf{A}$ and a derivable judgment $\textbf{A} \vdashcustom u \tm$, and in $\bbB$ a derivable judgment $\textbf{Y} \vdashcustom V \tp$. Let $\textbf{Y}' = (\textbf{Y}, y:V)$.

	If $\Ht(\textbf{f}:\textbf{X} \rightarrow \textbf{A})\Ht(\textbf{Y} \vdashcustom V \tp)$, $\Ht(\textbf{A} \vdashcustom u \tm)\Ht(\textbf{Y} \vdashcustom V \tp) \le k$, then the following is derivable:
	$$
	\textbf{X} \otimes \textbf{Y}' \vdashcustom (u \otimes y)[\textbf{f} \otimes \textbf{Y}'] \equiv u[\textbf{f}] \otimes y \tm.
	$$
	
	\item[\colorbox{white!90!black}{$Sub^1_{s,t}(k)$}] Suppose given in $\bbA$ a context morphism $\textbf{f}:\textbf{X} \rightarrow \textbf{A}$ and a derivable judgment $\textbf{A} \vdashcustom U \tp$, and in $\bbB$ a derivable judgment $\textbf{Y} \vdashcustom v \tm$. Let $\textbf{A}' = (\textbf{A}, a:U)$, $\textbf{X}' = (\textbf{X}, x:U[\textbf{f}])$, and $\textbf{f}' = (\textbf{f}, x):\textbf{X}' \rightarrow \textbf{A}'$.
	
	If ${\Ht(\textbf{f}:\textbf{X} \rightarrow \textbf{A})\Ht(\textbf{Y} \vdashcustom v \tm)}$, ${\Ht(\textbf{A} \vdashcustom U \tp)\Ht(\textbf{Y} \vdashcustom v \tm)}$, ${\Ht(\textbf{X} \vdashcustom U[\textbf{f}])\Ht(\textbf{Y}) \le k}$, then the following is derivable:
	$$
	\textbf{X}' \otimes \textbf{Y} \vdashcustom (a \otimes v)[\textbf{f}' \otimes \textbf{Y}] \equiv x \otimes v \tm.
	$$
	
	\item[\colorbox{white!90!black}{$Sub^1_{s,s}(k)$}] Suppose given in $\bbA$ a context morphism $\textbf{f}:\textbf{X} \rightarrow \textbf{A}$ and a derivable judgment $\textbf{A} \vdashcustom U \tp$, and in $\bbB$ a derivable judgment $\textbf{Y} \vdashcustom V \tp$. Let ${\textbf{A}' = (\textbf{A}, a:U)}$, ${\textbf{X}' = (\textbf{X}, x:U[\textbf{f}])}$, ${\textbf{f}' = (\textbf{f}, x):\textbf{X}' \rightarrow \textbf{A}'}$, and ${\textbf{Y}' = (\textbf{Y}, y:V)}$.
	
	If $\Ht(\textbf{f}:\textbf{X} \rightarrow \textbf{A})\Ht(\textbf{Y} \vdashcustom V \tp)$, $\Ht(\textbf{A} \vdashcustom U \tp)\Ht(\textbf{Y} \vdashcustom V \tp)$, $\Ht(\textbf{X} \vdashcustom U[\textbf{f}] \tp)\Ht(\textbf{Y}) \le k$, then the following is derivable:
	$$
	\partial(\textbf{X}' \otimes \textbf{Y}') \vdashcustom (U \otimes V)[\partial(\textbf{f}' \otimes \textbf{Y}')] \equiv U[\textbf{f}] \otimes V \tp.
	$$
	
	\item[\colorbox{white!90!black}{$Sub^2_{t,t}(k)$}] Suppose given in $\bbA$ a derivable judgment $\textbf{X} \vdashcustom u \tm$, and in $\bbB$ a context morphism $\textbf{g}:\textbf{Y} \rightarrow \textbf{B}$ and a derivable judgment $\textbf{B} \vdashcustom v \tm$.
	
	If $\Ht(\textbf{X} \vdashcustom u \tm)\Ht(\textbf{g}:\textbf{Y} \rightarrow \textbf{B})$, $\Ht(\textbf{X} \vdashcustom u \tm)\Ht(\textbf{B} \vdashcustom v \tm) \le k$, then the following is derivable:
	$$
	\textbf{X} \otimes \textbf{Y} \vdashcustom (u \otimes v)[\textbf{X} \otimes \textbf{g}] \equiv u \otimes v[\textbf{g}] \tm.
	$$
	
	\item[\colorbox{white!90!black}{$Sub^2_{t,s}(k)$}] Suppose given in $\bbA$ a derivable judgment $\textbf{X} \vdashcustom u \tm$, and in $\bbB$ a context morphism $\textbf{g}:\textbf{Y} \rightarrow \textbf{B}$ and a derivable judgment $\textbf{B} \vdashcustom V \tp$. Let $\textbf{B}' = (\textbf{B}, b:V)$, $\textbf{Y}' = (\textbf{Y}, y:V[\textbf{g}])$, and $\textbf{g}' = (\textbf{g},y):\textbf{Y}' \rightarrow \textbf{B}'$.
	
	If $\Ht(\textbf{X} \vdashcustom u \tm)\Ht(\textbf{g}:\textbf{Y} \rightarrow \textbf{B})$, $\Ht(\textbf{X} \vdashcustom u \tm)\Ht(\textbf{B} \vdashcustom V \tp)$, $\Ht(\textbf{X})\Ht(\textbf{Y} \vdashcustom V[\textbf{g}]) \le k$, then the following is derivable:
	$$
	\textbf{X} \otimes \textbf{Y}' \vdashcustom (u \otimes b)[\textbf{X} \otimes \textbf{g}'] \equiv u \otimes y \tm.
	$$
	
	\item[\colorbox{white!90!black}{$Sub^2_{s,t}(k)$}] Suppose given in $\bbA$ a derivable judgment $\textbf{X} \vdashcustom U \tp$, and in $\bbB$ a context morphism $\textbf{g}:\textbf{Y} \rightarrow \textbf{B}$ and a derivable judgment $\textbf{B} \vdashcustom v \tm$. Let $\textbf{X}' = (\textbf{X}, x:U)$.
	
	If $\Ht(\textbf{X} \vdashcustom U \tp)\Ht(\textbf{g}:\textbf{Y} \rightarrow \textbf{B})$, $\Ht(\textbf{X} \vdashcustom U \tp)\Ht(\textbf{B} \vdashcustom v \tm) \le k$, then the following is derivable:
	$$
	\textbf{X}' \otimes \textbf{Y} \vdashcustom (x \otimes v)[\textbf{X}' \otimes \textbf{g}] \equiv x \otimes v[\textbf{g}] \tm.
	$$
	
	\item[\colorbox{white!90!black}{$Sub^2_{s,s}(k)$}] Suppose given in $\bbA$ a derivable judgment $\textbf{X} \vdashcustom U \tp$, and in $\bbB$ a context morphism $\textbf{g}:\textbf{Y} \rightarrow \textbf{B}$ and a derivable judgment $\textbf{B} \vdashcustom V \tp$. Let $\textbf{B}' = (\textbf{B}, b:V)$, $\textbf{Y}' = (\textbf{Y}, y:V[\textbf{g}])$, $\textbf{g}' = (\textbf{g},y):\textbf{Y}' \rightarrow \textbf{B}'$, and $\textbf{X}' = (\textbf{X}, x:U)$.
	
	If $\Ht(\textbf{X} \vdashcustom U \tp)\Ht(\textbf{g}:\textbf{Y} \rightarrow \textbf{B})$, $\Ht(\textbf{X} \vdashcustom U \tp)\Ht(\textbf{B} \vdashcustom V \tp)$, $\Ht(\textbf{X})\Ht(\textbf{Y} \vdashcustom V[\textbf{g}]) \le k$, then the following is derivable:
	$$
	\partial(\textbf{X}' \otimes \partial \textbf{Y}') \vdashcustom (U \otimes V)[\partial(\textbf{X}' \otimes \textbf{g}')] \equiv U \otimes V[\textbf{g}] \tp.
	$$
	
	\item[\colorbox{white!90!black}{$Mor^1(k)$}] Suppose given in $\bbA$ a context morphism $\textbf{f}:\textbf{X} \rightarrow \textbf{A}$, and in $\bbB$ a context $\textbf{Y}$.
	
	If $\Ht(\textbf{f}:\textbf{X} \rightarrow \textbf{A})\Ht(\textbf{Y}) \le k$, then the following is derivable:
	$$
	\textbf{f} \otimes \textbf{Y}:\textbf{X} \otimes \textbf{Y} \longrightarrow \textbf{A} \otimes \textbf{Y}.
	$$
	
	\item[\colorbox{white!90!black}{$Mor^1_+(k)$}] Suppose given in $\bbA$ a context morphism $\textbf{f}:\textbf{X} \rightarrow \textbf{A}$ and a derivable judgment $\textbf{A} \vdashcustom U \tp$, and in $\bbB$ a context $\textbf{Y}$. Let $\textbf{A}' = (\textbf{A},a:U)$, $\textbf{X}' = (\textbf{X},x:U[\textbf{f}])$, and $\textbf{f}' = (\textbf{f},x):\textbf{X}' \rightarrow \textbf{A}'$.
	
	If $
	\Ht(\textbf{f}:\textbf{X} \rightarrow \textbf{A})\Ht(\textbf{Y})$, $\Ht(\textbf{A} \vdashcustom U \tp)\Ht(\textbf{Y}) \le k-1$ and $\Ht(\textbf{X} \vdashcustom U[\textbf{f}] \tp)\Ht(\textbf{Y}) \le k$, then the following is derivable:
	$$
	\textbf{f}' \otimes \textbf{Y}:\textbf{X}' \otimes \textbf{Y} \longrightarrow \textbf{A}' \otimes \textbf{Y}.
	$$
	
	\item[\colorbox{white!90!black}{$Mor^2(k)$}] Suppose given in $\bbA$ a context $\textbf{X}$, and in $\bbB$ a context morphism $\textbf{g}:\textbf{Y} \rightarrow \textbf{B}$.
	
	If $\Ht(\textbf{X})\Ht(\textbf{g}:\textbf{Y} \rightarrow \textbf{B}) \le k$, then the following is derivable:
	$$
	\textbf{X} \otimes \textbf{g}:\textbf{X} \otimes \textbf{Y} \longrightarrow \textbf{X} \otimes \textbf{B}.
	$$
	
	\item[\colorbox{white!90!black}{$Mor^2_+(k)$}] Suppose given in $\bbA$ a context $\textbf{X}$, and in $\bbB$ a context morphism $\textbf{g}:\textbf{Y} \rightarrow \textbf{B}$ and a derivable judgment $\textbf{B} \vdashcustom V \tp$. Let $\textbf{B}' = (\textbf{B},b:V)$, $\textbf{Y}' = (\textbf{Y}, y:V[\textbf{g}])$, and $\textbf{g}' = (\textbf{g},y):\textbf{Y}' \rightarrow \textbf{B}'$.
	
	If $\Ht(\textbf{X})\Ht(\textbf{g}:\textbf{Y} \rightarrow \textbf{B})$, $\Ht(\textbf{X})\Ht(\textbf{B} \vdashcustom V \tp) \le k-1$ and $\Ht(\textbf{X})\Ht(\textbf{Y} \vdashcustom V[\textbf{g}]) \le k$, then the following is derivable:
	$$
	\textbf{X} \otimes \textbf{g}':\textbf{X} \otimes \textbf{Y}' \longrightarrow \textbf{X} \otimes \textbf{B}'.
	$$
\end{itemize}

\begin{tcolorbox}[colback=white!90!black, colframe=white, boxrule=0pt, arc=0mm]
	In what follows, we fix theories $\bbA$ and $\bbB$, a natural number $h$, and work under the following assumptions on $(\bbA,\bbB)$:
	\begin{itemize}
		\item it is $h$-derivable;
		
		\item for every $k < h$, it satisfies all of the above statements with parameter $k$, i.e. $Cont(k)$, ..., $Mor_+^2(k)$.
	\end{itemize}
\end{tcolorbox}

Using these, we will prove

\begin{proposition}
$(\bbA, \bbB)$ satisfies $\Cont(h)$, ..., $Mor^2_+(h)$.
\end{proposition}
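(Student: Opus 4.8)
The plan is to prove the level-$h$ statements in a fixed order, each one drawing on $h$-derivability, on all the statements at levels $k<h$ (available by the standing assumption), and on the level-$h$ statements already established. The six ``$2$'' statements substitute in the $\bbB$-argument and are proved by the arguments that are verbatim symmetric to those for the ``$1$'' statements, using formula ($\blacklozenge$) for $\varotimes_t$ in place of formula ($\lozenge$) for $\otimes_t$; so it suffices to explain $Cont(h)$, the $Sub^1$ family, $Mor^1(h)$ and $Mor^1_+(h)$.

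First I would prove $Cont(h)$. Writing $\textbf{X} = (x_1:X_1, \ldots, x_m:X_m)$ and $\textbf{Y} = (y_1:Y_1, \ldots, y_n:Y_n)$, the entry of $\textbf{X} \otimes \textbf{Y}$ at lexicographic position $(i,j)$ is $x_iy_j : X_i^{x_i} \otimes Y_j^{y_j}$, whose sort is the codomain of the ``sort $\odot$ sort'' judgment $(\partial_{i-1}\textbf{X} \vdash X_i \tp) \odot (\partial_{j-1}\textbf{Y} \vdash Y_j \tp)$. Since by Proposition \ref{prop: properties height} we have $\Ht(\partial_{i-1}\textbf{X} \vdash X_i \tp)\,\Ht(\partial_{j-1}\textbf{Y} \vdash Y_j \tp) \le \Ht(\textbf{X})\Ht(\textbf{Y}) = h$, $h$-derivability makes this judgment derivable, with context $\partial(\textbf{X}_i' \otimes \textbf{Y}_j')$, where $\textbf{X}_i' = (\partial_{i-1}\textbf{X}, x_i:X_i)$ and $\textbf{Y}_j' = (\partial_{j-1}\textbf{Y}, y_j:Y_j)$. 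The remaining task is combinatorial: to verify that $\partial(\textbf{X}_i' \otimes \textbf{Y}_j')$ is precisely the list of entries of $\textbf{X} \otimes \textbf{Y}$ strictly preceding position $(i,j)$, so that after weakening and reordering of variables the sort $X_i \otimes Y_j$ is derivable in its ambient context. This is an induction on $(i,j)$ that simply unwinds the lexicographic indexing conventions of Notation \ref{not: abuses of notation}.

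Next I would establish the substitution lemmas, the heart of the argument. I would prove $Sub^1_{t,t}(h)$ by induction on the structure of $u$ (equivalently, on $\Ht(\textbf{A} \vdash u \tm)$), treating the cases of the recursive definition of $\otimes_t$ in turn. When $u$ is a variable both sides collapse immediately; when $u$ is not a variable I would expand $u \otimes v$ via formula ($\lozenge$) (using Lemma \ref{lem: formulas for tensor products of terms}) and push the substitution $[\textbf{f} \otimes \textbf{Y}]$ inward, exploiting that $[\textbf{f} \otimes \textbf{Y}]$ fixes the auxiliary variables $xy_1, \ldots, xy_n$, so that it commutes with the internal substitution $[u^U \otimes y_i \mid xy_i]$. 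The sort-argument subterms of $u$ have strictly smaller height, so the substitution identities for them come from the level-$(<h)$ hypotheses, while the codomain sort (which mixes $\otimes$ and $\varotimes$) is controlled by Remark \ref{rem: matrix form of tensor product of terms - not variables}. The sort-valued variants $Sub^1_{t,s}(h)$, $Sub^1_{s,t}(h)$ and $Sub^1_{s,s}(h)$ then follow by applying the term version componentwise to the sort-arguments, via the identification of a sort judgment with its augmented form (Notation \ref{not: abuses of notation}) and the definition of $\otimes_s$ as a boundary of $\otimes_t$.

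Finally, with $Cont(h)$ and the $Sub$ family in hand, the morphism statements are formal. For $Mor^1(h)$, since $\Ht(\textbf{X}), \Ht(\textbf{A}) \le \Ht(\textbf{f})$ the tensors $\textbf{X} \otimes \textbf{Y}$ and $\textbf{A} \otimes \textbf{Y}$ are contexts by $Cont(h)$ and its lower-level instances, and the requirement that $\textbf{f} \otimes \textbf{Y}$ be a morphism reduces to checking that each component $f_i \otimes y_j$ is a derivable term whose sort matches the substituted sort in $\textbf{A} \otimes \textbf{Y}$; the latter matching is exactly an instance of $Sub^1_{s,s}(h)$, and derivability of the component is supplied by $h$-derivability. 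The ``$+$'' version $Mor^1_+(h)$ adjoins a single display variable, so one need only verify the new final component, which reduces to $Mor^1(h)$ together with the relevant $Sub$ identity at the stated heights. I expect the substitution lemmas to be the main obstacle, and within them the precise step of commuting the first-argument substitution $[\textbf{f} \otimes \textbf{Y}]$ past the internal substitutions concealed in formula ($\lozenge$) while keeping the $\otimes$/$\varotimes$ discrepancy in the codomain sort under control; the rest is careful manipulation of the indexing conventions.
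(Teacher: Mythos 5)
Your overall architecture (symmetry between the $1$- and $2$-statements, $Cont$ first, the $Sub$ family as the core, morphisms assembled from these) matches the paper, and you correctly isolate the key manipulation in $Sub^1_{t,t}$ --- pushing $[\textbf{f} \otimes \textbf{Y}]$ past the internal substitution hidden in ($\lozenge$) using that it fixes the fresh variables $xy_1, \ldots, xy_n$. But there are several genuine gaps. The most serious is in $Sub^1_{t,t}(h)$: your induction is on $u$ alone, and you assert that ``when $u$ is a variable both sides collapse immediately.'' That is false when $u = a_I$ is a variable but $v = t(t_1,\ldots,t_l)$ is not. There $\overline{a_I \otimes v}$ unfolds to $\overline{St((\sigma_1,\ldots,\sigma_p,a_I)\otimes(t_1,\ldots,t_l))}$ with $A_I = S(\sigma_1,\ldots,\sigma_p)$, whereas $\underline{a_I}\otimes v = f_I \otimes v$ is computed by ($\lozenge$) from $x'^{\,\Type(f_I)} \otimes v$; since $\Type(f_I)$ and $\underline{A_I}$ are only \emph{provably} equal, reconciling the two sides requires tensoring the sort equality $\textbf{X} \vdash \Type(f_I) \equiv \underline{A_I} \tp$ with $\textbf{Y} \vdash v:\Type(v)$ (an application of $h$-derivability to an equality judgment) and then substituting along $(x_1,\ldots,x_m,f_I)\otimes\textbf{Y}$. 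This is the hardest case of the lemma and it also forces the induction to be on the product $\Ht(\textbf{A}\vdash u\tm)\Ht(\textbf{Y}\vdash v\tm)$ (the recursion descends into $v$ while $u$ stays fixed), not on the structure of $u$.

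Two further problems. First, your ordering is circular: you defer all morphism statements until after the $Sub$ family, but $Sub^1_{s,t}(h)$ already needs $Mor^1_+(h)$ at level $h$ --- both to make sense of the substitution $\doverline{a'\otimes v}$ along $\textbf{f}'\otimes\textbf{Y}:\textbf{X}'\otimes\textbf{Y}\to\textbf{A}'\otimes\textbf{Y}$ and to establish derivability of $\doverline{a'\otimes v}$ before the equality induction even starts; the correct order interleaves $Mor^1_+(h)$, $Mor^2_+(h)$ between the $t,t$- and the remaining $Sub$-statements (this works because $Mor^1_+(h)$ only consumes $Sub^1_{s,s}$ at level $h-1$). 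Relatedly, the sort-valued variants do not simply ``follow by applying the term version componentwise'': $Sub^1_{s,s}(h)$ genuinely needs $Sub^1_{s,t}(h)$ and $Sub^1_{t,s}(h)$ at level $h$ for the mixed entries $a'\otimes t_j$ and $s_i\otimes y'$. Second, your $Cont(h)$ argument rests on the claim that $\partial(\textbf{X}_i'\otimes\textbf{Y}_j')$ is the initial segment of $\textbf{X}\otimes\textbf{Y}$ preceding position $(i,j)$; in the lexicographic order this is false for $i<m$ (it omits the entries $x_ay_b$ with $a<i$ and $b>j$), and patching it would require weakening and variable-reordering lemmas for $\bbA\otimes\bbB$ that are nowhere established. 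The correct argument needs only the single judgment $(\partial\textbf{X}\vdash X_m\tp)\odot(\partial\textbf{Y}\vdash Y_n\tp)$, whose context is by construction exactly $\partial(\textbf{X}\otimes\textbf{Y})$, so that derivability of that one judgment already certifies the whole context.
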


\vspace{0.5em}
\colorbox{white!90!black}{Proof of $Cont(h)$}
\vspace{0.5em}

Suppose given contexts $\textbf{X}$ in $\bbA$ and $\textbf{Y}$ in $\bbB$ such that $\Ht(\textbf{X})\Ht(\textbf{Y}) = h$. Write $\textbf{X} = (x_1:X_1, \ldots, x_m:X_m)$ and $\textbf{Y} = (y_1:Y_1, \ldots, x_n:Y_n)$. We have $\Ht(\partial \textbf{X} \vdashcustom X_m \tp)\Ht(\partial \textbf{Y} \vdashcustom Y_n \tp) = h$, so by $h$-derivability we can derive $\partial(\textbf{X} \otimes \textbf{Y}) \vdashcustom X_m \otimes Y_n \tp$. It follows that $\textbf{X} \otimes \textbf{Y} = (\partial(\textbf{X} \otimes \textbf{Y}), x_my_n:X_m \otimes Y_n)$ is a context.

\vspace{1em}
\colorbox{white!90!black}{Proof of $Sub^1_{t,t}(h)$}
\vspace{0.5em}

Suppose given a context morphism
$$
\textbf{f} = (f_1, \ldots, f_M):\textbf{X} = (x_1:X_1, \ldots, x_m:X_m) \longrightarrow \textbf{A} = (a_1:A_1, \ldots, a_M:A_M)
$$
in $\bbA$, a context $\textbf{Y} = (y_1:Y_1, \ldots, y_n:Y_n)$ in $\bbB$, and derivable judgments $\textbf{A} \vdashcustom u \tm$, $\textbf{Y} \vdashcustom v \tm$. Assume that $\textbf{H}(\textbf{f}:\textbf{X} \rightarrow \textbf{A})\Ht(\textbf{Y} \vdashcustom v \tm)$ and $\Ht(\textbf{A} \vdashcustom u \tm)\Ht(\textbf{Y} \vdashcustom v \tm)$ are at most $h$.

Note that $\Ht(\textbf{f}:\textbf{X} \rightarrow \textbf{A})\Ht(\textbf{Y}) < h$, so by $Mor^1(h-1)$ we have a morphism $\textbf{f} \otimes \textbf{Y}: \textbf{X} \otimes \textbf{Y} \rightarrow \textbf{A} \otimes \textbf{Y}$.

In what follows, if $e$ is an expression in the alphabet of $\bbA \otimes \bbB$, we will write $\overline{e}$ for $e[\textbf{f} \otimes \textbf{Y}] = e[f_i \otimes y_j \mid a_iy_j]_{i \le M, j \le n}$. Similarly, if $e$ is an expression in the alphabet of $\bbA$, we write $\underline{e}$ for $e[\textbf{f}] = e[f_i \mid a_i]_{i \le M}$. Thus our goal is to derive the equality
$$
\textbf{X} \otimes \textbf{Y} \vdashcustom \overline{u \otimes v} \equiv \underline{u} \otimes v \tm.
$$

We will prove by induction on $P \in \{0, \ldots, h\}$ that this holds whenever $\Ht(\textbf{A} \vdashcustom u \tm)\Ht(\textbf{Y} \vdashcustom v \tm) = P$. Let $0 \le P \le h$ be such that the claim holds for all $P' < P$, and suppose that $\Ht(\textbf{A} \vdashcustom u \tm)\Ht(\textbf{Y} \vdashcustom v \tm) = P$.\footnote{Note that we do not verify a base case separately. In fact, for each $P$ the proof is done unconditionally for certain $u$ and $v$ $-$ namely, when both are variables.}\\

Consider contexts $\textbf{A}' = (\textbf{A},a':\Type(u))$ and $\textbf{X}' = (\textbf{X},x':\Type(\underline{u})) = (\textbf{X}, x':\underline{\Type(u)})$. We have the following cases:
\begin{itemize}
	\item $u$ is a variable $a_I$ and $v$ is a variable $y_J$. Then $\overline{u \otimes v}$ and $\underline{u} \otimes v$ are both $f_I \otimes y_J$.
	
	\item $u$ is a variable $a_I$ and $v = t(t_1, \ldots, t_l)$. Express $A_I$ as $S(\sigma_1, \ldots, \sigma_p)$. Then $\underline{A_I} = S(\underline{\sigma_1}, \ldots, \underline{\sigma_p})$ and
	$$
	\textbf{X} \vdashcustom \Type(f_I) \equiv \underline{A_I} \tp
	$$
	is derivable. Denoting this judgment by $J$, we have
	$$
	\Ht(J) \le \Ht(\textbf{X} \vdashcustom f_I:\underline{A_I}) \le \Ht(\textbf{f}:\textbf{X} \rightarrow \textbf{A}).
	$$
	Then $\Ht(J)\Ht(\textbf{Y} \vdashcustom v:\Type(v)) \le h$, so by tensoring $J$ with $\textbf{Y} \vdashcustom v:\Type(v)$ we derive an equality
	$$
	\textbf{X}' \otimes \textbf{Y} \vdashcustom x^{'\Type(f_I)} \otimes v \equiv x^{' \underline{A_I}} \otimes v \tm.
	$$

	Note that $f_I \otimes v$ is, by definition,
	$$
	(x^{' \Type(f_I)} \otimes v)[f_I \otimes y_1 \mid x'y_1, \ldots, f_I \otimes y_n \mid x'y_n],
	$$
	which is then provably equal in context $\textbf{X} \otimes \textbf{Y}$ to
	\[
	\tag{$\texttt{*}$}
	(x^{' \underline{A_I}} \otimes v)[f_I \otimes y_1 \mid x'y_1, \ldots, f_I \otimes y_n \mid x'y_n].
	\]
	Here we have used substitution along the context morphism $(x_1, \ldots, x_m,f_I) \otimes \textbf{Y}:\textbf{X} \otimes \textbf{Y} \rightarrow \textbf{X}' \otimes \textbf{Y}$.

	Expanding ($\texttt{*}$) we obtain
	\[
	\tag{1}
	St((\underline{\sigma_1}, \ldots, \underline{\sigma_p}, x^{'\underline{A_I}}) \otimes (t_1, \ldots, t_l))[f_I \otimes y_1 \mid x'y_1, \ldots, f_I \otimes y_n \mid x'y_n].
	\]
	On the other hand, $\overline{a_I \otimes v}$ is
	$$
	\overline{St((\sigma_1, \ldots, \sigma_p,a_I) \otimes (t_1, \ldots, t_l))},
	$$
	which, by the induction hypothesis, is provably equal in context $\textbf{X} \otimes \textbf{Y}$ to
	\[
	\tag{2}
	St((\underline{\sigma_1}, \ldots, \underline{\sigma_p},f_I) \otimes (t_1, \ldots, t_l)).
	\]
	To derive an equality between (1) and (2) in context $\textbf{X} \otimes \textbf{Y}$, it suffices $-$ noting that $x'$ does not occur in $\underline{\sigma_1}$, \ldots, $\underline{\sigma_p}$ $-$ to derive for $j = 1$, \ldots, $l$ an equality between $f_I \otimes t_j$ and
	$$
	(x^{'\underline{A_I}} \otimes t_j)[f_I \otimes y_1 \mid x'y_1, \ldots, f_I \otimes y_n \mid x'y_n].
	$$
	Since, by definition, $f_I \otimes t_j$ is $(x^{'\Type(f_I)} \otimes t_j)[f_I \otimes y_1 \mid x'y_1, \ldots, f_I \otimes y_n \mid x'y_n]$, the desired equality follows from
	$$
	\textbf{X}' \otimes \textbf{Y} \vdashcustom x^{'\Type(f_I)} \otimes t_j \equiv x^{'\underline{A_I}} \otimes t_j \tm
	$$
	being derivable.
	
	\item $u = s(s_1, \ldots, s_k)$ and $v$ is a variable $y_J$. Express $Y_J$ as $T(\tau_1, \ldots, \tau_q)$. Then $u \otimes v$ is $sT((s_1, \ldots, s_k) \otimes (\tau_1, \ldots, \tau_q, y_J))$, so $\overline{u \otimes v}$ is provably equal in context $\textbf{X} \otimes \textbf{Y}$, by the induction hypothesis, to
	$$
	sT((\underline{s_1}, \ldots, \underline{s_k}) \otimes (\tau_1, \ldots, \tau_q,y_J)).
	$$
	But the latter is precisely $\underline{u} \otimes v$.
	
	\item $u = s(s_1, \ldots, s_k)$ and $v = t(t_1, \ldots, t_l)$. Then $u \otimes v$ is $(a' \otimes v)[u \otimes y_1 \mid a'y_1, \ldots, u \otimes y_n \mid a'y_n]$, so
	$$
	\overline{u \otimes v} = (a' \otimes v)[u \otimes y_1 \mid a'y_1, \ldots, u \otimes y_n \mid a'y_n][f_1 \otimes y_1 \mid a_1y_1, \ldots, f_M \otimes y_n \mid a_my_n].
	$$
	Since the variables $a'y_1$, \ldots, $a'y_n$ do not occur in $f_1 \otimes y_1$, \ldots, $f_M \otimes y_n$, we conclude that
	\begin{align*}
		\overline{u \otimes v} & = (a' \otimes v)[f_i \otimes y_j \mid a_iy_j]_{i \le M, j \le n}[\overline{u \otimes y_j} \mid a'y_j]_{j \le n}\\
		 & = \overline{a' \otimes v}[\overline{u \otimes y_j} \mid a'y_j]_{j \le n}.\\
	\end{align*}

	We will verify that this expression is provably equal in context $\textbf{X} \otimes \textbf{Y}$ to $\underline{u} \otimes v$.
	
	Expressing $\Type(u)$ as $S(\sigma_1, \ldots, \sigma_p)$, we have that $a' \otimes v$ is $St((\sigma_1, \ldots, \sigma_p,a') \otimes (t_1, \ldots, t_l))$. Hence
	$$
	\overline{a' \otimes v} = St(\overline{\sigma_1 \otimes t_1}, \ldots, \overline{\sigma_p \otimes t_l}, \overline{a' \otimes t_1}, \ldots, \overline{a' \otimes t_l})
	$$
	and, as $a'$ does not occur in $\sigma_1$, \ldots, $\sigma_p$, we obtain
	\begin{align*}
		 \overline{u \otimes v} & = \overline{a' \otimes v}[\overline{u \otimes y_j} \mid a'y_j]_{j \le n}\\
		 & = St(\overline{\sigma_1 \otimes t_1}, \ldots, \overline{\sigma_p \otimes t_l}, \overline{a' \otimes t_1}[\overline{u \otimes y_j} \mid a'y_j]_{j \le n}, \ldots, \overline{a' \otimes t_l}[\overline{u \otimes y_j} \mid a'y_j]_{j \le n}) \\
		 & = St(\overline{\sigma_1 \otimes t_1}, \ldots, \overline{\sigma_p \otimes t_l}, \overline{u \otimes t_1}, \ldots, \overline{u \otimes t_l}).
	\end{align*}
	By the induction hypothesis, the latter is provably equal in context $\textbf{X} \otimes \textbf{Y}$ to
	$$
	St(\underline{\sigma_1} \otimes t_1, \ldots, \underline{\sigma_p} \otimes t_l, \underline{u} \otimes t_1, \ldots, \underline{u} \otimes t_l) = \underline{u} \otimes v.
	$$
	This concludes the proof.
\end{itemize}

\vspace{1em}
\colorbox{white!90!black}{Proof of $Sub^2_{t,t}(h)$}
\vspace{0.5em}

Suppose given a context $\textbf{X} = (x_1:X_1, \ldots, x_m:X_m)$ in $\bbA$, a context morphism
$$
\textbf{g} = (g_1, \ldots, g_N): \textbf{Y} = (y_1:Y_1, \ldots, y_n:Y_n) \rightarrow \textbf{B} = (b_1:B_1, \ldots, b_N:B_N)
$$
in $\bbB$, and derivable judgments $\textbf{X} \vdashcustom u \tm$, $\textbf{B} \vdashcustom v \tm$. Assume that $\Ht(\textbf{X} \vdashcustom u \tm)\Ht(\textbf{g}:\textbf{Y} \rightarrow \textbf{B})$ and $\Ht(\textbf{X} \vdashcustom u \tm)\Ht(\textbf{B} \vdashcustom v \tm)$ are at most $h$.

Note that $\Ht(\textbf{X})\Ht(\textbf{g}:\textbf{Y} \rightarrow \textbf{B}) < h$, so by $Mor^2(h-1)$ we have a context morphism $\textbf{X} \otimes \textbf{g}:\textbf{X} \otimes \textbf{Y} \rightarrow \textbf{X} \otimes \textbf{B}$.

Our goal, in notation analogous to the one used above, is to derive the equality
$$
\textbf{X} \otimes \textbf{Y} \vdashcustom \overline{u \otimes v} \equiv u \otimes \underline{v} \tm.
$$
We will prove by induction on $P \in \{0, \ldots, h\}$ that this holds whenever $\Ht(\textbf{X} \vdashcustom u \tm)\Ht(\textbf{B} \vdashcustom v \tm) = P$. Let $0 \le P \le h$ be such that the claim holds for all $P' < P$, and suppose that $\Ht(\textbf{X} \vdashcustom u \tm)\Ht(\textbf{B} \vdashcustom v \tm) = P$.

Consider contexts $\textbf{B}' = (\textbf{B}, b':\Type(v))$ and $\textbf{Y}' = (\textbf{Y}, y':\Type(\underline{v})) = (\textbf{Y}, y':\underline{\Type(v)})$. We have the following cases:

\begin{itemize}
	\item $u$ is a variable $x_I$ and $v$ is a variable $b_J$. Then $\overline{u \otimes v}$ and $u \otimes \underline{v}$ are both $x_I \otimes g_J$.
	
	\item $u$ is a variable $x_I$ and $v = t(t_1, \ldots, t_l)$. Express $X_I$ as $S(\sigma_1, \ldots, \sigma_p)$. Then $u \otimes v$ is $St((\sigma_1, \ldots, \sigma_p,x_I) \otimes (t_1, \ldots, t_l))$, so $\overline{u \otimes v}$ is provably equal in context $\textbf{X} \otimes \textbf{Y}$, by the induction hypothesis, to
	$$
	St((\sigma_1, \ldots, \sigma_p, x_I) \otimes (\underline{t_1}, \ldots, \underline{t_l})).
	$$
	But the latter is precisely $u \otimes \underline{v}$.
	
	\item $u = s(s_1, \ldots, s_k)$ and $v$ is a variable $b_J$. Express $B_J$ as $T(\tau_1, \ldots, \tau_q)$. Then $\underline{B_J} = T(\underline{\tau_1}, \ldots, \underline{\tau_q})$ and
	$$
	\textbf{Y} \vdashcustom \Type(g_J) \equiv \underline{B_J} \tp
	$$
	is derivable. Denoting this judgment by $J$, we have
	$$
	\Ht(J) \le \Ht(\textbf{Y} \vdashcustom g_J:\underline{B_J}) \le \Ht(\textbf{f}:\textbf{Y} \rightarrow \textbf{B}).
	$$
	Then $\Ht(\textbf{X} \vdashcustom u:\Type(u))\Ht(J) \le h$, so by tensoring $\textbf{X} \vdashcustom u:\Type(u)$ with $J$ we derive an equality
	\[
	\tag{$\texttt{*}$}
	\textbf{X} \otimes \textbf{Y'} \vdashcustom u \otimes y^{' \Type(g_J)} \equiv u \otimes y^{' \underline{B_J}} \tm.
	\]
	
	Also, note that since
	$$
	\Ht(\textbf{X} \vdashcustom u \tm)\Ht(\textbf{Y} \vdashcustom g_J \tm) \le \Ht(\textbf{X} \vdashcustom u \tm)\Ht(\textbf{f}:\textbf{Y} \rightarrow \textbf{B}) \le h,
	$$
	by $h$-derivability we can tensor $\textbf{X} \vdashcustom u \tm$ and $\textbf{Y} \vdashcustom g_J \tm$ to derive
	$$
	\textbf{X} \otimes \textbf{Y} \vdashcustom u \otimes g_J \equiv (u \otimes y^{' \Type(g_J)})[x_1 \otimes g_J \mid x_1y', \ldots, x_m \otimes g_J \mid x_my'] \tm.
	$$
	Now, by using ($\texttt{*}$) it follows that
	\[
	\tag{$\texttt{*}\texttt{*}$}
	\textbf{X} \otimes \textbf{Y} \vdashcustom u \otimes g_J \equiv (u \otimes y^{' \underline{B_J}})[x_1 \otimes g_J \mid x_1y', \ldots, x_m \otimes g_J \mid x_my'] \tm
	\]
	is derivable. In this step, we have used substitution along the context morphism $\textbf{X} \otimes (y_1, \ldots, y_n, g_J):\textbf{X} \otimes \textbf{Y} \rightarrow \textbf{X} \otimes \textbf{Y}'$.

	We will verify that the expression on the right-hand side of ($\texttt{*}\texttt{*}$) is provably equal in context $\textbf{X} \otimes \textbf{Y}$ to $\overline{u \otimes b_J}$.

	On the one hand, we have
	$$
	u \otimes y^{' \underline{B_J}} = sT((s_1, \ldots, s_k) \otimes (\underline{\tau_1}, \ldots, \underline{\tau_q}, y')),
	$$
	hence $(u \otimes y^{' \underline{B_J}})[x_1 \otimes g_J \mid x_1y', \ldots, x_m \otimes g_J \mid x_my']$ equals
	$$
	sT((s_1, \ldots, s_k) \otimes (\underline{\tau_1}, \ldots, \underline{\tau_q}, y^{' \underline{B_J}}))[x_1 \otimes g_J \mid x_1y', \ldots, x_m \otimes g_J \mid x_my']
	$$
	As $y$ does not occur in $\tau_1$, \ldots, $\tau_1$, this expression equals
	$$
	sT \bigg(s_1 \otimes \underline{\tau_1}, \ldots, s_1 \otimes \underline{\tau_q}, (s_1 \otimes y^{' \underline{B_J}})[x_1 \otimes g_J \mid x_1y', \ldots, x_m \otimes g_J \mid x_my'],
	$$
	$$
	\ldots,
	$$
	\[
	\tag{$\blacklozenge$}
	s_k \otimes \underline{\tau_1}, \ldots, s_k \otimes \underline{\tau_q}, (s_k \otimes y^{' \underline{B_J}})[x_1 \otimes g_J \mid x_1y', \ldots, x_m \otimes g_J \mid x_my'] \bigg).
	\]
	On the other hand,
	$$
	u \otimes b_j = sT((s_1, ..., s_k) \otimes (\tau_1, ..., \tau_q, b_j)),
	$$
	thus, by the induction hypothesis, the judgment
	\[
	\tag{$\blacklozenge\blacklozenge$}
	\textbf{X} \otimes \textbf{Y} \vdashcustom \overline{u \otimes b_j} \equiv sT((s_1, ..., s_k) \otimes (\underline{\tau_1}, ..., \underline{\tau_q}, g_J)) \tm
	\]
	is derivable. It remains to prove that ($\blacklozenge$) is provably equal, in context $\textbf{X} \otimes \textbf{Y}$, to the expression $sT((s_1, ..., s_k) \otimes (\underline{\tau_1}, ..., \underline{\tau_q}, g_J))$ as in ($\blacklozenge\blacklozenge$).
	
	We conclude by noting that, by the induction hypothesis, the judgment
	$$
	\textbf{X} \otimes \textbf{Y} \vdashcustom (s_i \otimes y^{' \underline{B_J}})[x_1 \otimes g_J \mid x_1y', \ldots, x_m \otimes g_J \mid x_my'] \equiv s_i \otimes g_J
	$$
	is derivable for $1 \le i \le k$.
	
	\item $u = s(s_1, ..., s_k)$ and $v = t(t_1, ..., t_l)$. Then
	$$
	u \otimes v = (x' \otimes v)[u \otimes b_1 \mid x'b_1, ..., u \otimes b_N \mid x'b_N]
	$$
	where we consider a context $\textbf{X}' = (\textbf{X}, x':\Type(u))$. It follows that
	\begin{align*}
		\overline{u \otimes v} & = \overline{(x' \otimes v)[u \otimes b_j \mid x'b_j]_{j \le N}}\\
		 & = \overline{x' \otimes v}[\overline{u \otimes b_j} \mid x'b_j]_{j \le N}.
	\end{align*}
	Expressing $\Type(u)$ as $S(\sigma_1, ..., \sigma_p)$, we have
	$$
	x' \otimes v = St((\sigma_1, ..., \sigma_p, x') \otimes (t_1, ..., t_l)),
	$$
	hence
	$$
	\overline{x' \otimes v} = St(\underline{\sigma_1 \otimes t_1}, ..., \overline{\sigma_p \otimes t_l}, \overline{x' \otimes t_1}, ..., \overline{x' \otimes t_l}).
	$$
	Noting that $x'$ does not occur in $\sigma_1$, ..., $\sigma_p$, it follows that
	\begin{align*}
		\overline{u \otimes v} & =  \overline{x' \otimes v}[\overline{u \otimes b_j} \mid x'b_j]_{j \le N}\\
		 & = St(\overline{\sigma_1 \otimes t_1}, ..., \overline{\sigma_p \otimes t_l}, \overline{x' \otimes t_1}[\overline{u \otimes b_j} \mid x'b_j]_{j \le N}, ..., \overline{x' \otimes t_l}[\overline{u \otimes b_j} \mid x'b_j]_{j \le N})\\
		 & = St(\overline{\sigma_1 \otimes t_1}, ..., \overline{\sigma_p \otimes t_l}, \overline{(x' \otimes t_1)[u \otimes b_j \mid x'b_j]_{j \le N}}, ..., \overline{(x' \otimes t_1)[u \otimes b_j \mid x'b_j]_{j \le N}})\\
		 & = St(\overline{\sigma_1 \otimes t_1}, ..., \overline{\sigma_p \otimes t_l}, \overline{u \otimes t_1}, ..., \overline{u \otimes t_l}).
	\end{align*}
	
	But by the induction hypothesis, the latter term expression is provably equal in context $\textbf{X} \otimes \textbf{Y}$ to
	\begin{align*}
		St(\sigma_1 \otimes \underline{t_1}, ..., \sigma_p \otimes \underline{t_l}, u \otimes \underline{t_1}, ..., u \otimes \underline{t_l}) & = St((\sigma_1, ..., \sigma_p, u) \otimes (\underline{t_1}, ..., \underline{t_l}))\\
		& = u \otimes \underline{v}.
	\end{align*}
\end{itemize}

\begin{minipage}{\textwidth}
\colorbox{white!90!black}{Proof of $Mor^1_+(h)$}
\vspace{0.5em}

	Suppose given a morphism
	$$
	\textbf{f} = (f_1, ..., f_M): \textbf{X} = (x_1:X_1, ..., x_m:X_m) \longrightarrow \textbf{A} = (a_1:A_1, ..., a_M:A_M)
	$$
	and a derivable judgment $\textbf{A} \vdashcustom U \tp$ in $\bbA$, and a context $\textbf{Y} = (y_1:Y_1, ..., y_n:Y_n)$ in $\bbB$. Assume that $\Ht(\textbf{f}:\textbf{X} \rightarrow \textbf{A})\Ht(\textbf{Y})$, $\Ht(\textbf{A} \vdashcustom U \tp)\Ht(\textbf{Y}) \le h-1$ and $\Ht(\textbf{X} \vdashcustom U[\textbf{f}] \tp)\Ht(\textbf{Y}) \le h$. Consider contexts $\textbf{A}' = (\textbf{A}, a':U)$ and $\textbf{X}' = (\textbf{X}, x':U[\textbf{f}])$, and write $\textbf{f}'$ for the tuple $(f_1, ..., f_M, x')$. We must verify that
	$$
	\textbf{f}' \otimes \textbf{Y}: \textbf{X}' \otimes \textbf{Y} \longrightarrow \textbf{A}' \otimes \textbf{Y}
	$$
	is a context morphism.
\end{minipage}
	
	Firstly, note that as $\Ht(\textbf{X}')\Ht(\textbf{Y})$, $\Ht(\textbf{A}')\Ht(\textbf{Y}) \le h$, we obtain from $Cont(h)$ that $\textbf{X}' \otimes \textbf{Y}$ and $\textbf{A}' \otimes \textbf{Y}$ are contexts.

	Moreover, by $Mor^1(h-1)$ we have a context morphism $\textbf{f} \otimes \textbf{Y}:\textbf{X} \otimes \textbf{Y} \rightarrow \textbf{A} \otimes \textbf{Y}$, thus also a morphism
	\[
	\tag{\texttt{*}}
	\textbf{f} \otimes \textbf{Y}:\textbf{X}' \otimes \textbf{Y} \rightarrow \textbf{A} \otimes \textbf{Y}.
	\]

	If $\Ht(\textbf{Y}) = 0$, then the tuple $\textbf{f}' \otimes \textbf{Y}$, which is empty, is the identity of the empty context $\textbf{X}' \otimes \textbf{Y}$. Otherwise, we have $\Ht(\partial \textbf{Y}) < \Ht(\textbf{Y})$, so we can use $Mor^1_+(h-1)$ to derive $\textbf{f}' \otimes \partial \textbf{Y}: \textbf{X}' \otimes \partial \textbf{Y} \rightarrow \textbf{A}' \otimes \partial \textbf{Y}$, hence
	\[
	\tag{\texttt{**}}
	\textbf{f}' \otimes \partial \textbf{Y}: \textbf{X}' \otimes \textbf{Y} \longrightarrow \textbf{A}' \otimes \partial \textbf{Y}.
	\]
	From (\texttt{*}) and (\texttt{**}) we obtain a morphism
	$$
	\partial(\textbf{f}' \otimes \textbf{Y}):\textbf{X}' \otimes \textbf{Y} \longrightarrow \partial(\textbf{A}' \otimes \textbf{Y}).
	$$
	
	To conclude, using a double bar to indicate substitution along $\partial(\textbf{f}' \otimes \textbf{Y})$, let us verify that the judgment
	$$
	\textbf{X}' \otimes \textbf{Y} \vdashcustom x'y_n: \doverline{U \otimes Y_n}
	$$
	is derivable. It suffices to prove that
	$$
	\textbf{X}' \otimes \textbf{Y} \vdashcustom \doverline{U \otimes Y_n} \equiv X' \otimes Y_n \tp
	$$
	is derivable. But that follows from $Sub^1_{s,s}(h-1)$ with $\textbf{Y} \vdashcustom V \tp$ replaced by $\partial\textbf{Y} \vdashcustom Y_n \tp$, noting that
	\begin{align*}
		&\Ht(\textbf{A} \vdashcustom U \tp)\Ht(\partial \textbf{Y} \vdashcustom Y_n \tp) = \Ht(\textbf{A} \vdashcustom U \tp)\Ht(\textbf{Y}) \le h-1, \\
		&\Ht(\textbf{X} \vdashcustom U[\textbf{f}] \tp)\Ht(\partial \textbf{Y}) < \Ht(\textbf{X} \vdashcustom U[\textbf{f}] \tp)\Ht(\textbf{Y}) \le h, \\
		&\Ht(\textbf{f}:\textbf{X} \rightarrow \textbf{A})\Ht(\partial\textbf{Y} \vdashcustom Y_n \tp) = \Ht(\textbf{f}:\textbf{X} \rightarrow \textbf{A})\Ht(\textbf{Y}) \le h-1.
	\end{align*}

\colorbox{white!90!black}{Proof of $Mor^2_+(h)$}
\vspace{0.5em}

Suppose given a context $\textbf{X} = (x_1:X_1, ..., x_m:X_m)$ in $\bbA$, and a morphism
$$
\textbf{g} = (g_1, ..., g_N): \textbf{Y} = (y_1:Y_1, ..., y_n:Y_n) \longrightarrow \textbf{B} = (b_1:B_1, ..., b_N:B_N)
$$
and a derivable judgment $\textbf{B} \vdashcustom V \tp$ in $\bbB$. Assume that $\Ht(\textbf{X})\Ht(\textbf{g}:\textbf{Y} \rightarrow \textbf{B})$, $\Ht(\textbf{X})\Ht(\textbf{B} \vdashcustom V \tp) \le h-1$ and $\Ht(\textbf{X})\Ht(\textbf{Y} \vdashcustom V[\textbf{g}] \tp) \le h$. Consider contexts $\textbf{B}' = (\textbf{B}, b':V)$ and $\textbf{Y}' = (\textbf{Y}, y':V[\textbf{g}])$, and write $\textbf{g}'$ for the tuple $(g_1, ..., g_N,y')$. We must verify that
$$
\textbf{X} \otimes \textbf{g}': \textbf{X} \otimes \textbf{Y}' \longrightarrow \textbf{X} \otimes \textbf{B}'
$$
is a context morphism.

Firstly, note that as $\Ht(\textbf{X})\Ht(\textbf{Y}')$, $\Ht(\textbf{X})\Ht(\textbf{B}') \le h$, we obtain from $Cont(h)$ that $\textbf{X} \otimes \textbf{Y}'$ and $\textbf{X} \otimes \textbf{B}'$ are contexts.

Moreover, by $Mor^2(h-1)$ we have a context morphism $\textbf{X} \otimes \textbf{g}:\textbf{X} \otimes \textbf{Y} \rightarrow \textbf{X} \otimes \textbf{B}$, thus also a morphism
\[
\tag{\texttt{*}}
\textbf{X} \otimes \textbf{g}: \textbf{X} \otimes \textbf{Y}' \longrightarrow \textbf{X} \otimes \textbf{B}.
\]
If $\Ht(\textbf{X}) = 0$, then the tuple $\textbf{X} \otimes \textbf{g}'$, which is empty, is the identity of the empty context $\textbf{X} \otimes \textbf{Y}'$. Otherwise, we have $\Ht(\partial\textbf{X}) < \Ht(\textbf{X})$, so we can use $Mor^2_+(h-1)$ to derive $\partial \textbf{X} \otimes \textbf{g}':\partial \textbf{X} \otimes \textbf{Y}' \rightarrow \partial \textbf{X} \otimes \textbf{B}'$, hence
\[
\tag{\texttt{**}}
\partial \textbf{X} \otimes \textbf{g}':\textbf{X} \otimes \textbf{Y}' \longrightarrow \partial \textbf{X} \otimes \textbf{B}'.
\]
From (\texttt{*}) and (\texttt{**}) we obtain a morphism
$$
\partial(\textbf{X} \otimes \textbf{g}'):\textbf{X} \otimes \textbf{Y}' \longrightarrow \partial(\textbf{X} \otimes \textbf{B}').
$$
To conclude, using a double bar to indicate substitution along $\partial(\textbf{X} \otimes \textbf{g}')$, let us verify that the judgment
$$
\textbf{X} \otimes \textbf{Y}' \vdashcustom x_m y':\doverline{X_m \otimes V}
$$
is derivable. It suffices to prove that
$$
\textbf{X} \otimes \textbf{Y}' \vdashcustom \doverline{X_m \otimes V} \equiv X_m \otimes Y' \tp
$$
is derivable. But that follows from $Sub^2_{s,s}(h-1)$ with $\textbf{X} \vdashcustom U \tp$ replaced by $\partial \textbf{X} \vdashcustom Y_m \tp$, noting that
\begin{align*}
	&\Ht(\partial \textbf{X} \vdashcustom X_m \tp)\Ht(\textbf{B} \vdashcustom V \tp) = \Ht(\textbf{X})\Ht(\textbf{B} \vdashcustom V \tp) \le h-1, \\
	&\Ht(\partial \textbf{X})\Ht(\textbf{Y} \vdashcustom V[\textbf{g}]) < \Ht(\textbf{X})\Ht(\textbf{X} \vdashcustom V[\textbf{g}]) \le h, \\
	&\Ht(\partial \textbf{X} \vdashcustom X_m \tp)\Ht(\textbf{g}:\textbf{Y} \rightarrow \textbf{B}) = \Ht(\textbf{X})\Ht(\textbf{g}:\textbf{Y} \rightarrow \textbf{B}) \le h-1.
\end{align*}

\colorbox{white!90!black}{Proof of $Sub^1_{t,s}(h)$}
\vspace{0.5em}

Suppose given a context morphism
$$
\textbf{f} = (f_1, \ldots, f_M):\textbf{X} = (x_1:X_1, \ldots, x_m:X_m) \longrightarrow \textbf{A} = (a_1:A_1, \ldots, a_M:A_M)
$$
in $\bbA$, a context $\textbf{Y} = (y_1:Y_1, \ldots, y_n:Y_n)$ in $\bbB$, and derivable judgments $\textbf{A} \vdashcustom u \tm$, $\textbf{Y} \vdashcustom V \tp$ such that $\Ht(\textbf{f}:\textbf{X} \rightarrow \textbf{A})\Ht(\textbf{Y} \vdashcustom V \tp)$ and $\Ht(\textbf{A} \vdashcustom u \tm)\Ht(\textbf{Y} \vdashcustom V \tp)$ are at most $h$.

Let $\textbf{Y}' = (\textbf{Y}, y':V)$. Since $\Ht(\textbf{X})\Ht(\textbf{Y}') = \Ht(\textbf{X})\Ht(\textbf{Y} \vdashcustom V \tp) \le h$, by $Cont(h)$ we have that $\textbf{X} \otimes \textbf{Y}'$ is a context.

Using notation analogous to that of the previous lemma, we will verify by induction on $P \in \{0, \ldots, h\}$ that
$$
\textbf{X} \otimes \textbf{Y}' \vdashcustom \overline{u \otimes y'} \equiv \underline{u} \otimes y' \tm
$$
is derivable whenever $\Ht(\textbf{A} \vdashcustom u \tm)\Ht(\textbf{Y} \vdashcustom V \tp) = P$.

Assume that the claim holds for all $P' < P = \Ht(\textbf{A} \vdashcustom u \tm)\Ht(\textbf{Y} \vdashcustom V \tp)$.

If $u$ is a variable $a_I$, then
$$
\overline{u \otimes y'} = \overline{a_Iy'} = f_I \otimes y' = \underline{u} \otimes y'.
$$

If it is not a variable, let $u = s(s_1, \ldots, s_p)$ and $V = T(t_1, \ldots, t_q)$. Then
$$
\underline{u} \otimes y' = s(\underline{s_1}, \ldots, \underline{s_p}) \otimes T(t_1, \ldots, t_q) = sT((\underline{s_1}, \ldots, \underline{s_p}) \otimes (t_1, \ldots, t_q, y')).
$$

Now, for $1 \le i \le p$ and $1 \le j \le q$, note that $\Ht(\textbf{A} \vdashcustom s_i \tm)\Ht(\textbf{Y} \vdashcustom t_j \tm) < \Ht(\textbf{A} \vdashcustom u \tm)\Ht(\textbf{Y} \vdashcustom V \tp) \le h$, so by $Sub^1_{t,t}(h-1)$ we can derive $\overline{s_i \otimes t_j} \equiv \underline{s_i} \otimes t_j$ in context $\textbf{X} \otimes \textbf{Y}$, hence in $\textbf{X} \otimes \textbf{Y}'$.

Moreover, for $1 \le i \le p$ we have $\Ht(\textbf{A} \vdashcustom s_i \tm)\Ht(\textbf{Y} \vdashcustom V \tp) < \Ht(\textbf{A} \vdashcustom u \tm)\Ht(\textbf{Y} \vdashcustom V \tp)$, so by the induction hypothesis we can derive $\overline{s_i \otimes y'} \equiv \underline{s_i} \otimes y'$ in context $\textbf{X} \otimes \textbf{Y}'$.

It follows that
$$
\textbf{X} \otimes \textbf{Y}' \vdashcustom \overline{sT((s_1, \ldots, s_p) \otimes (t_1, \ldots, t_q,y'))} \equiv sT((\underline{s_1}, \ldots, \underline{s_p}) \otimes (t_1,\ldots,t_q,y'))  \tm
$$
is derivable. But this judgment is precisely $\textbf{X} \otimes \textbf{Y}' \vdashcustom \overline{u \otimes y'} \equiv \underline{u} \otimes y' \tm$.

\vspace{1em}
\colorbox{white!90!black}{Proof of $Sub^2_{s,t}(h)$}
\vspace{0.5em}

Suppose given a context $\textbf{X} = (x_1:X_1, ..., x_m:X_m)$ in $\bbA$, a context morphism
$$
\textbf{g}:(g_1, ..., g_N): \textbf{Y} = (y_1:Y_1, ..., y_n:Y_n) \longrightarrow \textbf{B} = (b_1:B_1, ..., b_N:B_N)
$$
in $\bbB$, and derivable judgments $\textbf{X} \vdashcustom U \tp$, $\textbf{B} \vdashcustom v \tm$ such that $\Ht(\textbf{X} \vdashcustom U \tp)\Ht(\textbf{g}:\textbf{Y} \rightarrow \textbf{B})$ and $\Ht(\textbf{X} \vdashcustom U \tp)\Ht(\textbf{B} \vdashcustom v \tm)$ are at most $h$.

Let $\textbf{X}' = (\textbf{X}, x':U)$. Since $\Ht(\textbf{X}')\Ht(\textbf{Y}) = \Ht(\textbf{X} \vdashcustom U \tp)\Ht(\textbf{Y}) \le h$, by $Cont(h)$ we have that $\textbf{X}' \otimes \textbf{Y}$ is a context.

In notation analogous to that of the previous lemma, we will verify by induction on $P \in \{0, ..., h\}$ that
$$
\textbf{X}' \otimes \textbf{Y} \vdashcustom \doverline{x' \otimes v} \equiv x' \otimes \underline{v} \tm
$$
is derivable if $\Ht(\textbf{X} \vdashcustom U \tp)\Ht(\textbf{B} \vdashcustom v \tm) = P$.

Assume that the claim holds for all $P' < P = \Ht(\textbf{X} \vdashcustom U \tp)\Ht(\textbf{B} \vdashcustom v \tm)$.

If $v$ is a variable $b_J$, then
$$
\doverline{x' \otimes v} = \doverline{x'b_J} = x' \otimes g_J = x' \otimes \underline{v}.
$$
If it is not a variable, let $U = S(s_1, ..., s_p)$ and $v = t(t_1, ..., t_q)$. Then
$$
x' \otimes \underline{v} = S(s_1, ..., s_p) \otimes t(\underline{t_1}, ..., \underline{t_q}) = St((s_1, ..., s_p,x') \otimes (\underline{t_1}, ..., \underline{t_q})).
$$
Now, for $1 \le i \le p$ and $1 \le j \le q$, note that $\Ht(\textbf{X} \vdashcustom s_i \tm)\Ht(\textbf{B} \vdashcustom t_j \tm) < \Ht(\textbf{X} \vdashcustom U \tp)\Ht(\textbf{B} \vdashcustom v \tm) \le h$, so by $Sub^2_{t,t}(h-1)$ we can derive $\doverline{s_i \otimes t_j} \equiv s_i \otimes \underline{t_j}$ in context $\textbf{X} \otimes \textbf{Y}$, hence in $\textbf{X}' \otimes \textbf{Y}$.

Moreover, for $1 \le j \le q$ we have $\Ht(\textbf{X} \vdashcustom U \tp)\Ht(\textbf{B} \vdashcustom t_j \tm) < \Ht(\textbf{X} \vdashcustom U \tp)\Ht(\textbf{B} \vdashcustom v \tm) \le h$, so by the induction hypothesis we can derive $\doverline{x' \otimes t_j} \equiv x' \otimes \underline{t_j}$ in context $\textbf{X}' \otimes \textbf{Y}$.

It follows that
$$
\textbf{X}' \otimes \textbf{Y} \vdashcustom \doverline{St((s_1, ..., s_p,x') \otimes (t_1, ..., t_q))} \equiv St((s_1, ..., s_p,x') \otimes (\underline{t_1}, ..., \underline{t_q})) \tm
$$
is derivable. But this judgment is precisely $\textbf{X}' \otimes \textbf{Y} \vdashcustom \doverline{x' \otimes v} \equiv x' \otimes \underline{v} \tm$.

\vspace{1em}
\colorbox{white!90!black}{Proof of $Sub^1_{s,t}(h)$}
\vspace{0.5em}

Suppose given a context morphism
$$
\textbf{f} = (f_1, \ldots, f_M):\textbf{X} = (x_1:X_1, \ldots, x_m:X_m) \longrightarrow \textbf{A} = (a_1:A_1, \ldots, a_M:A_M)
$$
in $\bbA$, a context $\textbf{Y} = (y_1:Y_1, \ldots, y_n:Y_n)$ in $\bbB$, and derivable judgments $\textbf{A} \vdashcustom U \tp$, $\textbf{Y} \vdashcustom v \tm$ such that $\Ht(\textbf{f}:\textbf{X} \rightarrow \textbf{A})\Ht(\textbf{Y} \vdashcustom v \tm)$, $\Ht(\textbf{X} \vdashcustom U[\textbf{f}] \tp)\Ht(\textbf{Y})$ and $\Ht(\textbf{A} \vdashcustom U \tp)\Ht(\textbf{Y} \vdashcustom v \tm)$ are at most $h$.

Let $\textbf{A}' = (\textbf{A}, a':U)$ and $\textbf{X}' = (\textbf{X}, x':\underline{U})$ where $\underline{U} = U[\textbf{f}]$. Since $\Ht(\textbf{X}')\Ht(\textbf{Y}) \le h$, by $Cont(h)$ we have that $\textbf{X}' \otimes \textbf{Y}$ is a context.

We will verify by induction on $P \in \{0, \ldots, h\}$ that
$$
\textbf{X}' \otimes \textbf{Y} \vdashcustom \doverline{a' \otimes v} \equiv x' \otimes v \tm
$$
is derivable whenever $\Ht(\textbf{A} \vdashcustom U \tp)\Ht(\textbf{Y} \vdashcustom v \tm) = P$.

In fact, we will start by verifying, without using the induction hypothesis, that $\textbf{X}' \otimes \textbf{Y} \vdashcustom \doverline{a' \otimes v} \tm$ is derivable; the induction hypothesis will then be used to derive the equality with $x' \otimes v$ in context $\textbf{X}' \otimes \textbf{Y}$, which in particular will imply that $\textbf{X}' \otimes \textbf{Y} \vdashcustom x' \otimes v \tm$ is derivable.

For the first part, note that, as
\begin{align*}
	&\Ht(\textbf{f}:\textbf{X} \rightarrow \textbf{A})\Ht(\textbf{Y}) < \Ht(\textbf{f}:\textbf{X} \rightarrow \textbf{A})\Ht(\textbf{Y} \vdashcustom v \tm) \le h, \\
	&\Ht(\textbf{A} \vdashcustom U \tp)\Ht(\textbf{Y}) < \Ht(\textbf{A} \vdashcustom U \tp)\Ht(\textbf{Y} \vdashcustom v \tm) \le h, \\
	&\Ht(\textbf{X} \vdashcustom \underline{U} \tp)\Ht(\textbf{Y}) \le h,
\end{align*}
by $Mor^1_+(h)$ we can derive a morphism
\[
\tag{\texttt{*}}
\textbf{f}' \otimes \textbf{Y}:\textbf{X}' \otimes \textbf{Y} \longrightarrow \textbf{A}' \otimes \textbf{Y}.
\]

Moreover, as $\Ht(\textbf{A} \vdashcustom U \tp)\Ht(\textbf{Y} \vdashcustom v \tm) \le h$, by $h$-derivability we can tensor $\textbf{A} \vdashcustom U \tp$ and $\textbf{Y} \vdashcustom v \tm$ to derive
\[
\tag{\texttt{**}}
\textbf{A}' \otimes \textbf{Y} \vdashcustom a' \otimes v \tm.
\]
By substitution of (\texttt{**}) along (\texttt{*}) we derive $\textbf{X}' \otimes \textbf{Y} \vdashcustom \doverline{a' \otimes v} \tm$.

Now, let us turn to the inductive argument. Assume that the claim holds for all $P' < P = \Ht(\textbf{A} \vdashcustom U \tp)\Ht(\textbf{Y} \vdashcustom v \tm)$.

If $v$ is a variable $y_J$, then
$$
\doverline{a' \otimes v} = \doverline{a'y_J} = x'y_J = \underline{a'} \otimes v.
$$
If it is not a variable, let $U = S(s_1, ..., s_p)$ and $v = t(t_1, ..., t_q)$. Then $\underline{U} = S(\underline{s_1}, ..., \underline{s_p})$, so
$$
x' \otimes v = S(\underline{s_1}, ..., \underline{s_p}) \otimes t(t_1, ..., t_q) = St((\underline{s_1}, ..., \underline{s_p}, x') \otimes (t_1, ..., t_q)).
$$
For $1 \le i \le p$ and $1 \le j \le q$, note that $\Ht(\textbf{A} \vdashcustom s_i \tm)\Ht(\textbf{Y} \vdashcustom t_j \tm) < \Ht(\textbf{A} \vdashcustom U \tp)\Ht(\textbf{Y} \vdashcustom v \tm) \le h$, so by $Sub^1_{t,t}(h-1)$ we can derive $\doverline{s_i \otimes t_j} \equiv \underline{s_i} \otimes t_j$ in context $\textbf{X} \otimes \textbf{Y}$, hence in $\textbf{X}' \otimes \textbf{Y}$.

Also, for $1 \le j \le q$ we have $\Ht(\textbf{A} \vdashcustom U \tp)\Ht(\textbf{Y} \vdashcustom t_j \tm) < \Ht(\textbf{A} \vdashcustom U \tp)\Ht(\textbf{Y} \vdashcustom v \tm)$, so by the induction hypothesis we can derive $\doverline{a' \otimes t_j} \equiv x' \otimes t_j$ in context $\textbf{X}' \otimes \textbf{Y}$.

It follows that
$$
\textbf{X}' \otimes \textbf{Y} \vdashcustom \doverline{St((s_1, ..., s_p,a') \otimes (t_1, ..., t_q))} \equiv St((\underline{s_1}, ..., \underline{s_p}, x') \otimes (t_1, ..., t_q)) \tm
$$
is derivable. But this judgment is precisely $\textbf{X}' \otimes \textbf{Y} \vdashcustom \doverline{a' \otimes v} \equiv x' \otimes v \tm$.

\vspace{1em}
\colorbox{white!90!black}{Proof of $Sub^2_{t,s}(h)$}
\vspace{0.5em}

Suppose given a context $\textbf{X} = (x_1:X_1, ..., x_m:X_m)$ in $\bbA$, a context morphism
$$
\textbf{g}:(g_1, ..., g_N): \textbf{Y} = (y_1:Y_1, ..., y_n:Y_n) \longrightarrow \textbf{B} = (b_1:B_1, ..., b_N:B_N)
$$
in $\bbB$, and derivable judgments $\textbf{X} \vdashcustom u \tm$, $\textbf{B} \vdashcustom V \tp$ such that $\Ht(\textbf{X} \vdashcustom u \tm)\Ht(\textbf{g}:\textbf{Y} \rightarrow \textbf{B})$, $\Ht(\textbf{X})\Ht(\textbf{Y} \vdashcustom V[\textbf{g}] \tp)$ and $\Ht(\textbf{X} \vdashcustom u \tm)\Ht(\textbf{B} \vdashcustom V \tp)$ are at most $h$.

Let $\textbf{B}' = (\textbf{B}, b':V)$ and $\textbf{Y}' = (\textbf{Y}, y':\underline{V})$ where $\underline{V} = V[\textbf{g}]$. Since $\Ht(\textbf{X})\Ht(\textbf{Y}') \le h$, by $Cont(h)$ we have that $\textbf{X} \otimes \textbf{Y}'$ is a context.

We will verify by induction on $P \in \{0, ..., h\}$ that
$$
\textbf{X} \otimes \textbf{Y}' \vdashcustom \doverline{u \otimes b'} \equiv u \otimes y' \tm
$$
is derivable whenever $\Ht(\textbf{X} \vdashcustom u \tm)\Ht(\textbf{B} \vdashcustom V \tp) = P$.

As in the previous lemma, we will start by verifying, without using the induction hypothesis, that $\textbf{X} \otimes \textbf{Y}' \vdashcustom \doverline{u \otimes b'} \tm$ is derivable; the we will use the induction hypothesis to derive the equality with $u \otimes y'$ in context $\textbf{X} \otimes \textbf{Y}'$, which in particular implies that $\textbf{X} \otimes \textbf{Y}' \vdashcustom u \otimes y' \tm$ is derivable.

For the first part, note that, as
\begin{align*}
	&\Ht(\textbf{X})\Ht(\textbf{g}:\textbf{Y} \rightarrow \textbf{B}) < \Ht(\textbf{X} \vdashcustom u \tm)\Ht(\textbf{g}:\textbf{Y} \rightarrow \textbf{B}) \le h, \\
	&\Ht(\textbf{X})\Ht(\textbf{B} \vdashcustom V \tp) < \Ht(\textbf{X} \vdashcustom u \tm)\Ht(\textbf{B} \vdashcustom V \tp) \le h, \\
	&\Ht(\textbf{X})\Ht(\textbf{Y} \vdashcustom \underline{V}) \le h,
\end{align*}
by $Mor^2_+(h)$ we can derive
\[
\tag{\texttt{*}}
\textbf{X} \otimes \textbf{g}': \textbf{X} \otimes \textbf{Y}' \longrightarrow \textbf{X} \otimes \textbf{B}'.
\]
Moreover, as $\Ht(\textbf{X} \vdashcustom u \tm)\Ht(\textbf{Y} \vdashcustom V \tp) \le h$, by $h$-derivability we can tensor $\textbf{X} \vdashcustom u \tm$ and $\textbf{Y} \vdashcustom V \tp$ to derive
\[
\tag{\texttt{**}}
\textbf{X} \otimes \textbf{B}' \vdashcustom u \otimes b' \tm.
\]
By substitution of (\texttt{**}) along (\texttt{*}) we derive $\textbf{X} \otimes \textbf{Y}' \vdashcustom \doverline{u \otimes b'} \tm$.

Now, we turn to the inductive argument. Assume that the claim holds for all $P' < P = \Ht(\textbf{X} \vdashcustom u \tm)\Ht(\textbf{B} \vdashcustom V \tp)$.

If $u$ is a variable $a_I$, then
$$
\doverline{u \otimes b'} = \doverline{a_Ib'} = a_Iy' = u \otimes \underline{b'}.
$$
If it is not a variable, let $u = s(s_1, ..., s_p)$ and $V = T(t_1, ..., t_q)$. Then $\underline{V} = T(\underline{t_1}, ..., \underline{t_q})$, so
$$
u \otimes y' = s(s_1, ..., s_p) \otimes T(\underline{t_1}, ..., \underline{t_q}) = sT((s_1, ..., s_p) \otimes (\underline{t_1}, ..., \underline{t_1}, y')).
$$
For $1 \le i \le p$ and $1 \le j \le q$, note that $\Ht(\textbf{X} \vdashcustom s_i \tm)\Ht(\textbf{B} \vdashcustom t_j \tm) < \Ht(\textbf{X} \vdashcustom u \tm)\Ht(\textbf{B} \vdashcustom V \tp) \le h$, so by $Sub^2_{t,t}(h)$ we can derive $\doverline{s_i \otimes t_j} \equiv s_i \otimes \underline{t_j}$ in context $\textbf{X} \otimes \textbf{Y}$, hence in $\textbf{X} \otimes \textbf{Y}'$.

Also, for $1 \le i \le p$ we have $\Ht(\textbf{X} \vdashcustom s_i \tm)\Ht(\textbf{B} \vdashcustom V \tp) < \Ht(\textbf{X} \vdashcustom u \tm)\Ht(\textbf{B} \vdashcustom V \tp)$, so by the induction hypothesis we can derive $\doverline{s_i \otimes b'} \equiv s_i \otimes y'$ in context $\textbf{X} \otimes \textbf{Y}'$.

It follows that
$$
\textbf{X} \otimes \textbf{Y}' \vdashcustom \doverline{sT((s_1, ..., s_p) \otimes (t_1, ..., t_q,b'))} \equiv sT((s_1, ..., s_p) \otimes (\underline{t_1}, ..., \underline{t_q}, y')) \tm
$$
is derivable. But this judgment is precisely $\textbf{X} \otimes \textbf{Y}' \vdashcustom \doverline{u \otimes b'} \equiv u \otimes y' \tm$.

\vspace{1em}
\colorbox{white!90!black}{Proof of $Sub^1_{s,s}(h)$}
\vspace{0.5em}

Suppose given a context morphism
$$
\textbf{f} = (f_1, \ldots, f_M):\textbf{X} = (x_1:X_1, \ldots, x_m:X_m) \longrightarrow \textbf{A} = (a_1:A_1, \ldots, a_M:A_M)
$$
in $\bbA$, a context $\textbf{Y} = (y_1:Y_1, \ldots, y_n:Y_n)$ in $\bbB$, and derivable judgments $\textbf{A} \vdashcustom U \tp$ and $\textbf{Y} \vdashcustom V \tp$ such that $\Ht(\textbf{A} \vdashcustom U \tp)\Ht(\textbf{Y} \vdashcustom V \tp)$, $\Ht(\textbf{X} \vdashcustom U[\textbf{f}] \tp)\Ht(\textbf{Y})$ and $\Ht(\textbf{f}:\textbf{X} \rightarrow \textbf{A})\Ht(\textbf{Y} \vdashcustom V \tp)$ are at most $h$.

Define contexts $\textbf{A}' = (\textbf{A}, a':U)$, $\textbf{X}' = (\textbf{X},x':\underline{U})$, and $\textbf{Y}' = (\textbf{Y}, y':V)$ where $\underline{U} = U[\textbf{f}]$. Using the same notation as in the previous lemma, our goal is to derive
$$
\partial(\textbf{X}' \otimes \textbf{Y}') \vdashcustom \doverline{U \otimes V} \equiv \underline{U} \otimes V \tp.
$$

We will prove by induction on $P \in \{0, \ldots, h\}$ that this judgment is derivable whenever $\Ht(\textbf{A} \vdashcustom U \tp)\Ht(\textbf{Y} \vdashcustom V \tp) = P$. Suppose that the claim holds for all $P' < P = \Ht(\textbf{A} \vdashcustom U \tp)\Ht(\textbf{Y} \vdashcustom V \tp)$.

Let $U = S(s_1, ..., s_k)$ and $V = T(t_1, ..., t_l)$. Then
$$
\doverline{U \otimes V} = \doverline{ST(\partial((s_1, ..., s_k,a') \otimes (t_1, ..., t_l,y')))},
$$
which can be written in matrix form as
$$
ST
\begin{pmatrix}
	\doverline{s_1 \otimes t_1} & \cdots & \doverline{s_1 \otimes t_l} & \doverline{s_1 \otimes y'}\\
	\vdots & \ddots & \vdots & \vdots\\
	\doverline{s_k \otimes t_1} & \cdots & \doverline{s_k \otimes t_l} & \doverline{s_k \otimes y'}\\
	\doverline{a' \otimes t_1} & \cdots & \doverline{a' \otimes t_l} & -\\
\end{pmatrix}.
$$
We can describe each entry as follows:
\begin{itemize}
	\item For $i \le k$ and $j \le l$, the expression $\doverline{s_i \otimes t_j}$ is obtained from $s_i \otimes t_j$ by substitution along $\textbf{f} \otimes \textbf{Y}:\textbf{X} \otimes \textbf{Y} \rightarrow \textbf{A} \otimes \textbf{Y}$. But since
	\begin{align*}
		&\Ht(\textbf{A} \vdashcustom s_i \tm)\Ht(\textbf{Y} \vdashcustom t_j \tm) < \Ht(\textbf{A} \vdashcustom U \tp)\Ht(\textbf{Y} \vdashcustom V \tp) \le h, \\
		&\Ht(\textbf{f}:\textbf{X} \rightarrow \textbf{A})\Ht(\textbf{Y} \vdashcustom t_j \tm) < \Ht(\textbf{f}:\textbf{X} \rightarrow \textbf{A})\Ht(\textbf{Y} \vdashcustom V \tp) \le h,
	\end{align*}
	by $Sub^1_{t,t}(h-1)$ we can derive $\textbf{X} \otimes \textbf{Y} \vdashcustom \doverline{s_i \otimes t_j} \equiv \underline{s_i} \otimes t_j \tm$.
	
	\item To describe $\doverline{a' \otimes t_j}$, note that since
	\begin{align*}
		&\Ht(\textbf{f}:\textbf{X} \rightarrow \textbf{A})\Ht(\textbf{Y} \vdashcustom t_j \tm) < \Ht(\textbf{f}:\textbf{X} \rightarrow \textbf{A})\Ht(\textbf{Y} \vdashcustom V \tp) \le h, \\
		&\Ht(\textbf{X} \vdashcustom \underline{U} \tp)\Ht(\textbf{Y}) \le h, \\
		&\Ht(\textbf{A} \vdashcustom U \tp)\Ht(\textbf{Y} \vdashcustom t_j \tm) < \Ht(\textbf{A} \vdashcustom U \tp)\Ht(\textbf{Y} \vdashcustom V \tp) \le h,
	\end{align*}
	we can use $Sub^1_{s,t}(h)$ to derive
	$$
	\textbf{X}' \otimes \textbf{Y} \vdashcustom \doverline{a' \otimes t_j} \equiv x' \otimes t_j \tm.
	$$
	
	\item To describe $\doverline{s_i \otimes y'}$, since
	\begin{align*}
		&\Ht(\textbf{f}:\textbf{X} \rightarrow \textbf{A})\Ht(\textbf{Y} \vdashcustom V \tp) \le h, \\
		&\Ht(\textbf{A} \vdashcustom s_i \tm)\Ht(\textbf{Y} \vdashcustom V \tp) \le \Ht(\textbf{A} \vdashcustom U \tp)\Ht(\textbf{Y} \vdashcustom V \tp) \le h,
	\end{align*}
	we can use $Sub^1_{t,s}(h)$ to derive
	$$
	\textbf{X}' \otimes \textbf{Y} \vdashcustom \doverline{s_i \otimes y'} \equiv \underline{s_i} \otimes y' \tm.
	$$
\end{itemize}

We conclude that $\doverline{U \otimes V}$ is provably equal in context $\partial(\textbf{X}' \otimes \textbf{Y}')$ to
$$
ST
\begin{pmatrix}
	\underline{s_1} \otimes t_1 & \cdots & \underline{s_1} \otimes t_l & \underline{s_1} \otimes y'\\
	\vdots & \ddots & \vdots & \vdots\\
	\underline{s_k} \otimes t_1 & \cdots & \underline{s_k} \otimes t_l & \underline{s_k} \otimes y'\\
	x' \otimes t_1 & \cdots & x' \otimes t_l & -\\
\end{pmatrix}.
$$
This, in turn, is the matrix form of $\underline{U} \otimes V = ST(\partial((\underline{s_1}, ..., \underline{s_k},x') \otimes (t_1, ..., t_l,y')))$.

\vspace{1em}
\colorbox{white!90!black}{Proof of $Sub^2_{s,s}(h)$}
\vspace{0.5em}

Suppose given a context $\textbf{X} = (x_1:X_1, ..., x_m:X_m)$ in $\bbA$, a context morphism
$$
\textbf{g}:(g_1, ..., g_N): \textbf{Y} = (y_1:Y_1, ..., y_n:Y_n) \longrightarrow \textbf{B} = (b_1:B_1, ..., b_N:B_N)
$$
in $\bbB$, and derivable judgments $\textbf{X} \vdashcustom U \tp$ and $\textbf{B} \vdashcustom V \tp$ such that $\Ht(\textbf{X} \vdashcustom U \tp)\Ht(\textbf{B} \vdashcustom V \tp)$, $\Ht(\textbf{X})\Ht(\textbf{Y} \vdashcustom V[\textbf{g}] \tp)$ and $\Ht(\textbf{X} \vdashcustom U \tp)\Ht(\textbf{g}:\textbf{Y} \rightarrow \textbf{B})$ are at most $h$.

Define contexts $\textbf{X}' = (\textbf{X}, x':U)$, $\textbf{B}' = (\textbf{B}, b':V)$, and $\textbf{Y}' = (\textbf{Y}, y':\underline{V})$ where $\underline{V} = V[\textbf{g}]$. Using notation analogous to that in the previous lemma, our goal is to derive
$$
\partial(\textbf{X}' \otimes \textbf{Y}') \vdashcustom \doverline{U \otimes V} \equiv U \otimes \underline{V} \tp.
$$
We will prove by induction on $P \in \{0, ..., h\}$ that this judgment is derivable whenever $\Ht(\textbf{X} \vdashcustom U \tp)\Ht(\textbf{B} \vdashcustom V \tp) = P$. Suppose that the claim holds for all $P' < P = \Ht(\textbf{X} \vdashcustom U \tp)\Ht(\textbf{B} \vdashcustom V \tp)$.

Let $U = S(s_1, ..., s_k)$ and $V = T(t_1, ..., t_l)$. Then, in matrix form, $\doverline{U \otimes V}$ is
$$
ST
\begin{pmatrix}
	\doverline{s_1 \otimes t_1} & \cdots & \doverline{s_1 \otimes t_l} & \doverline{s_1 \otimes b'}\\
	\vdots & \ddots & \vdots & \vdots\\
	\doverline{s_k \otimes t_1} & \cdots & \doverline{s_k \otimes t_l} & \doverline{s_k \otimes b'}\\
	\doverline{x' \otimes t_1} & \cdots & \doverline{x' \otimes t_l} & -\\
\end{pmatrix}.
$$
We describe each entry as follows:
\begin{itemize}
	\item For $i \le k$ and $j \le l$, the expression $\doverline{s_i \otimes t_j}$ is obtained from $s_i \otimes t_j$ by substitution along $\textbf{X} \otimes \textbf{g}: \textbf{X} \otimes \textbf{B} \rightarrow \textbf{X} \otimes \textbf{Y}$. But since
	\begin{align*}
		&\Ht(\textbf{X} \vdashcustom s_i \tm)\Ht(\textbf{B} \vdashcustom t_j \tm) < \Ht(\textbf{X} \vdashcustom U \tp)\Ht(\textbf{B} \vdashcustom V \tp) \le h, \\
		&\Ht(\textbf{X} \vdashcustom s_i \tm)\Ht(\textbf{g}:\textbf{Y} \rightarrow \textbf{B}) < \Ht(\textbf{X} \vdashcustom U \tp)\Ht(\textbf{g}:\textbf{Y} \rightarrow \textbf{B}) \le h,
	\end{align*}
	by $Sub^2_{t,t}(h-1)$ we can derive $\textbf{X} \otimes \textbf{Y} \vdashcustom \doverline{s_i \otimes t_j} \equiv s_i \otimes \underline{t_j} \tm$.
	
	\item To describe $\doverline{s_i \otimes b'}$, note that since
	\begin{align*}
		&\Ht(\textbf{X} \vdashcustom s_i \tm)\Ht(\textbf{g}:\textbf{Y} \rightarrow \textbf{B}) < \Ht(\textbf{X} \vdashcustom U \tp)\Ht(\textbf{g}:\textbf{Y} \rightarrow \textbf{B}) \le h, \\
		&\Ht(\textbf{X})\Ht(\textbf{Y} \vdashcustom \underline{V} \tp) \le h, \\
		&\Ht(\textbf{X} \vdashcustom s_i \tm)\Ht(\textbf{B} \vdashcustom V \tp) < \Ht(\textbf{X} \vdashcustom U \tp)\Ht(\textbf{B} \vdashcustom V \tp) \le h,
	\end{align*}
	we can use $Sub^2_{t,s}(h)$ to derive
	$$
	\textbf{X} \otimes \textbf{Y}' \vdashcustom \doverline{s_i \otimes b'} \equiv s_i \otimes y' \tm.
	$$
	
	\item To describe $\doverline{x' \otimes t_j}$, since
	\begin{align*}
		&\Ht(\textbf{X} \vdashcustom U \tp)\Ht(\textbf{g}:\textbf{Y} \rightarrow \textbf{B}) \le h, \\
		&\Ht(\textbf{X} \vdashcustom U \tp)\Ht(\textbf{B} \vdashcustom t_j \tm) \le \Ht(\textbf{X} \vdashcustom U \tp)\Ht(\textbf{B} \vdashcustom V \tp) \le h,
	\end{align*}
	we can use $Sub^2_{s,t}(h)$ to derive
	$$
	\textbf{X} \otimes \textbf{Y}' \vdashcustom \doverline{x' \otimes t_j} \equiv x' \otimes \underline{t_j} \tm.
	$$
\end{itemize}
We conclude that $\doverline{U \otimes V}$ is provably equal in context $\partial(\textbf{X}' \otimes \textbf{Y}')$ to
$$
ST
\begin{pmatrix}
	s_1 \otimes \underline{t_1} & \cdots & s_1 \otimes \underline{t_l} & s_1 \otimes y'\\
	\vdots & \ddots & \vdots & \vdots\\
	s_k \otimes \underline{t_1} & \cdots & s_k \otimes \underline{t_l} & s_k \otimes y'\\
	x' \otimes \underline{t_1} & \cdots & x' \otimes \underline{t_l} & -\\
\end{pmatrix}.
$$
This, in turn, is the matrix form of $U \otimes \underline{V} = ST(\partial((s_1, ..., s_k,x') \otimes (\underline{t_1}, ..., \underline{t_l}, y')))$.

\vspace{1em}
\colorbox{white!90!black}{Proof of $Mor^1(h)$}
\vspace{0.5em}

Write $\mathscr S$ for the set of all pairs consisting of a context morphism $\textbf{f}:\textbf{X} \rightarrow \textbf{A}$ in $\bbA$ and a context $\textbf{Y}$ in $\bbB$ such that $\Ht(\textbf{f}:\textbf{X} \rightarrow \textbf{A})\Ht(\textbf{Y}) = h$. Note that in this case, since $\Ht(\textbf{X})$, $\Ht(\textbf{A}) \le \Ht(\textbf{f}:\textbf{X} \rightarrow \textbf{A})$, by $Cont(h)$ we have that $\textbf{X} \otimes \textbf{Y}$ and $\textbf{A} \otimes \textbf{Y}$ are contexts.

We will prove by induction on $L \ge 0$ that $\textbf{f} \otimes \textbf{Y}$ is a morphism from $\textbf{X} \otimes \textbf{Y}$ to $\textbf{A} \otimes \textbf{Y}$ whenever $l(\textbf{A})l(\textbf{Y}) = L$.

The claim holds for $L = 0$ as in this case $\textbf{A} \otimes \textbf{Y}$ is the empty context and $\textbf{f} \otimes \textbf{Y}$ is the empty sequence. Now, let $L \ge 1$ and assume that the claim holds for $0$, \ldots, $L-1$. Suppose that $(\textbf{f}:\textbf{X} \rightarrow \textbf{A},\textbf{Y}) \in \mathscr S$ satisfies $l(\textbf{A})l(\textbf{Y}) = L$.

Since $\Ht(\partial \textbf{f}: \textbf{X} \rightarrow \partial\textbf{A}) \le \Ht(\textbf{f}:\textbf{X} \rightarrow \textbf{A})$ and $l(\partial \textbf{A}) < l(\textbf{A})$, by the induction hypothesis we have a morphism
\[
\tag{$\texttt{*}$}
\partial \textbf{f} \otimes \textbf{Y}: \textbf{X} \otimes \textbf{Y} \longrightarrow \partial\textbf{A} \otimes \textbf{Y}.
\]
Similarly, as $\Ht(\partial \textbf{Y}) < \Ht(\textbf{Y})$ we have a morphism $\textbf{f} \otimes \partial\textbf{Y}:\textbf{X} \otimes \partial \textbf{Y} \longrightarrow \textbf{A} \otimes \partial \textbf{Y}$, hence a morphism
\[
\tag{$\texttt{*}\texttt{*}$}
\textbf{f} \otimes \partial\textbf{Y}:\textbf{X} \otimes \textbf{Y} \longrightarrow \textbf{A} \otimes \partial \textbf{Y}.
\]
From ($\texttt{*}$) and ($\texttt{*}\texttt{*}$) we obtain a morphism
$$
\partial(\textbf{f} \otimes \textbf{Y}): \textbf{X} \otimes \textbf{Y} \longrightarrow \partial(\textbf{A} \otimes \textbf{Y}).
$$
To derive $\textbf{f} \otimes \textbf{Y}:\textbf{X} \otimes \textbf{Y} \rightarrow \textbf{A} \otimes \textbf{Y}$, it remains to derive the term judgment corresponding to its last entry, i.e.
$$
\textbf{X} \otimes \textbf{Y} \vdashcustom f_M \otimes y_n : \doverline{A_M \otimes Y_n}
$$
where the double bar indicates the substitution operation $[f_i \otimes y_j \mid a_iy_j]_{(i,j) < (M,n)}$.

Using a lower bar to indicate the operation $[f_i \mid a_i]_{i < M}$, we have
$$
\Ht(\textbf{X} \vdashcustom f_M: \underline{A_M})\Ht(\partial \textbf{Y} \vdashcustom Y_n \tp) \le \Ht(\textbf{f}:\textbf{X} \rightarrow \textbf{A})\Ht(\textbf{Y}) = h,
$$
by $h$-derivability we can derive $(\textbf{X} \vdashcustom f_M:\underline{A_M}) \otimes (\partial \textbf{Y} \vdashcustom Y_n \tp)$, which is
$$
\textbf{X} \otimes \textbf{Y} \vdashcustom f_M \otimes y_n: (\underline{A_M} \otimes Y_n)[f_M \otimes y_j]_{j < n}.
$$
Thus it suffices to derive
$$
\partial(\textbf{X}' \otimes \textbf{Y}) \vdashcustom \doverline{A_M \otimes Y_n} \equiv \underline{A_M} \otimes Y_n \tp
$$
where $\textbf{X}' = (\textbf{X}, x':\underline{A_M})$. But that is obtained from $Sub^1_{s,s}(h)$ applied to the morphism $\partial \textbf{f}:\textbf{X} \rightarrow \partial \textbf{A}$ and the judgments $\partial\textbf{A} \vdashcustom A_M \tp$ and $\partial \textbf{Y} \vdashcustom Y_n \tp$, noting that
\begin{align*}
	&\Ht(\partial \textbf{A} \vdashcustom A_M \tp)\Ht(\partial \textbf{Y} \vdashcustom Y_n \tp) = \Ht(\textbf{A})\Ht(\textbf{Y}) \le h,\\
	&\Ht(\textbf{X} \vdashcustom \underline{A_M} \tp)\Ht(\partial \textbf{Y}) \le \Ht(\textbf{X} \vdashcustom f_M:\underline{A_M})\Ht(\partial \textbf{Y}) < \Ht(\textbf{f}:\textbf{X} \rightarrow \textbf{A})\Ht(\textbf{Y}) \le h,\\
	&\Ht(\partial \textbf{f}:\textbf{X} \rightarrow \partial \textbf{A})\Ht(\partial \textbf{Y} \vdashcustom Y_n \tp) \le \Ht(\textbf{f}:\textbf{X} \rightarrow \textbf{A})\Ht(\textbf{Y}) \le h.
\end{align*}
This concludes the proof of the claim for $L$.

\vspace{1em}
\colorbox{white!90!black}{Proof of $Mor^2(h)$}
\vspace{0.5em}

Write $\mathscr S$ for the set of all pairs consisting of a context $\textbf{X}$ in $\bbA$ and a context morphism $\textbf{g}:\textbf{Y} \rightarrow \textbf{B}$ in $\bbB$ such that $\Ht(\textbf{X})\Ht(\textbf{g}:\textbf{Y} \rightarrow \textbf{B}) = h$. Note that in this case, since $\Ht(\textbf{Y})$, $\Ht(\textbf{B}) \le \Ht(\textbf{g}:\textbf{Y} \rightarrow \textbf{B})$, by $Cont(h)$ we have that $\textbf{X} \otimes \textbf{Y}$ and $\textbf{X} \otimes \textbf{B}$ are contexts.

We will prove by induction on $L \ge 0$ that $\textbf{X} \otimes \textbf{g}$ is a morphism from $\textbf{X} \otimes \textbf{Y}$ to $\textbf{X} \otimes \textbf{B}$ whenever $l(\textbf{X})l(\textbf{B}) = L$.

The claim holds for $L = 0$ as in this case $\textbf{X} \otimes \textbf{B}$ is the empty context and $\textbf{X} \otimes \textbf{g}$ is the empty sequence. Now, let $L \ge 1$ and assume that the claim holds for $0$, ..., $L-1$. Suppose that $\Ht(\textbf{X})\Ht(\textbf{g}:\textbf{Y} \rightarrow \textbf{B}) \in \mathscr S$ satisfies $l(\textbf{X})l(\textbf{B}) = L$.

Since $\Ht(\partial \textbf{g}:\textbf{Y} \rightarrow \partial \textbf{B}) \le \Ht(\textbf{g}:\textbf{Y} \rightarrow \textbf{B})$ and $l(\partial \textbf{B}) < l(\textbf{B})$, by the induction hypothesis we have a morphism
\[
\tag{$\texttt{*}$}
\textbf{X} \otimes \partial \textbf{g}:\textbf{X} \otimes \textbf{Y} \longrightarrow \textbf{X} \otimes \partial \textbf{B}.
\]
Similarly, as $\Ht(\partial \textbf{X}) < \Ht(\textbf{X})$ we have a morphism $\partial \textbf{X} \otimes \textbf{g}: \partial \textbf{X} \otimes \textbf{Y} \longrightarrow \partial \textbf{X} \otimes \textbf{B}$, hence a morphism
\[
\tag{$\texttt{*}\texttt{*}$}
\partial \textbf{X} \otimes \textbf{g}: \textbf{X} \otimes \textbf{Y} \longrightarrow \partial \textbf{X} \otimes \textbf{B}.
\]
From ($\texttt{*}$) and ($\texttt{*}\texttt{*}$) we obtain a morphism
$$
\partial(\textbf{X} \otimes \textbf{g}):\textbf{X} \otimes \textbf{Y} \longrightarrow \partial(\textbf{X} \otimes \textbf{B}).
$$
To derive $\textbf{X} \otimes \textbf{g}:\textbf{X} \otimes \textbf{Y} \rightarrow \textbf{X} \otimes \textbf{B}$, it remains to derive the term judgment corresponding to its last entry, i.e.
$$
\textbf{X} \otimes \textbf{Y} \vdashcustom x_m \otimes g_N: \doverline{X_m \otimes B_N}
$$
where the upper double bar indicates the substitution operation $[x_i \otimes g_j \mid x_ib_j]_{(i,j) < (m,N)}$.

Using a lower bar to indicate the operation $[g_i \mid b_j]_{j < N}$, we have
$$
\Ht(\partial \textbf{X} \vdashcustom X_m \tp)\Ht(\textbf{Y} \vdashcustom g_N:\underline{B_N}) \le \Ht(\textbf{X})\Ht(\textbf{g}:\textbf{Y} \rightarrow \textbf{B}) = h,
$$
so by $h$-derivability we can derive $(\partial \textbf{X} \vdashcustom X_m \tp) \otimes (\textbf{Y} \vdashcustom g_N:\underline{B_N})$, which is
$$
\textbf{X} \otimes \textbf{Y} \vdashcustom x_m \otimes g_N: (X_m \otimes \underline{B_N})[x_i \otimes g_N]_{i < m}.
$$
Thus it suffices to derive
$$
\partial (\textbf{X} \otimes \textbf{Y}') \vdashcustom \doverline{X_m \otimes B_N} \equiv X_m \otimes \underline{B_N} \tp
$$
where $\textbf{Y}' = (\textbf{Y}, y':\underline{B_N})$. But that is obtained from $Sub^2_{s,s}(h)$ applied to the morphism $\partial \textbf{g}:\textbf{Y} \rightarrow \partial \textbf{B}$ and the judgments $\partial \textbf{X} \vdashcustom X_m \tp$ and $\partial \textbf{B} \vdashcustom B_N \tp$, noting that
\begin{align*}
	&\Ht(\partial \textbf{X} \vdashcustom X_m \tp)\Ht(\partial \textbf{B} \vdashcustom B_N \tp) = \Ht(\textbf{X})\Ht(\textbf{B}) \le h, \\
	&\Ht(\partial \textbf{X})\Ht(\textbf{Y} \vdashcustom \underline{B_N} \tp) \le \Ht(\partial \textbf{X})\Ht(\textbf{Y} \vdashcustom g_N:\underline{B_N}) < \Ht(\textbf{X})\Ht(\textbf{g}:\textbf{Y} \rightarrow \textbf{B}) \le h, \\
	&\Ht(\partial \textbf{X} \vdashcustom X_m \tp)\Ht(\partial \textbf{g}:\textbf{Y} \rightarrow \partial \textbf{B}) \le \Ht(\textbf{X})\Ht(\textbf{g}:\textbf{Y} \rightarrow \textbf{B}) \le h.
\end{align*}
This concludes the proof of the claim for $L$.

\subsection{Context equalities and context morphism equalities}

We will now consider the following statements:

\begin{itemize}	
	\item[\colorbox{white!90!black}{$Conteq^1(h)$}] Suppose given in $\bbA$ a derivable judgment $\textbf{X} \vdashcustom U \equiv U' \tp$, and in $\bbB$ a context $\textbf{Y}$. Let $\textbf{X}' = (\textbf{X}, x:U)$, $\textbf{X}'' = (\textbf{X}, x:U')$.
	
	If $\Ht(\textbf{X} \vdashcustom U \equiv U' \tp)\Ht(\textbf{Y}) \le h$, then $\textbf{X}' \otimes \textbf{Y} \equiv \textbf{X}'' \otimes \textbf{Y} \ctx$ is derivable.
	
	\item[\colorbox{white!90!black}{$Conteq^2(h)$}] Suppose given in $\bbA$ a context $\textbf{X}$, and in $\bbB$ a derivable judgment $\textbf{Y} \vdashcustom V \equiv V' \tp$. Let $\textbf{Y}' = (\textbf{Y}, y:V)$, $\textbf{Y}'' = (\textbf{Y}, y:V')$.
	
	If $\Ht(\textbf{X})\Ht(\textbf{Y} \vdashcustom V \equiv V' \tp) \le h$, then $\textbf{X} \otimes \textbf{Y}' \equiv \textbf{X} \otimes \textbf{Y}'' \ctx$ is derivable.
	
	\item[\colorbox{white!90!black}{$Moreq^1(h)$}] Suppose given in $\bbA$ a derivable morphism equality $\textbf{f} \equiv \textbf{g}:\textbf{X} \rightarrow \textbf{A}$, and in $\bbB$ a context $\textbf{Y}$.
	
	If $\Ht(\textbf{f} \equiv \textbf{g}:\textbf{X} \rightarrow \textbf{A})\Ht(\textbf{Y}) \le h$, then $\textbf{f} \otimes \textbf{Y} \equiv \textbf{g} \otimes \textbf{Y}:\textbf{X} \otimes \textbf{Y} \rightarrow \textbf{A} \otimes \textbf{Y}$ is derivable.
	
	\item[\colorbox{white!90!black}{$Moreq^1(h)$}] Suppose given in $\bbA$ a context $\textbf{X}$, and in $\bbB$ a derivable morphism equality $\textbf{f} \equiv \textbf{g}:\textbf{Y} \rightarrow \textbf{B}$.
	
	If $\Ht(\textbf{X})\Ht(\textbf{f} \equiv \textbf{g}:\textbf{Y} \rightarrow \textbf{B}) \le h$, then $\textbf{X} \otimes \textbf{f} \equiv \textbf{X} \otimes \textbf{g}:\textbf{X} \otimes \textbf{Y} \rightarrow \textbf{X} \otimes \textbf{B}$ is derivable.
\end{itemize}

\begin{lemma}
If $(\bbA,\bbB)$ is $h$-derivable, then it satisfies $Conteq^1(h)$ and $Conteq^2(h)$.
\end{lemma}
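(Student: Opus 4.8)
The plan is to reduce each of these context equalities to a family of sort equalities between corresponding entries, each of which is (up to a weakening) itself a tensor product of an equality judgment with a sort judgment, hence derivable by $h$-derivability. Throughout I write $\textbf{X} = (x_1:X_1, \ldots, x_m:X_m)$ and $\textbf{Y} = (y_1:Y_1, \ldots, y_n:Y_n)$, and I invoke the preceding Proposition to the effect that $Cont(k)$ holds for every $k \le h$, so that all tensor precontexts appearing below are genuine contexts.

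For $Conteq^1(h)$, the first observation is that $\textbf{X}' \otimes \textbf{Y}$ and $\textbf{X}'' \otimes \textbf{Y}$ agree on their initial block of $mn$ entries, namely $\textbf{X} \otimes \textbf{Y}$, and differ only in the final row of $n$ entries, where the sort of $xy_j$ is $U^x \otimes Y_j^{y_j}$ on the left and $U^{'x} \otimes Y_j^{y_j}$ on the right. Since $\Ht(\textbf{X})\Ht(\textbf{Y}) \le \Ht(\textbf{X} \vdash U \equiv U' \tp)\Ht(\textbf{Y}) \le h$, the common block $\textbf{X} \otimes \textbf{Y}$ is a context, which serves as the base of the entry-by-entry comparison. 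It then suffices, for each $j \in \{1, \ldots, n\}$, to derive $U^x \otimes Y_j^{y_j} \equiv U^{'x} \otimes Y_j^{y_j} \tp$ over the prefix of $\textbf{X}' \otimes \textbf{Y}$ ending just before $xy_j$. To produce this I would apply the ``sort equality $\odot$ sort'' construction to $\textbf{X} \vdash U \equiv U' \tp$ and $\partial_{j-1}\textbf{Y} \vdash Y_j \tp$; since $\Ht(\partial_{j-1}\textbf{Y} \vdash Y_j \tp) = \Ht(\partial_j\textbf{Y}) \le \Ht(\textbf{Y})$, the height product is at most $h$, so $h$-derivability yields exactly $\partial(\textbf{X}' \otimes \partial_j\textbf{Y}) \vdash U^x \otimes Y_j^{y_j} \equiv U^{'x} \otimes Y_j^{y_j} \tp$.

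The one wrinkle is that the context $\partial(\textbf{X}' \otimes \partial_j\textbf{Y})$ produced here is smaller than the desired prefix of $\textbf{X}' \otimes \textbf{Y}$: it retains only the first $j$ columns of the rows indexed by $\textbf{X}$, whereas the prefix retains all $n$ columns of those rows. I would close this gap by weakening, i.e. by substituting the derived equality along the canonical projection from the larger prefix onto $\partial(\textbf{X}' \otimes \partial_j\textbf{Y})$. This projection is the evident weakening context morphism (the identity on shared variables, forgetting the entries $x_{i'}y_{j'}$ with $j' > j$), and because the two sorts involve only variables present in the smaller context, the substituted judgment is literally the equality we want. Collecting these $n$ sort equalities on top of the reflexivity of $\textbf{X} \otimes \textbf{Y}$ then assembles $\textbf{X}' \otimes \textbf{Y} \equiv \textbf{X}'' \otimes \textbf{Y} \ctx$.

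For $Conteq^2(h)$ the argument is the mirror image, and slightly cleaner. Here $\textbf{X} \otimes \textbf{Y}'$ and $\textbf{X} \otimes \textbf{Y}''$ differ only in the entries $x_iy$ of the last column, whose sorts are $X_i^{x_i} \otimes V^y$ versus $X_i^{x_i} \otimes V^{'y}$. For each $i$ I would apply the ``sort $\odot$ sort equality'' construction to $\partial_{i-1}\textbf{X} \vdash X_i \tp$ and $\textbf{Y} \vdash V \equiv V' \tp$; the height product is at most $\Ht(\textbf{X})\Ht(\textbf{Y} \vdash V \equiv V' \tp) \le h$, so $h$-derivability applies. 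A direct check of the lexicographic orderings shows that the context $\partial(\partial_i\textbf{X} \otimes \textbf{Y}')$ delivered by this tensor product coincides \emph{exactly} with the prefix of $\textbf{X} \otimes \textbf{Y}'$ ending just before $x_iy$, so no weakening is needed and the equalities assemble at once. The main obstacle I anticipate is purely the bookkeeping in $Conteq^1$: correctly matching the context output of the $\odot$ construction against the intended prefix of the tensor context, and justifying that the intervening entries make the weakening legitimate. Once that projection is identified, all height products collapse to at most $h$ and both statements reduce to $h$-derivability together with the already-established $Cont(k)$ for $k \le h$.
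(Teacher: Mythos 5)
Your proof is correct, and it rests on the same engine as the paper's -- namely, using $h$-derivability to form the ``sort equality $\odot$ sort'' (resp.\ ``sort $\odot$ sort equality'') products of $\textbf{X} \vdash U \equiv U' \tp$ with the individual sort judgments of $\textbf{Y}$ -- but it is organized differently. The paper argues by induction on $h$: it peels off only the last entry of $\textbf{Y}$, derives $\partial(\textbf{X}' \otimes \textbf{Y}) \vdash U \otimes Y_n \equiv U' \otimes Y_n \tp$ by $h$-derivability, and disposes of the remaining prefix by invoking $Conteq^1(h-1)$ applied to $\textbf{X} \vdash U \equiv U' \tp$ and $\partial \textbf{Y}$ (using $\Ht(\partial\textbf{Y}) < \Ht(\textbf{Y})$). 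You instead treat all $n$ differing entries at once and pay for it with an explicit weakening step, substituting each derived equality from $\partial(\textbf{X}' \otimes \partial_j\textbf{Y})$ along the variable-renaming-free projection into the relevant prefix of $\textbf{X}' \otimes \textbf{Y}$. Your version makes explicit a context-matching issue that the paper's recursive formulation partly hides (the inductive step there still tacitly identifies $\partial(\textbf{X}' \otimes \textbf{Y})$ with an amalgam of $\textbf{X} \otimes \textbf{Y}$ and $\textbf{X}' \otimes \partial\textbf{Y}$, which also needs the same kind of weakening observation), at the cost of having to justify that the projection is a legitimate context morphism; your justification -- identity on shared variables, sorts mentioning only variables of the smaller context, so substitution acts trivially -- is sound given that the ambient tensor precontexts are contexts, which follows from $Cont(k)$ for $k \le h$ exactly as you say. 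Your observation that no weakening is needed in the $Conteq^2$ case because $\partial(\partial_i\textbf{X} \otimes \textbf{Y}')$ is literally the prefix of $\textbf{X} \otimes \textbf{Y}'$ preceding $x_iy$ is correct and is a genuine asymmetry between the two cases that the paper's ``the proof of $Conteq^2(h)$ is analogous'' elides.
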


\begin{proof}
We will only verify $Conteq^1(h)$, the proof of $Conteq^2(h)$ being analogous.

Suppose given a derivable judgment $\textbf{X} \vdashcustom U \equiv U' \tp$ in $\bbA$ and a context $\textbf{Y}$ in $\bbB$ such that $\Ht(\textbf{X} \vdashcustom U \equiv U' \tp)\Ht(\textbf{Y}) \le h$. Let us prove that, considering contexts $\textbf{X}' = (\textbf{X}, x:U)$ and $\textbf{X}'' = (\textbf{X}, x:U')$, the judgment $\textbf{X}' \otimes \textbf{Y} \equiv \textbf{X}'' \otimes \textbf{Y} \ctx$ is derivable.

If $\textbf{Y}$ is the empty context, the claim is trivial. Otherwise, expressing $\textbf{Y}$ as $(\partial \textbf{Y}, y:V)$, by $h$-derivability we can tensor $\textbf{X} \vdashcustom U \equiv U' \tp$ and $\partial\textbf{Y} \vdashcustom V \tp$ to derive
$$
\partial(\textbf{X}' \otimes \textbf{Y}) \vdashcustom U \otimes V \equiv U' \otimes V \tp.
$$
Thus to obtain the desired context equality it remains to derive $\partial(\textbf{X}' \otimes \textbf{Y}) \equiv \partial(\textbf{X}'' \otimes \textbf{Y})$. But as $\Ht(\partial \textbf{Y}) < \Ht(\textbf{Y})$, this follows by using $Conteq^1(h-1)$ to derive $\textbf{X}' \otimes \partial \textbf{Y} \equiv \textbf{X}'' \otimes \partial \textbf{Y} \ctx$.
\end{proof}

\begin{lemma}
If $(\bbA,\bbB)$ is $h$-derivable, then it satisfies $Moreq^1(h)$ and $Moreq^2(h)$.
\end{lemma}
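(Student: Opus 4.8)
The plan is to prove $Moreq^1(h)$ and $Moreq^2(h)$ by the same ``truncation induction'' used for $Mor^1(h)$ and $Mor^2(h)$, but now carried out at the level of equalities rather than of the morphisms themselves. I will treat $Moreq^1(h)$ in detail, since $Moreq^2(h)$ is entirely symmetric (tensoring on the other side, and invoking $Mor^2(h)$ and $Sub^2_{s,s}(h)$ in place of $Mor^1(h)$ and $Sub^1_{s,s}(h)$). Fix a derivable morphism equality $\textbf{f} \equiv \textbf{g}:\textbf{X} \rightarrow \textbf{A}$ in $\bbA$, with $\textbf{f} = (f_1, \ldots, f_M)$, $\textbf{g} = (g_1, \ldots, g_M)$, $\textbf{A} = (a_1:A_1, \ldots, a_M:A_M)$, and a context $\textbf{Y} = (y_1:Y_1, \ldots, y_n:Y_n)$ in $\bbB$ with $\Ht(\textbf{f} \equiv \textbf{g}:\textbf{X} \rightarrow \textbf{A})\Ht(\textbf{Y}) \le h$. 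Recall that such an equality amounts to derivable judgments $\textbf{X} \vdash f_i \equiv g_i:\underline{A_i} \tm$ for each $i$, where $\underline{A_i} = A_i[\partial_{i-1}\textbf{f}] = \Type(f_i)$. I will induct on $L = l(\textbf{A})l(\textbf{Y})$, the case $L = 0$ being trivial since then $\textbf{f} \otimes \textbf{Y}$ and $\textbf{g} \otimes \textbf{Y}$ are empty.

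For the inductive step I peel off the last entry $(M,n)$ of the $M \times n$ grid $\textbf{A} \otimes \textbf{Y}$, mirroring $Mor^1(h)$. First, since $\Ht(\partial\textbf{f} \equiv \partial\textbf{g}:\textbf{X} \rightarrow \partial\textbf{A}) \le \Ht(\textbf{f} \equiv \textbf{g}:\textbf{X} \rightarrow \textbf{A})$ and $l(\partial\textbf{A}) < l(\textbf{A})$, the inner induction hypothesis yields a morphism equality $\partial\textbf{f} \otimes \textbf{Y} \equiv \partial\textbf{g} \otimes \textbf{Y}:\textbf{X} \otimes \textbf{Y} \rightarrow \partial\textbf{A} \otimes \textbf{Y}$. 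Second, since $\Ht(\partial\textbf{Y}) < \Ht(\textbf{Y})$ the product of heights drops strictly below $h$, so the box assumption $Moreq^1(k)$ for the appropriate $k < h$ gives $\textbf{f} \otimes \partial\textbf{Y} \equiv \textbf{g} \otimes \partial\textbf{Y}:\textbf{X} \otimes \partial\textbf{Y} \rightarrow \textbf{A} \otimes \partial\textbf{Y}$, hence into $\textbf{X} \otimes \textbf{Y}$. As the grid minus its corner is covered by its first $M-1$ rows ($\partial\textbf{A} \otimes \textbf{Y}$) together with its first $n-1$ columns ($\textbf{A} \otimes \partial\textbf{Y}$), and the two equalities agree on the overlap exactly as in $Mor^1(h)$, they combine into a morphism equality $\partial(\textbf{f} \otimes \textbf{Y}) \equiv \partial(\textbf{g} \otimes \textbf{Y}):\textbf{X} \otimes \textbf{Y} \rightarrow \partial(\textbf{A} \otimes \textbf{Y})$.

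It then remains to supply the last component, i.e. to derive
$$
\textbf{X} \otimes \textbf{Y} \vdash f_M \otimes y_n \equiv g_M \otimes y_n:\doverline{A_M \otimes Y_n},
$$
where the double bar denotes substitution along $\partial(\textbf{f} \otimes \textbf{Y})$; feeding this together with the truncation equality to the inference rule for morphism equalities then produces the desired $\textbf{f} \otimes \textbf{Y} \equiv \textbf{g} \otimes \textbf{Y}:\textbf{X} \otimes \textbf{Y} \rightarrow \textbf{A} \otimes \textbf{Y}$ (and in particular makes both endpoints morphisms, so no separate appeal to $Mor^1(h)$ is needed here). To obtain the last component, note that $\Ht(\textbf{X} \vdash f_M \equiv g_M:\underline{A_M})\Ht(\partial_{n-1}\textbf{Y} \vdash Y_n \tp) \le \Ht(\textbf{f} \equiv \textbf{g})\Ht(\textbf{Y}) \le h$, so by $h$-derivability I may tensor the term equality $\textbf{X} \vdash f_M \equiv g_M:\underline{A_M}$ with the sort judgment $\partial_{n-1}\textbf{Y} \vdash Y_n \tp$ (the ``term equality $\odot$ sort'' case). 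By case (iii) of $\otimes_t$, applied with the variable $y_n$, this is precisely the equality $f_M \otimes y_n \equiv g_M \otimes y_n$ on the sort $(\underline{A_M} \otimes Y_n)[f_M \otimes y_j \mid xy_j]_{j<n}$ produced by the ``term $\odot$ sort'' tensor. That this sort is provably equal, in context $\textbf{X} \otimes \textbf{Y}$, to $\doverline{A_M \otimes Y_n}$ is exactly the reconciliation $\doverline{A_M \otimes Y_n} \equiv \underline{A_M} \otimes Y_n$ carried out at the end of the proof of $Mor^1(h)$ via $Sub^1_{s,s}(h)$; transporting the equality along this sort equality finishes the step.

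The main obstacle is the same bookkeeping that makes $Mor^1(h)$ delicate: ensuring that the last-component equality lives on the substitution-twisted sort $\doverline{A_M \otimes Y_n}$ rather than on the ``raw'' sort delivered by the tensor construction, and that the two truncation equalities glue along their overlap to the twisted context $\partial(\textbf{A} \otimes \textbf{Y})$. Both points are resolved by reusing, at the level of equalities, the height estimates and the $Sub^1_{s,s}(h)$ reconciliation already established for $Mor^1(h)$: every step of that proof has an equality-valued analogue supplied either by $h$-derivability (for the tensored equalities) or by the box assumptions $Moreq^1(k)$, $k<h$ (for the peeled-off factor), so no genuinely new computation is required.
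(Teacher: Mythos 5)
Your proof is correct, but it takes a substantially longer route than the paper's. The paper's proof of $Moreq^1(h)$ is a direct entrywise argument with no induction on length: since $\textbf{f} \otimes \textbf{Y}$ and $\textbf{g} \otimes \textbf{Y}$ are already morphisms (via $Mor^1(h)$, as $\Ht(\textbf{f}:\textbf{X}\rightarrow\textbf{A}) \le \Ht(\textbf{f} \equiv \textbf{g}:\textbf{X}\rightarrow\textbf{A})$), to establish the morphism equality it suffices to derive the \emph{partial} term equalities $\textbf{X} \otimes \textbf{Y} \vdash f_i \otimes y_j \equiv g_i \otimes y_j \tm$ for all $(i,j)$ --- equalities at \emph{some} sort, which then transport to the required substitution-twisted sorts because the sort of a derivable term is unique up to provable equality. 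Each such partial equality is produced by a single application of $h$-derivability to $(\textbf{X} \vdash f_i \equiv g_i:U) \odot (\partial_{j-1}\textbf{Y} \vdash Y_j \tp)$. Your truncation induction on $l(\textbf{A})l(\textbf{Y})$, the gluing of the two boundary equalities, and the explicit $Sub^1_{s,s}(h)$ reconciliation of $\doverline{A_M \otimes Y_n}$ with $\underline{A_M} \otimes Y_n$ reproduce by hand exactly the bookkeeping that the partial-equality device lets the paper skip; what your version buys is that it never needs to invoke the uniqueness-of-sorts fact, at the cost of redoing the entire $Mor^1(h)$ scaffolding at the level of equalities. One small repair: the grey box of standing assumptions does not include $Moreq^1(k)$ for $k<h$, so your appeal to it for the $\textbf{f} \otimes \partial\textbf{Y} \equiv \textbf{g} \otimes \partial\textbf{Y}$ step should instead be to your own inner induction hypothesis (that truncation also decreases $l(\textbf{A})l(\textbf{Y})$), or to an outer induction on $h$ of the kind the paper uses implicitly when it invokes $Conteq^1(h-1)$ inside the $Conteq$ lemma.
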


\begin{proof}
We will only check $Moreq^1(h)$, the proof of $Moreq^2(h)$ being analogous.

Suppose given a derivable equality $\textbf{f} \equiv \textbf{g}:\textbf{X} \rightarrow \textbf{A}$ in $\bbA$ and a context $\textbf{Y}$ in $\bbB$ such that $\Ht(\textbf{f} \equiv \textbf{g}:\textbf{X} \rightarrow \textbf{A})\Ht(\textbf{Y}) \le h$. Write $\textbf{Y} = (y_1:Y_1, ..., y_n:Y_n)$, $\textbf{f} = (f_1, ..., f_M)$, and $\textbf{g} = (g_1, ..., g_M)$. To derive $\textbf{f} \otimes \textbf{Y} \equiv \textbf{g} \otimes \textbf{Y}:\textbf{X} \otimes \textbf{Y} \rightarrow \textbf{A} \otimes \textbf{Y}$, it suffices to derive
$$
\textbf{X} \otimes \textbf{Y} \vdashcustom f_i \otimes y_j \equiv g_i \otimes y_j \tm
$$
for $1 \le i \le M$ and $1 \le j \le n$. But this follows from $h$-derivability by tensoring $\textbf{X} \vdashcustom f_i \equiv g_i:U$ and $\partial_{j-1} \textbf{Y} \vdashcustom Y_j \tp$ where the former is chosen so that $\Ht(\textbf{X} \vdashcustom f_i \equiv g_i:U) \le \Ht(\textbf{f} \equiv \textbf{g}:\textbf{X} \rightarrow \textbf{A})$.
\end{proof}

\subsection{Tensoring by a section of a display map}

We again work under the assumption that $(\bbA,\bbB)$ is $h$-derivable. Suppose given contexts $\textbf{X} = (x_1:X_1, ..., x_m:X_m)$ in $\bbA$ and $\textbf{Y} = (y_1:Y_1, ..., y_n:Y_n)$ in $\bbB$.

\begin{lemma}
\label{lem: tensoring a term and a context}
Consider a derivable judgment $\textbf{X} \vdashcustom u:U$ and let $\textbf{X}' = (\textbf{X}, x:U)$. If $\Ht(\textbf{X} \vdashcustom u:U)\Ht(\textbf{Y}) \le h$, then we have a context morphism
$$
(x_1, ..., x_m, u^U) \otimes \textbf{Y}:\textbf{X} \otimes \textbf{Y} \longrightarrow \textbf{X}' \otimes \textbf{Y}.
$$
Similarly, consider a derivable judgment $\textbf{Y} \vdashcustom v:V$ and let $\textbf{Y}' = (\textbf{Y}, y:V)$. If $\Ht(\textbf{X})\Ht(\textbf{Y} \vdashcustom v:V) \le h$, then we have a context morphism
$$
\textbf{X} \otimes (y_1, ..., y_n,v^V):\textbf{X} \otimes \textbf{Y} \longrightarrow \textbf{X} \otimes \textbf{Y}'.
$$
\end{lemma}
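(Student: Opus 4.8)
The plan is to recognize the tuple $(x_1, \ldots, x_m, u^U)$ as the context morphism $\textbf{s} := (x_1, \ldots, x_m, u^U) : \textbf{X} \to \textbf{X}'$ in $\bbA$ given by the section of the display map $\textbf{X}' \twoheadrightarrow \textbf{X}$ determined by $u$; it is a morphism because $\textbf{X} \vdash x_i : X_i$ and $\textbf{X} \vdash u : U$ are derivable. By the definition of the tensor product of a context morphism with a context, $(x_1, \ldots, x_m, u^U) \otimes \textbf{Y}$ is precisely $\textbf{s} \otimes \textbf{Y}$, so the first assertion is exactly the conclusion of $Mor^1(h)$ applied to $\textbf{s}$ and $\textbf{Y}$. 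Since $(\bbA, \bbB)$ is $h$-derivable, the statements $Cont(h)$, $Mor^1(h)$, $Mor^2(h)$ and $Sub^1_{s,s}(h)$ established earlier in this section are at our disposal, so the only thing left to check is the height hypothesis $\Ht(\textbf{s} : \textbf{X} \to \textbf{X}')\,\Ht(\textbf{Y}) \le h$.

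For this it suffices to show $\Ht(\textbf{s} : \textbf{X} \to \textbf{X}') \le \Ht(\textbf{X} \vdash u : U)$, for then the hypothesis $\Ht(\textbf{X} \vdash u : U)\,\Ht(\textbf{Y}) \le h$ gives the required bound. The components of $\textbf{s}$ are the variables $x_1, \ldots, x_m$, whose judgments $\textbf{X} \vdash x_i : X_i$ have height at most $\Ht(\textbf{X})$, together with the term $u$; and its target $\textbf{X}' = (\textbf{X}, x : U)$ has height $\Ht(\textbf{X} \vdash U \tp)$. By the monotonicity and comparison properties collected in Proposition \ref{prop: properties height} we have $\Ht(\textbf{X}) \le \Ht(\textbf{X} \vdash u : U)$ and $\Ht(\textbf{X} \vdash U \tp) \le \Ht(\textbf{X} \vdash u : U)$, so each of these quantities is bounded by $\Ht(\textbf{X} \vdash u : U)$; hence so is $\Ht(\textbf{s} : \textbf{X} \to \textbf{X}')$, and $Mor^1(h)$ delivers the first morphism. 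The second assertion is entirely symmetric: $(y_1, \ldots, y_n, v^V)$ is the section $\textbf{Y} \to \textbf{Y}'$ associated with $v$, its height equals $\Ht(\textbf{Y} \vdash v : V)$, and the hypothesis $\Ht(\textbf{X})\,\Ht(\textbf{Y} \vdash v : V) \le h$ lets us invoke $Mor^2(h)$ to obtain $\textbf{X} \otimes (y_1, \ldots, y_n, v^V) : \textbf{X} \otimes \textbf{Y} \to \textbf{X} \otimes \textbf{Y}'$.

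The one delicate point is precisely this height bookkeeping: the reduction relies on $\Ht(\textbf{s} : \textbf{X} \to \textbf{X}')$ being governed by the single judgment $\textbf{X} \vdash u : U$ rather than exceeding it, so that $Mor^1(h)$ stays within the derivability budget $h$. Should the chosen height convention for morphisms fail to yield this bound on the nose, I would instead prove the statement directly by reproducing the entry-by-entry induction of the proof of $Mor^1(h)$ specialized to $\textbf{s}$: the first $mn$ entries of $\textbf{s} \otimes \textbf{Y}$ are the diagonal terms $x_i y_j$, which together form $\mathrm{id}_{\textbf{X} \otimes \textbf{Y}}$, while each of the final $n$ entries $u^U \otimes y_j$ is typed by tensoring $\textbf{X} \vdash u : U$ with $\partial_{j-1}\textbf{Y} \vdash Y_j \tp$ (derivable by $h$-derivability, since $\Ht(\textbf{X} \vdash u : U)\,\Ht(\partial_j \textbf{Y}) \le h$) and then correcting its sort along the already-constructed partial morphism $\partial(\textbf{s} \otimes \textbf{Y})$ by means of $Sub^1_{s,s}(h)$. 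The substitution and type-matching for these last entries, carried out exactly as for the final entry in the proof of $Mor^1(h)$, is where the bulk of the work would lie; the symmetric fallback for the second assertion uses $Sub^2_{s,s}(h)$ in place of $Sub^1_{s,s}(h)$.
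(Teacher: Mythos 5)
The paper states this lemma without proof, so there is no official argument to compare against; judging your proposal on its own terms, the primary route contains a genuine gap, but your fallback is the correct argument and is what actually needs to be written out.

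The gap is the height estimate $\Ht(\textbf{s}:\textbf{X}\rightarrow\textbf{X}')\le\Ht(\textbf{X}\vdash u:U)$ for the section $\textbf{s}=(x_1,\ldots,x_m,u)$. You justify it by asserting that the component judgments $\textbf{X}\vdash x_i:X_i$ have height at most $\Ht(\textbf{X})$. This is false: by Proposition \ref{prop: properties height}, items (b) and (d), we have $\Ht(\textbf{X}\vdash x_i:X_i)>\Ht(\textbf{X}\vdash X_i\tp)>\Ht(\textbf{X})$, and nothing in the paper bounds $\Ht(\textbf{X}\vdash X_i\tp)$ --- the sort $X_i$ judged in the \emph{full} context $\textbf{X}$, whose derivability comes only from Corollary \ref{cor: sorts in a context are derivable} with no height control --- by $\Ht(\textbf{X}\vdash u:U)$. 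For instance if $u=x_1$ there is no reason to expect $\Ht(\textbf{X}\vdash x_m:X_m)\le\Ht(\textbf{X}\vdash x_1:X_1)$. So $Mor^1(h)$ cannot be invoked for $\textbf{s}$ within the budget $\Ht(\textbf{X}\vdash u:U)\Ht(\textbf{Y})\le h$; this is precisely why the lemma is phrased with a hypothesis on the height of the single judgment $\textbf{X}\vdash u:U$ rather than of a morphism.

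Your fallback is the right proof, with one simplification available. The first $mn$ entries of $(x_1,\ldots,x_m,u^U)\otimes\textbf{Y}$ are the variables $x_iy_j$, so all entries preceding position $(m+1,j)$ other than $xy_1,\ldots,xy_{j-1}$ are substituted by themselves; hence the sort required of the $(m+1,j)$-entry is exactly $(U\otimes Y_j)[u\otimes y_1\mid xy_1,\ldots,u\otimes y_{j-1}\mid xy_{j-1}]$, which is \emph{literally} the sort produced by tensoring $\textbf{X}\vdash u:U$ with $\partial_{j-1}\textbf{Y}\vdash Y_j\tp$ (legitimate by $h$-derivability, since $\Ht(\textbf{X}\vdash u:U)\Ht(\partial_{j-1}\textbf{Y}\vdash Y_j\tp)\le\Ht(\textbf{X}\vdash u:U)\Ht(\textbf{Y})\le h$). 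After weakening along the projection $\textbf{X}\otimes\textbf{Y}\rightarrow\textbf{X}\otimes\partial_j\textbf{Y}$, which is a morphism because both are contexts by $Cont$, the sorts therefore match on the nose and no appeal to $Sub^1_{s,s}(h)$ is needed. With that correction the entry-by-entry argument closes the first half, and the second half is symmetric as you say.
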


\begin{proposition}
\label{prop: term from pair of terms}
Suppose given derivable judgments $\textbf{X} \vdashcustom u:U$ and $\textbf{Y} \vdashcustom v:V$ such that $\Ht(\textbf{X} \vdashcustom u:U)\Ht(\textbf{Y} \vdashcustom v:V) \le h+1$. Then
$$
\textbf{X} \otimes \textbf{Y} \vdashcustom u^U \otimes v^V : \doverline{U \otimes V}, \qquad \textbf{X} \otimes \textbf{Y} \vdashcustom u^U \varotimes v^V : \doverline{U \otimes V}
$$
are derivable, where the double bar indicates the substitution operation
$$
[u^U \otimes y_1 \mid xy_1, ..., u^U \otimes y_n \mid xy_n, x_1 \otimes v^V \mid x_1y, ..., x_m \otimes v^V \mid x_my]
$$
for $x$ (resp. $y$) a variable of sort $U$ (resp. $V$).
\end{proposition}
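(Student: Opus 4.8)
The plan is to derive each of the two terms as a substitution of an already‑available judgment, exploiting that formulas $(\lozenge)$ and $(\blacklozenge)$ express $u^U \otimes v^V$ and $u^U \varotimes v^V$ as substitutions along the context morphisms provided by Lemma \ref{lem: tensoring a term and a context}. Write $p = \Ht(\textbf{X} \vdash u:U)$ and $q = \Ht(\textbf{Y} \vdash v:V)$, so $pq \le h+1$. By Proposition \ref{prop: properties height} we have $\Ht(\textbf{X}), \Ht(\textbf{X} \vdash U \tp) < p$ and $\Ht(\textbf{Y}), \Ht(\textbf{Y} \vdash V \tp) < q$, and these strict drops give the four bounds $\Ht(\textbf{X} \vdash U \tp)\,q \le h$, $\;p\,\Ht(\textbf{Y}) \le h$, $\;p\,\Ht(\textbf{Y} \vdash V \tp) \le h$ and $\Ht(\textbf{X})\,q \le h$ (each from $pq \le h+1$ together with $p,q \ge 1$). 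These are exactly the hypotheses needed to invoke $h$-derivability and Lemma \ref{lem: tensoring a term and a context} below, so no separate induction on the height product is required.

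To derive $\textbf{X} \otimes \textbf{Y} \vdash u^U \otimes v^V : \doverline{U \otimes V}$, set $\textbf{X}' = (\textbf{X}, x:U)$ and $\textbf{u} = (x_1, \ldots, x_m, u^U)$. First, since $\Ht(\textbf{X} \vdash U \tp)\,q \le h$, $h$-derivability applied to the ``sort $\odot$ term'' case yields the derivable judgment $\textbf{X}' \otimes \textbf{Y} \vdash x^U \otimes v^V : (U \otimes V)[x_1 \otimes v^V \mid x_1y, \ldots, x_m \otimes v^V \mid x_my]$. Second, since $p\,\Ht(\textbf{Y}) \le h$, Lemma \ref{lem: tensoring a term and a context} produces a context morphism $\textbf{u} \otimes \textbf{Y} : \textbf{X} \otimes \textbf{Y} \to \textbf{X}' \otimes \textbf{Y}$, which by construction is the identity on each $x_iy_j$ and sends $xy_j$ to $u^U \otimes y_j^{Y_j}$. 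Substituting the first judgment along this morphism gives a derivable judgment whose subject term is $(x^U \otimes v^V)[u^U \otimes y_1^{Y_1} \mid xy_1, \ldots, u^U \otimes y_n^{Y_n} \mid xy_n]$; by Lemma \ref{lem: formulas for tensor products of terms} this is exactly formula $(\lozenge)$, that is $u^U \otimes v^V$. The term $u^U \varotimes v^V$ is obtained symmetrically: using $p\,\Ht(\textbf{Y} \vdash V \tp) \le h$ and the ``term $\odot$ sort'' case to derive $\textbf{X} \otimes \textbf{Y}' \vdash u^U \otimes y^V : \cdots$ with $\textbf{Y}' = (\textbf{Y}, y:V)$, then substituting along $\textbf{X} \otimes \textbf{v} : \textbf{X} \otimes \textbf{Y} \to \textbf{X} \otimes \textbf{Y}'$ (Lemma \ref{lem: tensoring a term and a context}, valid since $\Ht(\textbf{X})\,q \le h$, where $\textbf{v} = (y_1, \ldots, y_n, v^V)$) to realize formula $(\blacklozenge)$.

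It remains to check that in both cases the resulting sort is literally $\doverline{U \otimes V}$. In the first case the sort obtained is $(U \otimes V)[x_i \otimes v^V \mid x_iy]$ further substituted by $[u^U \otimes y_j^{Y_j} \mid xy_j]$; since the first substitution touches only the variables $x_iy$ and the second only the variables $xy_j$, and neither family of replacement terms contains variables from the other's domain, the two commute and compose to the single simultaneous substitution defining $\doverline{U \otimes V}$. The second case is identical with the roles of the two substitutions exchanged. The main delicacy to watch is precisely this bookkeeping, together with the hypotheses of Lemma \ref{lem: formulas for tensor products of terms}: formula $(\lozenge)$ computes $u^U \otimes_t v^V$ only when $V = \Type(v)$, and $(\blacklozenge)$ only when $U = \Type(u)$. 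When $v$ is a variable this holds by the convention that a variable is typed by its declared sort, and when $v$ is not a variable the \emph{term} $u^U \otimes v^V$ is independent of the choice of $V$ (it is built from $U$ and the decomposition of $v$ alone; cf. case (ii) in the construction of $\otimes_t$), so one may take $V = \Type(v)$ to identify the term while retaining the actual $V$ in the sort; the analogous remark applies to $U$ and $(\blacklozenge)$. I expect this term/sort matching, rather than the derivations themselves, to be the only real obstacle.
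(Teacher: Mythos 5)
Your proof is correct and follows essentially the same route as the paper's: tensor $\textbf{X} \vdash U \tp$ with $\textbf{Y} \vdash v:V$ (resp.\ $\textbf{X} \vdash u:U$ with $\textbf{Y} \vdash V \tp$) via $h$-derivability, then substitute along the section-type context morphism $(x_1,\ldots,x_m,u^U) \otimes \textbf{Y}$ (resp.\ $\textbf{X} \otimes (y_1,\ldots,y_n,v^V)$) supplied by Lemma \ref{lem: tensoring a term and a context} to realize $(\lozenge)$ and $(\blacklozenge)$. Your additional bookkeeping — the explicit height bounds from Proposition \ref{prop: properties height}, the commutation of the two disjoint substitutions on the sort, and the caveat about when $(\lozenge)$/$(\blacklozenge)$ literally compute $\otimes_t$/$\varotimes_t$ — only makes explicit what the paper's proof leaves implicit.
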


\begin{proof}
Firstly, note that $\Ht(\textbf{X} \vdashcustom U \tp)\Ht(\textbf{Y} \vdashcustom v:V) < h+1$, so by $h$-derivability we can tensor $\textbf{X} \vdashcustom U \tp$ and $\textbf{Y} \vdashcustom v:V$ to derive
\[
\tag{\texttt{*}}
\textbf{X}' \otimes \textbf{Y} \vdashcustom x \otimes v^V:(U \otimes V)[x_1 \otimes v^V \mid x_1y, ..., x_m \otimes v^V \mid x_my]
\]
where $\textbf{X}' = (\textbf{X}, x:U)$.

Also, by Lemma \ref{lem: tensoring a term and a context} we have a context morphism $(x_1, ..., x_m,u^U) \otimes \textbf{Y}:\textbf{X} \otimes \textbf{Y} \rightarrow \textbf{X}' \otimes \textbf{Y}$. By substitution along this map, we obtain from (\texttt{*}) a derivable judgment
\begin{align*}
	\textbf{X} \otimes \textbf{Y} & \vdashcustom (x \otimes v^V)[u^U \otimes y_1 \mid xy_1, ..., u^U \otimes y_n \mid xy_n] \\
	&: (U \otimes V)[x_1 \otimes v^V \mid x_1y, ..., x_m \otimes v^V \mid x_my][u^U \otimes y_1 \mid xy_1, ..., u^U \otimes y_n \mid xy_n],
\end{align*}
which is precisely $\textbf{X} \otimes \textbf{Y} \vdashcustom u^U \otimes v^V : \doverline{U \otimes V}$.

Similarly, we can tensor $\textbf{X} \vdashcustom u:U$ and $\textbf{Y} \vdashcustom V \tp$ to derive
\[
\tag{\texttt{**}}
\textbf{X} \otimes \textbf{Y}' \vdashcustom u^U \otimes y : (U \otimes V)[u^U \otimes y_1 \mid xy_1, ..., u^U \otimes y_n \mid xy_n].
\]
where $\textbf{Y}' = (\textbf{Y}, y:V)$. From Lemma \ref{lem: tensoring a term and a context} we obtain a context morphism $\textbf{X} \otimes (y_1, ..., y_n, v^V): \textbf{X} \otimes \textbf{Y} \rightarrow \textbf{X} \otimes \textbf{Y}'$, hence by substitution in (\texttt{**}) we can derive
\begin{align*}
	\textbf{X} \otimes \textbf{Y} & \vdashcustom (u^U \otimes y)[x_1 \otimes v^V \mid x_1y, ..., x_m \otimes v^V \mid x_my] \\
	&: (U \otimes V)[u^U \otimes y_1 \mid xy_1, ..., u^U \otimes y_n \mid xy_n][x_1 \otimes v^V \mid x_1y, ..., x_m \otimes v^V \mid x_my],
\end{align*}
which equals $\textbf{X} \otimes \textbf{Y} \vdashcustom u^U \varotimes v^V: \doverline{U \otimes V}$.
\end{proof}

\begin{lemma}
\label{lem: morphism equality from (term eq,ctx)}
Suppose given a derivable judgment $\textbf{X} \vdashcustom u \equiv v:U$ in $\bbA$ such that $\Ht(\textbf{X} \vdashcustom u \equiv v:U)\Ht(\textbf{Y}) \le h$. Then, writing $\textbf{X}' = (\textbf{X}, x':U)$, the context morphism equality
$$
(x_1, ..., x_m,u^U) \otimes \textbf{Y} \equiv (x_1, ..., x_m,v^U) \otimes \textbf{Y}: \textbf{X} \otimes \textbf{Y} \longrightarrow \textbf{X}' \otimes \textbf{Y}
$$
is derivable.
\end{lemma}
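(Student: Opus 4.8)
The plan is to reduce the claimed context morphism equality to finitely many term equalities and to obtain each of those from $h$-derivability applied to $\textbf{X} \vdash u \equiv v : U$ together with a sort judgment read off from $\textbf{Y}$, mirroring the proof of $Moreq^1(h)$ given above.

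First I would record that both tuples in the statement are context morphisms $\textbf{X} \otimes \textbf{Y} \to \textbf{X}' \otimes \textbf{Y}$. By the monotonicity properties in Proposition \ref{prop: properties height} we have $\Ht(\textbf{X} \vdash u:U), \Ht(\textbf{X} \vdash v:U) \le \Ht(\textbf{X} \vdash u \equiv v:U)$, so both $\Ht(\textbf{X} \vdash u:U)\Ht(\textbf{Y})$ and $\Ht(\textbf{X} \vdash v:U)\Ht(\textbf{Y})$ are at most $h$, and Lemma \ref{lem: tensoring a term and a context} applies to each. Writing $\textbf{Y} = (y_1:Y_1, \ldots, y_n:Y_n)$, both morphisms have $(m+1)n$ entries, and their first $mn$ entries coincide: each is a variable $x_iy_j$, since $x_i^{X_i} \otimes y_j^{Y_j} = x_iy_j$. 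Hence, to derive the morphism equality it remains only to establish, for $1 \le j \le n$, the term equality $\textbf{X} \otimes \textbf{Y} \vdash u^U \otimes y_j^{Y_j} \equiv v^U \otimes y_j^{Y_j} \tm$, where $y_j^{Y_j} = (\partial_{j-1}\textbf{Y} \vdash Y_j \tp, y_j)$.

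Next I would obtain each of these from $h$-derivability. Applying the ``term equality $\odot$ sort'' clause to $J = (\textbf{X} \vdash u \equiv v:U)$ and $J' = (\partial_{j-1}\textbf{Y} \vdash Y_j \tp)$ produces exactly
$$\textbf{X} \otimes \partial_j\textbf{Y} \vdash u^U \otimes y_j^{Y_j} \equiv v^U \otimes y_j^{Y_j} : (U \otimes Y_j)[\cdots].$$
This is within range, since $\Ht(\partial_{j-1}\textbf{Y} \vdash Y_j \tp) = \Ht(\partial_j\textbf{Y}) \le \Ht(\textbf{Y})$ gives $\Ht(J)\Ht(J') \le \Ht(\textbf{X} \vdash u \equiv v:U)\Ht(\textbf{Y}) \le h$.

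Finally, this equality lives in context $\textbf{X} \otimes \partial_j\textbf{Y}$, whereas the target context is $\textbf{X} \otimes \textbf{Y}$. As the lexicographic ordering interleaves the two, $\textbf{X} \otimes \partial_j\textbf{Y}$ is not an initial segment of $\textbf{X} \otimes \textbf{Y}$, so a plain weakening is unavailable; instead I would substitute along the projection $\pi : \textbf{X} \otimes \textbf{Y} \to \textbf{X} \otimes \partial_j\textbf{Y}$ fixing every common variable $x_iy_k$ with $k \le j$. This $\pi$ is a genuine context morphism: each $x_iy_k$ carries the sort $X_i \otimes Y_k$ in both contexts, and that sort expression involves only variables $x_{i'}y_{k'}$ with $k' \le k \le j$, all of which $\pi$ fixes; hence substitution alters neither the terms nor their common sort and delivers the equality in context $\textbf{X} \otimes \textbf{Y}$. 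I expect this weakening step to be the only delicate point---checking that $\pi$ is well-defined and acts as the identity on the relevant expressions---and it is precisely the device left implicit in the proof of $Moreq^1(h)$. Conceptually, the whole statement can also be read as the instance of $Moreq^1(h)$ for the section morphism equality $(x_1, \ldots, x_m, u^U) \equiv (x_1, \ldots, x_m, v^U) : \textbf{X} \to \textbf{X}'$, once the height bookkeeping is unwound via Proposition \ref{prop: properties height}.
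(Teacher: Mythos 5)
Your proof follows the paper's argument exactly: both premorphisms are obtained from Lemma \ref{lem: tensoring a term and a context}, the morphism equality is reduced to the entrywise term equalities $\textbf{X} \otimes \textbf{Y} \vdash u^U \otimes y_j \equiv v^U \otimes y_j \tm$, and each of these is obtained by tensoring $\textbf{X} \vdash u \equiv v:U$ with $\partial_{j-1}\textbf{Y} \vdash Y_j \tp$ via $h$-derivability, using $\Ht(\partial_{j-1}\textbf{Y} \vdash Y_j \tp) \le \Ht(\textbf{Y})$, and then weakening from $\textbf{X} \otimes \partial_j\textbf{Y}$ to $\textbf{X} \otimes \textbf{Y}$. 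The only difference is that you make the weakening morphism explicit (correctly noting that $\textbf{X} \otimes \partial_j\textbf{Y}$ is not an initial segment of $\textbf{X} \otimes \textbf{Y}$), a step the paper leaves implicit.
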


\begin{proof}
As $\Ht(\textbf{X} \vdashcustom u:U)$, $\Ht(\textbf{X} \vdashcustom v:U) \le \Ht(\textbf{X} \vdashcustom u \equiv v:U)$, by Lemma \ref{lem: tensoring a term and a context} the two premorphisms in the statement are derivable. To obtain the desired morphism equality, it suffices to derive
$$
\textbf{X} \otimes \textbf{Y} \vdashcustom u^U \otimes y_j \equiv v^U \otimes y_j \tm
$$
for $1 \le j \le n$. For that, it suffices to derive $\textbf{X} \otimes \partial_j \textbf{Y} \vdashcustom u^U \otimes y_j \equiv v^V \otimes y_j \tm$, which is accomplished by tensoring $\textbf{X} \vdashcustom u \equiv v:U$ and $\partial_{j-1}\textbf{Y} \vdashcustom Y_j \tp$ using $h$-derivability and the fact that $\Ht(\partial_{j-1}\textbf{Y} \vdashcustom Y_j \tp) \le \Ht(\textbf{Y})$.
\end{proof}

\begin{lemma}
\label{lem: morphism equality from (ctx, term eq)}
Suppose given a derivable judgment $\textbf{Y} \vdashcustom u \equiv v:V$ in $\bbB$ such that $\Ht(\textbf{X})\Ht(\textbf{Y} \vdashcustom u \equiv v:V) \le h$. Then, writing $\textbf{Y}' = (\textbf{Y}, y':V)$, the context morphism equality
$$
\textbf{X} \otimes (y_1, ..., y_n,u) \equiv \textbf{X} \otimes (y_1, ..., y_n,v): \textbf{X} \otimes \textbf{Y} \longrightarrow \textbf{X} \otimes \textbf{Y}'
$$
is derivable.
\end{lemma}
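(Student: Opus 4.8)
The plan is to mirror, with the roles of $\bbA$ and $\bbB$ exchanged, the argument used for Lemma \ref{lem: morphism equality from (term eq,ctx)}. First I would record that $\Ht(\textbf{X})\Ht(\textbf{Y} \vdash u:V)$ and $\Ht(\textbf{X})\Ht(\textbf{Y} \vdash v:V)$ are both at most $\Ht(\textbf{X})\Ht(\textbf{Y} \vdash u \equiv v:V) \le h$, so that Lemma \ref{lem: tensoring a term and a context} applies and guarantees that the two context premorphisms $\textbf{X} \otimes (y_1, \ldots, y_n, u^V)$ and $\textbf{X} \otimes (y_1, \ldots, y_n, v^V)$ from $\textbf{X} \otimes \textbf{Y}$ to $\textbf{X} \otimes \textbf{Y}'$ are derivable. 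In particular $\textbf{X} \otimes \textbf{Y}$ and $\textbf{X} \otimes \textbf{Y}'$ are contexts, so no separate appeal to $Cont$ is needed.

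Next, to establish the desired morphism equality it suffices to compare the two premorphisms entry by entry. Writing $\textbf{g} = (y_1, \ldots, y_n, u^V)$ and $\textbf{g}' = (y_1, \ldots, y_n, v^V)$, the families $\textbf{X} \otimes \textbf{g}$ and $\textbf{X} \otimes \textbf{g}'$ agree on every entry of the form $x_i^{X_i} \otimes y_j^{Y_j} = x_iy_j$ with $j \le n$, and differ only on the $m$ entries $x_i^{X_i} \otimes u^V$ versus $x_i^{X_i} \otimes v^V$ for $1 \le i \le m$. Hence the problem reduces to deriving, for each such $i$, the term equality
$$
\textbf{X} \otimes \textbf{Y} \vdash x_i^{X_i} \otimes u^V \equiv x_i^{X_i} \otimes v^V \tm.
$$

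For fixed $i$, I would obtain this by tensoring the sort judgment $\partial_{i-1}\textbf{X} \vdash X_i \tp$, augmented with the variable $x_i$, with the term equality judgment $\textbf{Y} \vdash u \equiv v:V$. This is exactly a ``sort $\odot$ term equality'' tensor product, and by the explicit description in \S\ref{subsec: tensor product of judgments} its output is
$$
\partial_i\textbf{X} \otimes \textbf{Y} \vdash x_i^{X_i} \otimes u^V \equiv x_i^{X_i} \otimes v^V : (X_i \otimes V)[\cdots],
$$
with $\partial_i\textbf{X} = (\partial_{i-1}\textbf{X}, x_i:X_i)$. Since $\Ht(\partial_{i-1}\textbf{X} \vdash X_i \tp) \le \Ht(\textbf{X})$, we have $\Ht(\partial_{i-1}\textbf{X} \vdash X_i \tp)\Ht(\textbf{Y} \vdash u \equiv v:V) \le h$, so $h$-derivability makes this judgment derivable. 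Because the lexicographic ordering places all pairs $(k,j)$ with $k \le i$ before those with $k > i$, the context $\partial_i\textbf{X} \otimes \textbf{Y}$ is an initial segment of $\textbf{X} \otimes \textbf{Y}$; weakening therefore yields the required equality in context $\textbf{X} \otimes \textbf{Y}$.

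I expect the only subtlety — rather than a genuine obstacle — to be the bookkeeping confirming that the ``sort $\odot$ term equality'' construction reproduces precisely the entries $x_i^{X_i} \otimes u^V$ and $x_i^{X_i} \otimes v^V$ occurring in the two premorphisms (and that their common sort matches), together with checking that $\partial_i\textbf{X} \otimes \textbf{Y}$ is genuinely a prefix of $\textbf{X} \otimes \textbf{Y}$ so that weakening is legitimate. These are the same routine verifications carried out for Lemma \ref{lem: morphism equality from (term eq,ctx)}, so the proof should proceed without further difficulty.
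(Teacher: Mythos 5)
Your proposal is correct and follows essentially the same route as the paper: the paper proves this lemma by declaring it ``analogous to Lemma \ref{lem: morphism equality from (term eq,ctx)}'', whose proof is exactly your argument with the roles of $\bbA$ and $\bbB$ exchanged — invoke Lemma \ref{lem: tensoring a term and a context} for the two premorphisms, reduce to the entrywise equalities $x_i^{X_i} \otimes u^V \equiv x_i^{X_i} \otimes v^V$, and obtain each by tensoring $\partial_{i-1}\textbf{X} \vdash X_i \tp$ with $\textbf{Y} \vdash u \equiv v:V$ via $h$-derivability, using $\Ht(\partial_{i-1}\textbf{X} \vdash X_i \tp) \le \Ht(\textbf{X})$. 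Your additional observation that $\partial_i\textbf{X} \otimes \textbf{Y}$ is a genuine prefix of $\textbf{X} \otimes \textbf{Y}$ (so plain weakening suffices, unlike in the mirror case where one projects along a context morphism) is accurate and slightly sharper than what the paper records.
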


\begin{proof}
Analogous to Lemma \ref{lem: morphism equality from (term eq,ctx)}.
\end{proof}

\section{$h$-derivability implies $(h+1)$-derivability}

This section is devoted to proving the following statement:

\begin{proposition}
\label{prop: induction step}
Given $h \ge 0$, suppose that a pair of generalized algebraic theories $(\bbA, \bbB)$ is $h$-derivable. Then it is $(h+1)$-derivable.
\end{proposition}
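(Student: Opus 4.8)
The plan is to prove the statement directly: assuming $(\bbA,\bbB)$ is $h$-derivable, I must show that $J \odot J'$ is derivable for every pair of derivable judgments with $\Ht(J)\Ht(J') \le h+1$. Since $h$-derivability already covers $\Ht(J)\Ht(J') \le h$, it suffices to treat pairs with $\Ht(J)\Ht(J') = h+1$. Before the case analysis I would record the toolkit. First, $h$-derivability is monotone, so it implies $k$-derivability for every $k \le h$. Feeding this into the package of results of Section \ref{sec: consequences h-derivability}, whose hypotheses are exactly $h$-derivability together with all of $Cont(k),\ldots,Mor^2_+(k)$ for $k<h$, a short auxiliary induction on $k$ yields that $(\bbA,\bbB)$ satisfies every consequence statement $Cont(k),\ldots,Mor^2_+(k)$ at all parameters $k \le h$. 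In particular I may freely use $Cont(k)$, the substitution statements $Sub^1_{t,t}(k),\ldots,Sub^2_{s,s}(k)$, the morphism statements $Mor^i(k)$ and $Mor^i_+(k)$, Proposition \ref{prop: term from pair of terms}, and the equality lemmas \ref{lem: morphism equality from (term eq,ctx)}--\ref{lem: morphism equality from (ctx, term eq)}, for any $k \le h$.

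The engine of the argument is the notion of initial inference. By definition of $\Ht$, each of $J$ and $J'$ admits a final inference all of whose premises have height strictly smaller than the conclusion; fix such initial inferences. Then any judgment obtained by tensoring a premise of $J$ with $J'$, or $J$ with a premise of $J'$, or a premise with a premise, has product of heights strictly below $\Ht(J)\Ht(J') = h+1$, hence at most $h$, and is therefore derivable by $h$-derivability. The strategy in every case is to reconstruct $J \odot J'$ from these strictly smaller tensors by applying, in $\bbA \otimes \bbB$, the inference rule matching the shape of $J \odot J'$ dictated by Table \ref{table: 1}: the formation rule of a sort symbol $ST$, the formation rule of a term symbol $sT$ or $St$, or one of the equality and conversion rules. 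Proposition \ref{prop: properties height} supplies the height inequalities between a formation judgment and its argument subjudgments that make this decomposition legitimate.

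I would organize the case analysis along the entries of Table \ref{table: 1}, using the left--right symmetry of the construction to halve the work (so that $\mathrm{term}\odot\mathrm{sort}$, $\mathrm{sorteq}\odot\mathrm{sort}$ and $\mathrm{termeq}\odot\mathrm{sort}$ follow from their transposes). Within each kind I split further according to the initial inference -- variable/projection, weakening, a formation rule, a sort-conversion, or an equality rule -- since this determines which subjudgments are available at reduced height. For the formation cases I proceed as in $\mathrm{sort}\odot\mathrm{sort}$: writing $U = S(s_1,\ldots,s_k)$ and $V = T(t_1,\ldots,t_\ell)$, the entries $s_i \otimes t_j$, $s_i \otimes y^V$ and $x^U \otimes t_j$ of the defining matrix are all tensors in which at least one factor has strictly smaller height, hence are derivable terms of the prescribed sorts at product-height $\le h$ by $h$-derivability and Proposition \ref{prop: term from pair of terms}; assembling them, via sub-blocks of product-height $\le h$, into the context $\partial(\textbf{X}' \otimes \textbf{Y}')$ and the argument list of $ST$ then produces $J \odot J'$ by the formation rule. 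The conversion and equality cases reduce to strictly smaller instances and are closed up using the substitution statements together with the fact that derivability in $\bbA \otimes \bbB$ is itself closed under the inference rules.

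The two subtleties I expect to be the main obstacle are both bookkeeping of sorts and contexts at the critical height. First, the context $\partial(\textbf{X}' \otimes \textbf{Y}')$ and the conclusion sort $\doverline{U \otimes V}$ sit precisely at product-height $h+1$, so they cannot be produced by $Cont(h)$ or by $h$-derivability directly; they must be assembled from sub-blocks -- for instance $\textbf{X} \otimes \textbf{Y}'$ together with the column $xy_j : U \otimes Y_j$, each of product-height $\le h$ since $\Ht(\textbf{X}) < \Ht(J)$ and $\Ht(\textbf{Y}) < \Ht(J')$ -- and the statements $Sub^i_{s,s}(h)$ are then needed to certify that the reassembled sort actually equals the prescribed $\doverline{U \otimes V}$ rather than merely a provably-equal variant. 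Second, the $\mathrm{term}\odot\mathrm{term}$ case requires not only the well-formedness of both sides $u \otimes v$ and $u \varotimes v$, which is exactly Proposition \ref{prop: term from pair of terms} at the bound $h+1$, but also the derivability of the equality between them, which for composite $u$, $v$ is obtained by peeling off the outermost operation and invoking the axiom instances on the term symbols together with $Sub^1_{t,t}(h)$ and $Sub^2_{t,t}(h)$; getting these substitutions to commute correctly is the most delicate point.
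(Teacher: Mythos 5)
Your plan reproduces the paper's proof essentially verbatim: reduce to $\Ht(J)\Ht(J') = h+1$, import the Section \ref{sec: consequences h-derivability} package at all parameters $\le h$ via the auxiliary induction, and run a case analysis over Table \ref{table: 1} refined by the initial inferences of $J$ and $J'$, reassembling $J \odot J'$ from tensors of strictly smaller product-height; the two subtleties you flag (building $\partial(\textbf{X}' \otimes \textbf{Y}')$ and the conclusion sort from sub-blocks of product-height $\le h$ certified by $Sub^i_{s,s}(h)$, and handling term $\odot$ term via Proposition \ref{prop: term from pair of terms} plus the substitution statements) are exactly the points the paper's argument turns on. One small caution: the ``left--right symmetry'' is not literal at this stage (the $(\lozenge)$/$(\blacklozenge)$ asymmetry in the definition of $\otimes_t$, and the symmetry isomorphism of Section 8 depends on the theorem this proposition feeds into), so the transposed cases must genuinely be rewritten with $Sub^2/Mor^2$ in place of $Sub^1/Mor^1$ rather than transported formally --- which is what the paper does.
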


It will then be used as the induction step in a proof that the tensor product of any two \textsc{gat}s is a theory; see Theorem \ref{th: tensor product is a theory}.

\vspace{0.5em}

In what follows we let $J$ and $J'$ be derivable standard judgments\footnote{By a \emph{standard judgment} we mean a sort, term, sort equality or term equality judgment.} in $\bbA$ and $\bbB$, respectively, that satisfy $\Ht(J)\Ht(J') = h+1$. Our goal is to prove that $J \odot J'$ is derivable. This will be done in several steps, according to the kinds of judgment being considered and the rules used to derive them. Note that we can apply to $(\bbA,\bbB)$ the results of the previous section, that is, $Cont(k)$, $Sub^1_{t,t}(k)$, etc., whenever $k \le h$.

Below, we always write $\textbf{X} = (x_1:X_1,...,x_m:X_m)$ for the context of $J$ and $\textbf{Y} = (y_1:Y_1, ..., y_n:Y_n)$ for that of $J'$. We will repeatedly use the fact that $\textbf{X} \otimes \textbf{Y}$ is a context, which follows from $Cont(h)$ and $\Ht(\textbf{X})\Ht(\textbf{Y}) < \Ht(J)\Ht(J') = h+1$.

\vspace{0.5em}

As in the previous section, we use Proposition \ref{prop: properties height}, without explicit reference, as a source of all properties of the height function that will be needed.

\begin{center}
	\textcolor{newpurple}{\textbf{\normalsize{\underline{sort $\odot$ sort}}}}
\end{center}

\vspace{0.2em}
	
Suppose that $J$ is $\textbf{X} \vdashcustom X' \tp$ and $J'$ is $\textbf{Y} \vdashcustom Y' \tp$. Let $\textbf{X}' = (\textbf{X},x': X')$ and $\textbf{Y}' = (\textbf{Y},y':Y')$. Then $J \odot J'$ is
$$
\partial(\textbf{X}' \otimes \textbf{Y}') \vdashcustom X' \otimes Y' \tp.
$$
As $\Ht(\textbf{X}) < \Ht(\textbf{X}')$ and $\Ht(\textbf{Y}) < \Ht(\textbf{Y}')$, we have that $\Ht(\textbf{X}')\Ht(\textbf{Y})$ and $\Ht(\textbf{X})\Ht(\textbf{Y}')$ are at most $h$. It follows that $\textbf{X}' \otimes \textbf{Y}$ and $\textbf{X} \otimes \textbf{Y}'$, thus also $\partial(\textbf{X}' \otimes \textbf{Y}')$, are contexts.
	
We consider the following (not mutually exclusive) cases:
	
	\begin{enumerate}[label=(\arabic*)]
		\item Both $J$ and $J'$ are  axioms, say $\textbf{X} \vdashcustom S(x_1, ..., x_m) \tp$ and $\textbf{Y} \vdashcustom T(y_1, ..., y_n) \tp$, respectively. Then $J \odot J'$ is the axiom
		$$
		\partial(\textbf{X}' \otimes \textbf{Y}') \vdashcustom ST
		\begin{pmatrix}
			x_1y_1 & \cdots & x_1y_n & x_1y'\\
			\vdots & \ddots & \vdots & \vdots \\
			x_my_1 & \cdots & x_my_n & x_my'\\
			x'y_1 & \cdots & x'y_m & -
		\end{pmatrix}
		\tp.
		$$
		which is derivable as it is an axiom in $\bbA \otimes \bbB$ and $\partial(\textbf{X}' \otimes \textbf{Y}')$ is a context.
		
		\item $J$ has an initial inference of the form (T-sub). Then there exist an axiom $\textbf{A} \vdashcustom U \tp$ and a morphism $\textbf{f}:\textbf{X} \rightarrow \textbf{A}$ such that $X' = U[\textbf{f}]$ and $\Ht(\textbf{A} \vdashcustom U \tp)$, $\Ht(\textbf{f}:\textbf{X} \rightarrow \textbf{A}) < \Ht(\textbf{X} \vdashcustom X' \tp)$. Let $\textbf{A}'$ be a context $(\textbf{A}, a:U)$.
		
		Note that $\Ht(\textbf{Y}) < \Ht(\textbf{Y} \vdashcustom Y' \tp)$, so
		\begin{align*}
			&\Ht(\textbf{A} \vdashcustom U \tp)\Ht(\textbf{Y} \vdashcustom Y' \tp), \\
			&\Ht(\textbf{X} \vdashcustom X' \tp)\Ht(\textbf{Y}), \\
			&\Ht(\textbf{f}:\textbf{X} \rightarrow \textbf{A})\Ht(\textbf{Y} \vdashcustom Y' \tp)
		\end{align*}
		are strictly smaller than $\Ht(\textbf{X} \vdashcustom X' \tp)\Ht(\textbf{Y} \vdashcustom Y' \tp) = h+1$. This allows us to apply $Sub^1_{s,s}(h)$ to derive $\partial(\textbf{X}' \otimes \textbf{Y}') \vdashcustom X' \otimes Y' \tp$.
		
		\item $J'$ has an initial inference of the form (T-sub). Then there exist an axiom $\textbf{B} \vdashcustom V \tp$ and a morphism $\textbf{g}:\textbf{Y} \rightarrow \textbf{B}$ such that $Y' = V[\textbf{g}]$ and $\Ht(\textbf{B} \vdashcustom V \tp)$, $\Ht(\textbf{g}:\textbf{Y} \rightarrow \textbf{B}) < \Ht(\textbf{Y} \vdashcustom Y' \tp)$. Let $\textbf{B}'$ be a context $(\textbf{B}, b:V)$.
		
		Note that $\Ht(\textbf{X}) < \Ht(\textbf{X} \vdashcustom X' \tp)$, so
		\begin{align*}
			&\Ht(\textbf{X} \vdashcustom X' \tp)\Ht(\textbf{B} \vdashcustom V \tp), \\
			&\Ht(\textbf{X})\Ht(\textbf{Y} \vdashcustom Y' \tp), \\
			&\Ht(\textbf{X} \vdashcustom X' \tp)\Ht(\textbf{g}:\textbf{Y} \rightarrow \textbf{B})
		\end{align*}
		are strictly smaller than $\Ht(\textbf{X} \vdashcustom X' \tp)\Ht(\textbf{Y} \vdashcustom Y' \tp) = h+1$. Thus we can apply $Sub^2_{s,s}(h)$ to derive $\partial(\textbf{X}' \otimes \textbf{Y}') \vdashcustom X' \otimes Y' \tp$.
	\end{enumerate}
	
	\vspace{0.5em}
	
	\begin{center}
		\textcolor{newpurple}{\textbf{\normalsize{\underline{sort $\odot$ term}}}}
	\end{center}
	
	\vspace{0.2em}
	
	Suppose that $J$ is $\textbf{X} \vdashcustom X' \tp$ and $J'$ is $\textbf{Y} \vdashcustom v:Y'$. Taking contexts $\textbf{X}' = (\textbf{X},x':X')$ and $\textbf{Y}' = (\textbf{Y},y':Y')$, recall that $J \odot J'$ is
	$$
	\textbf{X}' \otimes \textbf{Y} \vdashcustom x' \otimes v: (X' \otimes Y')[x_1 \otimes v \mid x_1y', ..., x_m \otimes v \mid x_my'].
	$$
	
	We have the following cases:
	
	\begin{enumerate}[label=(\arabic*)]
	\item $J$ and $J'$ are axioms, say $\textbf{X} \vdashcustom S(x_1, ..., x_m) \tp$ and $\textbf{Y} \vdashcustom t(y_1, ..., y_n):Y'$, respectively. Then $J \odot J'$ is the axiom
	$$
	\textbf{X}' \otimes \textbf{Y} \vdashcustom St
	\begin{pmatrix}
		x_1y_1 & \cdots & x_1y_n \\
		\vdots & \ddots & \vdots \\
		x_my_1 & \cdots & x_my_n \\
		x'y_1 & \cdots & x'y_n
	\end{pmatrix}
	: (X' \otimes Y')[x_1 \otimes v \mid x_1y', ..., x_m \otimes v \mid x_my'].
	$$
	For it to be derivable, it must be well-formed, i.e.
	\[
	\tag{\texttt{*}}
	\textbf{X}' \otimes \textbf{Y} \vdashcustom (X' \otimes Y')[x_1 \otimes v \mid x_1y', ..., x_m \otimes v \mid x_my'] \tp
	\]
	must be derivable. To check that this is the case, we start by tensoring $\textbf{X} \vdashcustom X' \tp$ and $\textbf{Y} \vdashcustom Y' \tp$ to derive
	\[
	\tag{\texttt{**}}
	\partial(\textbf{X}' \otimes \textbf{Y}') \vdashcustom X' \otimes Y' \tp.
	\]
	On the other hand, as $\Ht(\textbf{X})\Ht(\textbf{Y} \vdashcustom v:Y') \le h$, by Lemma \ref{lem: tensoring a term and a context} we have a morphism $\textbf{X} \otimes (y_1, ..., y_n, v):\textbf{X} \otimes \textbf{Y} \rightarrow \textbf{X} \otimes \textbf{Y}'$. By amalgamating it with the identity morphism of $\textbf{X}'$ we derive
	$$
	\partial(\textbf{X}' \otimes (y_1, ..., y_n,v)):\textbf{X}' \otimes \textbf{Y} \longrightarrow \partial(\textbf{X}' \otimes \textbf{Y}').
	$$
	Now, substitution of (\texttt{**}) along the latter morphism yields the desired judgment (\texttt{*}).
	
	\item $J$ has an initial inference of the form (T-sub). Then there exist an axiom $\textbf{A} \vdashcustom U \tp$ and a morphism $\textbf{f} = (f_1, ..., f_M):\textbf{X} \rightarrow \textbf{A}$ such that $X' = U[\textbf{f}]$ and $\Ht(\textbf{A} \vdashcustom U \tp)$, $\Ht(\textbf{f}:\textbf{X} \rightarrow \textbf{A}) < \Ht(\textbf{X} \vdashcustom X' \tp)$. Let $\textbf{A}'$ be a context $(\textbf{A},a:U)$. Note that
	$$
	\Ht(\textbf{f}:\textbf{X} \rightarrow \textbf{A})\Ht(\textbf{Y} \vdashcustom v \tm), \qquad \Ht(\textbf{X} \vdashcustom X' \tp)\Ht(\textbf{Y}), \qquad \Ht(\textbf{A} \vdashcustom U \tp)\Ht(\textbf{Y} \vdashcustom v \tm)
	$$
	are strictly smaller than $\Ht(\textbf{X} \vdashcustom X' \tp)\Ht(\textbf{Y} \vdashcustom v:Y') = h+1$. Thus we can use $Sub^1_{s,t}(h)$ to derive
	$$
	\textbf{X}' \otimes \textbf{Y} \vdashcustom \doverline{a \otimes v} \equiv x' \otimes v \tm,
	$$
	where the double bar indicates substitution along $(f_1, ..., f_M,x') \otimes \textbf{Y}:\textbf{X}' \otimes \textbf{Y} \rightarrow \textbf{A}' \otimes \textbf{Y}$; the latter being a morphism follows from $Mor^1_+(h)$.
	
	It remains to show that $x' \otimes v$ is of the desired sort, that is,
	\[
	\tag{$\varheartsuit$}
	(X' \otimes Y')[x_1 \otimes v \mid x_1y', ..., x_m \otimes v \mid x_my'].
	\]
	We will do this by checking that $\doverline{a \otimes v}$ is provably of sort ($\varheartsuit$) in context $\textbf{X}' \otimes \textbf{Y}$.
	
	Writing $U = S(a_1, ..., a_M)$ and $Y' = T(\tau_1, ..., \tau_L)$, the matrix form of ($\varheartsuit$) is
	$$
	ST
	\begin{pmatrix}
		f_1 \otimes \tau_1 & \cdots & f_1 \otimes \tau_L & f_1 \varotimes v^{Y'}\\
		\vdots & \ddots & \vdots & \vdots\\
		f_M \otimes \tau_1 & \cdots & f_M \otimes \tau_L & f_M \varotimes v^{Y'}\\
		x' \otimes \tau_1 & \cdots & x' \otimes \tau_L & -
	\end{pmatrix}.
	$$
	
	On the other hand, let us study the sort of $\doverline{a \otimes v}$. By $h$-derivability, $\textbf{A}' \otimes \textbf{Y} \vdashcustom a \otimes v: (U \otimes Y')[a_1 \otimes v \mid a_1 y', ..., a_M \otimes v \mid a_M y']$ is derivable, so we can also derive
	$$
	\textbf{X}' \otimes \textbf{Y} \vdashcustom \doverline{a \otimes v}: \doverline{(U \otimes Y')[a_1 \otimes v \mid a_1 y', ..., a_M \otimes v \mid a_M y']}.
	$$
	We can express $(U \otimes Y')[a_1 \otimes v \mid a_1 y', ..., a_M \otimes v \mid a_M y']$ as
	$$
	ST
	\begin{pmatrix}
		a_1 \otimes \tau_1 & \cdots & a_1 \otimes \tau_L & a_1 \otimes v\\
		\vdots & \ddots & \vdots & \vdots\\
		a_M \otimes \tau_1 & \cdots & a_M \otimes \tau_L & a_M \otimes v\\
		a \otimes \tau_1 & \cdots & a \otimes \tau_L & -
	\end{pmatrix}.
	$$
	Now, observe that
	\begin{itemize}
		\item For $1 \le i \le M$ and $w$ among $\tau_1$, ..., $\tau_L$, we have $\doverline{a_i \otimes w} = (a_i \otimes w)[\textbf{f} \otimes \textbf{Y}]$, which is provably equal in context $\textbf{X} \otimes \textbf{Y}$ to $f_i \otimes w$. This follows from $Sub^1_{t,t}(h)$ using that
		\begin{align*}
			&\Ht(\textbf{f}:\textbf{X} \rightarrow \textbf{A})\Ht(\textbf{Y} \vdashcustom w \tm) \le \Ht(\textbf{f}:\textbf{X} \rightarrow \textbf{A})\Ht(\textbf{Y} \vdashcustom v:Y') \le h, \\
			&\Ht(\textbf{A} \vdashcustom a_i \tm)\Ht(\textbf{Y} \vdashcustom w \tm) < \Ht(\textbf{A} \vdashcustom U \tp)\Ht(\textbf{Y} \vdashcustom v:Y') \le h.
		\end{align*}
		
		Also, by tensoring $\textbf{X} \vdashcustom f_i:\Type(f_i)$ (which has height smaller than that of $\textbf{f}:\textbf{X} \rightarrow \textbf{A}$) and $\textbf{Y} \vdashcustom v:Y'$ we obtain that $f_i \otimes v$ is provably equal to $f_i \varotimes v^V$.
		
		\item For $1 \le j \le L$, we can use $Sub^1_{s,s}(h)$ to derive $\textbf{X}' \otimes \textbf{Y} \vdashcustom \doverline{a \otimes \tau_j} \equiv x' \otimes \tau_j \tm$.
	\end{itemize}
	
	By applying these modifications to the matrix form of $(U \otimes Y')[a_1 \otimes v \mid a_1 y', ..., a_M \otimes v \mid a_M y']$, we conclude that $\doverline{(U \otimes Y')[a_1 \otimes v \mid a_1 y', ..., a_M \otimes v \mid a_M y']}$ is provably equal to ($\varheartsuit$) in $\textbf{X}' \otimes \textbf{Y}$, as required.\\
	
	\item $J'$ has an initial inference
	$$
	\inferrule{\textbf{Y} \vdashcustom V \equiv Y' \tp \\ \textbf{Y} \vdashcustom v:V}{\textbf{Y} \vdashcustom v:Y'}\text{(Teq/t)}.
	$$
	In this case, by $h$-derivability we can tensor $\textbf{X} \vdashcustom X' \tp$ and $\textbf{Y} \vdashcustom v:V \tp$ to obtain
	$$
	\textbf{X}' \otimes \textbf{Y} \vdashcustom x' \otimes v: (X' \otimes V)[x_1 \otimes v \mid x_1y', ..., x_m \otimes v \mid x_my'].
	$$
	Hence it suffices to derive
	$$
	\textbf{X}' \otimes \textbf{Y} \vdashcustom (X' \otimes Y')[x_1 \otimes v \mid x_1y', ..., x_m \otimes v \mid x_my'] \equiv (X' \otimes V)[x_1 \otimes v \mid x_1y', ..., x_m \otimes v \mid x_my'] \tp.
	$$
	For that, by tensoring $\textbf{X} \vdashcustom X' \tp$ and $\textbf{Y} \vdashcustom V \equiv Y' \tp$ we derive $\partial(\textbf{X}' \otimes \textbf{Y}'') \vdashcustom X' \otimes V \equiv X' \otimes Y' \tp$ where $\textbf{Y}'' = (\textbf{Y}, y':V)$. But by $Conteq^2(h)$ we obtain $\textbf{X}' \otimes \textbf{Y}' \equiv \textbf{X}' \otimes \textbf{Y}'' \ctx$, thus also
	\[
	\tag{\texttt{*}}
	\partial(\textbf{X}' \otimes \textbf{Y}') \vdashcustom X' \otimes V \equiv X' \otimes Y' \tp.
	\]
	Finally, by Lemma \ref{lem: tensoring a term and a context} (using that $\Ht(\textbf{X}')\Ht(\textbf{Y} \vdashcustom v:V) \le h$) we can derive $\textbf{X}' \otimes (y_1, ..., y_n,v):\textbf{X}' \otimes \textbf{Y} \rightarrow \textbf{X}' \otimes \textbf{Y}'$, and substitution of (\texttt{*}) along this morphism yields the desired sort equality.
	
	\item There exists $j \in \{1, ..., n\}$ such that $Y' = Y_j$, $v = y_j$, and $J'$ has an initial inference
	$$
	\inferrule{\textbf{Y} \vdashcustom Y_j \tp}{\textbf{Y} \vdashcustom y_j:Y_j}\text{(var)}.
	$$	
	By $Cont(h)$ we have that $\textbf{X}' \otimes \textbf{Y}$ is a context. But $(X' \otimes Y_j)[x_1y_j \mid x_1y', ..., x_my_j \mid x_my']$ is, by definition, the sort of the variable $x'y_j$ in $\textbf{X}' \otimes \textbf{Y}$. An application of (var) then yields $J \odot J'$.
	
	\item $J'$ has an initial inference (t-sub): there exist an axiom $\textbf{B} \vdashcustom w:W$, where $\textbf{B} = (b_1:B_1, ..., b_N:B_N)$, and a morphism $\textbf{g}:\textbf{Y} \rightarrow \textbf{B}$ such that $Y' = W[\textbf{g}]$, $v = w[\textbf{g}]$, and $\Ht(\textbf{B} \vdashcustom w:W)$, $\Ht(\textbf{g}:\textbf{Y} \rightarrow \textbf{B}) < \Ht(\textbf{Y} \vdashcustom v:Y')$.
	
	Taking $\textbf{B}' = (\textbf{B}, b':W)$, we can tensor $\textbf{X} \vdashcustom X' \tp$ and $\textbf{B} \vdashcustom w:W$ to derive
	$$
	\textbf{X}' \otimes \textbf{B} \vdashcustom x' \otimes w: (X' \otimes W)[x_1 \otimes w \mid x_1b', ..., x_m \otimes w \mid x_mb'].
	$$
	As $\Ht(\textbf{X}')\Ht(\textbf{g}:\textbf{Y} \rightarrow \textbf{B}) \le h$, by $Mor^2(h)$ we have a morphism $\textbf{X}' \otimes \textbf{g}: \textbf{X}' \otimes \textbf{Y} \rightarrow \textbf{X}' \otimes \textbf{B}$. Also, by $Sub^2_{s,t}(h)$ we obtain
	$$
	\textbf{X}' \otimes \textbf{Y} \vdashcustom (x' \otimes w)[\textbf{X}' \otimes \textbf{g}] \equiv x' \otimes v \tm,
	$$
	so the claim will follow if we derive
	\[
	\tag{$\varheartsuit$}
	(X' \otimes W)[x_1 \otimes w \mid x_1b', ..., x_m \otimes w \mid x_mb'][\textbf{X}' \otimes \textbf{g}] \equiv (X' \otimes Y')[x_1 \otimes v \mid x_1y', ..., x_m \otimes v \mid x_my']
	\]
	in context $\textbf{X}' \otimes \textbf{Y}$.
	
	Writing $X' = S(\sigma_1, ..., \sigma_K)$ and $W = T(\tau_1, ..., \tau_L)$, we have
	$$
	(X' \otimes W)[x_1 \otimes w \mid x_1b', ..., x_m \otimes w \mid x_mb'] = ST
	\begin{pmatrix}
		\sigma_1 \otimes \tau_1 & \cdots & \sigma_1 \otimes \tau_L & \sigma_1 \varotimes w\\
		\vdots & \ddots & \vdots & \vdots \\
		\sigma_K \otimes \tau_1 & \cdots & \sigma_K \otimes \tau_L & \sigma_K \varotimes w\\
		x' \otimes \tau_1 & \cdots & x' \otimes \tau_L & -
	\end{pmatrix}.
	$$
	On the other hand,
	$$
	(X' \otimes Y')[x_1 \otimes v \mid x_1y', ..., x_m \otimes v \mid x_my'] = ST
	\begin{pmatrix}
		\sigma_1 \otimes \tau_1[\textbf{g}] & \cdots & \sigma_1 \otimes \tau_L[\textbf{g}] & \sigma_1 \varotimes v\\
		\vdots & \ddots & \vdots & \vdots \\
		\sigma_K \otimes \tau_1[\textbf{g}] & \cdots & \sigma_K \otimes \tau_L[\textbf{g}] & \sigma_K \varotimes v\\
		x' \otimes \tau_1[\textbf{g}] & \cdots & x' \otimes \tau_L[\textbf{g}] & -
	\end{pmatrix}.
	$$
	Now, we obtain ($\varheartsuit$) by deriving an equality, for each entry of the first matrix above, between its pullback along $\textbf{X}' \otimes \textbf{g}$ and the corresponding entry in the second matrix:
	\begin{itemize}
		\item For $1 \le i \le K$ and $1 \le j \le L$, we have $(\sigma_i \otimes \tau_j)[\textbf{X}' \otimes \textbf{g}] = (\sigma_i \otimes \tau_j)[\textbf{X} \otimes \textbf{g}]$. By $Sub^2_{t,s}(h)$, the latter is provably equal to $\sigma_i\otimes \tau_j[\textbf{g}]$.
		
		\item For $1 \le i \le K$, we have
		\begin{align*}
			(\sigma_i \varotimes w)[\textbf{X}' \otimes \textbf{g}] & = (\sigma_i \varotimes w)[\textbf{X} \otimes \textbf{g}] \\
			& \equiv (\sigma_i \otimes w)[\textbf{X} \otimes \textbf{g}] \tag{$h$-derivability}\\
			& \equiv \sigma_i \otimes v \tag{$Sub^2_{t,t}(h)$}\\
			& \equiv \sigma_i \varotimes v. \tag{$h$-derivability}
		\end{align*}
		
		\item For $1 \le j \le L$, by $Sub^2_{s,t}(h)$ we derive $(x' \otimes \tau_j)[\textbf{X}' \otimes \textbf{g}] \equiv x' \otimes \tau_j[\textbf{g}]$ in context $\textbf{X}' \otimes \textbf{Y}$.
	\end{itemize}
This concludes the proof in the case where $J$ is a sort judgment and $J'$ is a term judgment.
\end{enumerate}

\begin{center}
	\textcolor{newpurple}{\textbf{\normalsize{\underline{term $\odot$ sort}}}}
\end{center}

\vspace{0.2em}

Suppose that $J$ is $\textbf{X} \vdashcustom u:X'$ and $J'$ is $\textbf{Y} \vdashcustom Y' \tp$. Letting $\textbf{X}' = (\textbf{X}, x':X)$ and $\textbf{Y}' = (\textbf{Y}, y':Y')$, recall that $J \odot J'$ is
$$
\textbf{X} \otimes \textbf{Y}' \vdashcustom u \otimes y': (X' \otimes Y')[u \otimes y_1 \mid x'y_1, ..., u \otimes y_n \mid x'y_n].
$$

We have the following cases:

\begin{enumerate}[label=(\arabic*)]
	\item $J$ and $J'$ are axioms, say $\textbf{X} \vdashcustom s(x_1, ..., x_m):X'$ and $\textbf{Y} \vdashcustom T(y_1, ..., y_n) \tp$, respectively. Then $J \odot J'$ is the axiom
	$$
	\textbf{X} \otimes \textbf{Y}' \vdashcustom
	\begin{pmatrix}
		x_1y_1 & \cdots & x_1y_n & x_1y'\\
		\vdots & \ddots & \vdots & \vdots\\
		x_my_1 & \cdots & x_my_n & x_my'
	\end{pmatrix}
	(X' \otimes Y')[u \otimes y_1 \mid x'y_1, ..., u \otimes y_n \mid x'y_n].
	$$
	For it to be derivable, it must be well-formed, i.e.
	\[
	\tag{\texttt{*}}
	\textbf{X} \otimes \textbf{Y}' \vdashcustom (X' \otimes Y')[u \otimes y_1 \mid x'y_1, ..., u \otimes y_n \mid x'y_n] \tp
	\]
	must be derivable. To check that this is the case, we start by tensoring $\textbf{X} \vdashcustom X' \tp$ and $\textbf{Y} \vdashcustom Y' \tp$ to derive
	\[
	\tag{\texttt{**}}
	\partial(\textbf{X}' \otimes \textbf{Y}') \vdashcustom X' \otimes Y' \tp.
	\]
	On the other hand, as $\Ht(\textbf{X} \vdashcustom u:X')\Ht(\textbf{Y}) \le h$, by Lemma \ref{lem: tensoring a term and a context} we have a morphism $(x_1, ..., x_m,u):\textbf{X} \otimes \textbf{Y} \rightarrow \textbf{X}' \otimes \textbf{Y}$. By amalgamating it with the identity morphism of $\textbf{Y}'$ we derive
	$$
	\partial((x_1, ..., x_m,u) \otimes \textbf{Y}'): \textbf{X} \otimes \textbf{Y}' \longrightarrow \partial (\textbf{X}' \otimes \textbf{Y}').
	$$
	Now, substitution of (\texttt{**}) along the latter morphism yields (\texttt{*}).
	
	\item $J'$ has an initial inference of the form (T-sub). Then there exist a sort axiom $\textbf{B} \vdashcustom V \tp$ and a morphism $\textbf{g} = (g_1, ..., g_N):\textbf{Y} \rightarrow \textbf{B}$ such that $Y' = V[\textbf{g}]$ and $\Ht(\textbf{B} \vdashcustom V \tp)$, $\Ht(\textbf{g}:\textbf{Y} \rightarrow \textbf{B}) < \Ht(\textbf{Y} \vdashcustom Y' \tp)$. Let $\textbf{B}' = (\textbf{B}, b:V)$. Note that
	$$
	\Ht(\textbf{X} \vdashcustom u \tm)\Ht(\textbf{g}:\textbf{Y} \rightarrow \textbf{B}), \qquad \Ht(\textbf{X})\Ht(\textbf{Y} \vdashcustom Y' \tp),\qquad \Ht(\textbf{X} \vdashcustom u \tm)\Ht(\textbf{B} \vdashcustom V \tp)
	$$
	are strictly smaller than $\Ht(\textbf{X} \vdashcustom u:X')\Ht(\textbf{Y} \vdashcustom Y' \tp) = h+1$. Thus we can use $Sub^2_{t,s}(h)$ to derive
	$$
	\textbf{X} \otimes \textbf{Y}' \vdashcustom \doverline{u \otimes v} \equiv u \otimes y' \tm,
	$$
	where the double bar indicates substitution along $\textbf{X} \otimes (g_1, ..., g_N,y'):\textbf{X} \otimes \textbf{Y}' \rightarrow \textbf{X} \otimes \textbf{B}'$; the latter being a morphism follows from $Mor^2_+(h)$.
	
	It remains to show that $u \otimes y'$ is of the desired sort, that is,
	\[
	\tag{$\varheartsuit$}
	(X' \otimes Y')[u \otimes y_1 \mid x'y_1, ..., u \otimes y_n \mid x'y_n].
	\]
	We will do this by checking that $\doverline{u \otimes b}$ is provably of sort ($\varheartsuit$) in context $\textbf{X} \otimes \textbf{Y}'$.
	
	Writing $X' = S(\sigma_1, ..., \sigma_K)$ and $V = T(b_1, ..., b_N)$, the matrix form of ($\varheartsuit$) is
	$$
	ST
	\begin{pmatrix}
		\sigma_1 \otimes g_1 & \cdots & \sigma_1 \otimes g_N & \sigma_1 \otimes y'\\
		\vdots & \ddots & \vdots & \vdots\\
		\sigma_K \otimes g_1 & \cdots & \sigma_K \otimes g_N & \sigma_K \otimes y'\\
		u^{X'} \otimes g_1 & \cdots & u^{X'} \otimes g_N & -
	\end{pmatrix}.
	$$
	On the other hand, let us study the sort of $\doverline{u \otimes b}$. By $h$-derivability, $\textbf{X} \otimes \textbf{B}' \vdashcustom u \otimes b: (X' \otimes V)[u \otimes b_1 \mid x'b_1, ..., u \otimes b_N \mid x'b_N]$ is derivable, so we can also derive
	$$
	\textbf{X} \otimes \textbf{Y}' \vdashcustom \doverline{u \otimes b}: \doverline{(X' \otimes V)[u \otimes b_1 \mid x'b_1, ..., u \otimes b_N \mid x'b_N]}.
	$$
	We can express $(X' \otimes V)[u \otimes b_1 \mid x'b_1, ..., u \otimes b_N \mid x'b_N]$ as
	$$
	ST
	\begin{pmatrix}
		\sigma_1 \otimes b_1 & \cdots & \sigma_1 \otimes b_N & \sigma_1 \otimes b\\
		\vdots & \ddots & \vdots & \vdots\\
		\sigma_K \otimes b_1 & \cdots & \sigma_K \otimes b_N & \sigma_K \otimes b\\
		u \otimes b_1 & \cdots & u \otimes b_N & -
	\end{pmatrix}.
	$$
	Now, observe that
	\begin{itemize}
		\item For $1 \le j \le N$ and $w$ among $\sigma_1$, ..., $\sigma_K$, we have $\doverline{w \otimes b_j} = (w \otimes b_j)[\textbf{X} \otimes \textbf{g}]$, which is provably equal in $\textbf{X} \otimes \textbf{Y}$ to $w \otimes g_j$. This follows from $Sub^2_{t,t}(h)$ using that
		\begin{align*}
			&\Ht(\textbf{X} \vdashcustom w \tm)\Ht(\textbf{g}:\textbf{Y} \rightarrow \textbf{B}) \le \Ht(\textbf{X} \vdashcustom u:X')\Ht(\textbf{g}:\textbf{Y} \rightarrow \textbf{B}) \le h, \\
			&\Ht(\textbf{X} \vdashcustom w \tm)\Ht(\textbf{B} \vdashcustom b_j \tm) < \Ht(\textbf{X} \vdashcustom u:X')\Ht(\textbf{B} \vdashcustom V \tp) \le h.
		\end{align*}
		\item For $1 \le i \le K$, we can use $Sub^2_{s,s}(h)$ to derive $\textbf{X} \otimes \textbf{Y}' \vdashcustom \doverline{\sigma_i \otimes b} \equiv \sigma_i \otimes y' \tm$.
	\end{itemize}
	By applying these modifications to the matrix form of $(X' \otimes V)[u \otimes b_1 \mid x'b_1, ..., u \otimes b_N \mid x'b_N]$, we conclude that $\doverline{(X' \otimes V)[u \otimes b_1 \mid x'b_1, ..., u \otimes b_N \mid x'b_N]}$ is provably equal to ($\varheartsuit$) in $\textbf{X} \otimes \textbf{Y}'$, as required.
	
	\item $J$ has an initial inference
	$$
	\inferrule{\textbf{X} \vdashcustom U \equiv X' \tp \\ \textbf{X} \vdashcustom u:U}{\textbf{X} \vdashcustom u:X'}\text{(Teq/t)}.
	$$
	In this case, by $h$-derivability we can tensor $\textbf{X} \vdashcustom u:U$ and $\textbf{Y} \vdashcustom Y' \tp$ to obtain
	$$
	\textbf{X} \otimes \textbf{Y}' \vdashcustom u \otimes y':(U \otimes Y')[u \otimes y_1 \mid x'y_1, ..., u \otimes y_n \mid x'y_n].
	$$
	Hence it suffices to derive
	$$
	\textbf{X} \otimes \textbf{Y}' \vdashcustom (X' \otimes Y')[u \otimes y_1 \mid x'y_1, ..., u \otimes y_n \mid x'y_n] \equiv (U \otimes Y')[u \otimes y_1 \mid x'y_1, ..., u \otimes y_n \mid x'y_n] \tp.
	$$
	For that, by tensoring $\textbf{X} \vdashcustom U \equiv X' \tp$ and $\textbf{Y} \vdashcustom Y' \tp$ we derive $\partial(\textbf{X}'' \otimes \textbf{Y}') \vdashcustom U \otimes Y' \equiv X' \otimes Y' \tp$ where $\textbf{X}'' = (\textbf{X}, x',U)$. But by $Conteq^1(h)$ we obtain $\textbf{X}' \otimes \textbf{Y}' \equiv \textbf{X}'' \otimes \textbf{Y}' \ctx$, thus also
	\[
	\tag{\texttt{*}}
	\partial(\textbf{X}' \otimes \textbf{Y}') \vdashcustom U \otimes Y' \equiv X' \otimes Y' \tp.
	\]
	Finally, by Lemma \ref{lem: tensoring a term and a context} (using that $\Ht(\textbf{X} \vdashcustom u:U)\Ht(\textbf{Y}') \le h$) we can derive $(x_1, ..., x_m,u) \otimes \textbf{Y}':\textbf{X} \otimes \textbf{Y}' \rightarrow \textbf{X}' \otimes \textbf{Y}'$, and substitution of (\texttt{*}) along this morphism yields the desired sort equality.
	
	\item There exists $i \in \{1, ..., m\}$ such that $X' = X_i$, $u = x_i$, and $J$ has an initial inference
	$$
	\inferrule{\textbf{X} \vdashcustom X_i \tp}{\textbf{X} \vdashcustom x_i:X_i}\text{(var)}.
	$$
	By $Cont(h)$ we have that $\textbf{X}' \otimes \textbf{Y}$ is a context. But $(X_i \otimes Y')[x_iy_1 \mid x'y_1, ..., x_iy_n \mid x'y_n]$ is, by definition, the sort of the variable $x_iy'$ in $\textbf{X} \otimes \textbf{Y}'$. An application of (var) then yields $J \odot J'$.
	
	\item $J$ has an initial inference (t-sub): there exist an axiom $\textbf{A} \vdashcustom w:W$, where $\textbf{A} = (a_1:A_1, ..., a_M:A_M)$, and a morphism $\textbf{f}:\textbf{X} \rightarrow \textbf{A}$ such that $X' = W[\textbf{f}]$, $u = w[\textbf{f}]$, and $\Ht(\textbf{A} \vdashcustom w:W)$, $\Ht(\textbf{f}:\textbf{X} \rightarrow \textbf{A}) < \Ht(\textbf{X} \vdashcustom u:X')$.
	
	Taking $\textbf{A}' = (\textbf{A}, a':W)$, we can tensor $\textbf{A} \vdashcustom w:W$ and $\textbf{Y} \vdashcustom Y' \tp$ to derive
	$$
	\textbf{A} \otimes \textbf{Y}' \vdashcustom w \otimes y': (W \otimes Y')[w \otimes y_1 \mid a'y_1, ..., w \otimes y_n \mid a'y_n].
	$$
	As $\Ht(\textbf{f}:\textbf{X} \rightarrow \textbf{A})\Ht(\textbf{Y}') \le h$, by $Mor^1(h)$ we have a morphism $\textbf{f} \otimes \textbf{Y}':\textbf{X} \otimes \textbf{Y}' \rightarrow \textbf{A} \otimes \textbf{Y}'$. Also, by $Sub^1_{t,s}(h)$ we obtain
	$$
	\textbf{X} \otimes \textbf{Y}' \vdashcustom (w \otimes y')[\textbf{f} \otimes \textbf{Y}'] \equiv u \otimes y' \tm,
	$$
	so the claim will follow if we derive
	\[
	\tag{$\varheartsuit$}
	(W \otimes Y')[w \otimes y_1 \mid a'y_1, ..., w \otimes y_1 \mid a'y_1][\textbf{f} \otimes \textbf{Y}'] \equiv (X' \otimes Y')[u \otimes y_1 \mid x'y_1, ..., u \otimes y_n \mid x'y_n]
	\]
	in context $\textbf{X} \otimes \textbf{Y}'$.
	
	Writing $W = S(\sigma_1, ..., \sigma_K)$ and $Y' = T(\tau_1, ..., \tau_L)$, we have
	$$
	(W \otimes Y')[w \otimes y_1 \mid a'y_1, ..., w \otimes y_n \mid a'y_n] = ST
	\begin{pmatrix}
		\sigma_1 \otimes \tau_1 & \cdots & \sigma_1 \otimes \tau_L & \sigma_1 \otimes y'\\
		\vdots & \ddots & \vdots & \vdots\\
		\sigma_K \otimes \tau_1 & \dots & \sigma_K \otimes \tau_L & \sigma_K \otimes y'\\
		w \otimes \tau_1 & \cdots & w \otimes \tau_L & -
	\end{pmatrix}.
	$$
	On the other hand,
	$$
	(X' \otimes Y')[u \otimes y_1 \mid x'y_1, ..., u \otimes y_n \mid x'y_n] = ST
	\begin{pmatrix}
		\sigma_1[\textbf{f}] \otimes \tau_1 & \cdots & \sigma_1[\textbf{f}] \otimes \tau_L & \sigma_1[\textbf{f}] \otimes y'\\
		\vdots & \ddots & \vdots & \vdots\\
		\sigma_K[\textbf{f}] \otimes \tau_1 & \cdots & \sigma_K[\textbf{f}] \otimes \tau_L & \sigma_K[\textbf{f}] \otimes y'\\
		u \otimes \tau_1 & \cdots & u \otimes \tau_L & -
	\end{pmatrix}.
	$$
	Now, we obtain ($\varheartsuit$) by deriving an equality, for each entry of the first matrix above, between its pullback along $\textbf{f} \otimes \textbf{Y}'$ and the corresponding entry in the second matrix:
	\begin{itemize}
		\item For $1 \le i \le K$ and $1 \le j \le L$, we have $(\sigma_i \otimes \tau_j)[\textbf{f} \otimes \textbf{Y}'] = (\sigma_i \otimes \tau_j)[\textbf{f} \otimes \textbf{Y}]$. By $Sub^1_{s,t}(h)$, the latter is provably equal to $\sigma_i[\textbf{f}] \otimes \tau_j$.
		
		\item For $1 \le j \le L$, by $Sub^1_{t,t}(h)$ we derive $(w \otimes \tau_j)[\textbf{f} \otimes \textbf{Y}] \equiv u \otimes \tau_j$.
		
		\item For $1 \le i \le K$, by $Sub^1_{t,s}(h)$ we derive $(\sigma_i \otimes y')[\textbf{f} \otimes \textbf{Y}'] \equiv \sigma_i[\textbf{f}] \otimes y'$.
	\end{itemize}
\end{enumerate}

This concludes the proof in the case where $J$ is a term judgment and $J'$ is a sort judgment.

\vspace{0.5em}

\begin{center}
	\textcolor{newpurple}{\textbf{\normalsize{\underline{term $\odot$ term}}}}
\end{center}

\vspace{0.2em}
	
	Suppose that $J$ is $\textbf{X} \vdashcustom u:U$ and $J'$ is $\textbf{Y} \vdashcustom v:V$. Consider contexts $\textbf{X}' = (\textbf{X}, x':U)$ and $\textbf{Y}' = (\textbf{Y}, y':V)$.
	
	Then $J \odot J'$ is
	$$
	\textbf{X} \otimes \textbf{Y} \vdashcustom u^U \otimes v^V \equiv u^U \varotimes v^V: \doverline{U \otimes V}
	$$
	where the double bar denotes an application of
	$$
	[u^U \otimes y_1 \mid x'y_1, ..., u^U \otimes y_n \mid x'y_n , x_1 \otimes v^V \mid x_1y', ..., x_m \otimes v^V \mid x_my'].
	$$
	
	The problem can be simplified as follows: since, by Proposition \ref{prop: term from pair of terms}, the judgment $\textbf{X} \otimes \textbf{Y} \vdashcustom u^U \otimes v^V: \doverline{U \otimes V}$ (as well as $\textbf{X} \otimes \textbf{Y} \vdashcustom u^U \varotimes v^V: \doverline{U \otimes V}$) is derivable, it suffices to derive
	$$
	\textbf{X} \otimes \textbf{Y} \vdashcustom u^U \otimes v^V \equiv u^U \varotimes v^V \tm.
	$$
	
	Moreover, if $u$ or $v$ is a variable the claim is trivial since $u^U \varotimes v^V$ is defined as $u^U \otimes v^V$. Thus in what follows we assume that neither $u$ nor $v$ is a variable. (In particular, we do not consider the cases where $J$ or $J'$ has an initial inference of the form (var).)
	
	We have the following cases:
	
	\begin{enumerate}[label=(\arabic*)]
		\item $J$ and $J'$ are axioms. Then $J \odot J'$ is an axiom, so its derivability follows from that of the corresponding premises of the rule (t-a), i.e.
		$$
		\textbf{X} \otimes \textbf{Y} \vdashcustom u \otimes v: \doverline{U \otimes V}, \;\;\;\;\;\;\;\;\;\;\; \textbf{X} \otimes \textbf{Y} \vdashcustom u \varotimes v: \doverline{U \otimes V}.
		$$
		
		\item $J$ has an initial inference of the form (t-sub). Then there exist an axiom $\textbf{A} \vdashcustom w:W$, where $\textbf{A} = (a_1:A_1, ..., a_M:A_M)$, and a morphism $\textbf{f}:\textbf{X} \rightarrow \textbf{A}$ such that $U = W[\textbf{f}]$, $u = w[\textbf{f}]$, and $\Ht(\textbf{A} \vdashcustom w:W)$, $\Ht(\textbf{f}:\textbf{X} \rightarrow \textbf{A}) < \Ht(\textbf{X} \vdashcustom u:U)$.
		
		Writing $\textbf{A}'$ for a context $(\textbf{A}, a':W)$, by tensoring $\textbf{A} \vdashcustom w:W$ and $\textbf{Y} \vdashcustom v:V$ we derive
		\[
		\tag{\texttt{*}}
		\textbf{A} \otimes \textbf{Y} \vdashcustom w \otimes v^V \equiv w \varotimes v^V \tm.
		\]
		
		Moreover, as $\Ht(\textbf{f}:\textbf{X} \rightarrow \textbf{A})\Ht(\textbf{Y}) \le h$, by $Mor^1(h)$ we have a context morphism $\textbf{f} \otimes \textbf{Y}:\textbf{X} \otimes \textbf{Y} \rightarrow \textbf{A} \otimes \textbf{Y}$. We will verify that $J \odot J'$ is derivable by describing the pullback along $\textbf{f} \otimes \textbf{Y}$ of the two term expressions in (\texttt{*}). 
		
		Write $w = s(a_1, ..., a_M)$, $v = t(t_1, ..., t_l)$, $W = S(\sigma_1, ..., \sigma_K)$, and $V = T(\tau_1, ..., \tau_L)$; also, we use an upper bar (resp. lower bar) to indicate substitution along $\textbf{f} \otimes \textbf{Y}$ (resp. along $\textbf{f}$). Then:
		
		\begin{itemize}
			\item $w \otimes v^V$ has matrix form
			$$
			St
			\begin{pmatrix}
				\sigma_1 \otimes t_1 & \cdots & \sigma_1 \otimes t_l\\
				\vdots & \ddots & \vdots \\
				\sigma_K \otimes t_1 & \cdots & \sigma_K \otimes t_l\\
				w \otimes t_1 & \cdots & w \otimes t_l
			\end{pmatrix},
			$$
			so, by $Sub^1_{t,t}(h)$, we have that $\overline{w \otimes v^V}$ is provably equal in context $\textbf{X} \otimes \textbf{Y}$ to
			$$
			St
			\begin{pmatrix}
				\underline{\sigma_1} \otimes t_1 & \cdots & \underline{\sigma_1} \otimes t_l\\
				\vdots & \ddots & \vdots \\
				\underline{\sigma_K} \otimes t_1 & \cdots & \underline{\sigma_K} \otimes t_l\\
				\underline{w} \otimes t_1 & \cdots & \underline{w} \otimes t_l
			\end{pmatrix}
			= 
			St
			\begin{pmatrix}
				\underline{\sigma_1} \otimes t_1 & \cdots & \underline{\sigma_1} \otimes t_l\\
				\vdots & \ddots & \vdots \\
				\underline{\sigma_K} \otimes t_1 & \cdots & \underline{\sigma_K} \otimes t_l\\
				u \otimes t_1 & \cdots & u \otimes t_l
			\end{pmatrix}
			=
			u \otimes v^V.
			$$
			
			\item $w \varotimes v^V$ has matrix form
			$$
			sT
			\begin{pmatrix}
				a_1 \otimes \tau_1 & \cdots & a_1 \otimes \tau_L & a_1 \otimes v^V\\
				\vdots & \ddots & \vdots & \vdots \\
				a_M \otimes \tau_1 & \cdots & a_M \otimes \tau_L & a_M \otimes v^V
			\end{pmatrix}.
			$$
			By $Sub^1_{t,t}(h)$, it follows that $\overline{w \varotimes v^V}$ is provably equal in context $\textbf{X} \otimes \textbf{Y}$ to
			$$
			sT
			\begin{pmatrix}
				f_1 \otimes \tau_1 & \cdots & f_1 \otimes \tau_L & \overline{a_1 \otimes v^V}\\
				\vdots & \ddots & \vdots & \vdots \\
				f_M \otimes \tau_1 & \cdots & f_M \otimes \tau_L & \overline{a_M \otimes v^V}
			\end{pmatrix}.
			$$
			Now, note that, for $1 \le i \le M$,
			$$
			\overline{a_i \otimes v^V} = \overline{a_i \otimes v} \overset{\textbf{X} \otimes \textbf{Y}}{\equiv} f_i \otimes v = f_i \otimes v^V \overset{\textbf{X} \otimes \textbf{Y}}{\equiv} f_i \varotimes v^V,
			$$
			where the latter equality follows from $h$-derivability. It follows that the latter matrix is provably equal in context $\textbf{X} \otimes \textbf{Y}$ to
			$$
			sT
			\begin{pmatrix}
				f_1 \otimes \tau_1 & \cdots & f_1 \otimes \tau_L & f_1 \varotimes v^V\\
				\vdots & \ddots & \vdots & \vdots \\
				f_M \otimes \tau_1 & \cdots & f_M \otimes \tau_L & f_M \varotimes v^V
			\end{pmatrix}
			= u \varotimes v^V.
			$$
		\end{itemize}
		
		\item The case where $J'$ has an initial inference of the form (t-sub) can be studied analogously to (2); the verification has been omitted.
		
		\item $J$ has an initial inference of the form (Teq/t), say
		$$
		\inferrule{\textbf{X} \vdashcustom U'\equiv U \tp \\ \textbf{X} \vdashcustom u:U'}{\textbf{X} \vdashcustom u:U}.
		$$
		By $h$-derivability, we can tensor $\textbf{X} \vdashcustom u:U'$ and $\textbf{Y} \vdashcustom v:V$ to derive
		$$
		\textbf{X} \otimes \textbf{Y} \vdashcustom u^{U'} \otimes v^V \equiv u^{U'} \varotimes v^V \tm,
		$$
		hence it suffices to derive
		\begin{align*}
			&\textbf{X} \otimes \textbf{Y} \vdashcustom u^{U'} \otimes v^V \equiv u^U \otimes v^V \tm, \tag{\texttt{*}}\\
			&\textbf{X} \otimes \textbf{Y} \vdashcustom u^{U'} \varotimes v^V \equiv u^U \varotimes v^V \tm. \tag{\texttt{**}}
		\end{align*}		
		Recalling that $u$, $v$ are assumed to not be variables, we have that $u^{U'} \varotimes v^V$ and $u^U \varotimes v^V$ are the same expression. Hence it suffices to derive (\texttt{*}). For that, we tensor $\textbf{X} \vdashcustom U' \equiv U \tp$ and $\textbf{Y} \vdashcustom v:V$ to derive
		\[
		\tag{\texttt{***}}
		\textbf{X}' \otimes \textbf{Y} \vdashcustom z^{U'} \otimes v^V \equiv z^U \otimes v^V \tm.
		\]
		Now, it can be checked that we have an equality of context morphisms
		$$
		(x_1, ..., x_m,u^{U'}) \otimes \textbf{Y} \equiv (x_1, ..., x_m,u^U) \otimes \textbf{Y}: \textbf{X} \otimes \textbf{Y} \longrightarrow \textbf{X}' \otimes \textbf{Y}.
		$$
		By taking the pullback of the left term expression in (\texttt{***}) along $(x_1, ..., x_m,u^{U'})$ and of the right one along $(x_1, ..., x_m,u^U)$, we obtain
		$$
		\textbf{X} \otimes \textbf{Y} \vdashcustom (z^{U'} \otimes v^V)[u^{U'} \otimes y_1 \mid zy_1, ..., u^{U'} \otimes y_n \mid zy_n] \equiv (z^U \otimes v^V)[u^U \otimes y_1 \mid zy_1, ..., u^U \otimes y_n \mid zy_n] \tm,
		$$
		which is precisely (\texttt{*}).
		
		\item If $J'$ has an initial inference of the form (Teq/t), the proof is analogous to the one in (4).
	\end{enumerate}
	
	\vspace{0.1em}
	
	\begin{center}
		\textcolor{newpurple}{\textbf{\normalsize{\underline{sort equality $\odot$ sort}}}}
	\end{center}
	
	\vspace{0.2em}

	Suppose that $J$ is $\textbf{X} \vdashcustom X' \equiv X'' \tp$ and $J'$ is $\textbf{Y} \vdashcustom Y' \tp$. Let $\textbf{X}' = (\textbf{X},x':X')$, $\textbf{X}'' = (\textbf{X},x':X'')$, and $\textbf{Y}' = (\textbf{Y}, y':Y')$. Then $J \odot J'$ is
	$$
	\partial(\textbf{X}' \otimes \textbf{Y}') \vdashcustom X' \otimes Y' \equiv X'' \otimes Y' \tp.
	$$
	
	Firstly, note that this judgment is well-formed:
	\begin{itemize}
		\item As $\Ht(\textbf{X} \vdashcustom X' \tp) < \Ht(\textbf{X} \vdashcustom X' \equiv X'' \tp)$, by $h$-derivability we can tensor $\textbf{X} \vdashcustom X' \tp$ and $\textbf{Y}' \vdashcustom Y' \tp$ to derive $\partial(\textbf{X}' \otimes \textbf{Y}') \vdashcustom X' \otimes Y' \tp$.
		
		\item Similarly, we can tensor $\textbf{X} \vdashcustom X'' \tp$ and $\textbf{Y} \vdashcustom Y' \tp$ to derive $\partial(\textbf{X}'' \otimes \textbf{Y}'') \vdashcustom X' \otimes Y'' \tp$. But by $Conteq^1(h)$ we can derive $\textbf{X}' \otimes \textbf{Y} \equiv \textbf{X}'' \otimes \textbf{Y} \ctx$, from which we conclude that $\partial(\textbf{X}' \otimes \textbf{Y}')$ and $\partial(\textbf{X}'' \otimes \textbf{Y}')$ are provably equal.
	\end{itemize}

	Now, to derive $J \odot J'$, we have the following cases:
	
	\begin{enumerate}[label=(\arabic*)]
		\item $J$ and $J'$ are axioms. Then $J \odot J'$ is derivable as it is well-formed.

		\item $X'$ equals $X''$, and $J$ has an initial inference
		$$
		\inferrule{\textbf{X} \vdashcustom X' \tp}{\textbf{X} \vdashcustom X' \equiv X' \tp}(\text{T1}).
		$$
		Then we can derive $J \odot J'$ from $\partial(\textbf{X}' \otimes \textbf{Y}') \vdashcustom X' \otimes Y' \tp$ by using (T1).
		
		\item $J$ has an initial inference
		$$
		\inferrule{\textbf{X} \vdashcustom X'' \equiv X' \tp}{\textbf{X} \vdashcustom X' \equiv X'' \tp}(\text{T2}).
		$$
		By $h$-derivability, we can tensor the premise and $\textbf{Y} \vdashcustom Y' \tp$ to derive
		$$
		\partial(\textbf{X}'' \otimes \textbf{Y}') \vdashcustom X'' \otimes Y' \equiv X' \otimes Y' \tp.
		$$
		Then we obtain $J \odot J'$ from $\partial(\textbf{X}' \otimes \textbf{Y}') \equiv \partial(\textbf{X}'' \otimes \textbf{Y}') \ctx$ and an application of (T2).
		
		\item $J$ has an initial inference
		$$
		\inferrule{\textbf{X} \vdashcustom X' \equiv X''' \tp \\ \textbf{X} \vdashcustom X''' \equiv X''}{\textbf{X} \vdashcustom X' \equiv X'' \tp}(\text{T3}).
		$$
		By $h$-derivability, the judgments $(\textbf{X} \vdashcustom X' \equiv X''' \tp) \otimes J'$ and $(\textbf{X} \vdashcustom X''' \equiv X'' \tp) \otimes J'$ are derivable. These are, respectively,
		$$
		\partial(\textbf{X}' \otimes \textbf{Y}') \vdashcustom X' \otimes Y' \equiv X''' \otimes Y' \tp,
		$$
		$$
		\partial(\textbf{X}''' \otimes \textbf{Y}') \vdashcustom X''' \otimes Y' \equiv X'' \otimes Y' \tp,
		$$
		where $\textbf{X}''' = (\textbf{X}, x':X''')$. Then we obtain $J \odot J'$ by using $\partial(\textbf{X}' \otimes \textbf{Y}') \equiv \partial(\textbf{X}''' \otimes \textbf{Y}') \ctx$ and an instance of (T3).
		
		\item $J$ has an initial inference
		$$
		\inferrule{\textbf{A} \vdashcustom U \equiv V \tp \\ \textbf{X} \vdashcustom X' \tp \\ \textbf{X} \vdashcustom X'' \tp \\ \textbf{f}:\textbf{X} \rightarrow \textbf{A}}{\textbf{X} \vdashcustom X' \equiv X'' \tp}\text{(Teq-sub-1)}
		$$
		where $\textbf{A} = (a_1:A_1, ..., a_M:A_M)$, $X' = U[\textbf{f}]$ and $X'' = V[\textbf{f}]$. By tensoring $\textbf{A} \vdashcustom U \equiv V \tp$ and $\textbf{Y} \vdashcustom Y' \tp$, we derive
		\[
		\tag{\texttt{*}}
		\partial(\textbf{A}' \otimes \textbf{Y}') \vdashcustom U \otimes Y' \equiv V \otimes Y' \tp
		\]
		where $\textbf{A}' = (\textbf{A}, a:U)$. By $Mor^1_+(h)$, we have a morphism $(f_1, ..., f_M,x') \otimes \textbf{Y}:\textbf{X}' \otimes \textbf{Y} \rightarrow \textbf{A}' \otimes \textbf{Y}$, from which we obtain $\partial((f_1, ..., f_M, x') \otimes \textbf{Y}'):\partial(\textbf{X}' \otimes \textbf{Y}') \rightarrow \partial(\textbf{A}' \otimes \textbf{Y}')$.
		
		By taking the pullback along the latter of (\texttt{*}) and applying $Sub^1_{s,s}$ we derive $\partial(\textbf{X}' \otimes \textbf{Y}') \vdashcustom X' \otimes Y' \equiv X'' \otimes Y' \tp$, as required.
		
		\item $J$ has an initial inference
		$$
		\inferrule{\textbf{A} \vdashcustom U \tp \\ \textbf{X} \vdashcustom X' \tp \\ \textbf{X} \vdashcustom X'' \tp \\ \textbf{f} \equiv \textbf{g}:\textbf{X} \rightarrow \textbf{A}}{\textbf{X} \vdashcustom X' \equiv X'' \tp}\text{(Teq-sub-2)}
		$$
		where $X' = U[\textbf{f}]$ and $X'' = U[\textbf{g}]$. By $h$-normality, we can tensor $\textbf{A} \vdashcustom U \tp$ and $\textbf{Y} \vdashcustom Y' \tp$ to derive
		\[
		\tag{\texttt{*}}
		\partial(\textbf{A}' \otimes \textbf{Y}') \vdashcustom U \otimes Y' \tp
		\]
		where $\textbf{A}' = (\textbf{A}, a:U)$. Now, observe that
		\begin{itemize}
			\item By $Moreq^1(h)$ we have a morphism equality $\textbf{f} \otimes \textbf{Y}' \equiv \textbf{g} \otimes \textbf{Y}' : \textbf{X} \otimes \textbf{Y}' \rightarrow \textbf{A} \otimes \textbf{Y}'$.
			
			\item Let $\textbf{f} = (f_1, ..., f_M,x')$ and $\textbf{g} = (g_1, ..., g_M,x')$. Since $\Ht(\textbf{Y}) < \Ht(\textbf{Y}')$, we can use $Mor^1_+(h)$ to obtain morphisms
			$$
			\textbf{f}' \otimes \textbf{Y}:\textbf{X}' \otimes \textbf{Y} \longrightarrow \textbf{A}' \otimes \textbf{Y}, \quad\quad\quad\quad \textbf{g}' \otimes \textbf{Y}:\textbf{X}'' \otimes \textbf{Y} \longrightarrow \textbf{A}' \otimes \textbf{Y}.
			$$
			But $\textbf{X}' \otimes \textbf{Y}$ and $\textbf{X}'' \otimes \textbf{Y}$ are provably equal, so the judgment
			$$
			\textbf{f}' \otimes \textbf{Y} \equiv \textbf{g}' \otimes \textbf{Y}: \textbf{X}' \otimes \textbf{Y} \longrightarrow \textbf{A}' \otimes \textbf{Y}
			$$
			is well-formed. In fact, it is derivable: the two maps extend $\textbf{f} \otimes \textbf{Y}$ and $\textbf{g} \otimes \textbf{Y}$, respectively, by the same list of expressions (namely, $x'y_1$, ..., $x'y_n$).
		\end{itemize}
		From the two equalities above we obtain
		$$
		\partial(\textbf{f}' \otimes \textbf{Y}') \equiv \partial(\textbf{g}' \otimes \textbf{Y}'): \partial(\textbf{X}' \otimes \textbf{Y}') \longrightarrow \partial(\textbf{A}' \otimes \textbf{Y}').
		$$
		Finally, by taking the pullback of (\texttt{*}) along $\partial(\textbf{f}' \otimes \textbf{Y}')$ and $\partial(\textbf{g}' \otimes \textbf{Y}')$, and using $Sub^1_{s,s}$ in each case, we derive
		$$
		\partial(\textbf{X}' \otimes \textbf{Y}') \vdashcustom U[\textbf{f}] \otimes Y' \equiv U[\textbf{g}] \otimes Y' \tp.
		$$
		
		\item $J'$ has an initial inference of the form (T-sub). Then there exist an axiom $\textbf{B} \vdashcustom U \tp$, where $\textbf{B} = (b_1:B_1, ..., b_N:B_N)$, and a morphism $\textbf{g}:\textbf{Y} \rightarrow \textbf{B}$ such that $Y' = U[\textbf{g}]$ and $\Ht(\textbf{B} \vdashcustom U \tp)$, $\Ht(\textbf{g}:\textbf{Y} \rightarrow \textbf{B}) < \Ht(\textbf{Y} \vdashcustom Y' \tp)$.
		
		By $h$-derivability, we can tensor $J$ and $\textbf{B} \vdashcustom U \tp$ to obtain
		\[
		\tag{\texttt{*}}
		\partial(\textbf{X}' \otimes \textbf{B}') \vdashcustom X' \otimes U \equiv X'' \otimes U \tp
		\]
		where $\textbf{B}' = (\textbf{B},b:U)$. Also,
		\begin{itemize}
			\item By $Mor^2(h)$, we have a morphism $\textbf{X}' \otimes \textbf{g}: \textbf{X}' \otimes \textbf{Y} \rightarrow \textbf{X}' \otimes \textbf{B}$.
			
			\item By $Mor^2_+(h)$, we have a morphism $\textbf{X} \otimes \textbf{g}': \textbf{X} \otimes \textbf{Y}' \rightarrow \textbf{X} \otimes \textbf{B}'$ where $\textbf{g}' = (g_1, ..., g_N,y')$.
		\end{itemize}
		Thus we obtain a map $\partial(\textbf{X}' \otimes \textbf{g}'):\partial(\textbf{X}' \otimes \textbf{Y}') \rightarrow \partial(\textbf{X}' \otimes \textbf{B}')$. With an analogous argument we derive $\partial(\textbf{X}'' \otimes \textbf{g}'):\partial(\textbf{X}'' \otimes \textbf{Y}') \rightarrow \partial(\textbf{X}'' \otimes \textbf{B}')$, and, as the domains (resp. codomains) of these morphisms are provably equal, the judgment
		\[
		\tag{\texttt{**}}
		\partial(\textbf{X}' \otimes \textbf{g}') \equiv \partial(\textbf{X}'' \otimes \textbf{g}'):\partial(\textbf{X}' \otimes \textbf{Y}') \longrightarrow \partial(\textbf{X}' \otimes \textbf{B}')
		\]
		is well-formed. Now, note that the entries in which these two maps differ are of the form $x^{'X'} \otimes g_i$ and $x^{'X''} \otimes g_i$, respectively, for $1 \le i \le N$. By tensoring $\textbf{X} \vdashcustom X' \equiv X'' \tp$ and $\textbf{Y} \vdashcustom g_j:\Type(g_j)$, it follows (using that $\Ht(\textbf{Y} \vdashcustom g_j:\Type(g_j)) \le \Ht(\textbf{g}:\textbf{Y} \rightarrow \textbf{B})$) that $x^{'X'} \otimes g_i \equiv x^{'X''} \otimes g_i$ in context $\textbf{X}' \otimes \textbf{Y}$.
		
		This implies that (\texttt{**}) is derivable, and we can take the pullback of the two sort expressions in (\texttt{*}) along the two morphisms in (\texttt{**}) to derive
		\[
		\tag{\texttt{***}}
		\partial(\textbf{X}' \otimes \textbf{Y}') \vdashcustom (X' \otimes U)[\textbf{X}' \otimes \textbf{g}'] \equiv (X'' \otimes U)[\textbf{X}'' \otimes \textbf{g}'] \tp.
		\]
		Now, by $Sub^2_{s,s}$ we obtain
		\begin{align*}
			&\partial(\textbf{X}' \otimes \textbf{Y}') \vdashcustom (X' \otimes U)[\textbf{X}' \otimes \textbf{g}'] \equiv X' \otimes Y' \tp, \\
			&\partial(\textbf{X}'' \otimes \textbf{Y}') \vdashcustom (X'' \otimes U)[\textbf{X}'' \otimes \textbf{g}'] \equiv X'' \otimes Y' \tp,
		\end{align*}
		and, finally, by replacing the two sort expressions in (\texttt{***}) according to the latter equalities we derive the desired judgment $\partial(\textbf{X}' \otimes \textbf{Y}') \vdashcustom X' \otimes Y' \equiv X'' \otimes Y' \tp$.
\end{enumerate}
	
	\vspace{0.1em}
	
	\begin{center}
		\textcolor{newpurple}{\textbf{\normalsize{\underline{sort equality $\odot$ term}}}}
	\end{center}
	
	\vspace{0.2em}
	
	Suppose that $J$ is $\textbf{X} \vdashcustom X' \equiv X'' \tp$ and $J'$ is $\textbf{Y} \vdashcustom u:Y'$. Let $\textbf{X}' = (\textbf{X},x':X')$, $\textbf{X}'' = (\textbf{X},x':X'')$, and $\textbf{Y}' = (\textbf{Y}, y':Y')$. Then $J \odot J'$ is
	$$
	\textbf{X}' \otimes \textbf{Y} \vdashcustom x^{'X'} \otimes u \equiv x^{'X''} \otimes u : (X' \otimes Y')[x_1 \otimes u \mid x_1y', ..., x_m \otimes u \mid x_my'].
	$$
	As in the previous cases, we start by performing some simplifications. Since $\Ht(\textbf{X} \vdashcustom X' \tp)$, $\Ht(\textbf{X} \vdashcustom X'' \tp) < \Ht(J)$, we can tensor $\textbf{X} \vdashcustom X' \tp$ and $\textbf{X} \vdashcustom X'' \tp$ with $J'$ to derive
	\begin{align*}
		\textbf{X}' \otimes \textbf{Y} & \vdashcustom x^{'X'} \otimes u: (X' \otimes Y')[x_1 \otimes u \mid x_1y', ..., x_m \otimes u \mid x_my'], \tag{\texttt{*}} \\
		\textbf{X}'' \otimes \textbf{Y} & \vdashcustom x^{''X'} \otimes u: (X'' \otimes Y')[x_1 \otimes u \mid x_1y', ..., x_m \otimes u \mid x_my']. \tag{\texttt{**}}
	\end{align*}
	To combine these judgments, observe that tensoring $\textbf{X} \vdashcustom X' \equiv X'' \tp$ and $\textbf{Y} \vdashcustom Y' \tp$ yields
	\[
	\tag{\texttt{***}}
	\partial(\textbf{X}' \otimes \textbf{Y}') \vdashcustom X' \otimes Y' \equiv X'' \otimes Y' \tp.
	\]
	On the other hand, by Lemma \ref{lem: tensoring a term and a context} we have a morphism $\textbf{X} \otimes (y_1, ..., y_n,u):\textbf{X} \otimes \textbf{Y} \rightarrow \textbf{X} \otimes \textbf{Y}'$, hence also $\partial(\textbf{X}' \otimes (y_1, ..., y_n,u)): \textbf{X}' \otimes \textbf{Y} \longrightarrow \partial(\textbf{X}' \otimes \textbf{Y}')$. By substitution of (\texttt{***}) along the latter map we obtain
	$$
	\textbf{X}' \otimes \textbf{Y} \vdashcustom (X' \otimes Y')[x_1 \otimes u \mid x_1y', ..., x_m \otimes u \mid x_my'] \equiv (X'' \otimes Y')[x_1 \otimes u \mid x_1y', ..., x_m \otimes u \mid x_my'] \tp.
	$$
	We conclude that $J \odot J'$ is well-formed and, in particular, its derivability will follow from that of 
	\[
	\tag{$\varheartsuit$}
	\textbf{X}' \otimes \textbf{Y} \vdashcustom x^{'X'} \otimes u \equiv x^{'X''} \otimes u \tm.
	\]
	
	We consider the following cases:
	\begin{enumerate}[label=(\arabic*)]
		\item $J$ and $J'$ are axioms. Then $J \odot J'$ is a well-formed axiom, so it is derivable.
		
		\item $X'$ equals $X''$, and $J$ has an initial inference
		$$
		\inferrule{\textbf{X} \vdashcustom X' \tp}{\textbf{X} \vdashcustom X' \equiv X' \tp}(\text{T1}).
		$$
		Then an instance of (T1) allows us to derive ($\varheartsuit$) from $\textbf{X}' \otimes \textbf{Y} \vdashcustom x^{'X'} \otimes u \tm$.
		
		\item $J$ has an initial inference
		$$
		\inferrule{\textbf{X} \vdashcustom X'' \equiv X' \tp}{\textbf{X} \vdashcustom X' \equiv X'' \tp}(\text{T2}).
		$$
		Then $h$-derivability yields $\textbf{X}'' \otimes \textbf{Y} \vdashcustom x^{'X''} \otimes u \equiv x^{'X''} \otimes u \tm$, and we obtain ($\varheartsuit$) from $\textbf{X}' \otimes \textbf{Y} \equiv \textbf{X}'' \otimes \textbf{Y} \ctx$ and an instance of (T2).
		
		\item $J$ has an initial inference
		$$
		\inferrule{\textbf{X} \vdashcustom X' \equiv X''' \tp \\ \textbf{X} \vdashcustom X''' \equiv X''}{\textbf{X} \vdashcustom X' \equiv X'' \tp}(\text{T3}).
		$$
		Let $\textbf{X}''' = (\textbf{X}, x':X''')$. By $h$-derivability we obtain
		$$
		\textbf{X}' \otimes \textbf{Y} \vdashcustom x^{'X'} \otimes u \equiv x^{'X'''} \otimes u \tm, \quad\quad \textbf{X}''' \otimes \textbf{Y} \vdashcustom x^{'X'''} \otimes u \equiv x^{'X''} \otimes u \tm.
		$$
		Now, $\textbf{X}' \otimes \textbf{Y} \equiv \textbf{X}''' \otimes \textbf{Y} \ctx$ and an instance of (T3) yield ($\varheartsuit$).
		
		\item $J$ has an initial inference
		$$
		\inferrule{\textbf{A} \vdashcustom U \equiv V \tp \\ \textbf{X} \vdashcustom X' \tp \\ \textbf{X} \vdashcustom X'' \tp \\ \textbf{f}:\textbf{X} \rightarrow \textbf{A}}{\textbf{X} \vdashcustom X' \equiv X'' \tp}\text{(Teq-sub-1)}
		$$
		where $\textbf{A} = (a_1:A_1, ..., a_M:A_M)$, $X' = U[\textbf{f}]$ and $X'' = V[\textbf{f}]$. By tensoring $\textbf{A} \vdashcustom U \equiv V \tp$ and $\textbf{Y} \vdashcustom u:Y'$ we derive
		\[
		\tag{\texttt{*}}
		\textbf{A}' \otimes \textbf{Y} \vdashcustom a^U \otimes u \equiv a^V \otimes u \tm
		\]
		where $\textbf{A}' = (\textbf{A}, a:U)$. Now, by using $Sub^1_{s,t}$ twice we obtain
		\begin{align*}
			\textbf{X}' \otimes \textbf{Y} &\vdashcustom x^{'X'} \otimes u \equiv (a^U \otimes u)[\textbf{f}' \otimes \textbf{Y}] \tm, \tag{\texttt{**}}\\
			\textbf{X}'' \otimes \textbf{Y} &\vdashcustom x^{'X''} \otimes u \equiv (a^V \otimes u)[\textbf{f}' \otimes \textbf{Y}] \tm \tag{\texttt{***}}
		\end{align*}
		where $\textbf{f}' = (f_1, ..., f_M, x')$. Since $\textbf{X}' \otimes \textbf{Y} \equiv \textbf{X}'' \otimes \textbf{Y} \ctx$ is derivable, we can combine (\texttt{*}), (\texttt{**}) and (\texttt{***}) to obtain ($\varheartsuit$).
		
		\item $J$ has an initial inference
		$$
		\inferrule{\textbf{A} \vdashcustom U \tp \\ \textbf{X} \vdashcustom X' \tp \\ \textbf{X} \vdashcustom X'' \tp \\ \textbf{f} \equiv \textbf{g}:\textbf{X} \rightarrow \textbf{A}}{\textbf{X} \vdashcustom X' \equiv X'' \tp}\text{(Teq-sub-2)}
		$$
		where $X' = U[\textbf{f}]$ and $X'' = U[\textbf{g}]$. By tensoring $\textbf{A} \vdashcustom U \tp$ and $\textbf{Y} \vdashcustom u:Y'$ we derive
		\[
		\tag{\texttt{*}}
		\textbf{A}' \otimes \textbf{Y} \vdashcustom a^U \otimes u \tm
		\]
		where $\textbf{A}' = (\textbf{A}, a:U)$. On the other hand, as $\Ht(\textbf{f} \equiv \textbf{g}:\textbf{X} \rightarrow \textbf{A})\Ht(\textbf{Y}) \le h$, by $Moreq^1(h)$ we obtain $\textbf{f} \otimes \textbf{Y} \equiv \textbf{g} \otimes \textbf{Y}:\textbf{X} \otimes \textbf{Y} \longrightarrow \textbf{A} \otimes \textbf{Y}$, hence also
		\[
		\tag{\texttt{**}}
		\textbf{f}' \otimes \textbf{Y} \equiv \textbf{g}' \otimes \textbf{Y}: \textbf{X}' \otimes \textbf{Y} \longrightarrow \textbf{A}' \otimes \textbf{Y}.
		\]
		By substitution of (\texttt{*}) along the two morphisms in (\texttt{**}) we derive
		$$
		\textbf{X}' \otimes \textbf{Y} \vdashcustom (a^U \otimes u)[\textbf{f}' \otimes \textbf{Y}] \equiv (a^U \otimes u)[\textbf{g}' \otimes \textbf{Y}] \tm,
		$$
		and applying $Sub^1_{s,t}(h)$ to these two term expressions yields ($\varheartsuit$).
		
		\item $J'$ has an initial inference
		$$
		\inferrule{\textbf{Y} \vdashcustom Y'' \equiv Y' \tp \\ \textbf{Y} \vdashcustom u:Y''}{\textbf{Y} \vdashcustom u:Y'}\text{(Teq/t)}.
		$$
		By $h$-derivability, we can tensor $\textbf{X} \vdashcustom X' \equiv X'' \tp$ and $\textbf{Y} \vdashcustom u:Y''$ to derive, in particular,
		$$
		\textbf{X}' \otimes \textbf{Y} \vdashcustom x^{'X'} \otimes u \equiv x^{'X''} \otimes u \tm,
		$$
		which is the required judgment.
		
		\item There exists $j \in \{1, ..., n\}$ such that $Y' = Y_j$, $u = y_j$, and $J'$ has an initial inference
		$$
		\inferrule{\textbf{Y} \vdashcustom Y_j \tp}{\textbf{Y} \vdashcustom y_j:Y_j}\text{(var)}.
		$$
		Then ($\varheartsuit$) is $\textbf{X}' \otimes \textbf{Y} \vdashcustom x'y_j \equiv x'y_j \tm$. Since it is well-formed, by (T1) it is derivable.
		
		\item $J'$ has an initial inference of the form (t-sub). Then there exist an axiom $\textbf{B} \vdashcustom w:W$, where $\textbf{B} = (b_1:B_1, ..., b_N:B_N)$, and a morphism $\textbf{g}:\textbf{Y} \rightarrow \textbf{B}$ such that $Y' = W[\textbf{g}]$, $u = w[\textbf{g}]$, and $\Ht(\textbf{B} \vdashcustom w:W)$, $\Ht(\textbf{g}:\textbf{Y} \rightarrow \textbf{B}) < \Ht(\textbf{Y} \vdashcustom u:Y')$.
		
		Taking $\textbf{B}' = (\textbf{B}, b':W)$, by $h$-derivability we can tensor $\textbf{X} \vdashcustom X' \equiv X'' \tp$ and $\textbf{B} \vdashcustom w:W$ to derive, in particular,
		\[
		\tag{\texttt{*}}
		\textbf{X}' \otimes \textbf{B} \vdashcustom x^{'X'} \otimes w \equiv x^{'X''} \otimes w \tm.
		\]
		Also, by $Sub^2_{s,t}(h)$ we obtain
		\begin{align*}
			\textbf{X}' \otimes \textbf{Y} & \vdashcustom (x^{'X'} \otimes w)[\textbf{X}' \otimes \textbf{g}] \equiv x^{'X'} \otimes u \tm, \\
			\textbf{X}'' \otimes \textbf{Y} & \vdashcustom (x^{'X''} \otimes w)[\textbf{X}' \otimes \textbf{g}] \equiv x^{'X''} \otimes u \tm.
		\end{align*}
		As $\textbf{X}' \otimes \textbf{Y}$ and $\textbf{X}'' \otimes \textbf{Y}$ are provably equal, we can combine the above judgments with (\texttt{*}) to derive $\textbf{X}' \otimes \textbf{Y} \vdashcustom x^{'X'} \otimes u \equiv x^{'X''} \otimes u \tm$, as required.
	\end{enumerate}
	
	\vspace{0.1em}
	
	\begin{center}
		\textcolor{newpurple}{\textbf{\normalsize{\underline{sort $\odot$ term equality}}}}
	\end{center}
	
	\vspace{0.2em}
		
	Suppose that $J$ is $\textbf{X} \vdashcustom U \tp$ and $J'$ is $\textbf{Y} \vdashcustom u \equiv v:V$. Writing $\textbf{X}' = (\textbf{X}, x':U)$, we want to derive
	$$
	\textbf{X}' \otimes \textbf{Y} \vdashcustom x' \otimes u^V \equiv x' \otimes v^V: (U \otimes V)[x_1 \otimes u^V \mid x_1y', ..., x_m \otimes u^V \mid x_my'].
	$$
	To start, note that we can tensor $\textbf{X} \vdashcustom U \tp$ and $\textbf{Y} \vdashcustom u:V$ to derive
	$$
	\textbf{X}' \otimes \textbf{Y} \vdashcustom x' \otimes u^V: (U \otimes V)[x_1 \otimes u^V \mid x_1y', ..., x_m \otimes u^V \mid x_my']
	$$
	where $y'$ is a variable of sort $V$.
	
	As a consequence, to obtain $J \odot J'$ it suffices to derive $\textbf{X}' \otimes \textbf{Y} \vdashcustom x' \otimes u^V \equiv x' \otimes v^V \tm$ or, equivalently, $\textbf{X}' \otimes \textbf{Y} \vdashcustom x' \otimes u \equiv x' \otimes v \tm$.
	
	We have the following cases:
	
	\begin{enumerate}[label=(\arabic*)]
		\item $J$ and $J'$ are axioms. As $J \odot J'$ is an axiom, to conclude that it is derivable we must verify that it is well-formed; we will do this by checking that the following judgment is derivable:
		\[
		\tag{\texttt{*}}
		\textbf{X}' \otimes \textbf{Y} \vdashcustom (U \otimes V)[x_1 \otimes u^V \mid x_1y', ..., x_m \otimes u^V \mid x_my'] \equiv (U \otimes V)[x_1 \otimes v^V \mid x_1y', ..., x_m \otimes v^V \mid x_my'] \tp.
		\]
		
		By Lemma \ref{lem: morphism equality from (ctx, term eq)}, letting $\textbf{Y}' = (\textbf{Y}, y':V)$, we can derive a context morphism equality
		$$
		\textbf{X} \otimes (y_1, ..., y_n,u^V) \equiv \textbf{X} \otimes (y_1, ..., y_n,v^V): \textbf{X} \otimes \textbf{Y} \longrightarrow \textbf{X} \otimes \textbf{Y}',
		$$
		which in turn yields an equality
		\[
		\partial(\textbf{X}' \otimes (y_1, ..., y_n,u^V)) \equiv \partial(\textbf{X}' \otimes (y_1, ..., y_n,v^V)): \textbf{X}' \otimes \textbf{Y} \longrightarrow \partial(\textbf{X}' \otimes \textbf{Y}').
		\]
		Now, by taking the pullback of $\partial(\textbf{X}' \otimes \textbf{Y}') \vdashcustom U \otimes V \tp$ along the two morphisms above we obtain (\texttt{*}), as required.
		
		\item $J$ has an initial inference of the form (T-sub): there exist an axiom $\textbf{A} \vdashcustom W \tp$, say where $\textbf{A} = (a_1:A_1, ..., a_M:A_M)$, and a morphism $\textbf{f} = (f_1, ..., f_M):\textbf{X} \rightarrow \textbf{A}$ such that $U = W[\textbf{f}]$ and $\Ht(\textbf{f}:\textbf{X} \rightarrow \textbf{A})$, $\Ht(\textbf{A} \vdashcustom W \tp) < \Ht(\textbf{X} \vdashcustom U \tp)$. Let $\textbf{A}'$ be a context $(\textbf{A}, a':W)$.
		
		By $h$-derivability, we can tensor $\textbf{A} \vdashcustom W \tp$ and $\textbf{Y} \vdashcustom u \equiv v:V$ to derive
		$$
		\textbf{A}' \otimes \textbf{Y} \vdashcustom a' \otimes u \equiv a' \otimes v \tm.
		$$
		Now, by using $Sub^1_{s,t}(h)$ twice we obtain
		$$
		x' \otimes u \equiv \overline{a \otimes u} \equiv \overline{a \otimes u} \equiv x' \otimes v
		$$
		in context $\textbf{X}' \otimes \textbf{Y}$, where the upper bar denotes substitution along $(f_1, ..., f_M,x') \otimes \textbf{Y}:\textbf{X}' \otimes \textbf{Y} \rightarrow \textbf{A}' \otimes \textbf{Y}$.
		
		\item $J'$ has an initial inference
		$$
		\inferrule{\textbf{Y} \vdashcustom u:V}{\textbf{Y} \vdashcustom u \equiv u:V}\text{(t1)}.
		$$
		Then the desired judgment is $\textbf{X}' \otimes \textbf{Y} \vdashcustom x' \otimes u \equiv x' \otimes u \tm$, which is derivable by (t1).
		
		\item $J'$ has an initial inference
		$$
		\inferrule{\textbf{Y} \vdashcustom v \equiv u:V}{\textbf{Y} \vdashcustom u \equiv v:V}\text{(t2)}.
		$$
		As $\Ht(\textbf{X} \vdashcustom U \tp)\Ht(\textbf{Y} \vdashcustom v \equiv u:V) \le h$, we can derive $\textbf{X}' \otimes \textbf{Y} \vdashcustom x' \otimes v \equiv x' \otimes u \tm$; we conclude by applying (t2) to the latter judgment.
		
		\item $J'$ has an initial inference
		$$
		\inferrule{\textbf{Y} \vdashcustom u \equiv u':V \\ \textbf{Y} \vdashcustom u' \equiv v:V}{\textbf{Y} \vdashcustom u \equiv v:V}\text{(t3)}.
		$$
		By $h$-derivability, we can tensor $\textbf{X} \vdashcustom U \tp$ with each of the premises to obtain
		$$
		\textbf{X}' \otimes \textbf{Y} \vdashcustom x' \otimes u \equiv x' \otimes u' \tm \;\;\;\;\;\;\;\;\;\; \textbf{X}' \otimes \textbf{Y} \vdashcustom x' \otimes u' \equiv x' \otimes v \tm.
		$$
		By applying (t3) we derive $\textbf{X}' \otimes \textbf{Y} \vdashcustom x' \otimes u \equiv x' \otimes v \tm$, as required.
		
		\item $J'$ has an initial inference
		$$
		\inferrule{\textbf{Y} \vdashcustom V' \equiv V \tp \\ \textbf{Y} \vdashcustom u \equiv v:V' \\ \textbf{Y} \vdashcustom u:V \\ \textbf{Y} \vdashcustom v:V}{\textbf{Y} \vdashcustom u \equiv v:V}\text{(Teq/teq)}.
		$$
		By $h$-derivability, we can tensor $\textbf{X} \vdashcustom U \tp$ and $\textbf{Y} \vdashcustom u \equiv v:V'$ to obtain
		$$
		\textbf{X}' \otimes \textbf{Y} \vdashcustom x' \otimes u^{V'} \equiv x' \otimes v^{V'} \tm.
		$$
		This is the desired judgment as $x' \otimes u^{V'} = x' \otimes u$ and $x' \otimes v^{V'} = x' \otimes v$.
		
		\item $J'$ has an initial inference
		$$
		\inferrule{\textbf{B} \vdashcustom \alpha \equiv \beta:Z \\ \textbf{Y} \vdashcustom u:V \\ \textbf{Y} \vdashcustom v:V \\ \textbf{g}:\textbf{Y} \rightarrow \textbf{B}}{\textbf{Y} \vdashcustom u \equiv v:V}\text{(teq-sub-1)}
		$$
		where $u = \alpha[\textbf{g}]$, $v = \beta[\textbf{g}]$, and $V = Z[\textbf{g}]$. By $h$-derivability, we can tensor $\textbf{X} \vdashcustom U \tp$ and $\textbf{B} \vdashcustom \alpha \equiv \beta:Z$ to derive
		$$
		\textbf{X}' \otimes \textbf{B} \vdashcustom x' \otimes \alpha \equiv x' \otimes \beta \tm.
		$$
		
		Noting that $\Ht(\textbf{X} \vdashcustom U \tp)\Ht(\textbf{g}:\textbf{Y} \rightarrow \textbf{B})$ and $\Ht(\textbf{X} \vdashcustom U \tp)\Ht(\textbf{B} \vdashcustom \alpha \tm)$ are at most $h$, by $Sub^2_{s,t}(h)$ we can derive $\textbf{X}' \otimes \textbf{Y} \vdashcustom (x' \otimes \alpha)[\textbf{X}' \otimes \textbf{g}] \equiv x' \otimes u \tm$. Similarly we derive $\textbf{X}' \otimes \textbf{Y} \vdashcustom (x' \otimes \beta)[\textbf{X}' \otimes \textbf{g}] \equiv x' \otimes v \tm$, from which it follows that
		$$
		x' \otimes u \equiv (x' \otimes \alpha)[\textbf{X}' \otimes \textbf{g}] \equiv (x' \otimes \beta)[\textbf{X}' \otimes \textbf{g}] \equiv x' \otimes v
		$$
		in context $\textbf{X}' \otimes \textbf{Y}$.
		
		\item $J'$ has an initial inference
		$$
		\inferrule{\textbf{B} \vdashcustom \alpha:Z \\ \textbf{Y} \vdashcustom u:V \\ \textbf{Y} \vdashcustom v:V \\ \textbf{f} \equiv \textbf{g}:\textbf{Y} \rightarrow \textbf{B}}{\textbf{Y} \vdashcustom u \equiv v:V}
		$$
		where $u = \alpha[\textbf{f}]$, $v = \alpha[\textbf{g}]$, and $V = Z[\textbf{f}]$.
		
		By tensoring $\textbf{X} \vdashcustom U \tp$ and $\textbf{B} \vdashcustom \alpha:Z$ we derive $\textbf{X}' \otimes \textbf{B} \vdashcustom x' \otimes \alpha \tm$. On the other hand, as $\Ht(\textbf{X}')\Ht(\textbf{f} \equiv \textbf{g}: \textbf{Y} \rightarrow \textbf{B}) \le h$, by $Moreq^2(h)$ we derive $\textbf{X}' \otimes \textbf{f} \equiv \textbf{X}' \otimes \textbf{g}: \textbf{X}' \otimes \textbf{Y} \longrightarrow \textbf{X}' \otimes \textbf{B}$.
		
		It follows that
		$$
		x' \otimes u \equiv (x' \otimes \alpha)[\textbf{X}' \otimes \textbf{f}] \equiv (x' \otimes \alpha)[\textbf{X}' \otimes \textbf{g}] \equiv x' \otimes v
		$$
		in context $\textbf{X}' \otimes \textbf{Y}$, where the middle equality is obtained by substitution, and the first and third ones by $Mor^2_{s,t}(h)$.
	\end{enumerate}
	This concludes the proof in the case where $J$ is a sort judgment and $J'$ is a term equality judgment.
	
	\vspace{0.5em}
	
	\begin{center}
		\textcolor{newpurple}{\textbf{\normalsize{\underline{term equality $\odot$ sort}}}}
	\end{center}
	
	\vspace{0.2em}
	
	The proof is similar to the previous one. Suppose that $J$ is $\textbf{X} \vdashcustom u \equiv v:U$ and $J'$ is $\textbf{Y} \vdashcustom V \tp$. Writing $\textbf{Y}' = (\textbf{Y}, y':V)$, we want to derive
	$$
	\textbf{X} \otimes \textbf{Y}' \vdashcustom u \otimes y' \equiv v \otimes y': (U \otimes V)[u \otimes y_1 \mid x'y_1, ..., u \otimes y_n \mid x'y_n]
	$$
	where $x'$ is a variable of sort $U$.
	
	By tensoring $\textbf{X} \vdashcustom u:U$ and $\textbf{Y} \vdashcustom V \tp$ we derive
	$$
	\textbf{X} \otimes \textbf{Y}' \vdashcustom u \otimes y':(U \otimes V)[u \otimes y_1 \mid x'y_1, ..., u \otimes y_n \mid x'y_n],
	$$
	so to obtain $J \odot J'$ it suffices to derive $\textbf{X} \otimes \textbf{Y}' \vdashcustom u \otimes y' \equiv v \otimes y' \tm$. We have the following cases:
	\begin{enumerate}[label=(\arabic*)]
		\item $J$ and $J'$ are axioms. To derive $J \odot J'$, which is an axiom, we must verify that it is well-formed; for that, we will derive
		\[
		\textbf{X} \otimes \textbf{Y}' \vdashcustom (U \otimes V)[u \otimes y_1 \mid x'y_1, ..., u \otimes y_n \mid x'y_n] \equiv (U \otimes V)[v \otimes y_1 \mid x'y_1, ..., v \otimes y_n \mid x'y_n] \tp.
		\]
		By Lemma \ref{lem: morphism equality from (term eq,ctx)}, letting $\textbf{X}' = (\textbf{X}, x:U)$, we can derive a morphism equality
		$$
		(x_1, ..., x_m, u) \otimes \textbf{Y} \equiv (x_1, ..., x_m,v): \textbf{X} \otimes \textbf{Y} \longrightarrow \textbf{X}' \otimes \textbf{Y},
		$$
		hence also
		$$
		\partial((x_1, ..., x_m,u) \otimes \textbf{Y}') \equiv \partial((x_1, ..., x_m,v) \otimes \textbf{Y}'): \textbf{X} \otimes \textbf{Y}' \longrightarrow \partial(\textbf{X}' \otimes \textbf{Y}').
		$$
		The desired judgment (\texttt{*}) is now obtained by substitution of $\partial(\textbf{X}' \otimes \textbf{Y}') \vdashcustom U \otimes V \tp$ along the two morphisms above.
		
		\item $J'$ has an initial inference of the form (T-sub). Then there exist an axiom $\textbf{B} \vdashcustom W \tp$, say where $\textbf{B} = (b_1:B_1, ..., b_N:B_N)$, and a morphism $\textbf{g} = (g_1, ..., g_N):\textbf{Y} \rightarrow \textbf{B}$ such that $V = W[\textbf{g}]$ and $\Ht(\textbf{g}:\textbf{Y} \rightarrow \textbf{B})$, $\Ht(\textbf{B} \vdashcustom W \tp) < \Ht(\textbf{Y} \vdashcustom V \tp)$. Let $\textbf{B}'$ be a context $(\textbf{B}, b':W)$.
		
		By $h$-derivability, we can tensor $\textbf{X} \vdashcustom u \equiv v:U$ and $\textbf{B} \vdashcustom W \tp$ to derive
		$$
		\textbf{X} \otimes \textbf{B}' \vdashcustom u \otimes b' \equiv v \otimes b' \tm.
		$$
		We conclude by using $Sub^2_{t,s}$ twice to obtain
		$$
		u \otimes y' \equiv \overline{u \otimes b'} \equiv \overline{v \otimes b'} \equiv v \otimes y'
		$$
		in context $\textbf{X} \otimes \textbf{Y}'$, where the upper bar denotes substitution along $\textbf{X} \otimes (g_1, ..., g_N,y'):\textbf{X} \otimes \textbf{Y}' \rightarrow \textbf{X} \otimes \textbf{B}'$.
		
		\item $J$ has an initial inference
		$$
		\inferrule{\textbf{X} \vdashcustom u:U}{\textbf{X} \vdashcustom u \equiv u:U}\text{(t1)}.
		$$
		Then the desired judgment is $\textbf{X} \otimes \textbf{Y}' \vdashcustom u \otimes y' \equiv u \otimes y' \tm$, which is derivable by (t1).
		
		\item $J$ has an initial inference
		$$
		\inferrule{\textbf{X} \vdashcustom v \equiv u:V}{\textbf{X} \vdashcustom u \equiv v:V}\text{(t2)}.
		$$
		As $\Ht(\textbf{X} \vdashcustom v \equiv u:V)\Ht(\textbf{Y} \vdashcustom V \tp) \le h$, we can derive $\textbf{X} \otimes \textbf{Y}' \vdashcustom u \otimes y' \equiv v \otimes y' \tm$; we conclude by applying (t2) to the latter judgment.
		
		\item $J$ has an initial inference
		$$
		\inferrule{\textbf{X} \vdashcustom u \equiv u':U \\ \textbf{X} \vdashcustom u' \equiv v:V}{\textbf{X} \vdashcustom u \equiv v:V}\text{(t3)}.
		$$
		By $h$-derivability, we can tensor each of the premises with $\textbf{Y} \vdashcustom V \tp$ to obtain
		$$
		\textbf{X} \otimes \textbf{Y}' \vdashcustom u \otimes y' \equiv u' \otimes y' \tm \;\;\;\;\;\;\;\;\;\; \textbf{X} \otimes \textbf{Y}' \vdashcustom u' \otimes y' \equiv v \otimes y' \tm.
		$$
		By applying (t3) we derive $\textbf{X} \otimes \textbf{Y}' \vdashcustom u \otimes y' \equiv v \otimes y' \tm$, as required.
		
		\item $J$ has an initial inference
		$$
		\inferrule{\textbf{X} \vdashcustom U' \equiv U \tp \\ \textbf{X} \vdashcustom u \equiv v:U' \\ \textbf{X} \vdashcustom u:U \\ \textbf{X} \vdashcustom v:U}{\textbf{X} \vdashcustom u \equiv v:U}\text{(Teq/teq)}.
		$$
		By $h$-derivability, we can tensor $\textbf{X} \vdashcustom u \equiv v:U'$ and $\textbf{Y} \vdashcustom V \tp$ to obtain
		$$
		\textbf{X} \otimes \textbf{Y}' \vdashcustom u^{U'} \otimes y' \equiv v^{U'} \otimes y' \tm.
		$$
		This is the desired judgment as $u^{U'} \otimes y' = u \otimes y'$ and $v^{U'} \otimes y' = v \otimes y'$.
		
		\item $J$ has an initial inference
		$$
		\inferrule{\textbf{A} \vdashcustom \alpha \equiv \beta:Z \\ \textbf{X} \vdashcustom u:U \\ \textbf{X} \vdashcustom v:U \\ \textbf{f}:\textbf{X} \rightarrow \textbf{A}}{\textbf{X} \vdashcustom u \equiv v:U}\text{(teq-sub-1)}
		$$
		where $u = \alpha[\textbf{f}]$, $v = \beta[\textbf{g}]$, and $U = Z[\textbf{f}]$. By $h$-derivability, we can tensor $\textbf{A} \vdashcustom \alpha \equiv \beta:Z$ and $\textbf{Y} \vdashcustom V \tp$ to derive
		$$
		\textbf{A} \otimes \textbf{Y}' \vdashcustom \alpha \otimes y' \equiv \beta \otimes y' \tm.
		$$
		Noting that $\Ht(\textbf{f}:\textbf{X} \rightarrow \textbf{A})\Ht(\textbf{Y} \vdashcustom V \tp)$ and $\Ht(\textbf{A} \vdashcustom \alpha \tm)\Ht(\textbf{Y} \vdashcustom V \tp)$ are at most $h$, by $Sub^1_{t,s}(h)$ we can derive $\textbf{X} \otimes \textbf{Y}' \vdashcustom (\alpha \otimes y')[\textbf{f} \otimes \textbf{Y}'] \equiv u \otimes y' \tm$. Similarly we derive $\textbf{X} \otimes \textbf{Y}' \vdashcustom (\beta \otimes y')[\textbf{f} \otimes \textbf{Y}'] \equiv v \otimes y' \tm$, from which it follows that
		$$
		u \otimes y' \equiv (\alpha \otimes y')[\textbf{f} \otimes \textbf{Y}'] \equiv (\beta \otimes y')[\textbf{f} \otimes \textbf{Y}'] \equiv v \otimes y'
		$$
		in context $\textbf{X} \otimes \textbf{Y}'$.
		
		\item $J$ has an initial inference
		$$
		\inferrule{\textbf{A} \vdashcustom \alpha:Z \\ \textbf{X} \vdashcustom u:U \\ \textbf{X} \vdashcustom v:U \\ \textbf{f} \equiv \textbf{g}:\textbf{X} \rightarrow \textbf{A}}{\textbf{X} \vdashcustom u \equiv v:U}
		$$
		where $u = \alpha[\textbf{f}]$, $v = \alpha[\textbf{g}]$, and $U = Z[\textbf{f}]$.
		
		By tensoring $\textbf{A} \vdashcustom \alpha:Z$ and $\textbf{Y} \vdashcustom V \tp$ we derive $\textbf{A} \otimes \textbf{Y}' \vdashcustom \alpha \otimes y' \tm$. On the other hand, as $\Ht(\textbf{f} \equiv \textbf{g}:\textbf{X} \rightarrow \textbf{A})\Ht(\textbf{Y}') \le h$, by $Moreq^1(h)$ we derive $\textbf{f} \otimes \textbf{Y}' \equiv \textbf{g} \otimes \textbf{Y}': \textbf{X} \otimes \textbf{Y}' \rightarrow \textbf{A} \otimes \textbf{Y}'$.
		
		It follows that
		$$
		u \otimes y' \equiv (\alpha \otimes y')[\textbf{f} \otimes \textbf{Y}'] \equiv (\alpha \otimes y')[\textbf{g} \otimes \textbf{Y}'] \equiv v \otimes y'
		$$
		in context $\textbf{X} \otimes \textbf{Y}'$, where the middle equality is obtained by substitution, and the first and third ones by $Sub^1_{t,s}(h)$.
	\end{enumerate}

\vspace{0.5em}

This concludes the proof of Proposition \ref{prop: induction step}. By induction, we obtain the main result of the text:

\begin{theorem}
\label{th: tensor product is a theory}
Let $\bbA$ and $\bbB$ be generalized algebraic theories. Then $(\bbA,\bbB)$ is $h$-derivable for all $h \ge 0$. In particular, the pretheory $\bbA \otimes \bbB$ is a theory.
\end{theorem}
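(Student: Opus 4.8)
The plan is to establish, by a single induction on $h \ge 0$, a bundled statement that packages together $h$-derivability and the auxiliary facts of \S\ref{sec: consequences h-derivability}; the theorem then follows formally. For $h \ge 0$ let $\Phi(h)$ denote the conjunction: $(\bbA,\bbB)$ is $h$-derivable, and $(\bbA,\bbB)$ satisfies $\Cont(h),\ldots,Mor^2_+(h)$. Once $\Phi(h)$ is known for all $h$, the theorem is immediate. Indeed, $h$-derivability for every $h$ means that for any derivable standard judgments $J$, $J'$ one may take $h=\Ht(J)\Ht(J')$ and conclude that $J \odot J'$ is derivable; in particular, by Definition \ref{def: tensor product of theories} every axiom of $\bbA \otimes \bbB$ is of the form $J \odot J'$ with $J$, $J'$ axioms (hence derivable judgments), so each such axiom is derivable, which is exactly the well-formedness required (see the discussion following Definition \ref{def: tensor product of theories}) for $\bbA \otimes \bbB$ to be a theory.

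For the base case $\Phi(0)$, I would first note that $0$-derivability holds vacuously: by Proposition \ref{prop: properties height} every derivable standard judgment has height at least $1$, so no pair $(J,J')$ satisfies $\Ht(J)\Ht(J') \le 0$. The remaining statements $\Cont(0),\ldots,Mor^2_+(0)$ then follow from the Proposition of \S\ref{sec: consequences h-derivability} (the one asserting that $(\bbA,\bbB)$ satisfies $\Cont(h),\ldots,Mor^2_+(h)$) applied with parameter $0$, whose two hypotheses are precisely $0$-derivability, just verified, and the same statements at parameters $k<0$, which are vacuous. For the inductive step, assume $\Phi(k)$ for all $k \le h$. Since $(\bbA,\bbB)$ is $h$-derivable, Proposition \ref{prop: induction step} yields $(h+1)$-derivability. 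With this established, the Proposition of \S\ref{sec: consequences h-derivability} applied with parameter $h+1$ delivers $\Cont(h+1),\ldots,Mor^2_+(h+1)$, because its two hypotheses---namely $(h+1)$-derivability and those statements at parameters $<h+1$, the latter supplied by $\Phi(0),\ldots,\Phi(h)$---are now both available. Hence $\Phi(h+1)$ holds, closing the induction.

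The step that requires care, and the reason for bundling both halves into $\Phi$, is the mutual dependence between them: the statements of \S\ref{sec: consequences h-derivability} at level $h$ are not consequences of $h$-derivability in isolation but of $h$-derivability together with those same statements at all strictly lower levels, while the proof of Proposition \ref{prop: induction step} in turn invokes them (for all $k \le h$) in its case analysis in order to produce the next level of derivability. Interleaving the two as one induction is exactly what keeps the argument free of circularity. All of the genuine labor---the long case analyses of \S\ref{sec: consequences h-derivability} and of \S5---has already been carried out, so the proof of the theorem itself reduces to this inductive bookkeeping.
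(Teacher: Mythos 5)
Your proposal is correct and matches the paper's intended argument: the paper's proof of Theorem \ref{th: tensor product is a theory} is exactly the induction on $h$ combining the vacuous base case of $0$-derivability (heights are $\ge 1$), the Proposition of \S\ref{sec: consequences h-derivability}, and Proposition \ref{prop: induction step}, which the paper compresses into the phrase ``by induction.'' Your explicit bundling of $h$-derivability with $\Cont(h),\ldots,Mor^2_+(h)$ into a single inductive statement $\Phi(h)$ is precisely the bookkeeping needed to make that mutual dependence non-circular, so nothing is missing.
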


\section{Two-sided substitution and the functor $\mathcal C(\bbA) \times \mathcal C(\bbB) \rightarrow \mathcal C(\bbA \otimes \bbB)$}

\label{sec: comparison functor}

Let $\bbA$ and $\bbB$ be generalized algebraic theories. We have proved (Theorem \ref{th: tensor product is a theory}) that $\bbA \otimes \bbB$ is a theory; in fact, that $J \odot J'$ is derivable whenever $J$, $J'$ are derivable judgments in $\bbA$, $\bbB$, respectively. Also, all the statements from \S\ref{sec: consequences h-derivability} hold without reference to the parameter $h$; we will thus say, for example, that $(\bbA,\bbB)$ satisfies $Sub^1_{t,t}$, meaning that it satisfies $Sub^1_{t,t}(h)$ for all $h \ge 0$, and similarly for the other conditions in that section.

We will now consider tensor products of context morphisms and describe a form of commutativity between such tensor products and the substitution operation (which we refer to as ``two-sided substitution"). Using that, we will define a comparison functor $\mathcal C(\bbA) \times \mathcal C(\bbB) \rightarrow \mathcal C(\bbA \otimes \bbB)$ between the corresponding contextual categories.

We refer the reader to \cite{Car86} (and, for more detail, to \cite{Car78}) for some aspects of generalized algebraic theories not covered in the appendix (and which do not depend on the differences between Cartmell's presentation and ours). Notably, we have a category $\GAT$ of generalized algebraic theories and interpretations, and a category $\Cont$ of contextual categories and contextual functors.

Each theory $\bbA$ has an associated contextual category $\mathcal C(\bbA)$ where, in particular, objects are equivalence classes (with respect to provable equality) of contexts, and whose arrows are equivalence classes of context morphisms. These assemble into a functor
$$
\mathcal C:\GAT \longrightarrow \Cont,
$$
which is proved in \cite{Car78} to be an equivalence of categories.

Given a context $\textbf{X}$ in $\bbA$, the corresponding object in $\mathcal C(\bbA)$ will be denoted by $[\textbf{X}]$. Similarly, for a context morphism $\textbf{f}:\textbf{X} \rightarrow \textbf{Y}$ we write $[\textbf{f}]$ for the induced arrow from $[\textbf{X}]$ to $[\textbf{Y}]$.

\begin{lemma}
\label{lem: tensor product of morphisms}
Suppose given context morphisms
$$
\textbf{f} = (f_1, ..., f_M): \textbf{X} = (x_1:X_1, ..., x_m:X_m) \longrightarrow \textbf{A} = (a_1:A_1, ..., a_M:A_M)
$$
in $\bbA$ and
$$
\textbf{g} = (g_1, ..., g_N): \textbf{Y} = (y_1:Y_1, ..., y_n:Y_n) \longrightarrow \textbf{B} = (b_1:B_1, ..., b_N:B_N)
$$
in $\bbB$. Then $\textbf{f} \otimes \textbf{g}:\textbf{X} \otimes \textbf{Y} \rightarrow \textbf{A} \otimes \textbf{B}$ is derivable. Moreover, it is equal to both composites in the square
\[
\squa{\textbf{X} \otimes \textbf{Y}}{\textbf{A} \otimes \textbf{Y}}{\textbf{X} \otimes \textbf{B}}{\textbf{A} \otimes \textbf{B}.}{\textbf{f} \otimes \textbf{Y}}{\textbf{f} \otimes \textbf{B}}{\textbf{X} \otimes \textbf{g}}{\textbf{A} \otimes \textbf{g}}
\]
Here, we have used $Mor^1$ and $Mor^2$ to conclude that these four sequences are morphisms with the indicated co/domains.
\end{lemma}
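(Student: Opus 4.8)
The plan is to avoid verifying directly that the list $\textbf{f} \otimes \textbf{g}$ is a context morphism, and instead to identify it with the two composites displayed in the square, each of which is automatically a morphism as a composite of morphisms. Since $(\bbA,\bbB)$ is now $h$-derivable for all $h$ (Theorem \ref{th: tensor product is a theory}), the statements of \S\ref{sec: consequences h-derivability} hold unconditionally. First I would invoke $Mor^1$ to obtain the morphisms $\textbf{f} \otimes \textbf{Y}:\textbf{X} \otimes \textbf{Y} \rightarrow \textbf{A} \otimes \textbf{Y}$ and $\textbf{f} \otimes \textbf{B}:\textbf{X} \otimes \textbf{B} \rightarrow \textbf{A} \otimes \textbf{B}$, and $Mor^2$ to obtain $\textbf{A} \otimes \textbf{g}:\textbf{A} \otimes \textbf{Y} \rightarrow \textbf{A} \otimes \textbf{B}$ and $\textbf{X} \otimes \textbf{g}:\textbf{X} \otimes \textbf{Y} \rightarrow \textbf{X} \otimes \textbf{B}$. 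Both composites $(\textbf{A} \otimes \textbf{g}) \circ (\textbf{f} \otimes \textbf{Y})$ and $(\textbf{f} \otimes \textbf{B}) \circ (\textbf{X} \otimes \textbf{g})$ are then context morphisms $\textbf{X} \otimes \textbf{Y} \rightarrow \textbf{A} \otimes \textbf{B}$.

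Next I would show that each composite agrees, entry by entry and up to provable equality, with $\textbf{f} \otimes \textbf{g}$. Recalling that composition of context morphisms is computed by substitution, the $(i,j)$-entry of $(\textbf{A} \otimes \textbf{g}) \circ (\textbf{f} \otimes \textbf{Y})$ is $(a_i^{A_i} \otimes g_j^{\Type(g_j)})[\textbf{f} \otimes \textbf{Y}]$, because $a_i \otimes g_j$ is the $(i,j)$-entry of $\textbf{A} \otimes \textbf{g}$, living in context $\textbf{A} \otimes \textbf{Y}$. Applying $Sub^1_{t,t}$ to the variable judgment $\textbf{A} \vdash a_i:A_i$ and the term judgment $\textbf{Y} \vdash g_j:\Type(g_j)$ yields $\textbf{X} \otimes \textbf{Y} \vdash (a_i \otimes g_j)[\textbf{f} \otimes \textbf{Y}] \equiv a_i[\textbf{f}] \otimes g_j \tm$, and since $a_i[\textbf{f}] = f_i$ this entry equals $f_i \otimes g_j$, the $(i,j)$-entry of $\textbf{f} \otimes \textbf{g}$.

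Symmetrically, the $(i,j)$-entry of $(\textbf{f} \otimes \textbf{B}) \circ (\textbf{X} \otimes \textbf{g})$ is $(f_i^{\Type(f_i)} \otimes b_j^{B_j})[\textbf{X} \otimes \textbf{g}]$, and $Sub^2_{t,t}$ applied to $\textbf{X} \vdash f_i:\Type(f_i)$ and the variable judgment $\textbf{B} \vdash b_j:B_j$ gives $\textbf{X} \otimes \textbf{Y} \vdash (f_i \otimes b_j)[\textbf{X} \otimes \textbf{g}] \equiv f_i \otimes b_j[\textbf{g}] \tm$, with $b_j[\textbf{g}] = g_j$, again producing $f_i \otimes g_j$. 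Since both composites are morphisms and both coincide entrywise (up to provable equality) with the list $\textbf{f} \otimes \textbf{g}$, it follows that $\textbf{f} \otimes \textbf{g}:\textbf{X} \otimes \textbf{Y} \rightarrow \textbf{A} \otimes \textbf{B}$ is itself a derivable context morphism, equal in $\mathcal{C}(\bbA \otimes \bbB)$ to both composites.

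The genuinely substantive content has already been absorbed into the two-variable substitution statements $Sub^1_{t,t}$ and $Sub^2_{t,t}$, so the remaining work is bookkeeping rather than derivation. The one place requiring attention is the indexing: I would check that the lexicographic reindexing over $\{1,\dots,M\} \times \{1,\dots,N\}$ used to define $\textbf{f} \otimes \textbf{g}$, $\textbf{A} \otimes \textbf{g}$, $\textbf{f} \otimes \textbf{B}$ (and the intermediate families over $\{1,\dots,M\} \times \{1,\dots,n\}$ and $\{1,\dots,m\} \times \{1,\dots,N\}$) is uniform, and that substitution into such a reindexed family is exactly the componentwise substitution appearing in the $Sub$ statements. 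I expect this compatibility of conventions to be the only subtlety; beyond it there is no further inductive or derivational obstacle.
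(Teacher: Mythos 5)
Your proposal is correct and follows essentially the same route as the paper: the paper likewise computes the $(i,j)$-entry of $(\textbf{A} \otimes \textbf{g}) \circ (\textbf{f} \otimes \textbf{Y})$ as $(a_i \otimes g_j)[\textbf{f} \otimes \textbf{Y}]$, identifies it with $f_i \otimes g_j$ via $Sub^1_{t,t}$, handles the other composite symmetrically via $Sub^2_{t,t}$, and observes that either entrywise identification already forces $\textbf{f} \otimes \textbf{g}$ to be a context morphism. The bookkeeping about lexicographic reindexing that you flag is indeed the only residual point, and the paper treats it as implicit in the matrix-form conventions of Notation \ref{not: abuses of notation}.
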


\begin{proof}
We check that the entries of $\textbf{f} \otimes \textbf{g}$ are provably equal in context $\textbf{X} \otimes \textbf{Y}$ to the corresponding entries in $(\textbf{A} \otimes \textbf{g}) \circ (\textbf{f} \otimes \textbf{Y})$ and, similarly, in $(\textbf{f} \otimes \textbf{B}) \circ (\textbf{X} \otimes \textbf{g})$. Either statement implies, in particular, that $\textbf{f} \otimes \textbf{g}$ is a morphism from $\textbf{X} \otimes \textbf{Y}$ to $\textbf{A} \otimes \textbf{B}$.

Let $1 \le i \le M$ and $1 \le j \le N$. The $(i,j)$-entry of the matrix form of $(\textbf{A} \otimes \textbf{g}) \circ (\textbf{f} \otimes \textbf{Y})$ is $(a_i \otimes g_j)[\textbf{f} \otimes \textbf{Y}]$. By $Sub^1_{t,t}$, the latter is provably equal in $\textbf{X} \otimes \textbf{Y}$ to $f_i \otimes g_j$, which is the $(i,j)$-entry of $\textbf{f} \otimes \textbf{g}$.

The other case follows similarly from $Sub^2_{t,t}$.
\end{proof}

\begin{lemma}
\label{lem: one-sided composition}
Consider context morphisms
$$
\textbf{X} \overset{\textbf{f}}{\longrightarrow} \textbf{X}' \overset{\textbf{f}'}{\longrightarrow} \textbf{X}'', \qquad\qquad\qquad \textbf{Y} \overset{\textbf{g}}{\longrightarrow} \textbf{Y}' \overset{\textbf{g}'}{\longrightarrow} \textbf{Y}''
$$
in $\bbA$ and $\bbB$, respectively. Then the composites
$$
\textbf{X} \otimes \textbf{Y} \overset{\textbf{f} \otimes \textbf{Y}}{\longrightarrow} \textbf{X}' \otimes \textbf{Y} \overset{\textbf{f}' \otimes \textbf{Y}}{\longrightarrow} \textbf{X}'' \otimes \textbf{Y}, \qquad\qquad\qquad \textbf{X} \otimes \textbf{Y}' \overset{\textbf{X} \otimes \textbf{g}}{\longrightarrow} \textbf{X} \otimes \textbf{Y}' \overset{\textbf{X} \otimes \textbf{g}'}{\longrightarrow} \textbf{X} \otimes \textbf{Y}''
$$
are provably equal in $\bbA \otimes \bbB$ to $(\textbf{f'} \circ \textbf{f}) \otimes \textbf{Y}:\textbf{X} \otimes \textbf{Y} \rightarrow \textbf{X}'' \otimes \textbf{Y}$ and to $\textbf{X} \otimes (\textbf{g}' \circ \textbf{g}):\textbf{X} \otimes \textbf{Y} \rightarrow \textbf{X} \otimes \textbf{Y}''$, respectively.
\end{lemma}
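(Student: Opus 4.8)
The plan is to verify both equalities entrywise, exactly as in Lemma \ref{lem: tensor product of morphisms}, using that composition of context morphisms in a gat is computed by substitution; since we are in Section \ref{sec: comparison functor}, all the statements $Sub^1_{t,t}$, $Sub^2_{t,t}$, $Mor^1$, $Mor^2$, etc. hold without any height restriction. I would treat only the first composite in detail, the second being entirely symmetric with the roles of $Sub^1_{t,t}$ and $Sub^2_{t,t}$ interchanged. First I would record that all the sequences in sight are genuine context morphisms: by $Mor^1$ the morphisms $\textbf{f} \otimes \textbf{Y}$, $\textbf{f}' \otimes \textbf{Y}$, and $(\textbf{f}' \circ \textbf{f}) \otimes \textbf{Y}$ have the indicated domains and codomains, and likewise $\textbf{X} \otimes \textbf{g}$, $\textbf{X} \otimes \textbf{g}'$, and $\textbf{X} \otimes (\textbf{g}' \circ \textbf{g})$ by $Mor^2$.

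For the first composite, write $\textbf{f}' = (f'_1, \ldots, f'_{M'})$, so that the $(k,j)$-entry of $\textbf{f}' \otimes \textbf{Y}$ is $f'_k \otimes y_j$. Composition being given by substitution, the corresponding entry of $(\textbf{f}' \otimes \textbf{Y}) \circ (\textbf{f} \otimes \textbf{Y})$ is $(f'_k \otimes y_j)[\textbf{f} \otimes \textbf{Y}]$. Viewing the entry as a derivable term judgment $\textbf{X}' \vdash f'_k \tm$ and the variable $y_j$ as the derivable term judgment $\textbf{Y} \vdash y_j : Y_j$, an application of $Sub^1_{t,t}$ shows that this expression is provably equal in $\textbf{X} \otimes \textbf{Y}$ to $f'_k[\textbf{f}] \otimes y_j$. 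Since $f'_k[\textbf{f}]$ is precisely the $k$-th entry of $\textbf{f}' \circ \textbf{f}$, the latter is exactly the $(k,j)$-entry of $(\textbf{f}' \circ \textbf{f}) \otimes \textbf{Y}$; as this holds for every $(k,j)$, the two morphisms are provably equal. The second composite is handled identically: writing $\textbf{g}' = (g'_1, \ldots, g'_{N'})$, the $(i,l)$-entry of $(\textbf{X} \otimes \textbf{g}') \circ (\textbf{X} \otimes \textbf{g})$ is $(x_i \otimes g'_l)[\textbf{X} \otimes \textbf{g}]$, which by $Sub^2_{t,t}$ is provably equal to $x_i \otimes g'_l[\textbf{g}]$, the $(i,l)$-entry of $\textbf{X} \otimes (\textbf{g}' \circ \textbf{g})$.

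I expect essentially no genuine obstacle here: the two-sided substitution statements do all the work, and the result is just the one-variable functoriality of the operations $- \otimes \textbf{Y}$ and $\textbf{X} \otimes -$. The only care required is bookkeeping — correctly matching the lexicographically reindexed entries of the tensor-product morphisms, recalling that composition of context morphisms is given by substitution, and noting that variables count as derivable term judgments so that $Sub^1_{t,t}$ and $Sub^2_{t,t}$ apply verbatim.
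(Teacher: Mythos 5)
Your proof is correct and follows essentially the same route as the paper: both compute the composites entrywise, identify the $(k,j)$-entry of $(\textbf{f}' \otimes \textbf{Y}) \circ (\textbf{f} \otimes \textbf{Y})$ as $(f'_k \otimes y_j)[\textbf{f} \otimes \textbf{Y}]$, and apply $Sub^1_{t,t}$ (resp.\ $Sub^2_{t,t}$ for the symmetric case) to rewrite it as $f'_k[\textbf{f}] \otimes y_j = (\textbf{f}' \circ \textbf{f})_k \otimes y_j$. The paper's version is just a terser statement of the same argument.
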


\begin{proof}
The proof is similar to that of the previous lemma: the $(i,j)$-entry of the left-hand side composite is $(f'_i \otimes y_j)[\textbf{f} \otimes \textbf{Y}]$, which by $Sub^1_{t,t}$ is provably equal in context $\textbf{X} \otimes \textbf{Y}$ to $f'_i[\textbf{f}] \otimes y_j = (\textbf{f}' \circ \textbf{f})_i \otimes y_j$. The second case is verified similarly from $Sub^2_{t,t}$.
\end{proof}

\begin{proposition}
\label{prop: two-sided substitution (morphism version)}
In the setting of Lemma \ref{lem: one-sided composition}, the morphisms
$$
(\textbf{f}' \otimes \textbf{g}') \circ (\textbf{f} \otimes \textbf{g}), \qquad\qquad (\textbf{f}' \circ \textbf{f}) \otimes (\textbf{g}' \circ \textbf{g})
$$
from $\textbf{X} \otimes \textbf{Y}$ to $\textbf{X}'' \otimes \textbf{Y}''$ are provably equal.
\end{proposition}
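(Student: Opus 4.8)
The plan is to reduce both morphisms to a common composite of \emph{pure} one-sided tensors---those of the form $\textbf{h} \otimes \textbf{Y}$ or $\textbf{X} \otimes \textbf{k}$---and then to recognize the remaining discrepancy as a single instance of the commuting square of Lemma \ref{lem: tensor product of morphisms}, used as an interchange law. First I would expand the right-hand morphism. Applying Lemma \ref{lem: tensor product of morphisms} to $\textbf{f}' \circ \textbf{f}:\textbf{X} \to \textbf{X}''$ and $\textbf{g}' \circ \textbf{g}:\textbf{Y} \to \textbf{Y}''$ gives
\[
(\textbf{f}' \circ \textbf{f}) \otimes (\textbf{g}' \circ \textbf{g}) = \big(\textbf{X}'' \otimes (\textbf{g}' \circ \textbf{g})\big) \circ \big((\textbf{f}' \circ \textbf{f}) \otimes \textbf{Y}\big),
\]
and then Lemma \ref{lem: one-sided composition} rewrites each one-sided factor, yielding
\[
(\textbf{f}' \circ \textbf{f}) \otimes (\textbf{g}' \circ \textbf{g}) = (\textbf{X}'' \otimes \textbf{g}') \circ (\textbf{X}'' \otimes \textbf{g}) \circ (\textbf{f}' \otimes \textbf{Y}) \circ (\textbf{f} \otimes \textbf{Y}).
\]

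Next I would expand the left-hand morphism by choosing the two factorizations from Lemma \ref{lem: tensor product of morphisms} whose shared intermediate object is $\textbf{X}'' \otimes \textbf{Y}'$, namely $\textbf{f} \otimes \textbf{g} = (\textbf{X}' \otimes \textbf{g}) \circ (\textbf{f} \otimes \textbf{Y})$ and $\textbf{f}' \otimes \textbf{g}' = (\textbf{X}'' \otimes \textbf{g}') \circ (\textbf{f}' \otimes \textbf{Y}')$. This produces
\[
(\textbf{f}' \otimes \textbf{g}') \circ (\textbf{f} \otimes \textbf{g}) = (\textbf{X}'' \otimes \textbf{g}') \circ (\textbf{f}' \otimes \textbf{Y}') \circ (\textbf{X}' \otimes \textbf{g}) \circ (\textbf{f} \otimes \textbf{Y}).
\]
Comparing the two displays, the outermost factor $\textbf{X}'' \otimes \textbf{g}'$ and the innermost factor $\textbf{f} \otimes \textbf{Y}$ already agree, so it suffices to prove the equality of the two middle composites
\[
(\textbf{f}' \otimes \textbf{Y}') \circ (\textbf{X}' \otimes \textbf{g}) = (\textbf{X}'' \otimes \textbf{g}) \circ (\textbf{f}' \otimes \textbf{Y}),
\]
both morphisms from $\textbf{X}' \otimes \textbf{Y}$ to $\textbf{X}'' \otimes \textbf{Y}'$. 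But this is precisely the commutativity of the square of Lemma \ref{lem: tensor product of morphisms} for the pair $\textbf{f}':\textbf{X}' \to \textbf{X}''$ and $\textbf{g}:\textbf{Y} \to \textbf{Y}'$: both composites are provably equal to $\textbf{f}' \otimes \textbf{g}$. Since composition in $\mathcal C(\bbA \otimes \bbB)$ respects provable equality, substituting this identity into the middle of the left-hand composite turns it into the right-hand composite, and the proposition follows.

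All the one-sided tensors occurring above are genuine context morphisms with the indicated domains and codomains by $Mor^1$ and $Mor^2$, which hold without height restriction as recorded at the start of this section, so no numerical bookkeeping is needed. I do not expect a substantive obstacle: the proof is a formal ``middle-four interchange'' computation, and the only point requiring care is to select, for each of $\textbf{f} \otimes \textbf{g}$ and $\textbf{f}' \otimes \textbf{g}'$, the factorization that makes their intermediate objects match up so that every composite written above is well-typed---after which the reduction to the single square of Lemma \ref{lem: tensor product of morphisms} is immediate.
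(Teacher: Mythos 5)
Your proof is correct and is essentially the paper's argument: both decompose the two morphisms into one-sided tensors via Lemma \ref{lem: tensor product of morphisms} and Lemma \ref{lem: one-sided composition} and then invoke the commuting square of Lemma \ref{lem: tensor product of morphisms} for the pair $(\textbf{f}',\textbf{g})$ as the interchange step. The paper presents this as a pasted commutative diagram while you write it out equationally, but the content is the same.
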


\begin{proof}
By the two lemmas above, we have a diagram of context morphisms
\[\begin{tikzcd}
	{\textbf{X} \otimes \textbf{Y}} && {\textbf{X}' \otimes \textbf{Y}} && {\textbf{X}'' \otimes \textbf{Y}} \\
	\\
	&& {\textbf{X}' \otimes \textbf{Y}'} && {\textbf{X}'' \otimes \textbf{Y}'} \\
	\\
	&&&& {\textbf{X}'' \otimes \textbf{Y}''}
	\arrow["{\textbf{f} \otimes \textbf{Y}}"{description}, from=1-1, to=1-3]
	\arrow["{(\textbf{f}' \circ \textbf{f}) \otimes \textbf{Y}}", curve={height=-24pt}, from=1-1, to=1-5]
	\arrow["{f \otimes g}"', from=1-1, to=3-3]
	\arrow["{\textbf{f}' \otimes \textbf{Y}}"{description}, from=1-3, to=1-5]
	\arrow["{\textbf{X}' \otimes \textbf{g}}"{description}, from=1-3, to=3-3]
	\arrow["{\textbf{X}'' \otimes \textbf{g}}"{description}, from=1-5, to=3-5]
	\arrow["{\textbf{X}'' \otimes (\textbf{g}' \circ \textbf{g})}", curve={height=-24pt}, from=1-5, to=5-5]
	\arrow["{\textbf{f}' \otimes \textbf{Y}'}"{description}, from=3-3, to=3-5]
	\arrow["{f' \otimes g'}"', from=3-3, to=5-5]
	\arrow["{\textbf{X}'' \otimes \textbf{g}''}"{description}, from=3-5, to=5-5]
\end{tikzcd}\]
that commutes up to provable equality. Also, by Lemma \ref{lem: tensor product of morphisms} we can derive a morphism equality
$$
\big(\textbf{X}'' \otimes (\textbf{g}' \circ \textbf{g})\big) \circ \big( (\textbf{f}' \otimes \textbf{f}) \otimes \textbf{Y} \big) \equiv (\textbf{g}' \circ \textbf{g}) \otimes (\textbf{f}' \otimes \textbf{f}).
$$
\end{proof}

\begin{proposition}[Two-sided substitution]
\label{prop: two-sided substitution}
Suppose given a derivable judgment $\textbf{A} \vdashcustom u \tm$ and a context morphism $\textbf{f}:\textbf{X} \rightarrow \textbf{A}$ in $\bbA$ and, similarly, $\textbf{B} \vdashcustom v \tm$ and $\textbf{g}:\textbf{Y} \rightarrow \textbf{B}$ in $\bbB$. Then the judgment
$$
\textbf{X} \otimes \textbf{Y} \vdashcustom (u \otimes v)[\textbf{f} \otimes \textbf{g}] \equiv u[\textbf{f}] \otimes v[\textbf{g}] \tm
$$
is derivable.
\end{proposition}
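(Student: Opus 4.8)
The plan is to factor the two-sided substitution into two one-sided substitutions by means of the commutative square of Lemma \ref{lem: tensor product of morphisms}. Recall from that lemma that $\textbf{f} \otimes \textbf{g}$ is provably equal to the composite $(\textbf{A} \otimes \textbf{g}) \circ (\textbf{f} \otimes \textbf{Y})$, where $\textbf{f} \otimes \textbf{Y}:\textbf{X} \otimes \textbf{Y} \to \textbf{A} \otimes \textbf{Y}$ is a morphism by $Mor^1$ and $\textbf{A} \otimes \textbf{g}:\textbf{A} \otimes \textbf{Y} \to \textbf{A} \otimes \textbf{B}$ is a morphism by $Mor^2$. Since substitution respects provable equality of context morphisms and is functorial with respect to composition, $(u \otimes v)[\textbf{f} \otimes \textbf{g}]$ is provably equal in context $\textbf{X} \otimes \textbf{Y}$ to $(u \otimes v)[\textbf{A} \otimes \textbf{g}][\textbf{f} \otimes \textbf{Y}]$, where the inner substitution $(u \otimes v)[\textbf{A} \otimes \textbf{g}]$ lives in context $\textbf{A} \otimes \textbf{Y}$.

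First I would apply $Sub^2_{t,t}$ with the $\bbA$-side context taken to be $\textbf{A}$ and the term $\textbf{A} \vdash u \tm$, together with $\textbf{g}:\textbf{Y} \to \textbf{B}$ and $\textbf{B} \vdash v \tm$; since all the statements of \S\ref{sec: consequences h-derivability} now hold unconditionally, this yields the derivable equality $\textbf{A} \otimes \textbf{Y} \vdash (u \otimes v)[\textbf{A} \otimes \textbf{g}] \equiv u \otimes v[\textbf{g}] \tm$. Substituting both sides along $\textbf{f} \otimes \textbf{Y}$ preserves this equality, so in context $\textbf{X} \otimes \textbf{Y}$ I obtain $(u \otimes v)[\textbf{A} \otimes \textbf{g}][\textbf{f} \otimes \textbf{Y}] \equiv (u \otimes v[\textbf{g}])[\textbf{f} \otimes \textbf{Y}]$.

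Next I would observe that $u \otimes v[\textbf{g}]$ is the tensor of the derivable judgments $\textbf{A} \vdash u \tm$ and $\textbf{Y} \vdash v[\textbf{g}] \tm$ (the latter derivable since $\textbf{g}$ is a context morphism), and apply $Sub^1_{t,t}$ with $v$ replaced by $v[\textbf{g}]$ to get $\textbf{X} \otimes \textbf{Y} \vdash (u \otimes v[\textbf{g}])[\textbf{f} \otimes \textbf{Y}] \equiv u[\textbf{f}] \otimes v[\textbf{g}] \tm$. Chaining the three equalities produced above yields exactly $\textbf{X} \otimes \textbf{Y} \vdash (u \otimes v)[\textbf{f} \otimes \textbf{g}] \equiv u[\textbf{f}] \otimes v[\textbf{g}] \tm$. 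As a consistency check, the symmetric route through the other composite $(\textbf{f} \otimes \textbf{B}) \circ (\textbf{X} \otimes \textbf{g})$, applying $Sub^1_{t,t}$ first and then $Sub^2_{t,t}$, leads to the same conclusion.

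The argument is short because the essential work was already carried out in \S\ref{sec: consequences h-derivability}; the only point requiring care is the bookkeeping of the directions of substitution and the ambient context of each intermediate judgment — in particular remembering that $(u \otimes v)[\textbf{A} \otimes \textbf{g}]$ is a judgment in $\textbf{A} \otimes \textbf{Y}$ rather than $\textbf{X} \otimes \textbf{Y}$ — together with the implicit fact that each intermediate tensor term is well-formed and derivable, which is guaranteed by Theorem \ref{th: tensor product is a theory}. The main, and rather mild, obstacle is thus simply to confirm that the hypotheses of $Sub^1_{t,t}$ and $Sub^2_{t,t}$ are met after each substitution, which is automatic now that those statements hold for all $h$.
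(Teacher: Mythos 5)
Your proof is correct and follows essentially the same route as the paper: factor $\textbf{f} \otimes \textbf{g}$ through one leg of the commuting square of Lemma \ref{lem: tensor product of morphisms} and apply $Sub^1_{t,t}$ and $Sub^2_{t,t}$ in sequence. The paper happens to use the composite $(\textbf{f} \otimes \textbf{B}) \circ (\textbf{X} \otimes \textbf{g})$ — the ``symmetric route'' you mention at the end — but the two legs are interchangeable and the argument is the same.
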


\begin{proof}
By Lemma \ref{lem: tensor product of morphisms}, $Sub^1$ and $Sub^2$, we have the following derivable equalities in context $\textbf{X} \otimes \textbf{Y}$:
\begin{align*}
	(u \otimes v)[\textbf{f} \otimes \textbf{g}] & \equiv (u \otimes v)[(\textbf{f} \otimes \textbf{B}) \circ (\textbf{X} \otimes \textbf{g})]\\
	& \equiv (u \otimes v)[\textbf{f} \otimes \textbf{B}][\textbf{X} \otimes \textbf{g}]\\
	& \equiv (u[\textbf{f}] \otimes v)[\textbf{X} \otimes \textbf{g}]\\
	& \equiv u[\textbf{f}] \otimes v[\textbf{g}].
\end{align*}
\end{proof}

\begin{construction}
\label{const: universal functor}
We define a functor $\otimes_{\bbA,\bbB}:\mathcal C(\bbA) \times \mathcal C(\bbB) \rightarrow \mathcal C(\bbA \otimes \bbB)$, also denoted by $\otimes$ by abuse of notation, as follows:
\begin{itemize}
	\item Given derivable equalities $\textbf{X} \equiv \textbf{X}' \ctx$ in $\bbA$ and $\textbf{Y} \equiv \textbf{Y}' \ctx$ in $\bbB$, from $Conteq^1$ and $Conteq^2$ we obtain
	$$
	\textbf{X} \otimes \textbf{Y} \equiv \textbf{X}' \otimes \textbf{Y} \equiv \textbf{X}' \otimes \textbf{Y}'.
	$$
	We then let $[\textbf{X}] \otimes [\textbf{Y}] = [\textbf{X} \otimes \textbf{Y}]$.
	
	\item Given derivable equalities $\textbf{f} \otimes \textbf{f}':\textbf{X} \rightarrow \textbf{A}$ in $\bbA$ and $\textbf{g} \equiv \textbf{g}':\textbf{Y} \rightarrow \textbf{B}$ in $\bbB$, we obtain an equality
	$$
	\textbf{f} \otimes \textbf{g} \equiv (\textbf{A} \otimes \textbf{g}) \circ (\textbf{f} \otimes \textbf{Y}) \equiv (\textbf{A} \otimes \textbf{g}') \circ (\textbf{f}' \otimes \textbf{Y}) \equiv \textbf{f}' \otimes \textbf{g}
	$$
	between morphisms from $\textbf{X} \otimes \textbf{Y}$ to $\textbf{A} \otimes \textbf{B}$ by using $Moreq^1$, $Moreq^2$ and Proposition \ref{lem: tensor product of morphisms}.
	
	We let $[\textbf{f}] \otimes [\textbf{g}]:[\textbf{X}] \otimes [\textbf{Y}] \rightarrow [\textbf{A}] \otimes [\textbf{B}]$ be $[\textbf{f} \otimes \textbf{g}]$.
	
	\item For functionality, note firstly that the equality $(x_1, ..., x_m) \otimes (y_1, ..., y_n) = (x_1y_1, ..., x_my_n)$ yields $id_{[\textbf{X}]} \otimes id_{[\textbf{Y}]} = id_{[\textbf{X} \otimes \textbf{Y}]} = id_{[\textbf{X}] \otimes [\textbf{Y}]}$. On the other hand, preservation of composition follows from Proposition \ref{prop: two-sided substitution (morphism version)}.
\end{itemize}
\end{construction}

\begin{remark}
\label{rem: functoriality particular case}
Suppose that we extend $\bbA$ and $\bbB$ to theories $\bbA'$ and $\bbB'$, respectively, by adding a set of axioms (including, possibly, new symbols). It can be checked by induction on the height that all the operations of the form $- \otimes -$ between structures in $\bbA$ and $\bbB$ (such as tensoring two judgments, contexts, or context morphisms) are identical to the restrictions to structures in $\bbA$ and $\bbB$ of the corresponding operations for $\bbA'$ and $\bbB'$. This implies a weak form of naturality of the functors given by Construction \ref{const: universal functor}: the diagram
\[
\widesqua{\mathcal C(\bbA) \times \mathcal C(\bbB)}{\mathcal C(\bbA') \times \mathcal C(\bbB')}{\mathcal C(\bbA \otimes \bbB)}{\mathcal C(\bbA' \otimes \bbB')}{\mathcal C(I) \times \mathcal C(I)}{\mathcal C(I)}{\otimes_{\bbA,\bbB}}{\otimes_{\bbA',\bbB'}}
\]
commutes where $I$ is, in each case, the identity interpretation.
\end{remark}

\begin{remark}
\label{rem: towards functoriality}
Consider the following statement: given morphisms of theories $F:\bbA \rightarrow \bbA'$ and $G:\bbB \rightarrow \bbB'$, there exists a unique contextual functor $H:\mathcal C(\bbA \otimes \bbB) \rightarrow \mathcal C(\bbA' \otimes \bbB')$ such that
\[
\widesqua{\mathcal C(\bbA) \times \mathcal C(\bbB)}{\mathcal C(\bbA') \times \mathcal C(\bbB')}{\mathcal C(\bbA \otimes \bbB)}{\mathcal C(\bbA' \otimes \bbB')}{\mathcal C(F) \times \mathcal C(G)}{H}{\otimes_{\bbA,\bbB}}{\otimes_{\bbA',\bbB'}}
\]
commutes. This would immediately turn the tensor product of theories into a functor $\otimes:\GAT \times \GAT \rightarrow \GAT$. However, giving a purely syntactic proof of the above statement seems to be a laborious task due to the recursive nature of morphisms of \textsc{gat}s (as equivalence classes of interpretations) is; compare, for example, with the definition of a contextual functor. We will come back to this discussion in \cite{Alm26}.
\end{remark}

\section{Towards associativity}

In this section we outline a description of an isomorphism $(\bbA \otimes \bbB) \otimes \bbC \cong \bbA \otimes (\bbB \otimes \bbC)$ for arbitrary \textsc{gat}s $\bbA$, $\bbB$, $\bbC$. (But we only see this as associativity in a restricted sense; see Remark \ref{rem: no monoidal structure now}.)

To start, observe that the tensor product of sort-and-term alphabets from Definition \ref{def: tensor product of alphabets} is associative. Given $\Sigma_1$, $\Sigma_2$ and $\Sigma_3$, the alphabet $(\Sigma_1 \otimes \Sigma_2) \otimes \Sigma_3$ has a set of variables $(\Sigma_1^{\text{var}} \times \Sigma_2^{\text{var}}) \times \Sigma_3^{\text{var}}$, a set of sort symbols $(\Sigma_1^{\text{sort}} \times \Sigma_2^{\text{sort}}) \times \Sigma_3^{\text{sort}}$, and a set of term symbols $(\Sigma_1 \otimes \Sigma_2)^{\text{sort}} \times \Sigma_3^{\text{term}}\;\; \sqcup \;\; (\Sigma_1 \otimes \Sigma_2)^{\text{term}} \times \Sigma_3^{\text{sort}}$, which equals
$$
(\Sigma_1^{\text{sort}} \times \Sigma_2^{\text{sort}}) \times \Sigma_3^{\text{term}} \;\; \sqcup \;\; (\Sigma_1^{\text{sort}} \times \Sigma_2^{\text{term}} \sqcup \Sigma_1^{\text{term}} \times \Sigma_2^{\text{sort}}) \times \Sigma_3^{\text{term}}.
$$
Similarly, $\Sigma_1 \otimes (\Sigma_2 \otimes \Sigma_3)$ has a set of variables $\Sigma_1^{\text{var}} \times (\Sigma_2^{\text{var}} \times \Sigma_3^{\text{var}})$, a set of sort symbols $\Sigma_1^{\text{sort}} \times (\Sigma_2^{\text{sort}} \times \Sigma_3^{\text{sort}})$, and a sort of term symbols $\Sigma_1^{\text{sort}} \times (\Sigma_2^{\text{sort}} \times \Sigma_3^{\text{term}} \sqcup\Sigma_2^{\text{term}} \times \Sigma_3^{\text{sort}}) \;\;\sqcup\;\; \Sigma_1^{\text{term}} \times (\Sigma_2^{\text{sort}} \times \Sigma_3^{\text{sort}})$. Each of the three sets in the first case is canonically isomorphic to the corresponding set in the second one; hence we will identify both alphabets with the alphabet $\Sigma_1 \otimes \Sigma_2 \otimes \Sigma_3$ defined as having a set of variables $\Sigma_1^{\text{var}} \times \Sigma_2^{\text{var}} \times \Sigma_3^{\text{var}}$, a set of sort symbols $\Sigma_1^{\text{sort}} \times \Sigma_2^{\text{sort}} \times \Sigma_3^{\text{sort}}$, and a set of term symbols
$$
(\Sigma_1^{\text{sort}} \times \Sigma_2^{\text{sort}} \times \Sigma_3^{\text{term}}) \sqcup (\Sigma_1^{\text{sort}} \times \Sigma_2^{\text{term}} \times \Sigma_3^{\text{sort}}) \sqcup (\Sigma_1^{\text{term}} \times \Sigma_2^{\text{sort}} \times \Sigma_3^{\text{sort}}).
$$
We will denote a triple in one of these sets by concatenating the entries; for example, given $R \in \Sigma_1^{\text{sort}}$, $S \in \Sigma_2^{\text{sort}}$ and $t \in \Sigma_3^{\text{term}}$, we write $RSt$ for $(R,S,t)$.

For theories $\bbA$, $\bbB$ and $\bbC$, we will abuse notation and view $(\bbA \otimes \bbB) \otimes \bbC$ and $\bbA \otimes (\bbB \otimes \bbC)$ as having the same alphabet, namely, $\Sigma(\bbA) \otimes \Sigma(\bbB) \otimes \Sigma(\bbC)$. More precisely, we will also write $(\bbA \otimes \bbB) \otimes \bbC$ and $\bbA \otimes (\bbB \otimes \bbC)$ for the theories in the alphabet $\Sigma(\bbA) \otimes \Sigma(\bbB) \otimes \Sigma(\bbC)$ obtained by translating the respective sets of axioms (hence of derivable judgments) along the canonical bijections between sets of symbols. Under this convention, our goal will be to prove that these two theories have the same derivable judgments.

\subsection{Preliminary notations}

Before discussing the associativity of the tensor product of theories, we introduce a syntactic structure that will be useful for describing the tensor product of a sequence of expressions coming from two or more theories.

\begin{notation}
Let $\bbA$ be a theory. For a derivable judgment $\textbf{X} \vdashcustom U \tp$, where $U = T(\alpha_1, ..., \alpha_k)$, and a variable $x$ not occurring in $\textbf{X}$, we let $U\{x\}$ be the pair $(T(\alpha_1, ..., \alpha_k,x), \; \textbf{X})$. We view $\textbf{X}$ as a label on the expression $T(\alpha_1, ..., \alpha_k,x)$ that, in particular, endows each variable occurring in it with a sort: the variables in $\alpha_1$, ..., $\alpha_k$ have the sorts specified by $\textbf{X}$, and $x$ has sort $U$.

Similarly, for derivable $\textbf{X} \vdashcustom u:U$ we write $U\{u\}$ for $(T(\alpha_1, ..., \alpha_k,u), \; \textbf{X})$.

In either case, we omit the context $\textbf{X}$ from the notation when it is implicit.

Note that $U$ can be recovered from $U\{x\} = T(\alpha_1, ..., \alpha_k,x)$ or $U\{u\} = T(\alpha_1, ..., \alpha_k,u)$ by removing the last entry between parentheses. However, it will be useful for performing calculations to denote $U\{x\}$ (resp. $U\{u\}$) by $A(\alpha_1, ..., \alpha_k,x^U)$ (resp. $A(\alpha_1, ..., \alpha_k,u^U)$).
\end{notation}

The usefulness of this notation comes from the fact that, in many cases, such expressions can be naturally used to describe the operations introduced in \S\ref{sec: tensor product of generalized algebraic theories}. To understand that, we introduce a further set of notations.

\vspace{0.5em}

Let $\bbA$ and $\bbB$ be theories, and consider expressions $U\{u\}$, $V\{v\}$ of one of the two forms described above with respect to $\bbA$, $\bbB$, respectively. That is, we are given
\begin{itemize}
	\item in $\bbA$, either a derivable judgment $\textbf{X} \vdashcustom u:U$, or a derivable judgment $\textbf{X} \vdashcustom U \tp$ such that $u$ is the chosen variable not occurring in $\textbf{X}$; and
	
	\item in $\bbB$, either a derivable judgment $\textbf{Y} \vdashcustom v:V$, or a derivable judgment $\textbf{Y} \vdashcustom V \tp$ such that $v$ is the chosen variable not in $\textbf{Y}$.
\end{itemize}
Write $U = R(\alpha_1, ..., \alpha_M)$ and $V = S(\beta_1, ..., \beta_N)$. Then $U\{u\} = R(\alpha_1, ..., \alpha_m,u^U)$ and $V\{v\} = S(\beta_1, ..., \beta_n,v^V)$, and we define their ``box product" as
$$
U\{u\} \btimes V\{v\} := RS((\alpha_1, ..., \alpha_m,u^U) \otimes (\beta_1, ..., \beta_N,v^V)) =
RS
\begin{pmatrix}
\alpha_1 \otimes \beta_1 & \cdots & \alpha_1 \otimes \beta_N & \alpha_1 \otimes v^V\\
\vdots & \ddots & \vdots & \vdots\\
\alpha_M \otimes \beta_1 & \cdots & \alpha_M \otimes \beta_N & \alpha_M \otimes v^V\\
u^U \otimes \beta_1 & \cdots & u^U \otimes \beta_N & u^U \otimes v^V
\end{pmatrix}.
$$
We also consider the following variants, where we replace some instances of $\otimes$ by $\varotimes$:
\begin{align*}
	U\{u\} \varbtimes V\{v\} :=
	RS
	\begin{pmatrix}
		\alpha_1 \otimes \beta_1 & \cdots & \alpha_1 \otimes \beta_N & \alpha_1 \varotimes v^V\\
		\vdots & \ddots & \vdots & \vdots\\
		\alpha_M \otimes \beta_1 & \cdots & \alpha_M \otimes \beta_N & \alpha_M \varotimes v^V\\
		u^U \otimes \beta_1 & \cdots & u^U \otimes \beta_N & u^U \varotimes v^V
	\end{pmatrix},\\
	&\\
	U\{u\} \uphalfsquare V\{v\} :=
	RS
	\begin{pmatrix}
		\alpha_1 \otimes \beta_1 & \cdots & \alpha_1 \otimes \beta_N & \alpha_1 \varotimes v^V\\
		\vdots & \ddots & \vdots & \vdots\\
		\alpha_M \otimes \beta_1 & \cdots & \alpha_M \otimes \beta_N & \alpha_M \varotimes v^V\\
		u^U \otimes \beta_1 & \cdots & u^U \otimes \beta_N & u^U \otimes v^V
	\end{pmatrix}.
\end{align*}

Similarly, if $u$ is not a variable, say $u = s(a_1, ..., a_m)$, we let
\begin{align*}
	u \btimes V\{v\} & := rS((a_1, ..., a_m) \otimes (\beta_1, ..., \beta_N,v^V)) = rS
	\begin{pmatrix}
		a_1 \otimes \beta_1 & \cdots & a_1 \otimes \beta_N & a_1 \otimes v^V\\
		\vdots & \ddots & \vdots & \vdots\\
		a_m \otimes \beta_1 & \cdots & a_m \otimes \beta_N & a_m \otimes v^V
	\end{pmatrix},\\
	&\\
	u \varbtimes V\{v\} & := rS((a_1, ..., a_m) \varotimes (\beta_1, ..., \beta_N,v^V)) := rS
	\begin{pmatrix}
		a_1 \otimes \beta_1 & \cdots & a_1 \otimes \beta_N & a_1 \varotimes v^V\\
		\vdots & \ddots & \vdots & \vdots\\
		a_m \otimes \beta_1 & \cdots & a_m \otimes \beta_N & a_m \varotimes v^V
	\end{pmatrix},
\end{align*}

and when $v$ is not a variable, say $v = s(b_1, ..., b_n)$, we let

\begin{align*}
	U\{u\} \btimes v & := Rs((\alpha_1, ..., \alpha_m,u^U) \otimes (b_1, ..., b_n)) = Rs
	\begin{pmatrix}
		\alpha_1 \otimes b_1 & \cdots & \alpha_1 \otimes b_n\\
		\vdots & \ddots & \vdots\\
		\alpha_M \otimes b_1 & \cdots & \alpha_M \otimes b_n\\
		u^U \otimes b_1 & \cdots & u^U \otimes b_n
	\end{pmatrix},\\
	&\\
	U\{u\} \varbtimes v & := Rs((\alpha_1, ..., \alpha_m,u^U) \varotimes (b_1, ..., b_n)) := Rs
	\begin{pmatrix}
		\alpha_1 \otimes b_1 & \cdots & \alpha_1 \varotimes b_n\\
		\vdots & \ddots & \vdots\\
		\alpha_M \otimes b_1 & \cdots & \alpha_M \varotimes b_n\\
		u^U \otimes b_1 & \cdots & u^U \varotimes b_n
	\end{pmatrix}.
\end{align*}

These expressions allow us to present tensor products between sorts and terms in a way that can be easily iterated. For example, if $u$ and $v$ are not variables, then
$$
u^U \otimes v^V = U\{u\} \btimes v, \qquad\qquad u^U \varotimes v^V = u \varbtimes V\{v\},
$$
and the tensor product of $\textbf{X} \vdashcustom u:U$ and $\textbf{Y} \vdashcustom v:V$ equals
$$
\textbf{X} \otimes \textbf{Y} \vdashcustom U\{u\} \btimes v \equiv u \varbtimes V\{v\}: \partial(U\{u\} \uphalfsquare V\{v\}).
$$

\begin{remark}
Denoting by $K$ the sort $\partial(U\{u\} \uphalfsquare V\{v\}) = \partial(U\{u\} \varbtimes V\{v\})$ in the above judgment, we have
$$
K\{u^U \otimes v^V\} = U\{u\} \uphalfsquare V\{v\}, \qquad\qquad K\{u^U \varotimes v^V\} = U\{u\} \varbtimes V\{v\}.
$$
This exemplifies a general compatibility, which will be the key for verifying associativity of the tensor product of theories, between the box product and the tensor product of judgments.
\end{remark}

Let us use the above formalism to describe other combinations of judgments.

The tensor product of $\textbf{X} \vdashcustom U \tp$ and $\textbf{Y} \vdashcustom V \tp$ is
$$
\partial(\textbf{X}' \otimes \textbf{Y}') \vdashcustom U \otimes V \tp
$$
where $\textbf{X}' = (\textbf{X}, x:U)$, $\textbf{Y}' = (\textbf{Y}, y:V)$, and
$$
U \otimes V = RS
\begin{pmatrix}
	\alpha_1 \otimes \beta_1 & \cdots & \alpha_1 \otimes \beta_N & \alpha_1 \otimes y^V\\
	\vdots & \ddots & \vdots & \vdots\\
	\alpha_M \otimes \beta_1 & \cdots & \alpha_M \otimes \beta_N & \alpha_M \otimes y^V\\
	x^U \otimes \beta_1 & \cdots & x^U \otimes \beta_N & -
\end{pmatrix}
= \partial(U\{x\} \btimes V\{y\}).
$$

This implies that $(U \otimes V)\{xy\} = U\{x\} \btimes V\{y\}$.

Similarly, it can be checked that the tensor product of $\textbf{X} \vdashcustom u:U$ and $\textbf{Y} \vdashcustom V \tp$ is
$$
\textbf{X} \otimes \textbf{Y}' \vdashcustom u \btimes V\{y\}: \partial(U\{u\} \btimes V\{y\}),
$$
and that of $\textbf{X} \vdashcustom U \tp$ and $\textbf{Y} \vdashcustom v:V$ is
$$
\textbf{X}' \otimes \textbf{Y} \vdashcustom U\{x\} \btimes v: \partial(U\{x\} \varbtimes V\{v\}).
$$

These formulas can be extended in a straightforward way to ones for describing $J \odot J'$ when $J$ or $J'$ is an equality judgment.

\vspace{0.5em}

We now state the main result of this section:

\begin{theorem}
Let $\bbA$, $\bbB$ and $\bbC$ be generalized algebraic theories. Then $(\bbA \otimes \bbB) \otimes \bbC$ and $\bbA \otimes (\bbB \otimes \bbC)$, both viewed over the alphabet $\Sigma(\bbA) \otimes \Sigma(\bbB) \otimes \Sigma(\bbC)$, have the same derivable judgments. As a consequence, we obtain an isomorphism $(\bbA \otimes \bbB) \otimes \bbC \cong \bbA \otimes (\bbB \otimes \bbC)$.
\end{theorem}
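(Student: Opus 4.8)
The goal is to show that the two theories $(\bbA \otimes \bbB) \otimes \bbC$ and $\bbA \otimes (\bbB \otimes \bbC)$, which by the identification of alphabets both live over $\Sigma(\bbA) \otimes \Sigma(\bbB) \otimes \Sigma(\bbC)$, have the same derivable judgments. Since by Definition \ref{def: tensor product of theories} a theory is determined (as a pretheory) by its axioms, and since derivability is determined by the axioms together with the inference rules, it suffices to show that the two pretheories have the same \emph{axioms}. The axioms of $(\bbA \otimes \bbB) \otimes \bbC$ are the judgments $(J \odot J') \odot J''$ where $J$, $J'$, $J''$ are axioms in $\bbA$, $\bbB$, $\bbC$ respectively (and the inner/outer $\odot$ is defined); those of $\bbA \otimes (\bbB \otimes \bbC)$ are the $J \odot (J' \odot J'')$. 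So the heart of the matter is a triple-associativity statement at the level of the operation $\odot$: that $(J \odot J') \odot J''$ and $J \odot (J' \odot J'')$ coincide (after the canonical identification of symbols) whenever both are defined, and that they are defined in precisely the same situations. The pattern of definedness is governed by Table \ref{table: 1}, and one first checks that composing the table with itself in the two orders yields the same set of defined triples (e.g. a triple of term-equality judgments is excluded either way, a triple where at most one factor is an equality is always defined, etc.).

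**The key technical step** is to prove an associativity law for the underlying expression-level operations $\otimes_t$, $\varotimes_t$, and $\otimes_s$. This is exactly what the box-product formalism $U\{u\} \btimes V\{v\}$, $\varbtimes$, $\uphalfsquare$ was introduced to handle: I would first establish that the box product is associative, i.e. that for expressions-with-labels coming from $\bbA$, $\bbB$, $\bbC$ one has
$$
(U\{u\} \btimes V\{v\}) \btimes W\{w\} = U\{u\} \btimes (V\{v\} \btimes W\{w\})
$$
and likewise for the $\varbtimes$ and $\uphalfsquare$ variants, once both sides are unwound into a common three-dimensional (rather than matrix) array indexed lexicographically over $\{1,\dots,M\}\times\{1,\dots,N\}\times\{1,\dots,P\}$. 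The crucial bookkeeping is that the lexicographic reindexing $\{1,\dots,MNP\}\cong\{1,\dots,M\}\times\{1,\dots,N\}\times\{1,\dots,P\}$ is associative, so that grouping the first two coordinates or the last two coordinates produces the same underlying family of entries. The Remark preceding the theorem records the compatibility $K\{u^U \otimes v^V\} = U\{u\} \uphalfsquare V\{v\}$ (with $K = \partial(U\{u\}\uphalfsquare V\{v\})$), and it is precisely this compatibility that lets the box product be iterated: tensoring a \emph{pair} and then tensoring the result against a third factor can be re-expressed, via such identities, as a single threefold box product, independent of the order of association.

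**From the expression-level associativity I would then climb back up to judgments.** For each of the admissible cells of the composed table (sort--sort--sort, sort--sort--term, sort--term--term, term--term--term, and the cells involving a single equality), I would verify that the explicit description of $(J \odot J') \odot J''$ given by the formulas in \S\ref{subsec: tensor product of judgments} matches that of $J \odot (J' \odot J'')$. The contexts agree because the tensor precontext construction (\texttt{*}) in \S\ref{sec: tensor product of generalized algebraic theories} is itself a lexicographic reindexing and hence associative; the subjects and sorts agree by the box-product associativity just established. The most delicate cells are the term--term--term case and the cells where exactly one factor is an equality, since there the output is a term-equality judgment and one must match \emph{both} sides of the $\equiv$ as well as the sort, using the interplay between $\otimes$ and $\varotimes$ encoded in $\uphalfsquare$ and $\varbtimes$. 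Granting the axiom-level equality, the equality of the full sets of derivable judgments is immediate (same axioms, same inference rules), and the isomorphism $(\bbA \otimes \bbB) \otimes \bbC \cong \bbA \otimes (\bbB \otimes \bbC)$ is then induced by the canonical bijection of alphabets, which is a morphism of theories in both directions.

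**The main obstacle** I anticipate is not any single identity but the combinatorial management of the threefold case analysis combined with the substitution operations ($\lozenge$), ($\blacklozenge$) hidden inside $\otimes_t$ and $\varotimes_t$ when the innermost factors are not variables. Because $u^U \otimes v^V$ is defined by a simultaneous substitution into $x^U \otimes v^V$, iterating the construction threefold requires showing that two nested families of substitutions commute and can be amalgamated into one, which is where the ``does not depend on the choice of the fresh variable'' remarks and Lemma \ref{lem: formulas for tensor products of terms} must be invoked carefully to keep the fresh variables of the three stages disjoint and correctly threaded. I would isolate this as a substitution-commutation lemma proved by induction on the product of the three heights $\Ht(J)\Ht(J')\Ht(J'')$, mirroring the height-based induction used throughout Sections 4 and 5, so that the associativity of the box product can be invoked at strictly smaller complexity in the inductive step.
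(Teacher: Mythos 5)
There is a genuine gap at the very first step of your plan. You reduce the theorem to showing that the two pretheories have \emph{the same axioms}, i.e.\ that $(J \odot J') \odot J''$ and $J \odot (J' \odot J'')$ coincide syntactically after the identification of alphabets, and you propose to prove this via a strict associativity law for the box product. That strict law is false. The operations $\otimes_t$ and $\varotimes_t$ are genuinely asymmetric (formula ($\lozenge$) substitutes into $x^U \otimes v^V$, formula ($\blacklozenge$) into $u^U \otimes y^V$), and iterating them in the two orders produces expressions that differ not only in their substitution structure but even in their outermost symbol: for instance $(u^U \varotimes v^V) \otimes w^W$ has head symbol $RSt$ while $u^U \varotimes (v^V \otimes w^W)$ has head symbol $rST$, so no amount of reindexing of lexicographic orders can make them literally equal. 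The same phenomenon already contaminates the contexts: $(X_i \otimes Y_j) \otimes Z_k$ and $X_i \otimes (Y_j \otimes Z_k)$ need not be the same sort expression. Consequently the axioms of $(\bbA \otimes \bbB) \otimes \bbC$ are in general \emph{not} axioms of $\bbA \otimes (\bbB \otimes \bbC)$, and your concluding step ``same axioms, same inference rules, hence same derivable judgments'' does not apply.

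The correct target, and the one the paper proves, is associativity \emph{up to provable equality}: one shows that $(J \odot J') \odot J''$ is \emph{derivable in} $\bbA \otimes (\bbB \otimes \bbC)$ and has the same interpretation in $\mathcal C(\bbA \otimes (\bbB \otimes \bbC))$ as $J \odot (J' \odot J'')$ (statements (A1)--(A3)). This forces the argument to live inside the theory rather than at the level of raw strings: the eight parenthesizations of $\otimes_t/\varotimes_t$ are related by derivable term equalities, and the key bridge between the $RSt$-headed and $rST$-headed expressions is precisely the family of term-equality axioms $\varphi \otimes \psi \equiv \varphi \varotimes \psi$ of the tensor product, together with the substitution statements $Sub^i_{*,*}$ from Section~\ref{sec: consequences h-derivability}; the context comparison (A2) is likewise a derivable context equality obtained from a limit description of $\partial((\textbf{X}\otimes\textbf{Y})\otimes\textbf{Z})$ rather than an identity of precontexts. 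Your closing suggestion of an induction on $\Ht(J)\Ht(J')\Ht(J'')$ is the right engine, but it must drive a simultaneous induction on derivability-plus-provable-equality, not on a syntactic identity; as stated, the identity you would be trying to prove in the inductive step is already false in the base cases where some factor is a non-variable term.
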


We will sketch a proof that if $J$, $J'$ and $J''$ are derivable judgments in $\bbA$, $\bbB$ and $\bbC$, respectively, then $(J \odot J') \odot J''$ is derivable in $\bbA \otimes (\bbB \otimes \bbC)$. By considering axioms $J$, $J'$ and $J''$, we conclude that every axiom in $(\bbA \otimes \bbB) \otimes \bbC$ is derivable in $\bbA \otimes (\bbB \otimes \bbC)$, hence that every judgment derivable in $(\bbA \otimes \bbB) \otimes \bbC$ is also derivable in $\bbA \otimes (\bbB \otimes \bbC)$. It can be verified analogously that, conversely, if a judgment is derivable in $\bbA \otimes (\bbB \otimes \bbC)$, then it is derivable in $(\bbA \otimes \bbB) \otimes \bbC$.

\vspace{0.5em}

Let us fix some terminology: in what follows, by \emph{derivable} or \emph{provably equal} we mean so in $\bbA \otimes (\bbB \otimes \bbC)$ whenever we are dealing with judgments or contexts/expressions in $\Sigma(\bbA) \otimes \Sigma(\bbB) \otimes \Sigma(\bbC)$. We also use the following convention: given derivable judgments $J$, $J'$ in the alphabet of $\bbA \otimes (\bbB \otimes \bbC)$, we write $J \sim J'$ when $J$, $J'$ have the same interpretation in the contextual category $\mathcal C = \mathcal C(\bbA \otimes (\bbB \otimes \bbC))$. Precisely, one of the following holds:
\begin{itemize}
	\item[--] $J$, $J'$ are sort judgments that correspond to the same display map in $\mathcal C$.
	
	\item[--] $J$, $J'$ are term judgments that correspond to the same section of a display map in $\mathcal C$.
	
	\item[--] $J$, $J'$ are sort equality judgments, say $\textbf{X} \vdashcustom U \equiv U' \tp$ and $\textbf{Y} \vdashcustom V \equiv V' \tp$, respectively, such that $(\textbf{X} \vdashcustom U \tp) \sim (\textbf{Y} \vdashcustom V \tp)$ and $(\textbf{X} \vdashcustom U' \tp) \sim (\textbf{Y} \vdashcustom V' \tp)$.
	
	\item[--] $J$, $J'$ are term equality judgments, say $\textbf{X} \vdashcustom u \equiv u':U$ and $\textbf{Y} \vdashcustom v \equiv v':V$, respectively, such that $(\textbf{X} \vdashcustom u:U) \sim (\textbf{Y} \vdashcustom v:V)$ and $(\textbf{X} \vdashcustom u':U) \sim (\textbf{Y} \vdashcustom v':V)$.
\end{itemize}

In other words, $J = (\textbf{X} \vdashcustom \blacksquare$) is derivable and $J'$ can be obtained from $J$ by a combination of the following kinds of operations: replacing the context $\textbf{X}$ by a provably equal one, and replacing a sort/term expression in $\blacksquare$ by a provably equal one.

\vspace{0.5em}

We check by induction on $h \ge 0$ that the following three statements hold:

\begin{enumerate}[label=(A\arabic*)]
	\item If $\Ht(J)\Ht(J')\Ht(J'') = h$ and $(J \odot J') \odot J''$ is derivable, then so is $J \odot (J' \odot J'')$, and $(J \odot J') \odot J'' \Rightarrow J \odot (J' \odot J'')$.
	
	\item Given contexts $\textbf{X}$, $\textbf{Y}$, $\textbf{Z}$ in $\bbA$, $\bbB$, $\bbC$, resp., such that $\Ht(\textbf{X})\Ht(\textbf{Y})\Ht(\textbf{Z}) = h$, the judgment
	$$
	(\textbf{X} \otimes \textbf{Y}) \otimes \textbf{Z} \equiv \textbf{X} \otimes (\textbf{Y} \otimes \textbf{Z}) \ctx
	$$
	is derivable.
	
	\item Consider derivable judgments $J$, $J'$, $J''$ in $\bbA$, $\bbB$, $\bbC$, respectively, each of which is a sort judgment or a term judgment, and such that $\Ht(J)\Ht(J')\Ht(J'') = h$. Let $\textbf{K}$ be the context of the judgment $(J \odot J') \odot J''$. Then, in the notation of \ref{subsec: expressions from pairs of judgments}, the following terms are derivable and provably equal to each other in $\textbf{K}$:
	\begin{align*}
		(J \otimes_t J') \otimes_t J'', &\qquad J \otimes_t (J' \otimes_t J''),\\
		(J \otimes_t J') \varotimes_t J'', &\qquad J \otimes_t (J' \varotimes_t J''),\\
		(J \varotimes_t J') \otimes_t J'', &\qquad J \varotimes_t (J' \otimes_t J''),\\
		(J \varotimes_t J') \varotimes_t J'', &\qquad J \varotimes_t (J' \varotimes_t J'').
	\end{align*}
\end{enumerate}

Observe that, as we are working in $(\bbA \otimes \bbB) \otimes \bbC$, by using term equalities $\varphi \otimes \psi \equiv \varphi \varotimes \psi$ obtained by tensoring two term judgments we can derive equalities between the four expressions in the left column:
$$
(J \otimes_t J') \otimes_t J'' \equiv (J \otimes_t J') \varotimes_t J'' \equiv (J \varotimes_t J') \varotimes_t J'' \equiv (J \varotimes_t J') \otimes_t J''.
$$

Before proceeding, let us make the statement more explicit. If $J = (\textbf{X} \vdashcustom u:U)$, $J' = (\textbf{Y} \vdashcustom v:V)$ and $J'' = (\textbf{Z} \vdashcustom w:W)$, then the above term expressions are
\begin{align*}
(u^U \otimes v^V) \otimes w^W, &\qquad u^U \otimes (v^V \otimes w^W),\\
(u^U \otimes v^V) \varotimes w^W, &\qquad u^U \otimes (v^V \varotimes w^W),\\
(u^U \varotimes v^V) \otimes w^W, &\qquad u^U \varotimes (v^V \otimes w^W),\\
(u^U \varotimes v^V) \varotimes w^W, &\qquad u^U \varotimes (v^V \varotimes w^W),
\end{align*}
and we are claiming their derivability and equality in context $(\textbf{X} \otimes \textbf{Y}) \otimes \textbf{Z}$. When some among $J$, $J'$, $J''$ are, instead, sort judgments, we suitably expand the context and modify the above expressions by using fresh variables in place of $u$, $v$ or $w$. For example, if we replace $J$ by $\textbf{X} \vdashcustom U \tp$, then we consider
\begin{align*}
	(x^U \otimes v^V) \otimes w^W, &\qquad x^U \otimes (v^V \otimes w^W),\\
	(x^U \otimes v^V) \varotimes w^W, &\qquad x^U \otimes (v^V \varotimes w^W),\\
	(x^U \varotimes v^V) \otimes w^W, &\qquad x^U \varotimes (v^V \otimes w^W),\\
	(x^U \varotimes v^V) \varotimes w^W, &\qquad x^U \varotimes (v^V \varotimes w^W)
\end{align*}
in context $(\textbf{X}' \otimes \textbf{Y}) \otimes \textbf{Z}$, where $x$ is the chosen variable not occurring in $\textbf{X}$, and $\textbf{X}' = (\textbf{X},x:U)$.

\vspace{0.5em}

Now, let us sketch how to prove statements (A1)-(A3) for a given $h \ge 0$ assuming that they hold for $h' < h$.

\subsection{Sketch of proofs of (A1) and (A2)}

\label{subsec: proof of (A1)}

Before studying (A1), let us prove a preliminary lemma: given contexts $\textbf{X}$, $\textbf{Y}$, $\textbf{Z}$ in $\bbA$, $\bbB$, $\bbC$, resp. such that $\Ht(\textbf{X})\Ht(\textbf{Y})\Ht(\textbf{Z}) \le h$, the judgment $\partial ((\textbf{X} \otimes \textbf{Y}) \otimes \textbf{Z}) \equiv \partial (\textbf{X} \otimes (\textbf{Y} \otimes \textbf{Z})) \ctx$ is derivable.

We start by observing that (the object of $\mathcal C((\bbA \otimes \bbB) \otimes \bbC)$ corresponding to) $\partial((\textbf{X} \otimes \textbf{Y}) \otimes \textbf{Z})$ is, in a canonical way, a limit of the diagram of projections
\[\begin{tikzcd}
	{(\partial\textbf{X} \otimes \textbf{Y}) \otimes\textbf{Z}} && {(\textbf{X} \otimes \partial\textbf{Y}) \otimes\textbf{Z}} && {(\textbf{X} \otimes \textbf{Y}) \otimes \partial\textbf{Z}} \\
	& {(\partial\textbf{X} \otimes \partial\textbf{Y}) \otimes\textbf{Z}} && {(\textbf{X} \otimes \partial\textbf{Y}) \otimes \partial\textbf{Z}} \\
	&& {(\partial\textbf{X} \otimes \partial\textbf{Y}) \otimes\partial\textbf{Z},}
	\arrow[from=1-1, to=2-2]
	\arrow[from=1-3, to=2-2]
	\arrow[from=1-3, to=2-4]
	\arrow[from=1-5, to=2-4]
	\arrow[from=2-2, to=3-3]
	\arrow[from=2-4, to=3-3]
\end{tikzcd}\]
i.e. where each arrow is dual to the corresponding inclusion between sets of variables -- for example, $(\partial\textbf{X} \otimes \textbf{Y}) \otimes \textbf{Z} \rightarrow (\partial\textbf{X} \otimes \partial \textbf{Y}) \otimes \textbf{Z}$ is given by lexicographically reindexing the family $(x_iy_jz_k)_{(i,j,k) \in \{1,...,m-1\} \times \{1, ..., n-1\} \times \{1, ..., p\}}$ whose entries are taken from the set of variables of $(\partial \textbf{X} \otimes \textbf{Y}) \otimes \textbf{Z}$.

Since $\Ht(\partial \textbf{X}) < \Ht(\textbf{X})$, $\Ht(\partial \textbf{Y}) < \Ht(\textbf{Y})$ and $\Ht(\partial\textbf{Z}) < \Ht(\textbf{Z})$, it follows from the induction hypothesis that each context in the above diagram is provably equal to the one having the same three factors but associated differently -- i.e. $(\partial \textbf{X} \otimes \textbf{Y}) \otimes \textbf{Z} \equiv \partial \textbf{X} \otimes (\textbf{Y} \otimes \textbf{Z}) \ctx$, etc. are derivable. Hence $\partial(\textbf{X} \otimes (\textbf{Y} \otimes \textbf{Z}))$ is also, canonically, a limit of the diagram. This implies that the sequence of variables $\partial(((x_1, ..., x_m) \otimes (y_1, ..., y_n)) \otimes (z_1, ..., z_p))$ is a morphism from $\partial((\textbf{X} \otimes \textbf{Y}) \otimes \textbf{Z})$ to $\partial(\textbf{X} \otimes (\textbf{Y} \otimes \textbf{Z}))$, thus that $\partial((\textbf{X} \otimes \textbf{Y}) \otimes \textbf{Z}) \equiv \partial(\textbf{X} \otimes (\textbf{Y} \otimes \textbf{Z})) \ctx$ is derivable.

\subsubsection{Statement (A1)}

Now, let us sketch a proof of (A1). We have different cases depending on the kinds of judgments being considered. Below, whenever applicable we let $\textbf{X}' = (\textbf{X},x:U)$, $\textbf{Y}' = (\textbf{Y}, y:V)$ and $\textbf{Z}' = (\textbf{Z}, z:W)$ where $x$, $y$, $z$ are the chosen variables not occurring in $\textbf{X}$, $\textbf{Y}$, $\textbf{Z}$, respectively.

\begin{itemize}
	\item $J$ is $\textbf{X} \vdashcustom U \tp$, $J'$ is $\textbf{Y} \vdashcustom V \tp$, and $J''$ is $\textbf{Z} \vdashcustom W \tp$. Then
	\begin{align*}
		 & (J \odot J') \odot J''\\
		 = \quad & (\partial(\textbf{X}' \otimes \textbf{Y}') \vdashcustom U \otimes V \tp) \odot (\textbf{Z} \vdashcustom W \tp)\\
		 = \quad& \partial((\textbf{X}' \otimes \textbf{Y}') \otimes \textbf{Z}') \vdashcustom (U \otimes V) \otimes W \tp\\
		 = \quad&  \partial((\textbf{X}' \otimes \textbf{Y}') \otimes \textbf{Z}') \vdashcustom \partial((U \otimes V)\{xy\} \btimes W\{z\}) \tp\\
		 = \quad&  \partial((\textbf{X}' \otimes \textbf{Y}') \otimes \textbf{Z}') \vdashcustom \partial((U\{x\} \btimes V\{y\}) \btimes W\{z\}) \tp\\
		 \Rightarrow \quad & \partial((\textbf{X}' \otimes \textbf{Y}') \otimes \textbf{Z}') \vdashcustom \partial(U\{x\} \btimes (V\{y\} \btimes W\{z\})) \tp \tag{calculation below}\\
		 \Rightarrow \quad & \partial(\textbf{X}' \otimes (\textbf{Y}' \otimes \textbf{Z}')) \vdashcustom \partial(U\{x\} \btimes (V\{y\} \btimes W\{z\})) \tp \tag{lemma}\\
		 = \quad & \partial(\textbf{X}' \otimes (\textbf{Y}' \otimes \textbf{Z}')) \vdashcustom U \otimes (V \otimes W) \tp\\
		 = \quad & J \odot (J' \odot J'').
	\end{align*}
	
	It remains to verify that $(U\{x\} \btimes V\{y\}) \btimes W\{z\} \equiv U\{x\} \btimes (V\{x\} \btimes W\{z\})$ is entrywise derivable in context $\textbf{K} = \partial((\textbf{X}' \otimes \textbf{Y}') \otimes \textbf{Z}')$. Writing
	$$
	U = R(\alpha_1, ..., \alpha_M), \quad V = S(\beta_1, ..., \beta_N), \quad W = T(\gamma_1, ..., \gamma_P),
	$$
	we must derive
	$$
	RST(((\alpha_1, ..., \alpha_M,x^U) \otimes (\beta_1, ..., \beta_N,y^V)) \otimes (\gamma_1, ..., \gamma_P,z^W))
	$$
	$$
	\overset{\textbf{K}}{\equiv} RST((\alpha_1, ..., \alpha_M,x^U) \otimes ((\beta_1, ..., \beta_N,y^V) \otimes (\gamma_1, ..., \gamma_P,z^W))).
	$$
	This corresponds to deriving in $\textbf{K}$ the following equalities:
	\begin{enumerate}[label=(\arabic*)]
		\item $(\alpha_i \otimes \beta_j) \otimes \gamma_k \equiv \alpha_i \otimes (\beta_j \otimes \gamma_k)$.
		
		Its derivability follows from (A3), via the induction hypothesis, since
		\begin{align*}
			\Ht(\textbf{X} \vdashcustom \alpha_i:\Type(\alpha_i)) & < \Ht(\textbf{X} \vdashcustom U \tp),\\
			\Ht(\textbf{Y} \vdashcustom \beta_j:\Type(\beta_j)) & < \Ht(\textbf{Y} \vdashcustom V \tp),\\
			\Ht(\textbf{Z} \vdashcustom \gamma_k:\Type(\gamma_k)) & < \Ht(\textbf{Z} \vdashcustom W \tp).
		\end{align*}
		(Note that this is stronger than required to use the induction hypothesis: it is sufficient that one of these three inequalities be strict. This remark will be used repeatedly below.)

		\item $(\alpha_i \otimes \beta_j) \otimes z^W \equiv \alpha_i \otimes (\beta_j \otimes z^W)$.

		\item $(\alpha_i \otimes y^V) \otimes \gamma_k \equiv \alpha_i \otimes (y^V \otimes \gamma_k)$.

		\item $(\alpha_i \otimes y^V) \otimes z^W \equiv \alpha_i \otimes (y^V \otimes z^W)$.

		\item $(x^U \otimes \beta_j) \otimes \gamma_k \equiv x^U \otimes (\beta_j \otimes \gamma_k)$.
		
		\item $(x^U \otimes \beta_j) \otimes z^W \equiv x^U \otimes (\beta_j \otimes z^W)$.

		\item $(x^U \otimes y^V) \otimes \gamma_k \equiv x^U \otimes (y^V \otimes \gamma_k)$.
		
		These cases are verified similarly to (1). Here, when applicable, we use $\textbf{X} \vdashcustom U \tp$, $\textbf{Y} \vdashcustom V \tp$, $\textbf{Z} \vdashcustom W \tp$, respectively, instead of $\textbf{X} \vdashcustom \alpha_i:\Type(\alpha_i)$, $\textbf{Y} \vdashcustom \beta_j:\Type(\beta_j)$, $\textbf{Z} \vdashcustom \gamma_k:\Type(\gamma_k)$.

		\item $(x^U \otimes y^V) \otimes z^W \equiv x^U \otimes (y^V \otimes z^W)$.
		
		The left- and right-hand sides equal $xyz$.
	\end{enumerate}

	\item $J$ is $\textbf{X} \vdashcustom U \tp$, $J'$ is $\textbf{Y} \vdashcustom V \tp$, and $J''$ is $\textbf{Z} \vdashcustom w:W$.
	\begin{align*}
		 & (J \odot J') \odot J''\\
		= \quad& (\partial(\textbf{X}' \otimes \textbf{Y}') \vdashcustom U \otimes V \tp) \odot (\textbf{Z} \vdashcustom w:W)\\
		= \quad & (\textbf{X}' \otimes \textbf{Y}') \otimes \textbf{Z} \vdashcustom (U \otimes V)\{xy\} \btimes w: \partial((U \otimes V)\{xy\} \varbtimes W\{w\})\\
		= \quad& (\textbf{X}' \otimes \textbf{Y}') \otimes \textbf{Z} \vdashcustom (U\{x\} \btimes V\{y\}) \btimes w: \partial((U\{x\} \btimes V\{y\}) \varbtimes W\{w\})\\
		\Rightarrow \quad & (\textbf{X}' \otimes \textbf{Y}') \otimes \textbf{Z} \vdashcustom U\{x\} \btimes (V\{y\} \btimes w): \partial(U\{x\} \varbtimes (V\{y\} \varbtimes W\{w\})) \tag{calculation below}\\
		\Rightarrow \quad & \textbf{X}' \otimes (\textbf{Y}' \otimes \textbf{Z}) \vdashcustom U\{x\} \btimes (V\{y\} \btimes w): \partial(U\{x\} \varbtimes (V\{y\} \varbtimes W\{w\})) \tag{A2}\\
		= \quad & (\textbf{X} \vdashcustom U \tp) \odot (\textbf{Y}' \otimes \textbf{Z} \vdashcustom V\{y\} \btimes w: \partial(V\{y\} \varbtimes W\{w\}))\\
		= \quad & J \odot (J' \odot J'').
	\end{align*}
	
	Let us now check that
	\[
	\tag{\texttt{*}}
	(U\{x\} \btimes V\{y\}) \btimes w \equiv U\{x\} \btimes (V\{y\} \btimes w),
	\]
	\[
	\tag{\texttt{**}}
	\partial( (U\{x\} \btimes V\{y\}) \varbtimes W\{w\} ) \equiv \partial( U\{x\} \varbtimes (V\{y\} \varbtimes W\{w\}))
	\]
	are entrywise derivable in $\textbf{K} = \partial((\textbf{X}' \otimes \textbf{Y}') \otimes \textbf{Z})$.
	
	In (\texttt{*}), if $w$ is a variable, then the left- and right-hand sides are both $xyw$. Otherwise, write
	$$
	U = R(\alpha_1, ..., \alpha_M), \quad V = S(\beta_1, ..., \beta_N), \quad w = t(c_1, ..., c_p).
	$$
	Then (\texttt{*}) is
	$$
	RSt( ((\alpha_1, ..., \alpha_M,x^U) \otimes (\beta_1, ..., \beta_N,y^V)) \otimes (c_1, ..., c_p) )
	$$
	$$
	\overset{\textbf{K}}{\equiv} RSt( (\alpha_1, ..., \alpha_M,x^U) \otimes ((\beta_1, ..., \beta_N,y^V) \otimes (c_1, ..., c_p)) ),
	$$
	so it suffices to derive, in context $\textbf{K}$, all equalities of one of the forms
	
	\begin{enumerate}[label=(\arabic*)]
		\item $(\alpha_i \otimes \beta_j) \otimes c_k \equiv \alpha_i \otimes (\beta_j \otimes c_k)$.
		
		\item $(\alpha_i \otimes y^V) \otimes c_k \equiv \alpha_i \otimes (y^V \otimes c_k)$
		
		\item $(x^U \otimes \beta_j) \otimes c_k \equiv x^U \otimes (\beta_j \otimes c_k)$.
		
		\item $(x^U \otimes y^V) \otimes c_k \equiv x^U \otimes (y^V \otimes c_k)$.
	\end{enumerate}
	
	These follow from (A3) since
	\begin{align*}
		 \Ht(\textbf{X} \vdashcustom \alpha_i:\Type(\alpha_i)) & < \Ht(\textbf{X} \vdashcustom U \tp), \\
		 \Ht(\textbf{Y} \vdashcustom \beta_j:\Type(\beta_j)) & < \Ht(\textbf{Y} \vdashcustom V \tp), \\
		 \Ht(\textbf{Z} \vdashcustom c_k:\Type(c_k)) & < \Ht(\textbf{Z} \vdashcustom w:W).
	\end{align*}
	
	For (\texttt{**}), write
	$$
	U = R(\alpha_1, ..., \alpha_M), \quad V = S(\beta_1, ..., \beta_N), \quad W = T(\gamma_1, ..., \gamma_P).
	$$
	Then the desired equality is
	$$
	RST( \partial ( ((\alpha_1, ..., \alpha_M,x^U) \otimes (\beta_1, ..., \beta_N,y^V)) \varotimes (\gamma_1, ..., \gamma_P,w^W) ))
	$$
	$$
	\overset{\textbf{K}}{\equiv} RST( \partial ( (\alpha_1, ..., \alpha_M,x^U) \varotimes ((\beta_1, ..., \beta_N,y^V) \varotimes (\gamma_1, ..., \gamma_P,w^W)) )),
	$$
	for which we must derive:
	\begin{enumerate}[label=(\arabic*)]
		\item $(\alpha_i \otimes \beta_j) \otimes \gamma_k \equiv \alpha_i \otimes (\beta_j \otimes \gamma_k)$.
		
		\item $(\alpha_i \otimes \beta_j) \varotimes w^W \equiv \alpha_i \otimes (\beta_j \varotimes w^W)$.
		
		\item $(\alpha_i \otimes y^V) \otimes \gamma_k \equiv \alpha_i \otimes (y^V \otimes \gamma_k)$.
		
		\item $(\alpha_i \otimes y^V) \varotimes w^W \equiv \alpha_i \varotimes (y^V \varotimes w^W)$.
		
		\item $(x^U \otimes \beta_j) \otimes \gamma_k \equiv x^U \otimes (\beta_j \otimes \gamma_k)$.
		
		\item $(x^U \otimes \beta_j) \varotimes w^W \equiv x^U \otimes (\beta_j \varotimes w^W)$.
		
		\item $(x^U \otimes y^V) \otimes \gamma_k \equiv x^U \otimes (y^V \otimes \gamma_k)$.		
	\end{enumerate}
	All of these follow from (A3).
	
	\item $J$ is $\textbf{X} \vdashcustom U \tp$, $J'$ is $\textbf{Y} \vdashcustom V \tp$, and $J''$ is $\textbf{Z} \vdashcustom W \equiv W' \tp$. Then
	\begin{align*}
		& (J \odot J') \odot J''\\
		= \quad & (\partial(\textbf{X}' \otimes \textbf{Y}') \vdashcustom U \otimes V \tp) \odot (\textbf{Z} \vdashcustom W \equiv W' \tp)\\
		= \quad& \partial((\textbf{X}' \otimes \textbf{Y}') \otimes \textbf{Z}') \vdashcustom (U \otimes V) \otimes W \equiv (U \otimes V) \otimes W' \tp\\
		= \quad&  \partial((\textbf{X}' \otimes \textbf{Y}') \otimes \textbf{Z}') \vdashcustom \partial((U \otimes V)\{xy\} \btimes W\{z\}) \equiv \partial((U \otimes V)\{xy\} \btimes W'\{z\}) \tp\\
		= \quad&  \partial((\textbf{X}' \otimes \textbf{Y}') \otimes \textbf{Z}') \vdashcustom \partial((U\{x\} \btimes V\{y\}) \btimes W\{z\}) \equiv \partial((U\{x\} \btimes V\{y\}) \btimes W'\{z\}) \tp\\
		\Rightarrow \quad & \partial((\textbf{X}' \otimes \textbf{Y}') \otimes \textbf{Z}') \vdashcustom \partial(U\{x\} \btimes (V\{y\} \btimes W\{z\})) \equiv \partial(U\{x\} \btimes (V\{y\} \btimes W'\{z\})) \tp \tag{see below}\\
		\Rightarrow \quad & \partial(\textbf{X}' \otimes (\textbf{Y}' \otimes \textbf{Z}')) \vdashcustom \partial(U\{x\} \btimes (V\{y\} \btimes W\{z\})) \equiv \partial(U\{x\} \btimes (V\{y\} \btimes W'\{z\})) \tp \tag{lemma}\\
		= \quad & \partial(\textbf{X}' \otimes (\textbf{Y}' \otimes \textbf{Z}')) \vdashcustom U \otimes (V \otimes W) \equiv U \otimes (V \otimes W')  \tp\\
		= \quad & J \odot (J' \odot J'').
	\end{align*}
	
	For the remaining step, calculations analogous to those in the previous case show that each of the type equalities
	\begin{align*}
		\partial((U\{x\} \btimes V\{y\}) \btimes W\{z\}) & \equiv \partial(U\{x\} \btimes (V\{y\} \btimes W\{z\}))\\
		\partial((U\{x\} \btimes V\{y\}) \btimes W'\{z\}) & \equiv \partial(U\{x\} \btimes (V\{y\} \btimes W'\{z\}))
	\end{align*}
	is entrywise derivable in context $\partial((\textbf{X}' \otimes \textbf{Y}') \otimes \textbf{Z}')$.
	
	\item $J$ is $\textbf{X} \vdashcustom U \tp$, $J'$ is $\textbf{Y} \vdashcustom V \tp$, and $J''$ is $\textbf{Z} \vdashcustom w \equiv w':W$. Then
	\begin{align*}
		& (J \odot J') \odot J''\\
		= \quad& (\partial(\textbf{X}' \otimes \textbf{Y}') \vdashcustom U \otimes V \tp) \odot (\textbf{Z} \vdashcustom w \equiv w':W)\\
		= \quad & (\textbf{X}' \otimes \textbf{Y}') \otimes \textbf{Z} \vdashcustom (U \otimes V)\{xy\} \btimes w \equiv (U \otimes V)\{xy\} \btimes w': \partial((U \otimes V)\{xy\} \varbtimes W\{w\})\\
		= \quad& (\textbf{X}' \otimes \textbf{Y}') \otimes \textbf{Z} \vdashcustom (U\{x\} \btimes V\{y\}) \btimes w \equiv (U\{x\} \btimes V\{y\}) \btimes w': \partial((U\{x\} \btimes V\{y\}) \varbtimes W\{w\})\\
		\Rightarrow \quad & (\textbf{X}' \otimes \textbf{Y}') \otimes \textbf{Z} \vdashcustom U\{x\} \btimes (V\{y\} \btimes w) \equiv U\{x\} \btimes (V\{y\} \btimes w'): \partial(U\{x\} \varbtimes (V\{y\} \varbtimes W\{w\})) \tag{see below}\\
		\Rightarrow \quad & \textbf{X}' \otimes (\textbf{Y}' \otimes \textbf{Z}) \vdashcustom U\{x\} \btimes (V\{y\} \btimes w) \equiv U\{x\} \btimes (V\{y\} \btimes w'): \partial(U\{x\} \varbtimes (V\{y\} \varbtimes W\{w\})) \tag{A2}\\
		= \quad & (\textbf{X} \vdashcustom U \tp) \odot (\textbf{Y}' \otimes \textbf{Z} \vdashcustom V\{y\} \btimes w \equiv V\{y\} \btimes w': \partial(V\{y\} \varbtimes W\{w\}))\\
		= \quad & J \odot (J' \odot J'').
	\end{align*}
	
	\item $J$ is $\textbf{X} \vdashcustom U \tp$, $J'$ is $\textbf{Y} \vdashcustom v:V$, and $J''$ is $\textbf{Z} \vdashcustom w:W$. Then
	\begin{align*}
		 & (J \odot J') \odot J'' \\
		= \quad & (\textbf{X}' \otimes \textbf{Y} \vdashcustom U\{x\} \btimes v: \partial(U\{x\} \varbtimes V\{v\})) \odot (\textbf{Z} \vdashcustom w:W)\\
		= \quad & (\textbf{X}' \otimes \textbf{Y}) \otimes \textbf{Z} \vdashcustom (U\{x\} \varbtimes V\{v\}) \btimes w \equiv (U\{x\} \btimes v) \varbtimes W\{w\}: \partial((U\{x\} \varbtimes V\{v\}) \uphalfsquare W\{w\})\\
		\Rightarrow \quad & (\textbf{X}' \otimes \textbf{Y}) \otimes \textbf{Z} \vdashcustom U\{x\} \btimes (V\{v\} \btimes w) \equiv U\{x\} \btimes (v \varbtimes W\{w\}) : \partial (U\{x\} \varbtimes (V\{v\} \uphalfsquare W\{w\})) \tag{see below}\\
		\Rightarrow \quad & \textbf{X}' \otimes (\textbf{Y} \otimes \textbf{Z}) \vdashcustom U\{x\} \btimes (V\{v\} \btimes w) \equiv U\{x\} \btimes (v \varbtimes W\{w\}) : \partial (U\{x\} \varbtimes (V\{v\} \uphalfsquare W\{w\})) \tag{A2}\\
		= \quad & (\textbf{X} \vdashcustom U \tp) \odot (\textbf{Y} \otimes \textbf{Z} \vdashcustom V\{v\} \btimes w \equiv v \varbtimes W\{w\} : \partial(V\{v\} \varbtimes W\{w\}))\\
		= \quad & J \odot (J' \odot J'').
	\end{align*}
	
	It remains to verify that the following equalities are entrywise derivable in context $(\textbf{X}' \otimes \textbf{Y}) \otimes \textbf{Z}$:
	\begin{align*}
		(U\{x\} \varbtimes V\{v\}) \btimes w & \equiv U\{x\} \btimes (V\{v\} \btimes w),\\
		(U\{x\} \btimes v) \varbtimes W\{w\} & \equiv U\{x\} \btimes (v \varbtimes W\{w\}),\\
		\partial((U\{x\} \varbtimes V\{v\}) \uphalfsquare W\{w\}) & \equiv \partial(U\{x\} \varbtimes (V\{v\} \uphalfsquare W\{w\})).
	\end{align*}
	These can be expanded and derived using (A3), in a way analogous to our approach in the previous cases.

	\item $J$ is $\textbf{X} \vdashcustom U \tp$, $J'$ is $\textbf{Y} \vdashcustom v:V$, and $J''$ is $\textbf{Z} \vdashcustom W \equiv W' \tp$. Then
	\begin{align*}
		 & (J \odot J') \odot J'' \\
		= \quad & (\textbf{X}' \otimes \textbf{Y} \vdashcustom U\{x\} \btimes v: \partial(U\{x\} \varbtimes  V\{v\})) \otimes (\textbf{Z} \vdashcustom W \equiv W' \tp)\\
		= \quad & (\textbf{X}' \otimes \textbf{Y}) \otimes \textbf{Z}' \vdashcustom (U\{x\} \btimes v) \btimes W\{z\} \equiv (U\{x\} \btimes v) \btimes W'\{z\}: \partial((U\{x\} \varbtimes V\{v\}) \varbtimes W\{z\})\\
		\Rightarrow \quad & (\textbf{X}' \otimes \textbf{Y}) \otimes \textbf{Z}' \vdashcustom U\{x\} \btimes (v \btimes W\{z\}) \equiv U\{x\} \btimes (v \btimes W'\{z\}): \partial(U\{x\} \varbtimes (V\{v\} \btimes W\{z\})) \tag{see below}\\
		\Rightarrow \quad & \textbf{X}' \otimes (\textbf{Y} \otimes \textbf{Z}') \vdashcustom U\{x\} \btimes (v \btimes W\{z\}) \equiv U\{x\} \btimes (v \btimes W'\{z\}): \partial(U\{x\} \varbtimes (V\{v\} \btimes W\{z\})) \tag{A2}\\
		= \quad & (\textbf{X} \vdashcustom U \tp) \otimes (\textbf{Y} \otimes \textbf{Z}' \vdashcustom v \btimes W\{z\} \equiv v \btimes W'\{z\}: \partial(V\{v\} \btimes W\{z\}))\\
		= \quad & J \odot (J' \odot J'').
	\end{align*}

As in the previous cases, we can conclude by expanding the equalities
\begin{align*}
	(U\{x\} \btimes v) \btimes W\{z\} & \equiv U\{x\} \btimes (v \btimes W\{z\})\\
	(U\{x\} \btimes v) \btimes W'\{z\} & \equiv U\{x\} \btimes (v \btimes W'\{z\})\\
	\partial((U\{x\} \varbtimes V\{v\}) \varbtimes W\{z\}) & \equiv \partial(U\{x\} \varbtimes (V\{v\} \btimes W\{z\}))
\end{align*}
and deriving them entrywise from (A3).

\item $(J \odot J') \odot J'' \Rightarrow J \odot (J' \odot J'')$ can be proved analogously in the remaining cases, namely, when $J$, $J'$, $J''$ are, respectively, judgments of the following forms:

\begin{multicols}{2}
	\begin{itemize}
		\item[\texttt{*}] sort, term, sort
		\item[\texttt{*}] term, sort, sort
		\item[\texttt{*}] sort, sort equality, sort
		\item[\texttt{*}] sort equality, sort, sort
		\item[\texttt{*}] sort, term equality, sort
		\item[\texttt{*}] term equality, sort, sort
		\item[\texttt{*}] term, sort, term
		\item[\texttt{*}] term, term, sort
		\item[\texttt{*}] sort, sort equality, term
		\item[\texttt{*}] sort equality, sort, term
		\item[\texttt{*}] sort equality, term, sort
		\item[\texttt{*}] term, sort, sort equality
		\item[\texttt{*}] term, sort equality, sort
	\end{itemize}
\end{multicols}
\end{itemize}

\subsubsection{Statement (A2)}

Consider contexts
$$\textbf{X} = (x_1:X_1, ..., x_m:X_m), \qquad \textbf{Y} = (y_1:Y_1, ..., y_n:Y_n), \qquad \textbf{Z} = (z_1:Z_1, ..., z_p:Z_p)
$$
in $\bbA$, $\bbB$, $\bbC$, respectively, such that $\Ht(\textbf{X})\Ht(\textbf{Y})\Ht(\textbf{Z}) = h \ge 1$. We will verify that $(\textbf{X} \otimes \textbf{Y}) \otimes \textbf{Z} \equiv \textbf{X} \otimes (\textbf{Y} \otimes \textbf{Z}) \ctx$ is derivable under the assumption that the analogous statement always holds with $0$, ..., $h-1$ in place of $h$.

\vspace{0.5em}

By the lemma proved at the beginning of the section, the judgment
\[
\tag{\texttt{*}}
\partial((\textbf{X} \otimes \textbf{Y}) \otimes \textbf{Z}) \equiv \partial(\textbf{X} \otimes (\textbf{Y} \otimes \textbf{Z})) \ctx
\]
is derivable. Moreover, applying (A1) to $J = (\partial \textbf{X} \vdashcustom X_m \tp)$, $J' = (\partial \textbf{Y} \vdashcustom Y_n \tp)$, $J'' = (\partial \textbf{Z} \vdashcustom Z_p \tp)$ yields derivability of
\[
\tag{\texttt{**}}
\partial((\textbf{X} \otimes \textbf{Y}) \otimes \textbf{Z}) \vdashcustom (X_m \otimes Y_n) \otimes Z_p \equiv X_m \otimes (Y_n \otimes Z_p) \tp.
\]

By combining (\texttt{*}) and (\texttt{**}) we conclude that $\textbf{X} \otimes (\textbf{Y} \otimes \textbf{Z})$ is a context and is provably equal to $(\textbf{X} \otimes \textbf{Y}) \otimes \textbf{Z}$ in $(\bbA \otimes \bbB) \otimes \bbC$.

\subsection{Sketch of proof of (A3)}

Let $J$, $J'$, $J''$ and $\textbf{K}$ be as in the statement of (A3), and consider the following list of equalities -- to be verified -- in context $\textbf{K}$:
\begin{enumerate}[label=(E\arabic*)]
	\item $(J \otimes_t J') \otimes_t J'' \equiv J \otimes_t (J' \otimes_t J'')$.
	
	\item $(J \otimes_t J') \varotimes_t J'' \equiv J \otimes_t (J' \varotimes_t J'')$.
	
	\item $(J \varotimes_t J') \otimes_t J'' \equiv J \varotimes_t (J' \otimes_t J'')$.
	
	\item $(J \varotimes_t J') \varotimes_t J'' \equiv J \varotimes_t (J' \varotimes_t J'')$.
\end{enumerate}

\begin{remark}
\label{rem: reducing cases - associativity}
Observe that it is possible to reduce the set of cases to be checked by repeatedly using term equalities $\varphi \otimes \psi \equiv \varphi \varotimes \psi$ obtained by tensoring two term judgments. Indeed, this shows derivability of
$$
(J \otimes_t J') \otimes_t J'' \equiv (J \otimes_t J') \varotimes_t J'' \equiv (J \varotimes_t J') \varotimes_t J'' \equiv (J \varotimes_t J') \otimes_t J''
$$
in $\textbf{K}$. This will be combined with the fact that if $J$ is either a sort judgment or $\textbf{X} \vdashcustom x:U$ for a variable $x$, then, by definition, $\otimes_t$ and $\varotimes_t$ give identical expressions when one of the arguments is $J$; similarly for $J'$, $J''$. For example, if $J$ is of that form, then
$$
J \otimes_t (J' \otimes_t J'') = J \varotimes_t (J' \otimes_t J''), \qquad J \otimes_t (J' \varotimes_t J'') = J \varotimes_t (J' \varotimes_t J''),
$$
so it suffices to derive $\big($(E1) or (E3)$\big)$ and $\big($(E2) or (E4)$\big)$. Similarly, if $J'$ or $J''$ is of that form, it suffices to derive $\big($(E1) or (E2)$\big)$ and $\big($(E3) or (E4)$\big)$; if two among $J$, $J'$, $J''$ are of that form, then (E1)-(E4) are the same; and if the three judgments are of that form, then (E1)-(E4) are trivially derivable as they equal $xyz \equiv xyz$ for variables $x$, $y$ and $z$.
\end{remark}

We will now outline the verification of the claim in some of these cases.

\vspace{0.5em}

\textbf{\underline{1st case}}: suppose that $J$, $J'$, $J''$ are term judgments whose term expressions are not variables, say
\begin{align*}
	J &= (\textbf{X} \vdashcustom u:U) = (\textbf{X} \vdashcustom r(a_1, ..., a_m):R(\alpha_1, ..., \alpha_M)),\\
	J' &= (\textbf{Y} \vdashcustom v:V) = (\textbf{Y} \vdashcustom s(b_1, ..., b_n): S(\beta_1, ..., \beta_N)),\\
	J'' &= (\textbf{Z} \vdashcustom w:W) = (\textbf{Z} \vdashcustom t(c_1, ..., c_p): T(\gamma_1, ..., \gamma_P)).
\end{align*}
Then we consider the following equalities in context $(\textbf{X} \otimes \textbf{Y}) \otimes \textbf{Z}$:

\begin{itemize}
	\item[\textbf{(E1)}] $(u^U \otimes v^V) \otimes w^W \equiv u^U \otimes (v^V \otimes w^W)$.
	
	Note that we can express it as
	$$
	RS
	\begin{pmatrix}
		\alpha_1 \otimes \beta_1 & \cdots & \alpha_1 \otimes \beta_N & \alpha_1 \varotimes v^V\\
		\vdots & \ddots & \vdots & \vdots\\
		\alpha_M \otimes \beta_1 & \cdots & \alpha_M \otimes \beta_N & \alpha_M \varotimes v^V\\
		u^U \otimes \beta_1 & \cdots & u^U \otimes \beta_N & u^U \otimes v^V
	\end{pmatrix}
	\btimes 
	t(c_1, ..., c_p)
	\equiv R(\alpha_1, ..., \alpha_M, u^U)
	\btimes St
	\begin{pmatrix}
		\beta_1 \otimes c_1 & \cdots & \beta_1 \otimes c_p\\
		\vdots & \ddots & \vdots\\
		\beta_N \otimes c_1 & \cdots & \beta_N \otimes c_p\\
		v^V \otimes c_1 & \cdots & v^V \otimes c_p
	\end{pmatrix}.
	$$
	By the induction hypothesis, the left- and right-hand sides are entrywise provably equal, i.e. the following equalities are derivable in $(\textbf{X} \otimes \textbf{Y}) \otimes \textbf{Z}$:
	\begin{align*}
		(\alpha_i \otimes \beta_j) \otimes c_k &\equiv \alpha_i \otimes (\beta_j \otimes c_k) \tag{for all $i$, $j$, $k$}\\
		(\alpha_i \varotimes v^V) \otimes c_k &\equiv \alpha_i \otimes (v^V \otimes c_k) \tag{for all $i$, $k$}\\
		(u^U \otimes \beta_j) \otimes c_k &\equiv u^U \otimes (\beta_j \otimes c_k) \tag{for all $j$, $k$}\\
		(u^U \otimes v^V) \otimes c_k &\equiv u^U \otimes (v^V \otimes c_k) \tag{for all $k$}
	\end{align*}
	
	\item[\textbf{(E2)}] $(u^U \otimes v^V) \varotimes w^W \equiv u^U \otimes (v^V \varotimes w^W)$.
	
	This can be expressed as
	$$
	Rs
	\begin{pmatrix}
		\alpha_1 \otimes b_1 & \cdots & \alpha_1 \otimes b_n\\
		\vdots & \ddots & \vdots\\
		\alpha_M \otimes b_1 & \cdots & \alpha_M \otimes b_n\\
		u^U \otimes b_1 & \cdots & u^U \otimes b_n
	\end{pmatrix}
	\varbtimes
	T(\gamma_1, ..., \gamma_P, w^W)
	\equiv
	R(\alpha_1, ..., \alpha_M,u^U)
	\btimes
	sT
	\begin{pmatrix}
		b_1 \otimes \gamma_1 & \cdots & b_1 \otimes \gamma_P & b_1 \varotimes w^W\\
		\vdots & \ddots & \vdots & \vdots\\
		b_n \otimes \gamma_1 & \cdots & b_n \otimes \gamma_P & b_n \varotimes w^W
	\end{pmatrix}.
	$$
	Similarly to the previous case, we conclude by using the induction hypothesis to derive the term equality corresponding to each of the entries:
	\begin{align*}
		(\alpha_i \otimes b_j) \otimes \gamma_k &\equiv \alpha_i \otimes (b_j \otimes \gamma_k) \tag{for all $i$, $j$, $k$}\\
		(\alpha_i \otimes b_j) \varotimes w^W &\equiv \alpha_i \otimes (b_j \varotimes w^W) \tag{for all $i$, $j$}\\
		(u^U \otimes b_j) \otimes \gamma_k &\equiv u^U \otimes (b_j \otimes \gamma_k) \tag{for all $j$, $k$}\\
		(u^U \otimes b_j) \varotimes w^W &\equiv u^U \otimes (b_j \varotimes w^W) \tag{for all $j$}
	\end{align*}
	
	\item[\textbf{(E4)}] $(u^U \varotimes v^V) \varotimes w^W \equiv u^U \varotimes (v^V \varotimes w^W)$.
	
	We can express it as
	$$
	rS
	\begin{pmatrix}
		a_1 \otimes \beta_1 & \cdots & a_1 \otimes \beta_N & a_1 \varotimes v^V\\
		\vdots & \ddots & \vdots & \vdots\\
		a_m \otimes \beta_1 & \cdots & a_m \otimes \beta_N & a_m \varotimes v^V
	\end{pmatrix}
	\varbtimes
	T(\gamma_1, ..., \gamma_P, w^W) \equiv
	r(a_1, ..., a_m) \varbtimes ST
	\begin{pmatrix}
		\beta_1 \otimes \gamma_1 & \cdots & \beta_1 \otimes \gamma_P & \beta_1 \varotimes w^W\\
		\vdots & \ddots & \vdots & \vdots\\
		\beta_N \otimes \gamma_1 & \cdots & \beta_N \otimes \gamma_P & \beta_N \varotimes w^W\\
		v^V \otimes \gamma_1 & \cdots & v^V \otimes \gamma_P & v^V \varotimes w^W
	\end{pmatrix}.
	$$
	Its derivability follows from that of:
	\begin{align*}
		(a_i \otimes \beta_j) \otimes \gamma_k & \equiv a_i \otimes (\beta_j \otimes \gamma_k) \tag{for all $i$, $j$, $k$}\\
		(a_i \otimes \beta_j) \varotimes w^W & \equiv a_i \otimes (\beta_j \varotimes w^W) \tag{for all $i$, $j$}\\
		(a_i \varotimes v^V) \otimes \gamma_k & \equiv a_i \otimes (v^V \otimes \gamma_k) \tag{for all $i$, $k$}\\
		(a_i \varotimes v^V) \varotimes w^W & \equiv a_i \varotimes (v^V \varotimes w^W)\tag{for all $i$}
	\end{align*}

	\item[\textbf{(E3)}] $(u^U \varotimes v^V) \otimes w^W \equiv u^U \varotimes (v^V \otimes w^W)$.
	
	This case differs from the others: while the outer term symbol in the left expression is $RSt$, the one in the right expression is $rST$, so it is not sufficient to derive an entrywise equality. Instead, we start by observing that $u^U \varotimes (v^V \otimes w^W)$ equals
	$$
	r(a_1, ..., a_m) \varbtimes ST
	\begin{pmatrix}
		\beta_1 \otimes \gamma_1 & \cdots & \beta_1 \otimes \gamma_P & \beta_1 \varotimes w^W\\
		\vdots & \ddots & \vdots & \vdots\\
		\beta_N \otimes \gamma_1 & \cdots & \beta_N \otimes \gamma_P & \beta_N \varotimes w^W\\
		v^V \otimes \gamma_1 & \cdots & v^V \otimes \gamma_P & v^V \otimes w^W
	\end{pmatrix},
	$$
	which is -- by the induction hypothesis -- entrywise provably equal to $u^U \varotimes (v^V \varotimes w^W)$ in $(\textbf{X} \otimes \textbf{Y}) \otimes \textbf{Z}$. The desired equality then follows from
	$$
	u^U \varotimes (v^V \varotimes w^W) \equiv (u^U \varotimes v^V) \varotimes w^W \equiv (u^U \varotimes v^V) \otimes w^W.
	$$
\end{itemize}

\textbf{\underline{2nd case}}: suppose that
\begin{align*}
	J &= (\textbf{X} \vdashcustom x:U) = (\textbf{X} \vdashcustom x:R(\alpha_1, ..., \alpha_M)),\\
	J' &= (\textbf{Y} \vdashcustom v:V) = (\textbf{Y} \vdashcustom s(b_1, ..., b_n): S(\beta_1, ..., \beta_N)),\\
	J'' &= (\textbf{Z} \vdashcustom w:W) = (\textbf{Z} \vdashcustom t(c_1, ..., c_p): T(\gamma_1, ..., \gamma_P))
\end{align*}
where $x$ is a variable occurring in $\textbf{X}$. By Remark \ref{rem: reducing cases - associativity}, it suffices to derive the following equalities in context $(\textbf{X} \otimes \textbf{Y}) \otimes \textbf{Z}$:

\begin{itemize}
	\item[\textbf{(E1)}] $(x^U \otimes v^V) \otimes w^W \equiv x^U \otimes (v^V \otimes w^W)$. The proof of its derivability is analogous to that of (E1) in the 1st case.

	\item[\textbf{(E2)}] $(x^U \otimes v^V) \varotimes w^W \equiv x^U \otimes (v^V \varotimes w^W)$. Here, we argue as for (E2) in the 1st case.
\end{itemize}

A similar argument allows us to derive (E1) and (E2) when $J$ is replaced by $\textbf{X} \vdashcustom U \tp$.

\vspace{1em}

\textbf{\underline{3rd case}}: suppose that
\begin{align*}
	J &= (\textbf{X} \vdashcustom u:U) = (\textbf{X} \vdashcustom r(a_1, ..., a_m):R(\alpha_1, ..., \alpha_M)),\\
	J' &= (\textbf{Y} \vdashcustom y:V) = (\textbf{Y} \vdashcustom y: S(\beta_1, ..., \beta_N)),\\
	J'' &= (\textbf{Z} \vdashcustom w:W) = (\textbf{Z} \vdashcustom t(c_1, ..., c_p): T(\gamma_1, ..., \gamma_P))
\end{align*}
where $y$ is a variable occurring in $\textbf{Y}$. By Remark \ref{rem: reducing cases - associativity}, it suffices to derive the following equalities in $(\textbf{X} \otimes \textbf{Y}) \otimes \textbf{Z}$:

\begin{itemize}
	\item[\textbf{(E1)}] $(u^U \otimes y^V) \otimes w^W \equiv u^U \otimes (y^V \otimes w^W)$. Derived as (E1) in the 1st case.
	
	\item[\textbf{(E4)}] $(u^U \varotimes y^V) \varotimes w^W \equiv u^U \varotimes (y^V \varotimes w^W)$. Derived as (E4) in the 1st case.
\end{itemize}

With a similar argument we can derive (E1) and (E4) when $J'$ is replaced by $\textbf{Y} \vdashcustom V \tp$.

\vspace{1em}

\textbf{\underline{4th case}}: suppose that
\begin{align*}
	J &= (\textbf{X} \vdashcustom u:U) = (\textbf{X} \vdashcustom r(a_1, ..., a_m):R(\alpha_1, ..., \alpha_M)),\\
	J' &= (\textbf{Y} \vdashcustom v:V) = (\textbf{Y} \vdashcustom s(b_1, ..., b_n): S(\beta_1, ..., \beta_N)),\\
	J'' &= (\textbf{Z} \vdashcustom z:W) = (\textbf{Z} \vdashcustom z: T(\gamma_1, ..., \gamma_P))
\end{align*}
where $z$ is a variable occurring in $\textbf{Z}$. By Remark \ref{rem: reducing cases - associativity}, it suffices to derive the following equalities in $(\textbf{X} \otimes \textbf{Y}) \otimes \textbf{Z}$:

\begin{itemize}
	\item[\textbf{(E2)}] $(u^U \otimes v^V) \varotimes z^W \equiv u^U \otimes (v^V \varotimes z^W)$. Derived as (E2) in the 1st case.

	\item[\textbf{(E4)}] $(u^U \varotimes v^V) \varotimes z^W \equiv u^U \varotimes (v^V \varotimes z^W)$. Derived as (E4) in the 1st case.
\end{itemize}

With a similar argument we can derive (E2) and (E4) when $J''$ is replaced by $\textbf{Z} \vdashcustom W \tp$.

\vspace{1em}

\textbf{\underline{5th case}}: suppose that
\begin{align*}
	J &= (\textbf{X} \vdashcustom x:U) = (\textbf{X} \vdashcustom x:R(\alpha_1, ..., \alpha_M)),\\
	J' &= (\textbf{Y} \vdashcustom y:V) = (\textbf{Y} \vdashcustom y: S(\beta_1, ..., \beta_N)),\\
	J'' &= (\textbf{Z} \vdashcustom w:W) = (\textbf{Z} \vdashcustom t(c_1, ..., c_p): T(\gamma_1, ..., \gamma_P))
\end{align*}
where $x$, $y$ are variables in $\textbf{X}$, $\textbf{Y}$, respectively. By Remark \ref{rem: reducing cases - associativity}, it suffices to derive (E1) -- $(x^U \otimes y^V) \otimes w^W \equiv x^U \otimes (y^V \otimes w^W)$ -- in context $(\textbf{X} \otimes \textbf{Y}) \otimes \textbf{Z}$. Here, we can argue as for (E1) in the 1st case.

\vspace{0.5em}

We can proceed similarly when $J$ or $J'$ is replaced by $\textbf{X} \vdashcustom U \tp$ or $\textbf{Y} \vdashcustom V \tp$, respectively.

\vspace{1em}

\textbf{\underline{6th case}}: suppose that
\begin{align*}
	J &= (\textbf{X} \vdashcustom x:U) = (\textbf{X} \vdashcustom x:R(\alpha_1, ..., \alpha_M)),\\
	J' &= (\textbf{Y} \vdashcustom v:V) = (\textbf{Y} \vdashcustom s(b_1, ..., b_n): S(\beta_1, ..., \beta_N)),\\
	J'' &= (\textbf{Z} \vdashcustom z:W) = (\textbf{Z} \vdashcustom z: T(\gamma_1, ..., \gamma_P))
\end{align*}
where $x$, $z$ are variables in $\textbf{X}$, $\textbf{Z}$, respectively. By Remark \ref{rem: reducing cases - associativity}, it suffices to derive (E2) -- $(x^U \otimes y^V) \varotimes w^W \equiv x^U \otimes (y^V \varotimes w^W)$ -- in context $(\textbf{X} \otimes \textbf{Y}) \otimes \textbf{Z}$. For that, we can proceed as for (E2) in the 1st case.

\vspace{0.5em}

We can use a similar argument when $J$ or $J''$ is replaced by $\textbf{X} \vdashcustom U \tp$ or $\textbf{Z} \vdashcustom W \tp$, respectively.

\vspace{1em}

\textbf{\underline{7th case}}: suppose that
\begin{align*}
	J &= (\textbf{X} \vdashcustom u:U) = (\textbf{X} \vdashcustom r(a_1, ..., a_m):R(\alpha_1, ..., \alpha_M)),\\
	J' &= (\textbf{Y} \vdashcustom y:V) = (\textbf{Y} \vdashcustom y: S(\beta_1, ..., \beta_N)),\\
	J'' &= (\textbf{Z} \vdashcustom z:W) = (\textbf{Z} \vdashcustom z: T(\gamma_1, ..., \gamma_P))
\end{align*}
where $y$, $z$ are variables in $\textbf{Y}$, $\textbf{Z}$, respectively. By Remark \ref{rem: reducing cases - associativity}, it suffices to derive (E4) -- $(x^U \varotimes y^V) \varotimes w^W \equiv x^U \varotimes (y^V \varotimes w^W)$ -- in context $(\textbf{X} \otimes \textbf{Y}) \otimes \textbf{Z}$. For that, we can proceed as for (E4) in the 1st case.

\vspace{0.5em}

A similar argument can be used when $J'$ or $J''$ is replaced by $\textbf{Y} \vdashcustom V \tp$ or $\textbf{Z} \vdashcustom W \tp$, respectively.

\vspace{0.5em}

This concludes the sketch of proof that for any $J$, $J'$, $J''$, the eight term expressions under consideration are derivable and provably equal to each other in the appropriate context.

\begin{remark}
\label{rem: no monoidal structure now}
It is natural to try use the isomorphisms $(\bbA \otimes \bbB) \otimes \bbC \equiv \bbA \otimes (\bbB \otimes \bbC)$ described in this section as components of an associator for a monoidal structure on $\GAT$, the unit of which would be the theory $\bbO_1$ having a single axiom $\vdashcustom O \tp$. We defer this discussion to \cite{Alm26} since we have, at this point, little to no access to functoriality of the tensor product.
\end{remark}

\section{Towards symmetry}

We will now describe for generalized algebraic theories $\bbA$ and $\bbB$ an isomorphism $\bbA \otimes \bbB \cong \bbB \otimes \bbA$.

Firstly, observe that we have a canonical isomorphism between the alphabets $\Sigma(\bbA) \otimes \Sigma(\bbB)$ and $\Sigma(\bbB) \otimes \Sigma(\bbA)$ given by swapping the entries of each element: a variable $xy$, a sort symbol $ST$, and term symbols $sT$ and $St$ in $\Sigma(\bbA) \otimes \Sigma(\bbB)$ correspond, respectively, to $yx$, $ST$, $St$ and $sT$ in $\Sigma(\bbB) \otimes \Sigma(\bbA)$. Note also that, because of the ad-hoc (and asymmetric) choice of lexicographic order in the definition of tensor expressions (\S\ref{sec: tensor product of generalized algebraic theories}), the ``arity matrix" of (for example) $sT$ in $\bbA \otimes \bbB$ is transpose to the one of $Ts$ in $\bbB \otimes \bbA$.

\vspace{0.5em}

Precisely, we define a preinterpretation, in the sense of \cite{Car86}, \S12\footnote{However, since we allow different theories to have different sets of variables, we must modify the definition of a preinterpretation by requiring a choice of function between the respective sets of variables.}, of $\bbA \otimes \bbB$ in $\bbB \otimes \bbA$ as follows:

\begin{itemize}
	\item A variable $xy$ is sent to $yx$.
	
	\item A sort symbol $ST$, say introduced as
	$$
	ST
	\begin{pmatrix}
		x_1y_1 & \cdots & x_1y_n & x_1y'\\
		\vdots & \ddots & \vdots & \vdots \\
		x_my_1 & \cdots & x_my_n & x_my'\\
		x'y_1 & \cdots & x'y_n & -
	\end{pmatrix},
	$$
	is sent to the expression
	$$
	TS
	\begin{pmatrix}
		y_1x_1 & \cdots & y_1x_m & y_1x'\\
		\vdots & \ddots & \vdots & \vdots \\
		y_nx_1 & \cdots & y_nx_m & y_nx'\\
		y'x_1 & \cdots & y'x_n & -
	\end{pmatrix}.
	$$
	\item A term symbol $sT$, say introduced as
	$$
	sT
	\begin{pmatrix}
		x_1y_1 & \cdots & x_1y_n & x_1y'\\
		\vdots & \ddots & \vdots & \vdots\\
		x_my_1 & \cdots & x_my_n & x_my'
	\end{pmatrix},
	$$
	is sent to the expression
	$$
	Ts
	\begin{pmatrix}
		y_1x_1 & \cdots & y_1x_m\\
		\vdots & \ddots & \vdots\\
		y_nx_1 & \cdots & y_nx_m\\
		y'x_1 & \cdots & y'x_m
	\end{pmatrix}.
	$$
	
	\item A term symbol $St$, say introduced as
	$$
	St
	\begin{pmatrix}
		x_1y_1 & \cdots & x_1y_n\\
		\vdots & \ddots & \vdots\\
		x_my_1 & \cdots & x_my_n\\
		x'y_1 & \cdots & x'y_n
	\end{pmatrix},
	$$
	is sent to the expression
	$$
	tS
	\begin{pmatrix}
		y_1x_1 & \cdots & y_1x_m & y_1x'\\
		\vdots & \ddots & \vdots & \vdots\\
		y_nx_1 & \cdots & y_nx_m & y_nx'
	\end{pmatrix}.
	$$
\end{itemize}
Denote this preinterpretation by $I_{\bbA,\bbB}$. As explained in \cite{Car86}, it defines by recursion a map $\hat{I}_{\bbA,\bbB}:\Sigma(\bbA \otimes \bbB) \rightarrow \Sigma(\bbB \otimes \bbA)$ between the respective sets of derivable judgments. Similarly, by swapping the roles of $\bbA$ and $\bbB$ we obtain a preinterpretation $I_{\bbB,\bbA}$ of $\bbB \otimes \bbA$ in $\bbA \otimes \bbB$, and it is immediate that $\hat{I}_{\bbA,\bbB}$ and $\hat{I}_{\bbB,\bbA}$ are inverse to each other.

Hence as soon as we prove that $I_{\bbA,\bbB}$ (and, analogously, $I_{\bbB,\bbA}$) is an interpretation, it will be an isomorphism of \textsc{gat}s.

\begin{proposition}
\label{prop: symmetry interpretation}
$I_{\bbA,\bbB}$ is an interpretation of $\bbA \otimes \bbB$ in $\bbB \otimes \bbA$.
\end{proposition}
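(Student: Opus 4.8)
The plan is to invoke Cartmell's characterization of interpretations among preinterpretations (\cite{Car86}, §12): a preinterpretation is an interpretation precisely when its recursive extension $\hat{I}_{\bbA,\bbB}$ carries every axiom to a derivable judgment. Since the axioms of $\bbA \otimes \bbB$ are exactly the judgments $J \odot J'$ with $J$ an axiom of $\bbA$ and $J'$ an axiom of $\bbB$, and since $J' \odot J$ is then an axiom of $\bbB \otimes \bbA$ (Table \ref{table: 1} is symmetric across its diagonal, so the kind of $J \odot J'$ agrees with that of $J' \odot J$), it suffices to prove that $\hat{I}_{\bbA,\bbB}(J \odot J')$ is provably equal, up to a reordering of its context, to $J' \odot J$. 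Because permuting the entries of a context and replacing sort/term expressions by provably equal ones both preserve derivability, this yields derivability of $\hat{I}_{\bbA,\bbB}(J \odot J')$ in $\bbB \otimes \bbA$.

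First I would establish a transposition lemma, by induction on the product $\Ht(J)\Ht(J')$, relating $\hat{I}_{\bbA,\bbB}$ to the operations of \S\ref{subsec: expressions from pairs of judgments}. For derivable term or augmented-sort judgments I expect
\[
\hat{I}_{\bbA,\bbB}(u^U \otimes_t v^V) \equiv v^V \varotimes_t u^U, \qquad \hat{I}_{\bbA,\bbB}(u^U \varotimes_t v^V) \equiv v^V \otimes_t u^U,
\]
and for sort judgments $\hat{I}_{\bbA,\bbB}(U^x \otimes_s V^y) \equiv V^y \otimes_s U^x$, where on the right we use the operations of $\bbB \otimes \bbA$ and the equalities are provable equalities in $\hat{I}_{\bbA,\bbB}(\textbf{X} \otimes \textbf{Y})$; the latter context should itself be a permutation of $\textbf{Y} \otimes \textbf{X}$, namely the one carrying the source lexicographic order to the transposed one. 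The induction runs through cases (i)--(iv) of the definition of $\otimes_t$ and the two cases of $\varotimes_t$, using that $\hat{I}_{\bbA,\bbB}$ sends each symbol $ST$, $sT$, $St$ to $TS$, $Ts$, $tS$ with arity matrix transposed, so that its recursive extension replaces a matrix of entries by its transpose with each entry mapped.

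The exchange $\otimes_t \leftrightarrow \varotimes_t$ is precisely what the transpose encodes: $u^U \otimes_t v^V$ applies $u$ to each column and then $v$ to the resulting list, and transposing while swapping the two factors turns this into applying $u$ to each row and then $v$, which is exactly $v^V \varotimes_t u^U$. The interior entries, however, will not match on the nose---transposition forces positions holding a literal $\otimes_t$-product to be compared against a $\varotimes_t$-product---so at each such entry I would bridge the gap using Theorem \ref{th: tensor product is a theory} together with the derivable equalities $\varphi \otimes_t \psi \equiv \varphi \varotimes_t \psi$ (obtained by tensoring two term judgments) and the substitution statements $Sub^1$, $Sub^2$, $Conteq^1$, $Conteq^2$ of \S\ref{sec: consequences h-derivability}. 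Feeding the transposition lemma through the explicit descriptions of $J \odot J'$ in \S\ref{subsec: tensor product of judgments}, case by case over Table \ref{table: 1}, then gives $\hat{I}_{\bbA,\bbB}(J \odot J') \equiv J' \odot J$ up to context reordering. Applying this with $J$, $J'$ axioms verifies the criterion, so $I_{\bbA,\bbB}$ is an interpretation; the same argument applies to $I_{\bbB,\bbA}$, and since $\hat{I}_{\bbA,\bbB}$ and $\hat{I}_{\bbB,\bbA}$ are mutually inverse bijections of alphabets, we conclude $\bbA \otimes \bbB \cong \bbB \otimes \bbA$.

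The hard part will be the transposition lemma, and specifically the bookkeeping in the interior of the matrices. Because the lexicographic convention is asymmetric, $\hat{I}_{\bbA,\bbB}$ does not commute strictly with $\otimes_t$: the matrix transpose moves $\otimes_t$-entries into positions where the target theory's definition expects $\varotimes_t$-entries, so literal equality fails and one must work up to provable equality throughout, carrying the precise context permutation along at every recursive step. Keeping that permutation coherent as one descends into subexpressions---and confirming that the equalities $\varphi \otimes \psi \equiv \varphi \varotimes \psi$ are available at the required heights---is where the real work lies; the case analysis over Table \ref{table: 1} is then routine, mirroring the bookkeeping already carried out for associativity in \S\ref{subsec: proof of (A1)}.
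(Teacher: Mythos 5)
Your plan is sound and lands on the same underlying mechanism as the paper, but the decomposition is genuinely different and worth comparing. The paper's device is an auxiliary family of mirrored operations $\otimes^*_t$, $\varotimes^*_t$, $\odot^*$, obtained by swapping the roles of the formulas $(\lozenge)$ and $(\blacklozenge)$ (the only point where the recursion treats the two factors asymmetrically); the proof then splits into (i) a purely structural induction giving the \emph{exact} identity $\hat{I}_{\bbA,\bbB}(J \odot J') = J' \odot^* J$, with no derivability input, and (ii) an induction on $\Ht(J)\Ht(J')$ showing that $J \odot^* J'$ is derivable and has the same interpretation in $\mathcal C(\bbA \otimes \bbB)$ as $J \odot J'$, using exactly the ingredients you name (the equalities $\varphi \otimes \psi \equiv \varphi \varotimes \psi$, substitution along the tensored morphisms, and the induction hypothesis on subexpressions). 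Your transposition lemma $\hat{I}_{\bbA,\bbB}(u^U \otimes_t v^V) \equiv v^V \varotimes_t u^U$ is precisely the composite of these two steps — one has $\hat{I}_{\bbA,\bbB}(u^U \otimes_t v^V) = v^V \otimes^*_t u^U$ on the nose, and step (ii) supplies $v^V \otimes^*_t u^U \equiv v^V \varotimes_t u^U$ — so your single induction carries the syntactic bookkeeping and the derivability content simultaneously. What the paper's factorization buys is that step (ii) is a statement internal to one theory in which the two contexts being compared have the same variables in the same order and differ only by provably equal sorts, so no context permutation ever arises there; all reordering is absorbed into the exact identity of step (i). Your route instead threads a genuine transposition of the lexicographic order through every recursive stage, and since the formal system has no primitive rule for permuting a context, you would need to verify separately that both linearizations of $\textbf{X} \otimes \textbf{Y}$ are derivable contexts (which holds because $X_i \otimes Y_j$ depends only on variables $x_{i'}y_{j'}$ with $i' \le i$ and $j' \le j$) and that derivability of judgments transfers across the permutation. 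That is the one piece of extra work your decomposition incurs, and you correctly flag it; the bridging equalities you invoke are available at the required heights by Theorem \ref{th: tensor product is a theory} and the statements of \S\ref{sec: consequences h-derivability}, so the proposal goes through.
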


We must verify that if $J$, $J'$ are axioms in $\bbA$, $\bbB$, respectively, then $\hat{I}_{\bbA,\bbB}(J \odot J')$ is derivable in $\bbB \otimes \bbA$.

The reason why this is not clear is that, even after taking into account the modifications of symbols and their arities described above, the definition of tensor expressions from \S\ref{subsec: expressions from pairs of judgments} does not become symmetric in the two factors. Indeed, the operations $\otimes_t$ and $\varotimes_t$ do not play a symmetric role: whenever the recursion requires using a previously defined tensor expression, it will use, say, $u^U \otimes_t v^V$ rather than $u^U \varotimes_t v^V$. But that choice was completely arbitrary, and we expect that swapping the roles of $\otimes_t$ and $\varotimes_t$ in the definition of $J \odot J'$ will also yield a derivable judgment with respect to the previously constructed theory.

\vspace{0.5em}

We will now explain more precisely how this modification is made and how it leads to a proof that $I_{\bbA,\bbB}$ is an interpretation.

\begin{construction}
Consider \textsc{gat}s $\bbA$ and $\bbB$. In the notation of \S\ref{sec: tensor product of generalized algebraic theories}, we sketch a definition of operations
\begin{align*}
	\otimes^*_t:&(\text{Der}_s^+(\bbA) \cup \text{Der}_t(\bbA)) \times (\text{Der}_s^+(\bbB) \cup \text{Der}_t(\bbB)) \longrightarrow \text{Exp}_t(\Sigma(\bbA) \otimes \Sigma(\bbB)),\\[0.5em]
	\varotimes^*_t:&(\text{Der}_s^+(\bbA) \cup \text{Der}_t(\bbA)) \times (\text{Der}_s^+(\bbB) \cup \text{Der}_t(\bbB)) \longrightarrow \text{Exp}_t(\Sigma(\bbA) \otimes \Sigma(\bbB)),\\[0.5em]
	\otimes^*_s:&\text{Der}_s^+(\bbA) \otimes \text{Der}_s^+(\bbB) \longrightarrow \text{Exp}_s(\Sigma(\bbA) \otimes \Sigma(\bbB)),
\end{align*}
as well as $\otimes^*$ for precontexts and $\odot^*$ for derivable judgments, by using the same recursive procedure as for $\otimes_t$, $\varotimes_t$, $\otimes_s$, $\otimes$, $\odot$, respectively, except for two cases: if $J = (x_1:X_1, ..., x_m:X_m \vdashcustom u:U)$ and $J' = (y_1:Y_1, ..., y_n:Y_n \vdashcustom v:V)$ where $u$, $v$ are not variables, we let
\begin{align*}
u^U \varotimes^* v^V & = J \varotimes^*_t J' = (x^U \otimes^* v^V)[u^U \otimes^* y_1^{Y_1} \mid xy_1, ..., u^U \otimes^* y_n^{Y_n} \mid xy_n],\\[0.5em]
u^U \otimes^* v^V & = J \otimes_t^* J' = (u^U \otimes^* y^V)[x_1^{X_1} \otimes^* v^V \mid x_1y, ..., x_m^{X_m} \otimes^* v^V \mid x_my].
\end{align*}
Note that these are the only cases where the use of $\otimes_t$ and $\varotimes_t$ is asymmetric.
\end{construction}

Arguing by induction yields the following result on the preinterpretation $I_{\bbA,\bbB}$ defined above:

\begin{proposition}
For derivable judgments $J$, $J'$ in $\bbA$, $\bbB$, respectively, we have
$$
\hat{I}_{\bbA,\bbB}(J \odot J') = J' \odot^* J.
$$
\end{proposition}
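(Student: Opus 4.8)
The plan is to prove the proposition by a simultaneous induction on the product $\Ht(J)\Ht(J')$, mirroring the recursive definitions of $\otimes_t$, $\varotimes_t$, $\otimes_s$ and of their starred analogues. The technical core consists of three intertwining identities, from which the judgment-level statement follows by inspecting each cell of Table~\ref{table: 1}. The identities I would establish are, for $a \in Der_s^+(\bbA) \cup Der_t(\bbA)$ and $b \in Der_s^+(\bbB) \cup Der_t(\bbB)$,
$$
\hat{I}_{\bbA,\bbB}(a \otimes_t b) = b \otimes_t^* a, \qquad \hat{I}_{\bbA,\bbB}(a \varotimes_t b) = b \varotimes_t^* a,
$$
together with $\hat{I}_{\bbA,\bbB}(U^x \otimes_s V^y) = V^y \otimes_s^* U^x$ for augmented sort judgments and $\hat{I}_{\bbA,\bbB}(\textbf{X} \otimes \textbf{Y}) = \textbf{Y} \otimes^* \textbf{X}$ for precontexts. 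It is crucial here that the pairing does \emph{not} exchange $\otimes_t$ with $\varotimes_t^*$: the transposition of arity matrices performed by $\hat{I}_{\bbA,\bbB}$ is compensated precisely by the fact that the starred operations are defined through the swapped substitution formulas, so that $\otimes_t$ still corresponds to $\otimes_t^*$ and $\varotimes_t$ to $\varotimes_t^*$. One checks the base case $x^U \otimes_t y^V = xy \mapsto yx = y^V \otimes_t^* x^U$ immediately.

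For the inductive step I would first record two structural facts about $\hat{I}_{\bbA,\bbB}$: it transposes the arity matrix and swaps the two factors of every sort/term symbol (by definition of the preinterpretation $I_{\bbA,\bbB}$), and it commutes with variable substitution, i.e. $\hat{I}_{\bbA,\bbB}(e[\,\cdots \mid x_iy_j\,]) = \hat{I}_{\bbA,\bbB}(e)[\,\cdots \mid y_jx_i\,]$, since it is built by recursion from a preinterpretation. In the mixed cases (ii) and (iii) of $\otimes_t$, the claim then reduces to comparing the transpose of an explicit matrix with the matrix produced by the corresponding starred clause, entry by entry via the inductive hypothesis. In the decisive case (iv) (both $u$, $v$ non-variable), I would compute $\hat{I}_{\bbA,\bbB}(u^U \otimes_t v^V)$ from the matrix form of Remark~\ref{rem: matrix form of tensor product of terms - not variables} and compare it \emph{directly} with the matrix form of $v^V \otimes_t^* u^U$ obtained by expanding the starred clause $v^V \otimes_t^* u^U = (v^V \otimes^* z^U)[\,\cdots \mid y_p z\,]$. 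The inner entries match by induction, and the boundary column matches because the defining substitution reproduces exactly $t_j \otimes_t^* u^U$, which equals $\hat{I}_{\bbA,\bbB}(u^U \otimes_t t_j)$ by induction; the identification of the boundary entries is supplied by (the starred form of) Lemma~\ref{lem: formulas for tensor products of terms}.

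Granting the three intertwining identities, the assembly is routine. For ``sort $\odot$ sort'' the conclusion of $J \odot J'$ is $U \otimes V = \partial(U\{x\} \btimes V\{y\})$ in context $\partial(\textbf{X}' \otimes \textbf{Y}')$; applying $\hat{I}_{\bbA,\bbB}$ and using the sort and precontext identities yields exactly $J' \odot^* J$. For ``term $\odot$ term'' the equality $u^U \otimes v^V \equiv u^U \varotimes v^V$ is sent, by the two term identities, to $v^V \otimes^* u^U \equiv v^V \varotimes^* u^U$, which is the conclusion of $J' \odot^* J$, while its sort transforms correctly by the sort identity. The remaining cells of Table~\ref{table: 1}, each involving one equality judgment, are combinations of these and follow by the same mechanism, once one notes that $\hat{I}_{\bbA,\bbB}$ carries the left and right sides of an equality to the left and right sides of the swapped equality.

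The main obstacle will be case (iv) of the term identity, and more broadly controlling the interaction between the transposition performed by $\hat{I}_{\bbA,\bbB}$ and the asymmetric formulas $(\lozenge)$, $(\blacklozenge)$. The tempting shortcut---to claim that the starred matrix is literally the symbol-renamed transpose of the unstarred one---is \emph{false}: because the starred construction redefines $\otimes^*$ and $\varotimes^*$ through swapped formulas, the boundary column of $v^V \otimes_t^* u^U$ carries $\otimes^*$ where a naive translation of the unstarred Remark would predict $\varotimes^*$. The correct route is always to expand both sides from their defining clauses and identify the boundary entries via the substitution lemma, checking this compatibility for every variable/non-variable combination and verifying at each step that the height inequalities $\Ht(\alpha_i) < \Ht(U)$ etc. license the inductive hypothesis. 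Once the term identity is secured, the sort, precontext, and judgment statements are bookkeeping.
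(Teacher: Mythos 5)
Your proposal is essentially the proof the paper has in mind: the paper dispatches this proposition with the single sentence ``Arguing by induction yields the following result,'' and your four intertwining identities, proved by induction on $\Ht(J)\Ht(J')$ and then assembled cell by cell from Table \ref{table: 1}, are exactly the content that induction must carry. You have also correctly isolated the one place where a sign error is easy to make: because the starred construction swaps the two substitution formulas $(\lozenge)$ and $(\blacklozenge)$ while $\hat I_{\bbA,\bbB}$ transposes arities, the operation $\otimes_t$ pairs with $\otimes_t^*$ and $\varotimes_t$ with $\varotimes_t^*$; this is consistent with the subsequent proposition, where $\otimes^*$ for the \emph{same} pair $(\bbA,\bbB)$ is only \emph{provably equal} to $\varotimes$, not syntactically identical to it.

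One caveat. The precontext identity $\hat I_{\bbA,\bbB}(\textbf{X}\otimes\textbf{Y})=\textbf{Y}\otimes^*\textbf{X}$ is not a literal equality of sequences. For sort and term expressions the transposition is absorbed into the definition of $I_{\bbA,\bbB}$ on symbols (the image of $ST$ is $TS$ applied to the transposed variable matrix, so substituting the images of the arguments lands each $\hat I_{\bbA,\bbB}(s_i\otimes t_j)$ in position $(j,i)$, as required). But $\hat I_{\bbA,\bbB}$ acts on a context entrywise and order-preservingly, so $\hat I_{\bbA,\bbB}(\textbf{X}\otimes\textbf{Y})$ lists the pairs $(y_jx_i:Y_j\otimes^*X_i)$ in the lexicographic order of $(i,j)$, whereas $\textbf{Y}\otimes^*\textbf{X}$ uses the lexicographic order of $(j,i)$; for $m,n\ge 2$ these differ by a nontrivial permutation (both are valid precontexts, since each sort depends only on variables whose two indices are componentwise $\le$ its own). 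This imprecision is already present in the paper's statement of the proposition, but since your write-up asserts the identity as one of its four pillars, you should either prove it only up to this canonical transposition of the context --- which suffices for the intended application, namely the derivability of $\hat I_{\bbA,\bbB}(J\odot J')$ --- or build the corresponding reordering into the action of $\hat I_{\bbA,\bbB}$ on judgments.
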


Thus Proposition \ref{prop: symmetry interpretation} will follow once we verify that $J' \odot^* J$ is derivable.

\begin{proposition}
Let $J$, $J'$ be derivable judgments in \textsc{gat}s $\bbA$, $\bbB$, respectively. Then $J \odot^* J'$ is derivable in $\bbA \otimes \bbB$.
\end{proposition}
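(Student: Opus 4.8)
The plan is to mirror the proof of Theorem~\ref{th: tensor product is a theory}: I would prove, by induction on $h \ge 0$, that the pair $(\bbA,\bbB)$ is \emph{$h$-derivable$^*$}, meaning that $J \odot^* J'$ is derivable in the pretheory $\bbA \otimes \bbB$ whenever $J$, $J'$ are derivable judgments in $\bbA$, $\bbB$ with $\Ht(J)\Ht(J') \le h$. It is worth stressing at the outset why one cannot shortcut this through the swap isomorphism. The combinatorial identity $\hat{I}_{\bbA,\bbB}(J \odot J') = J' \odot^* J$ does hold at the level of expressions, by a routine induction on the recursion that uses only that $\hat{I}_{\bbA,\bbB}$ is a relabelling-and-transpose bijection of alphabets; but deducing derivability of the right-hand side from that of the left would require $\hat{I}_{\bbA,\bbB}$ to carry derivable judgments to derivable judgments, which is exactly the content of Proposition~\ref{prop: symmetry interpretation} that the present statement is meant to supply. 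Thus a direct induction is unavoidable, and it is precisely this proposition that then feeds back to establish Proposition~\ref{prop: symmetry interpretation}.

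The observation that makes the induction go through with no new ideas is that the starred construction is obtained from the unstarred one by a single, globally consistent interchange: in the only two asymmetric clauses --- the term--term cases --- the formulas $(\lozenge)$ and $(\blacklozenge)$ trade places, so that $\otimes_t^*$ is given by the $(\blacklozenge)$-shaped recursion and $\varotimes_t^*$ by the $(\lozenge)$-shaped one. Everything else --- the outer sort/term symbol, the matrix layouts, the lexicographic reindexing, and all the mixed sort/term clauses --- is identical. Consequently Lemma~\ref{lem: formulas for tensor products of terms} and Remark~\ref{rem: matrix form of tensor product of terms - not variables} have verbatim starred analogues with $(\lozenge)$ and $(\blacklozenge)$ exchanged, and so do each of the auxiliary statements $Cont(k),\dots,Mor^2_+(k)$, the statements $Conteq^i$ and $Moreq^i$, Lemma~\ref{lem: tensoring a term and a context}, Lemma~\ref{lem: morphism equality from (term eq,ctx)}, and Proposition~\ref{prop: term from pair of terms}. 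Crucially, all the numerical bounds driving these results depend only on $\Ht(J)$, $\Ht(J')$ and the heights of the sub-judgments of $J$ and $J'$; none of them sees the starred versus unstarred distinction, so every height inequality used in \S\ref{sec: consequences h-derivability} and in Proposition~\ref{prop: induction step} is reproduced unchanged.

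Concretely, I would first introduce the starred versions $Cont^*(k), Sub^{1*}_{t,t}(k),\dots,Mor^{2*}_+(k)$ of the boxed statements of \S\ref{sec: consequences h-derivability}, together with $h$-derivability$^*$, and re-run the proposition of that section: under the inductive hypothesis that $(\bbA,\bbB)$ is $h$-derivable$^*$ and satisfies all starred statements with parameter $k<h$, each starred statement with parameter $h$ follows by the identical argument, since those proofs touch the combining operations only through their substitution behaviour (captured by $Sub^{1*}$, $Sub^{2*}$) and through the formulas of the starred Lemma~\ref{lem: formulas for tensor products of terms}, both of which are preserved by the interchange. I would then transcribe the proof of Proposition~\ref{prop: induction step} case by case over the kinds of $J$, $J'$ and their initial inferences; the only clauses that genuinely invoke the distinction between $\otimes$ and $\varotimes$ --- chiefly the \underline{term~$\odot$~term} case and the parts of the sort/term cases that expand a sort through its matrix form --- go through because $(\lozenge)$ and $(\blacklozenge)$ are swapped coherently on both sides of every equation. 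Induction on $h$ then yields $h$-derivability$^*$ for all $h$, which is the assertion.

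The hard part will not be any single computation but the bookkeeping needed to certify that the interchange is \emph{globally} consistent. The statements of \S\ref{sec: consequences h-derivability} are proved simultaneously and refer to one another in an intricate web --- for instance $Sub^1_{s,s}$ draws on $Sub^1_{t,t}$, $Sub^1_{s,t}$ and $Sub^1_{t,s}$, while $Mor^1$ draws on $Sub^1_{s,s}$ --- so I must check that passing to the starred operations maps this dependency graph to itself rather than to some incompatible variant. Since the exchange of $(\lozenge)$ and $(\blacklozenge)$ is an involution of the entire formalism of \S\ref{sec: tensor product of generalized algebraic theories}, each lemma is carried to its own starred counterpart and each invocation to the corresponding starred invocation, so no genuinely new argument appears; the delicate point, and the one I would write out with care, is exactly this closure of the lemma-dependency structure under the interchange. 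Once it is in place, every individual case is a faithful transcription of the corresponding case in \S\ref{sec: consequences h-derivability} and in Proposition~\ref{prop: induction step}.
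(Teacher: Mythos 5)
Your proposal diverges from the paper's proof in a way that leaves a genuine gap, and the gap is concentrated exactly where you are most confident: the base cases of the induction. The axioms of the pretheory $\bbA \otimes \bbB$ are, by Definition \ref{def: tensor product of theories}, the \emph{unstarred} judgments $J \odot J'$ for axioms $J$, $J'$. When you transcribe the cases of Proposition \ref{prop: induction step} in which both $J$ and $J'$ are axioms, the original argument reads ``$J \odot J'$ is derivable because it is an axiom and its context is a context''; the starred transcription fails, because $J \odot^* J'$ is \emph{not} an axiom of $\bbA \otimes \bbB$. Although the conclusion's sort or term expression agrees literally with the axiom's (the arguments there are bare variables), the context $\partial(\textbf{X}' \otimes^* \textbf{Y}')$ is built from sort expressions $X_i \otimes^* Y_j$, and these differ as strings from $X_i \otimes Y_j$ whenever the $X_i$, $Y_j$ carry non-variable term arguments. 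To derive the starred judgment one must therefore substitute the unstarred axiom along a premorphism from the starred context to the unstarred one, and showing that this premorphism is a context morphism requires knowing that $X_i \otimes^* Y_j$ and $X_i \otimes Y_j$ are \emph{provably equal}. The same comparison is needed throughout: every use of (s-sub), (t-sub), etc.\ substitutes into unstarred axioms and so produces unstarred expressions, which must then be matched against the starred ones. A purely parallel development of $Cont^*$, $Sub^{1*}_{t,t}$, \dots cannot supply this, because those statements only ever mention starred expressions.

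This is precisely why the paper does not re-run Sections \ref{sec: consequences h-derivability} and 5. Its induction on $h = \Ht(J)\Ht(J')$ carries a \emph{stronger} invariant: $J \odot^* J'$ is derivable \emph{and has the same interpretation in $\mathcal C(\bbA \otimes \bbB)$ as $J \odot J'$}. The induction hypothesis then gives $U \otimes^* V \equiv U \otimes V$ and $u \otimes^* v \equiv u \otimes v$ for all lower-height subexpressions, so the starred context is provably equal to the unstarred one, and in the only genuinely asymmetric case (term $\odot$ term with non-variable terms) one gets $u^U \otimes^* v^V \equiv u^U \varotimes v^V$ and $u^U \varotimes^* v^V \equiv u^U \otimes v^V$; derivability of $J \odot^* J'$ then follows from the already-established derivability of $J \odot J'$ (Theorem \ref{th: tensor product is a theory}) by rewriting along these equalities. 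Your opening paragraph dismisses ``comparison with the unstarred construction'' as unavailable because it seems to presuppose Proposition \ref{prop: symmetry interpretation}; but the comparison the paper uses is not the swap isomorphism $\hat I_{\bbA,\bbB}$ at all --- it is an internal provable-equality statement in $\bbA \otimes \bbB$ itself, proved by the same induction. If you want to rescue your plan, you must add this provable-equality clause to your inductive statement, at which point the parallel re-development of Sections 4 and 5 becomes unnecessary.
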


\begin{proof}[Proof sketch]
We will outline a proof by induction that the following holds for all $h \ge 0$: if $\Ht(J)\Ht(J')$, then the judgment $J \odot^* J'$ is derivable and has the same interpretation in $\mathcal C(\bbA \otimes \bbB)$ as $J \odot J'$.

Given $h \ge 0$, suppose that the claim holds for all $h' < h$, and let $J$, $J'$ be such that $\Ht(J)\Ht(J') = h$. Several cases need to be verified, corresponding to Table \ref{table: 1}. Firstly, note that in all of them, the precontext of $J \odot^* J'$ is either
\begin{itemize}
	\item of the form $\textbf{X} \otimes^* \textbf{Y}$ where $\Ht(\textbf{X})\Ht(\textbf{Y}) < h$, or
	
	\item of the form $\partial(\textbf{X}' \otimes^* \textbf{Y}')$ where $\textbf{X} = \partial\textbf{X}'$ and $\textbf{Y} = \partial \textbf{Y}'$ satisfy $\Ht(\textbf{X})\Ht(\textbf{Y}')$, $\Ht(\textbf{X}')\Ht(\textbf{Y}) < h$.
\end{itemize}
In either case, every sort expression occurring in the context is, by construction, $U \otimes^* V$ for some derivable judgments $\textbf{A} \vdashcustom U \tp$ and $\textbf{B} \vdashcustom V \tp$ such that $\Ht(\textbf{A} \vdashcustom U \tp)\Ht(\textbf{B} \vdashcustom V \tp) < h$. But by the induction hypothesis, $\partial(\textbf{A}' \otimes \textbf{B}') \vdashcustom U \otimes^* V \equiv U \otimes V \tp$ is derivable where $\textbf{A}' = (\textbf{A}, a:U)$, $\textbf{B}' = (\textbf{B}, b:V)$. Hence in the first (resp. second) case, $\textbf{X} \otimes^* \textbf{Y}$ (resp. $\partial(\textbf{X}' \otimes^* \textbf{Y}')$) is a context and is provably equal to $\textbf{X} \otimes \textbf{Y}$ (resp. to $\partial(\textbf{X}' \otimes \textbf{Y}')$).

It remains analyze the second part of $J \odot^* J'$:
\begin{itemize}
	\item Suppose that $J = (\textbf{X} \vdashcustom u:U)$ and $J' = (\textbf{Y} \vdashcustom v:V)$ where $u$, $v$ are not variables. Then $J \odot^* J'$ is
	\begin{align*}
		\textbf{X} \otimes^* \textbf{Y} &\vdashcustom u^U \otimes^* v^V \equiv u^U \varotimes^* v^V\\
		&: (U \otimes^* V)[x_1^{X_1} \otimes^* v^V \mid x_1y, ..., x_m^{X_m} \otimes^* v^V \mid x_my, ..., u^U \otimes^* y_1^{Y_1} \mid xy_1, ..., u^U \otimes^* y_n^{Y_n} \mid xy_n].
	\end{align*}
	Letting $\textbf{X}' = (\textbf{X}, x:U)$ and $\textbf{Y}' = (\textbf{Y},y:V)$, the induction hypothesis implies that
	\begin{align*}
		&\partial(\textbf{X}' \otimes^* \textbf{Y}') \equiv \partial(\textbf{X}' \otimes \textbf{Y}') \ctx\\
		&\partial(\textbf{X}' \otimes \textbf{Y}') \vdashcustom U \otimes^* V \equiv U \otimes V \tp\\		
		&\textbf{X} \otimes \textbf{Y} \vdashcustom x_i^{X_i} \otimes^* v^V \equiv x_i^{X_i} \otimes v^V \tm \tag{$1 \le i \le m$}\\
		&\textbf{X} \otimes \textbf{Y} \vdashcustom u^U \otimes^* y_j^{Y_j} \equiv u^U \otimes y_j^{Y_j} \tm \tag{$1 \le j \le n$}
	\end{align*}
	are derivable. Now, by substitution we obtain
	\begin{align*}
		\textbf{X} \otimes \textbf{Y} & \vdashcustom (U \otimes^* V)[x_1^{X_1} \otimes^* v^V \mid x_1y, ..., x_m^{X_m} \otimes^* v^V \mid x_my, ..., u^U \otimes^* y_1^{Y_1} \mid xy_1, ..., u^U \otimes^* y_n^{Y_n} \mid xy_n]\\
		& \equiv (U \otimes V)[x_1^{X_1} \otimes v^V \mid x_1y, ..., x_m^{X_m} \otimes v^V \mid x_my, ..., u^U \otimes y_1^{Y_1} \mid xy_1, ..., u^U \otimes y_n^{Y_n} \mid xy_n] \tp.
	\end{align*}
	Since $J \odot J'$, which is
	\begin{align*}
		\textbf{X} \otimes \textbf{Y} &\vdashcustom u^U \otimes v^V \equiv u^U \varotimes v^V\\
		&: (U \otimes V)[x_1^{X_1} \otimes v^V \mid x_1y, ..., x_m^{X_m} \otimes v^V \mid x_my, ..., u^U \otimes y_1^{Y_1} \mid xy_1, ..., u^U \otimes y_n^{Y_n} \mid xy_n],
	\end{align*}
	is derivable, to obtain $J \odot^* J'$ it suffices to derive
	\begin{align*}
		&\textbf{X} \otimes \textbf{Y} \vdashcustom u^U \otimes^* v^V \equiv u^U \varotimes v^V \tm,\\
		&\textbf{X} \otimes \textbf{Y} \vdashcustom u^U \varotimes^* v^V \equiv u^U \otimes v^V \tm.
	\end{align*}
	But the induction hypothesis yields
	\begin{align*}
		u^U \otimes^* v^V & = (u^U \otimes^* y^V)[x_1^{X_1} \otimes^* v^V \mid x_1y, ..., x_m^{X_m} \otimes^* v^V \mid x_my]\\
		& \equiv (u^U \otimes y^V)[x_1^{X_1} \otimes v^V \mid x_1y, ..., x_m^{X_m} \otimes v^V \mid x_my]\\
		& = u^U \varotimes v^V,\\[1em]
		u^U \varotimes^* v^V & = (x^U \otimes^* v^V)[u^U \otimes^* y_1^{Y_1} \mid xy_1, ..., u^U \otimes^* y_n^{Y_n} \mid xy_n]\\
		& \equiv (x^U \otimes v^V)[u^U \otimes y_1^{Y_1} \mid xy_1, ..., u^U \otimes y_n^{Y_n} \mid xy_n]\\
		& = u^U \otimes v^V,
	\end{align*}
	which concludes the proof in this case.
	
	\item In all other cases, the definition of $J \odot' J'$ does not involve any term expressions $u \otimes^* v$ corresponding to judgments $K = (\textbf{A} \vdashcustom u:U)$, $K' = (\textbf{B} \vdashcustom v:V)$ where $u$, $v$ are not variables and $\Ht(K)\Ht(K') \ge h$. This implies (details have been omitted) that:
	\begin{itemize}
		\item[-] Any sort expression $U \otimes^* V$ occurring in $J \odot' J'$ can be expanded and proved equal to $U \otimes V$ using the induction hypothesis.
		
		\item[-] For a term expression $u \otimes^* v$ occurring in $J \odot' J'$, say with $u$, $v$ corresponding to judgments $K$, $K'$, if $u$ and $v$ are not variables, then the induction hypothesis $u \otimes^* v \equiv u \otimes v$.
		
		\item[-] Otherwise, if $u$ or $v$ is a variable, then $u \otimes^* v$ can be expanded and proved equal to $u \otimes v$ via the induction hypothesis.
	\end{itemize}
\end{itemize}
This concludes our sketch of proof that $J \odot^* J'$ is derivable in $\bbA \otimes \bbB$ for any $J$, $J'$ derivable in $\bbA$, $\bbB$.
\end{proof}

As a consequence we obtain Proposition \ref{prop: symmetry interpretation}, which in turn implies that $\bbA \otimes \bbB$ and $\bbB \otimes \bbA$ are isomorphic via $I_{\bbA,\bbB}$ and $I_{\bbB,\bbA}$.

\vspace{0.5em}

Like the situation with associativity, a categorical study of this symmetry isomorphism is out of reach in the current article. We will return to this problem in \cite{Alm26}.

\appendix

\section{An overview of generalized algebraic theories}

We now give an overview of the concept of a generalized algebraic theory (\textsc{gat} for short), introduced by J. Cartmell in his doctoral thesis \cite{Car78} (see also \cite{Car86}).

Our approach to defining a generalized algebraic theory differs from Cartmell's in the choice of derivation rules. However, as must be the case for our presentation to be valid, our list of derivation rules is ``as expressive" as Cartmell's in the following sense (see Proposition \ref{prop: derivable iff c-derivable}): for any pretheory, the sets of derivable judgments obtained by the two procedures are the same. In other words, the classes of ``theories" obtained in the two cases are identical as long as one only considers the \emph{set} of derivable judgments (rather than the structure of derivations).

The reason for this modification is that, with the new derivation rules, we have a finer control over the recursive process by which a judgment is derived. Ultimately, this will allow us to show that a certain \emph{height} function -- which, in a sense, measures the complexity of a derivable judgment in terms of a ``shortest" derivation -- has several good properties allowing us to prove statements on judgments by induction.

\subsection{Alphabets and pretheories}

The starting point is what we will call a \emph{sort-and-term alphabet}, which provides the symbols required for constructing a generalized algebraic theory:

\begin{definition}
\label{def: alphabet}
A \emph{sort-and-term alphabet} $-$ or, in our setting, simply \emph{alphabet} $-$ is a triple $\Sigma = (\Sigma^{\text{var}}, \Sigma^{\text{sort}}, \Sigma^{\text{term}})$ consisting of a countably infinite set $\Sigma^{\text{var}}$ of \emph{variables}, a set $\Sigma^{\text{sort}}$ of \emph{sort symbols} (or \emph{type symbols}), and a set $\Sigma^{\text{term}}$ of \emph{term symbols} (or \emph{operation symbols}).
\end{definition}

Variables will usually denoted by lowercase letters such as $a$, $b$, $x$, $y$, often with indices as in $x_1$, ..., $x_n$ when we work with lists of variables. Sort symbols will be denoted by uppercase letters such as $S$ and $T$, and term symbols by lowercase letters such as $s$ and $t$.

\begin{construction}
Consider the (single-sorted) algebraic theory $\bbA$ having $\mathbb N \times (\Sigma^{\text{sort}} \sqcup \Sigma^{\text{term}})$ as its set of function symbols, where $(n,w)$ has arity $n$ for each $n$ and $w$, and having no axioms.

We define $\Sigma^*$ as the free model of $\bbA$ on the set $\Sigma^{\text{var}}$. An element of $\Sigma^*$ is called an \emph{expression} on the alphabet $\Sigma$.

To provide an explicit description of $(\Sigma^{\text{var}},\Sigma^{\text{sort}},\Sigma^{\text{term}})^*$, let $X$ be the set of all finite sequences $w_1$, ..., $w_n$ (possibly with $n = 0$, i.e. we include the empty sequence) of elements of a set obtained from $\mathbb N \times (\Sigma^{\text{sort}} \sqcup \Sigma^{\text{term}})$ by adjoining two elements, which we denote by $($ and $)$, to be treated as left and right parentheses. Precisely, we have
$$
X = \bigsqcup_{n \ge 0}\bigg( \mathbb N \times (\Sigma^{\text{sort}} \sqcup \Sigma^{\text{term}})\; \sqcup \; \{\;(,\;)\;\} \bigg)^n.
$$
Then $(\Sigma^{\text{var}},\Sigma^{\text{sort}},\Sigma^{\text{term}})^*$ is the smallest subset $Y \subset X$ that satisfies the following conditions:
\begin{itemize}
	\item If $x$ is a variable, then the length-$1$ sequence $x$ belongs to $Y$.
	
	\item Suppose given $\sigma \in \Sigma^{\text{sort}} \sqcup \Sigma^{\text{term}}$ and a sequence $w_1$, ..., $w_n$ of elements of $Y$ where $n \ge 0$. Then the length-$(n+3)$ sequence
	$$
	\sigma,\; (,\; w_1, ..., w_n,\; )
	$$
	belongs to $Y$.
	
	We will denote it by $\sigma(w_1, ..., w_n)$ when $n \ge 0$, or simply by $\sigma$ when $n = 0$.
\end{itemize}
The structure of model of $\bbA$ on this set is given as follows: for $(\sigma,n) \in \bbN \times (\Sigma^{\text{sort}} \sqcup \Sigma^{\text{term}})$, i.e. an element of $\Sigma^{\text{sort}} \sqcup \Sigma^{\text{term}}$ viewed as an $n$-ary operation, its action on $X$ is given by sending a sequence $w_1$, ..., $w_n$ in $X$ to the sequence $\sigma(w_1, ..., w_n)$ described above.
\end{construction}

\begin{definition}
In the above notation, if $w \in (\Sigma^{\text{var}},\Sigma^{\text{sort}},\Sigma^{\text{term}})^*$ and $\sigma$ is a variable, sort symbol or term symbol, we define an \emph{occurrence} of $\sigma$ as an index $i$ such that the $i$-th entry of $w$ equals $\sigma$. If there exists an occurrence of $\sigma$ in $w$, we say that $\sigma$ \emph{occurs} in $w$.

Given $w$, $u \in (\Sigma^{\text{var}},\Sigma^{\text{sort}},\Sigma^{\text{term}})^*$ and a variable $x$, we write $w[u \mid x]$ for the element of $(\Sigma^{\text{var}},\Sigma^{\text{sort}},\Sigma^{\text{term}})^*$ obtained by replacing every occurrence of $x$ by $u$

More generally, for $k \ge 1$, a sequence of variables $x_1$, ..., $x_k$, and a sequence $u_1$, ..., $u_k$ in $(\Sigma^{\text{var}},\Sigma^{\text{sort}},\Sigma^{\text{term}})^*$, we write $w[u_1 \mid x_1, ..., u_k \mid x_k]$ for the expression obtained by \emph{simultaneously} replacing every occurrence of $x_i$ by $u_i$ for each $i$.

An expression $w \in (\Sigma^{\text{var}},\Sigma^{\text{sort}},\Sigma^{\text{term}})^*$ is said to be a \emph{term expression} if no sort symbol occurs in $w$. We say that $w$ is a \emph{sort expression} if it is of the form $T(u_1, ..., u_n)$ where $T$ is a sort symbol and $u_1$, ..., $u_n$ are term expressions.
\end{definition}

Following our convention for sort symbols, we will usually denote term expressions by using lowercase letters, and sort expressions by using uppercase ones.

\begin{definition}
\label{def: precontext}
Given an alphabet $\Sigma=(\Sigma^{\text{var}},\Sigma^{\text{sort}},\Sigma^{\text{term}})$, a \emph{(length-$n$) precontext} is a sequence of ordered pairs $((x_1,X_1),...,(x_n,X_n))$, with $n \ge 0$, where, for each $i$, $x_i$ is a variable and $X_i$ is a sort expression that only contains variables among $x_1$, ..., $x_{i-1}$. We denote it by
	$$
	x_1:X_1,\ldots,x_n:X_n.
	$$
We will usually denote contexts by boldface letters such as $\textbf{X}$ and $\textbf{Y}$.

Given contexts
$$
\textbf{X} = (x_1:X_1, ..., x_m:X_m), \qquad\qquad \textbf{Y} = (y_1:Y_1, ..., y_n:Y_n),
$$
we write $\textbf{X},\textbf{Y}$ or $\textbf{X},y_1:Y_1, ..., y_n:Y_n$ for the concatenation
$$
x_1:X_1, ..., x_m:X_m,y_1:Y_1, ..., y_n:Y_n
$$
of $\textbf{X}$ and $\textbf{Y}$.

The unique precontext of length $0$ will be said to be \emph{empty}.
\end{definition}

\begin{notation}
For a precontext $\textbf{X} = (x_1:X_1, ..., x_n:X_n)$, we denote its length (i.e. $n$) by $\bbl(\textbf{X})$. Also, for $i \in \{0, ..., n\}$ we will write $\partial_i\textbf{X}$ for the truncation $(x_1:X_1, ..., x_i:X_i)$.
\end{notation}
	
\begin{definition}
\label{def: judgment}
We define a
	\begin{itemize}
		\item \emph{sort judgment} as a pair $(\textbf{X},U)$ consisting of a precontext $\textbf{X}$ and a sort expression $U$. We denote it by
		$$
		\textbf{X} \vdashcustom U \tp.
		$$
		
		\item \emph{term judgment} as a triple $(\textbf{X},U,u)$ consisting of a precontext $\textbf{X}$, a sort expression $U$, and a term expression $u$. We denote it by
		$$
		\textbf{X} \vdashcustom u:U.
		$$
		
		\item \emph{sort equality judgment} as a triple $(\textbf{X},U,V)$ consisting of a precontext $\textbf{X}$ and sort expressions $U$, $V$. We denote it by
		$$
		\textbf{X} \vdashcustom U \equiv V \tp.
		$$
		
		\item \emph{term equality judgment} as a quadruple $(\textbf{X}, U, u, v)$ consisting of a precontext $\textbf{X}$, a sort expression $U$, and term expressions $t$, $t'$. We denote it by
		$$
		\textbf{X} \vdashcustom u \equiv v: U.
		$$
	\end{itemize}
	
	We will use the word ``judgment" to refer collectively to the structures introduced in these four items, but, more generally, also to other kinds of structure, introduced later.
	
	In contrast, a judgment of one of the four kinds above will be called a \emph{standard judgment}.
	
	We will usually denote judgments by the letter $J$ and variants such as $J'$ and $J_1$, ..., $J_n$.
	
	If $J$ is a standard judgment, the precontext $\textbf{X}$ preceding the turnstile symbol will be referred to as the \emph{context of} $J$.
	
	When the context of a standard judgment is empty, we omit it from the notation: $\textbf{X} \vdashcustom U \tp$ becomes $\vdashcustom U \tp$.
	
	\vspace{0.5em}
	
	A precontext $\textbf{X}$ corresponds to a \emph{context judgment}, which we denote by\footnote{Technically, a context judgment is the same as a precontext. However, the shift in terminology and notation corresponds to the fact that, in what follows, the same data will be treated from two different perspectives.}
	$$
	\textbf{X} \ctx.
	$$
	
	We will write $\mathscr J(\Sigma)$, or just $\mathscr J$ when the alphabet is implicit, for the union of the set of standard judgments and that of context judgments of $\Sigma$.
\end{definition}

\begin{definition}[\cite{Car78}]
A \emph{pretheory} is a pair $\bbT = (\Sigma, \mathscr A)$ where $\Sigma$ is a sort-and-term alphabet and $\mathscr A$ is a subset of the set of standard judgments of $\Sigma$ $-$ whose elements we call the \emph{axioms} of $\bbT$ $-$ with the following properties:
	\begin{enumerate}[label=(\roman*)]
		\item If $T$ is a sort symbol, then $\mathscr A$ contains a unique judgment of the form
		$$
		x_1:X_1,\ldots,x_n:X_n \vdashcustom T(x_1,...,x_n) \tp,
		$$
		which we refer to as axiom that \emph{introduces} $T$.
		
		Moreover, $\mathscr A$ does not contain any other sort judgments.
		
		\item If $t$ is a term symbol, then $\mathscr A$ contains a unique judgment of the form
		$$
		x_1:X_1,\ldots,x_n:X_n \vdashcustom t(x_1,...,x_n):U,
		$$
		which we refer to as axiom that \emph{introduces} $t$.
		
		Moreover, $\mathscr A$ does not contain any other term judgments.
	\end{enumerate}
\end{definition}

Generally, an axiom $J \in \mathscr A$ is called a \emph{sort} (resp. \emph{term}, \emph{sort equality}, \emph{term equality}) \emph{axiom} if it is, respectively, a sort (resp. term, sort equality, term equality) judgment.

In the above notation, we refer to $x_1:X_1,\ldots,x_n:X_n \vdashcustom T(x_1,...,x_n) \tp$, resp. $x_1:X_1,\ldots,x_n:X_n \vdashcustom t(x_1,...,x_n):U$, as the \emph{introduction axiom} of $T$, resp. $t$.

\vspace{0.5em}

For a pretheory $\bbT$, we will write $\Sigma^{\text{var}}(\bbT)$, $\Sigma^{\text{sort}}(\bbT)$, $\Sigma^{\text{term}}(\bbT)$, and $\mathscr A(\bbT)$, respectively, for its set of variables, of sort symbols, of term symbols, and of axioms.

\begin{definition}
\label{def: GAT inference rules}
An \emph{inference step} with respect to a given sort-and-term alphabet is a sequence $(J_1,...,J_n,J_{n+1})$ in $\mathscr J$ with $n \ge 0$ (hence each entry is either a standard judgment or a context judgment). We denote it by
$$
\inferrule{J_1 \\ J_2 \\ \ldots\ \\ J_n}{J_{n+1}}.
$$
The judgments $J_1$, ..., $J_n$ are called the \emph{premises}, and $J_{n+1}$ is called the \emph{conclusion}.

By an \emph{inference rule} we will mean a set whose elements are inference steps. If $R$ is an inference rule, we refer to an inference step in $R$ as an \emph{instance} of $R$. If $(J_1, ..., J_n,J_{n+1})$ is an instance of $R$, we write
$$
\inferrule{J_1 \\ J_2 \\ \ldots\ \\ J_n}{J_{n+1}}(R).
$$

For a fixed pretheory $\bbT$, we write $\mathscr R_{gat}$ for the set of all inference steps, with respect to the alphabet of $\bbT$, of one of the sixteen forms below. Note that for each item we obtain an inference rule by taking the set of all inference steps of the given form.

\begin{enumerate}
	\item[\textbf{(ctx)}] $$
	\inferrule{\;}{\varnothing \ctx}
	$$
	where $\varnothing$ denotes the empty precontext. And, for $n \ge 0$,
	$$
	\inferrule{x_1:X_1, ..., x_n:X_n \vdashcustom U \tp}{x_1:X_1, ..., x_n:X_n, x_{n+1}:U \quad \ctx}
	$$
	whenever $x_{n+1}$ is a variable distinct from $x_1$, ..., $x_n$.
	
	\item[\textbf{(s1)}] $$
	\inferrule{\textbf{X} \vdashcustom U \tp}{\textbf{X} \vdashcustom U \equiv U \tp}.
	$$
	\item[\textbf{(s2)}] $$
	\inferrule{\textbf{X} \vdashcustom U\equiv U' \tp}{\textbf{X} \vdashcustom U'\equiv U \tp}.
	$$
	\item[\textbf{(s3)}] $$
	\inferrule{\textbf{X} \vdashcustom U\equiv U' \tp \\ \textbf{X} \vdashcustom U'\equiv U'' \tp}{\textbf{X} \vdashcustom U\equiv U'' \tp}.
	$$
	\item[\textbf{(t1)}] $$
	\inferrule{\textbf{X} \vdashcustom u:U}{\textbf{X} \vdashcustom u \equiv  u:U}.
	$$
	\item[\textbf{(t2)}] $$
	\inferrule{\textbf{X} \vdashcustom u\equiv u':U}{\textbf{X} \vdashcustom u'\equiv u:U}.
	$$
	\item[\textbf{(t3)}] $$
	\inferrule{\textbf{X} \vdashcustom u\equiv u':U \\ \textbf{X} \vdashcustom u'\equiv u'':U}{\textbf{X} \vdashcustom u\equiv u'':U}.
	$$
	\item[\textbf{(seq/t)}] $$
	\inferrule{\textbf{X} \vdashcustom U\equiv U' \tp \\ \textbf{X} \vdashcustom u:U}{\textbf{X} \vdashcustom u:U'}.
	$$
	\item[\textbf{(seq/teq)}] $$
	\inferrule{\textbf{X} \vdashcustom U\equiv U' \tp \\ \textbf{X} \vdashcustom u\equiv u':U \\ \textbf{X} \vdashcustom u:U' \\ \textbf{X} \vdashcustom u':U'}{\textbf{X} \vdashcustom u\equiv u':U'}.
	$$
	
	\item[\textbf{(var)}] $$
	\inferrule{x_1:X_1, ..., x_n:X_n \vdashcustom X_i \tp}{x_1:X_1, ..., x_n:X_n \vdashcustom x_i:X_i}
	$$
	where $n \ge 1$ and $1 \le i \le n$.
	
	\item[\textbf{(s-a)}] For each sort axiom $\textbf{X} \vdashcustom T(x_1,...,x_n) \tp$ we consider
	$$
	\inferrule{\textbf{X} \ctx \\ \textbf{X} \vdashcustom x_1:X_1 \\ \cdots \\ \textbf{X} \vdashcustom x_n:X_n}{\textbf{X} \vdashcustom T(x_1,...,x_n) \tp}
	$$
	where $\textbf{X} = (x_1:X_1, ..., x_n:X_n)$.
	
	\item[\textbf{(t-a)}] For each term axiom $\textbf{X} \vdashcustom t(x_1,...,x_n):U$ we consider
	$$
	\inferrule{\textbf{X} \vdashcustom U \tp \\ \textbf{X} \vdashcustom x_1:X_1 \\ \cdots \\ \textbf{X} \vdashcustom x_n:X_n}{\textbf{X} \vdashcustom t(x_1,...,x_n):U}
	$$
	where $\textbf{X} = (x_1:X_1, ..., x_n:X_n)$.
	
	\item[\textbf{(seq-a)}] For each sort equality axiom $\textbf{X} \vdashcustom U \equiv  U' \tp$ we consider
	$$
	\inferrule{\textbf{X} \vdashcustom U \tp \\ \textbf{X} \vdashcustom U' \tp}{\textbf{X} \vdashcustom U \equiv  U' \tp}.
	$$
	
	\item[\textbf{(teq-a)}] For each term equality axiom $\textbf{X} \vdashcustom u \equiv  u':U$, we consider
	$$
	\inferrule{\textbf{X} \vdashcustom u:U \\ \textbf{X} \vdashcustom u':U}{\textbf{X} \vdashcustom u \equiv  u':U}.
	$$	
	
	\item[\textbf{(s-sub)}] Suppose given a sort axiom $J$, say $\textbf{X} \vdashcustom T(x_1, ..., x_n) \tp$, where $\textbf{X}$ is $x_1:X_1, ..., x_n:X_n$ (possibly with $n = 0$), a precontext $\textbf{Y}$, and term expressions $f_1$, ..., $f_n$. For $i = 1$, ..., $n$, let $J_i$ be the judgment
	$$
	\textbf{Y} \vdashcustom f_i : X_i[f_1 \mid x_1, ..., f_{i-1} \mid x_{i-1}].
	$$
	We consider
	$$
	\inferrule{J \\ \textbf{Y} \ctx \\ J_1 \\ J_2 \\ \cdots \\ J_n}{\textbf{Y} \vdashcustom T(f_1, ..., f_n) \tp}.
	$$
	
	\item[\textbf{(t-sub)}] Suppose given a term axiom $J$, say $\textbf{X} \vdashcustom t(x_1, ..., x_n):U$ where $\textbf{X}$ is $x_1:X_1, ..., x_n:X_n$ (possibly with $n = 0$), a precontext $\textbf{Y}$, and term expressions $f_1$, ..., $f_n$. For $i = 1$, ..., $n$, let $J_i$ be the judgment
	$$
	\textbf{Y} \vdashcustom f_i : X_i[f_1 \mid x_1, ..., f_{i-1} \mid x_{i-1}].
	$$
	Writing $J'$ for $\textbf{Y} \vdashcustom U[f_1 \mid x_1, ..., f_n \mid x_n] \tp$, we consider
	$$
	\inferrule{J \\ J' \\ J_1 \\ J_2 \\ \cdots \\ J_n}{\textbf{Y} \vdashcustom t(f_1, ..., f_n):U[f_1 \mid x_1, ..., f_n \mid x_n]}.
	$$
	
	\item[\textbf{(seq-sub-1)}] Suppose given a sort equality axiom $J$, say $\textbf{X} \vdashcustom U \equiv  U' \tp$ where $\textbf{X} = (x_1:X_1, ..., x_n:X_n)$. Suppose given a precontext $\textbf{Y}$, term expressions $f_1$, ..., $f_n$, and, for $i = 1$, ..., $n$, let $J_i$ be the judgment
	$$
	\textbf{Y} \vdashcustom f_i: X_i[f_1 \mid x_1, ..., f_{i-1} \mid x_{i-1}].
	$$
	Then, writing $J'$ for $\textbf{Y} \vdashcustom U[f_1 \mid x_1, ..., f_n \mid x_n] \tp$ and $J''$ for $\textbf{Y} \vdashcustom U'[f_1 \mid x_1, ..., f_n \mid x_n] \tp$, we consider
	$$
	\inferrule{J \\ J' \\ J'' \\ J_1 \\ \cdots \\ J_n}{\textbf{Y} \vdashcustom U[f_1 \mid x_1, ..., f_n \mid x_n] \equiv  U'[f_1 \mid x_1, ..., f_n \mid x_n] \tp}.
	$$
	
	\item[\textbf{(seq-sub-2)}]
	
	Suppose given a sort axiom $J$, say $\textbf{X} \vdashcustom T(x_1, ..., x_n) \tp$ where $\textbf{X} = (x_1:X_1, ..., x_n:X_n)$. Suppose given a precontext $\textbf{Y}$, term expressions $f_1$, ..., $f_n$, $g_1$, ..., $g_n$, and, for $i = 1$, ..., $n$, let $J_i$ be the judgment
	$$
	\textbf{Y} \vdashcustom f_i \equiv  g_i : X_i[f_1 \mid x_1, ..., f_{i-1} \mid x_{i-1}].
	$$
	Then, writing $J'$ for $\textbf{Y} \vdashcustom T(f_1, ..., f_n) \tp$ and $J''$ for $\textbf{Y} \vdashcustom T(g_1, ..., g_n) \tp$, we consider
	$$
	\inferrule{J \\ J' \\ J'' \\ J_1 \\ \cdots \\ J_n}{\textbf{Y} \vdashcustom T(f_1, ..., f_n) \equiv T(g_1, ..., g_n) \tp}.
	$$
	
	\item[\textbf{(teq-sub-1)}]
	
	Suppose given a term equality axiom $J$, say $\textbf{X} \vdashcustom u \equiv  u' : U$ where $\textbf{X}$ is $x_1:X_1, ..., x_n:X_n$. Suppose given a precontext $\textbf{Y}$, term expressions $f_1$, ..., $f_n$, and, for $i = 1$, ..., $n$, let $J_i$ be the judgment
	$$
	\textbf{Y} \vdashcustom f_i : X_i[f_1 \mid x_1, ..., f_{i-1} \mid x_{i-1}].
	$$
	Then, writing $J'$ for $\textbf{Y} \vdashcustom u[f_1 \mid x_1, ..., f_n \mid x_n]:U[f_1 \mid x_1, ..., f_n \mid x_n]$ and $J''$ for $\textbf{Y} \vdashcustom u'[f_1 \mid x_1, ..., f_n \mid x_n]:U[f_1 \mid x_1, ..., f_n \mid x_n]$, we consider
	$$
	\inferrule{J \\ J' \\ J'' \\ J_1 \\ \cdots \\ J_n}{\textbf{Y} \vdashcustom u[f_1 \mid x_1, ..., f_n \mid x_n] \equiv  u'[f_1 \mid x_1, ..., f_n \mid x_n] : U[f_1 \mid x_1, ..., f_n \mid x_n]}.
	$$
	
	\item[\textbf{(teq-sub-2)}] Suppose given a term axiom $J$, say $\textbf{X} \vdashcustom t(x_1, ..., x_n): U$ where $\textbf{X}$ is $x_1:X_1, ..., x_n:X_n$. Suppose given a precontext $\textbf{Y}$, term expressions $f_1$, ..., $f_n$, $g_1$, ..., $g_n$, and, for $i = 1$, ..., $n$, let $J_i$ be the judgment
	$$
	\textbf{Y} \vdashcustom f_i \equiv  g_i : X_i[f_1 \mid x_1, ..., f_{i-1} \mid x_{i-1}].
	$$
	Then, writing $J'$ for $\textbf{Y} \vdashcustom t(f_1, ..., f_n):U[f_1 \mid x_1, ..., f_n \mid x_n]$ and $J''$ for $\textbf{Y} \vdashcustom t(g_1, ..., g_n):U[f_1 \mid x_1, ..., f_n \mid x_n]$, we consider
	$$
	\inferrule{J \\ J' \\ J'' \\ J_1 \\ \cdots \\ J_n}{\textbf{Y} \vdashcustom t(f_1, ..., f_n) \equiv t(g_1, ..., g_n) : U[f_1 \mid x_1, ..., f_n \mid x_n]}.
	$$
\end{enumerate}

\end{definition}

\subsection{Derivable judgments}

We will now define \emph{derivable judgments} with respect to a pretheory $\bbT$.

\begin{definition}
\label{def: derivable judgment}
Let $\bbT$ be a pretheory, and recall the definition of the set of inference steps $\mathscr R_{gat}$ given above.

We have an increasing sequence $(\mathscr D_n)_{n \ge 0}$ of subsets of $\mathscr R_{gat}$ constructed recursively as follows:
\begin{enumerate}[label=(\roman*)]
	\item $\mathscr D_0$ is the empty set.
	
	\item For each $n \ge 0$, $\mathscr D_{n+1}$ is the union of $\mathscr D_n$ with the set of all inference steps
	$$
	\inferrule{J_1 \\ \cdots \\ J_k}{J}
	$$
	(where $k \ge 0$) in $\mathscr R_{gat}$ such that $J_1$, ..., $J_k$ occur as the conclusion of some inference step in $\mathscr D_n$.
\end{enumerate}

\begin{definition}
\label{def: derivable judgment, height, initial inference}
A judgment $J$ is \emph{derivable} if it is the conclusion of a derivation step in $\mathscr D_n$ for some $n \ge 0$. The \emph{height} of $J$ is defined as the smallest such $n$. We will denote it by $\Ht(J)$.

An \emph{initial inference} of a derivable judgment $J$ is an inference step in $\mathscr D_{\Ht(J)}$ whose conclusion is $J$. (Hence whose premises have height $< \Ht(J)$.)
\end{definition}

A \emph{context} is a precontext $\textbf{X}$ such that $\textbf{X} \ctx$ is derivable. The height of $\textbf{X}$ is defined as follows: expressing $\textbf{X}$ as $x_1:X_1, ..., x_n:X_n$, we let $\Ht(\textbf{X})$ be $0$ if $n = 0$, and $\Ht(x_1:X_1, ..., x_{n-1}:X_{n-1} \vdashcustom X_n \tp)$ otherwise. (Note, in particular, that $\Ht(\textbf{X} \ctx) = \Ht(\textbf{X}) + 1$.)
\end{definition}

\begin{definition}
A \emph{generalized algebraic theory} (which we will often refer to simply as a \emph{theory}, or abbreviate as \textsc{gat}) is a pretheory in which every axiom is derivable.
\end{definition}

This means, for instance, that if $J$ is a sort axiom, say $\textbf{X} \vdashcustom T(x_1, ..., x_n) \tp$, then the premises of the corresponding instance of (s-a), i.e.
$$
\textbf{X} \ctx, \quad \textbf{X} \vdashcustom x_1:X_1, \quad \ldots, \quad \textbf{X} \vdashcustom x_n:X_n,
$$
are derivable. Similarly for the other three kinds of axioms.

\subsection{Other kinds of judgment, the canonical sort of a term, and context morphisms}

\label{subsec: other kinds of judgment}

We will also consider the following structures:

\begin{itemize}
	\item An \emph{partial term judgment} is a pair $(\textbf{X},u)$ consisting of a precontext $\textbf{X}$ and a term expression $u$. We denote it by
	$$
	\textbf{X} \vdashcustom u \tm.
	$$
	
	We say that it is \emph{derivable} if there exists a sort expression $U$ such that $\textbf{X} \vdashcustom u:U$ is derivable.
	
	\item An \emph{partial term equality judgment} is a triple $(\textbf{X},u,u')$ of a precontext $\textbf{X}$ and term expressions $u$, $u'$. We denote it by
	$$
	\textbf{X} \vdashcustom u \equiv u' \tm.
	$$
	
	We say that it is \emph{derivable} if there exists a sort expression $U$ such that $\textbf{X} \vdashcustom u \equiv u':U$ is derivable.
	
	\begin{remark}
		The absence of a sort symbol from these judgments is not intended to suggest that it is meaningful to specify terms that do not have a sort. The judgment $\textbf{X} \vdashcustom u \tm$ can be viewed, by the definition of when it is derivable, as an abbreviation for ``$u$ is a term of \emph{some} sort $U$ in context $\textbf{X}$". In fact, we make sense of $u$ being a ``term in context $\textbf{X}$" at the same time as we assign a sort to it.
		
		But for $\textbf{X} \vdashcustom u \tm$ to be semantically unambiguous, it is crucial that the syntax of \textsc{gat}s be such that if $\textbf{X} \vdashcustom u:U$ and $\textbf{X} \vdashcustom u:V$ are derivable, then $\textbf{X} \vdashcustom U \equiv V \tp$ is derivable. This is indeed the case, as noted by Cartmell in \cite{Car78}, page 1.35, as a corollary of the ``derivation lemma".
		
		Considering such judgments turned out to be quite useful in the present text for the following reason: if we have derivable judgments $\textbf{X} \vdashcustom u:U$, \;\;$\textbf{X} \vdashcustom v:V$, and $\textbf{X} \vdashcustom U \equiv V \tp$, then both $\textbf{X} \vdashcustom u \equiv v:U$ and $\textbf{X} \vdashcustom u \equiv v:V$ are well-formed (in the sense of \cite{Car78}), and one is derivable precisely when the other is. The judgment $\textbf{X} \vdashcustom u \equiv v \tm$ encodes the meaning of either without the need of artificially choosing a sort expression such as $U$, $V$, etc.
	\end{remark}
	
	\item A \emph{context equality judgment} is a pair $(\textbf{X},\textbf{Y})$ consisting of two precontexts of the same length, say $\textbf{X} = (x_1:X_1, ..., x_n:X_n)$ and $\textbf{Y} = (y_1:Y_1, ..., y_n:Y_n)$. We denote it by
	$$
	\textbf{X} \equiv \textbf{Y} \ctx.
	$$
	We say that it is \emph{derivable} if $\textbf{X} \ctx$ and $\textbf{Y} \ctx$ are derivable and, for $1 \le i \le n$, the judgment
	$$
	\partial_{i-1}\textbf{X} \vdashcustom X_i \equiv Y_i[x_1 \mid y_1, ..., x_{i-1} \mid y_{i-1}] \tp
	$$
	is derivable.
	
	\item Consider a derivable judgment $x_1:X_1, ..., x_n:X_n \vdashcustom u \tm$. The \emph{canonical sort} of $u$ (with the context implicit), denoted by $\Type(u)$, is defined as follows: if $u$ is a variable $x_i$, then $\Type(u) = X_i$; if $u = t(f_1, ..., f_k)$ with $t$ introduced by the axiom $\textbf{A} \vdashcustom t(a_1, ..., a_k):V$, then $\Type(u) = V[f_1 \mid a_1, ..., f_k \mid a_k]$.
	
	(Later we will check that $x_1:X_1, ..., x_n:X_n \vdashcustom u:\Type(u)$ is derivable.)
	
	\item A \emph{premorphism} is a triple $(\textbf{X},\textbf{Y},\textbf{f})$ consisting of precontexts $\textbf{X}$, $\textbf{Y} = (y_1:Y_1, ..., y_n:Y_n)$ and a sequence of term expressions $\textbf{f} = (f_1, ..., f_n)$. We denote it by
	$$
	\textbf{f}:\textbf{X} \longrightarrow \textbf{Y}.
	$$
	
	We say that it is \emph{derivable} or that it is a \emph{(context) morphism} if $\textbf{X} \ctx$ and $\textbf{Y} \ctx$ are derivable and, for $1 \le i \le n$, the judgment
	$$
	\textbf{X} \vdashcustom f_i: Y_i[f_1 \mid y_1, ..., f_{i-1} \mid y_{i-1}]
	$$
	is derivable.
	
	\item A \emph{premorphism equality} is a quadruple $(\textbf{X}, \textbf{Y}, \textbf{f}, \textbf{g})$ consisting of precontexts $\textbf{X}$, $\textbf{Y} = (y_1:Y_1, ..., y_n:Y_n)$ and sequences of term expressions $\textbf{f} = (f_1, ..., f_n)$, $\textbf{g} = (g_1, ..., g_n)$. We denote it by
	$$
	\textbf{f} \equiv \textbf{g}: \textbf{X} \longrightarrow \textbf{Y}.
	$$
	We say that it is \emph{derivable} or that it is a \emph{(context) morphism equality} if $\textbf{f}:\textbf{X} \rightarrow \textbf{Y}$ and $\textbf{g}:\textbf{X} \rightarrow \textbf{Y}$ are derivable and, for $1 \le i \le n$, the judgment
	$$
	\textbf{X} \vdashcustom f_i \equiv g_i: Y_i[f_1 \mid y_1, ..., f_{i-1} \mid y_{i-1}]
	$$
	is derivable.
\end{itemize}

We extend the height function $\Ht$, previously defined for standard judgments and context judgments, to the newly introduced ones in the following way:

\begin{itemize}
	\item For derivable partial term judgments,
	$$
	\Ht(\textbf{X} \vdashcustom u \tm) = \min_U \Ht(\textbf{X} \vdashcustom u:U)
	$$
	where the minimum is taken over all sort expressions $U$ such that $\textbf{X} \vdashcustom u:U$ is derivable.
	
	\item For derivable partial term equality judgments,
	$$
	\Ht(\textbf{X} \vdashcustom u \equiv u' \tm) = \min_U \Ht(\textbf{X} \vdashcustom u \equiv u':U)
	$$
	where the minimum is taken over all sort expressions $U$ such that $\textbf{X} \vdashcustom u \equiv u':U$ is derivable.
	
	\item For derivable context equality judgments,
	$$
	\Ht(\textbf{X} \equiv \textbf{Y} \ctx) = \max\{\Ht(\textbf{X}),\;\; \Ht(\textbf{Y}), \;\;\Ht(\partial_0 \textbf{X} \vdashcustom X_1 \equiv Y'_1 \tp),\;\;\; ...,\;\;\; \Ht(\partial_{n-1} \textbf{X} \vdashcustom X_n \equiv Y'_n \tp)\}
	$$
	where $\textbf{X} = (x_1:X_1, ..., x_n:X_n)$, $\textbf{Y} = (y_1:Y_1, ..., y_n:Y_n)$, and $Y'_i = Y_i[x_1 \mid y_1, ..., x_{i-1} \mid y_{i-1}]$.
	
	\item For morphisms,
	$$
	\Ht(\textbf{f}:\textbf{X} \rightarrow \textbf{Y}) = \max\{\Ht(\textbf{X}),\;\; \Ht(\textbf{Y}),\;\; \Ht(\textbf{X} \vdashcustom f_1:Y'_1),\;\; ...,\;\; \Ht(\textbf{X} \vdashcustom f_n:Y'_n)\}
	$$
	where $\textbf{Y} = (y_1:Y_1, ..., y_n:Y_n)$ and $Y'_i = Y_i[f_1 \mid y_1, ..., f_{i-1} \mid y_{i-1}]$.
	
	\item For morphism equalities,
	$$
	\Ht(\textbf{f} \equiv \textbf{g}:\textbf{X} \rightarrow \textbf{Y}) = \max\{\Ht(\textbf{f}:\textbf{X} \rightarrow \textbf{Y}),\;\; \Ht(\textbf{g}:\textbf{X} \rightarrow \textbf{Y}),\;\; \Ht(\textbf{X} \vdashcustom f_1 \equiv g_1: Y'_1),\;\; ..., \;\; \Ht(\textbf{X} \vdashcustom f_n \equiv g_n:Y'_n)\}
	$$
	where $\textbf{Y} = (y_1:Y_1, ..., y_n:Y_n)$ and $Y'_i = Y_i[f_1 \mid y_1, ..., f_{i-1} \mid y_{i-1}]$.
\end{itemize}

\begin{notation}
Given a sequence of term expressions $\textbf{f} = (f_1, ..., f_n)$, a context $\textbf{X} = (x_1:X_1, ..., x_n:X_n)$, and an expression $w$ that only contains variables among $x_1$, ..., $x_n$, we write $w[\textbf{f}]$ for $w[f_1 \mid x_1, ..., f_n \mid x_n]$, with $\textbf{X}$ implicit. We will use this, for example, when $\textbf{f}$ is a morphism (or premorphism) from some context $\textbf{Y}$ to $\textbf{X}$.

For $i = 0$, ..., $n$, we let $\partial_i\textbf{f} = (f_1, ..., f_i)$. Note that for a premorphism (resp. morphism) $\textbf{f}:\textbf{X} \rightarrow \textbf{Y}$, we have a premorphism (resp. morphism) $\partial_i\textbf{f}:\textbf{X} \rightarrow \partial_i\textbf{Y}$.
\end{notation}

\begin{definition}
\label{def: composite of context morphisms}
Suppose given contexts $\textbf{X} = (x_1:X_1, ..., x_m:X_m)$, $\textbf{Y} = (y_1:Y_1, ..., y_n:Y_n)$, and $\textbf{Z} = (z_1:Z_1, ..., z_p:Z_p)$, and context morphisms
$$
\textbf{f} = (f_1, ..., f_n):\textbf{X} \longrightarrow \textbf{Y},
$$
$$
\textbf{g} = (g_1, ..., g_p):\textbf{Y} \longrightarrow \textbf{Z}.
$$
Their \emph{composite} is the premorphism $(\textbf{X}, \textbf{Z},\textbf{g} \circ \textbf{f})$ where the tuple of expressions $\textbf{g} \circ \textbf{f}$ is given by
$$
(\textbf{g} \circ \textbf{f})_i = g_i[\textbf{f}] = g_i[f_1 \mid y_1, ..., f_n \mid y_n]
$$
for $1 \le i \le p$.
\end{definition}

It is true, but not clear at this point, that $\textbf{g} \circ \textbf{f}$ is a morphism from $\textbf{X}$ to $\textbf{Z}$. This will be verified in Corollary \ref{cor: composite is a morphism} as a consequence of Proposition \ref{prop: substitution lemma}, which states that derivable judgments are closed under substitution along context morphisms.

\subsection{Properties of the height function}

We now verify some properties of the height of derivable judgments, contexts, etc., as defined above.

\begin{proposition}
\label{prop: properties height}
Let $\bbT$ be a pretheory. The following hold:
\begin{enumerate}[label=(\alph*)]
	\item Every derivable judgment has height at least $1$. The only context of height $0$ is $\varnothing$, and the only context morphism of height $0$ is the identity $():\varnothing \rightarrow \varnothing$.
	
	\item For a derivable sort judgment $\textbf{X} \vdashcustom U \tp$ we have
	$$
	\Ht(\textbf{X} \vdashcustom U \tp) > \Ht(\textbf{X}).
	$$
	As a consequence, for a length-$n$ context $\textbf{X}$ we have $\Ht(\partial_i\textbf{X}) < \Ht(\textbf{X})$ for $0 \le i < n$.
	
	\item For a derivable sort equality judgment $\textbf{X} \vdashcustom U = U' \tp$ we have
	$$
	\Ht(\textbf{X} \vdashcustom U \equiv U' \tp) > \Ht(\textbf{X} \vdashcustom U \tp),
	$$
	$$
	\Ht(\textbf{X} \vdashcustom U \equiv U' \tp) > \Ht(\textbf{X} \vdashcustom U' \tp).
	$$
	
	\item For a derivable term judgment $\textbf{X} \vdashcustom u:U$ we have
	$$
	\Ht(\textbf{X} \vdashcustom u:U) > \Ht(\textbf{X} \vdashcustom U \tp).
	$$
	
	\item For a derivable term equality judgment $\textbf{X} \vdashcustom u \equiv u':U$ we have
	$$
	\Ht(\textbf{X} \vdashcustom u \equiv u':U) > \Ht(\textbf{X} \vdashcustom u:U),
	$$
	$$
	\Ht(\textbf{X} \vdashcustom u \equiv u':U) > \Ht(\textbf{X} \vdashcustom u':U).
	$$
	
	\item If the judgment $\textbf{X} \vdashcustom u:U$ is derivable, so is $\textbf{X} \vdashcustom u:\Type(u)$. Moreover, we have
	$$
	\Ht(\textbf{X} \vdashcustom u:U) \ge \Ht(\textbf{X} \vdashcustom u:\Type(u)),
	$$
	$$
	\Ht(\textbf{X} \vdashcustom u:U) \ge \Ht(\textbf{X} \vdashcustom \Type(u) \equiv U \tp),
	$$
	and the first inequality is strict if and only if $U$ and $\Type(u)$ are distinct.
	
	\item If $\textbf{X} \vdashcustom u \tm$ is derivable, then $\Ht(\textbf{X} \vdashcustom u \tm) = \Ht(\textbf{X} \vdashcustom u:\Type(u))$.
	
	\item For a derivable judgment $\textbf{X} \vdashcustom T(f_1, ..., f_k) \tp$, we have
	$$
	\Ht(\textbf{X} \vdashcustom T(f_1, ..., f_k) \tp) > \Ht(\textbf{X} \vdashcustom f_i:\Type(f_i))
	$$
	for $1 \le i \le k$. Also, if $\textbf{X} \vdashcustom T(f_1, ..., f_k)$ is not an axiom, then, letting $\textbf{A} \vdashcustom T(a_1, ..., a_k) \tp$ be the axiom that introduces $T$ and $\textbf{f} = (f_1, ..., f_k)$,
	$$
	\Ht(\textbf{X} \vdashcustom T(f_1, ..., f_k) \tp) > \Ht(\textbf{f}:\textbf{X} \rightarrow \textbf{A}).
	$$
	
	\item For a derivable judgment $\textbf{X} \vdashcustom t(f_1, ..., f_k):U$, we have
	$$
	\Ht(\textbf{X} \vdashcustom t(f_1, ..., f_k):U) > \Ht(\textbf{X} \vdashcustom f_i \tm)
	$$
	for $1 \le i \le k$.
	
	\item For a context morphism $\textbf{f}:\textbf{X} \rightarrow \textbf{Y}$, where $\textbf{f} = (f_1, ..., f_n)$,
	$$
	\Ht(\textbf{f}:\textbf{X} \rightarrow \textbf{Y}) \ge \Ht(\textbf{X} \vdashcustom f_i \tm)
	$$
	for $1 \le i \le n$. Also, for a morphism equality $\textbf{f} \equiv \textbf{g}:\textbf{X} \rightarrow \textbf{Y}$ we have
	$$
	\Ht(\textbf{f} \equiv \textbf{g}:\textbf{X} \rightarrow \textbf{Y}) \ge \Ht(\textbf{f}:\textbf{X} \rightarrow \textbf{Y}), \;\; \Ht(\textbf{g}:\textbf{X} \rightarrow \textbf{Y}).
	$$
\end{enumerate}
\end{proposition}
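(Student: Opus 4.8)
The plan is to prove all of (a)--(j) simultaneously by strong induction on the height of the judgment appearing on the left-hand side of each inequality. The single structural fact that makes this work is that, by the definition of an initial inference, every premise of an initial inference of a derivable judgment $J$ has height strictly smaller than $\Ht(J)$; and each rule of $\mathscr{R}_{gat}$ was designed precisely so that the judgment on the right-hand side of the relevant inequality occurs as a premise (or is dominated in height by one). So for the inductive step I would fix $J$ of height $N$, choose an initial inference, enumerate the finitely many rules that can conclude a judgment of $J$'s kind, and in each case read off the desired inequality, invoking the inductive hypothesis whenever a premise is itself an equality judgment whose height must be compared. First I would dispose of (a): since $\mathscr{D}_0 = \emptyset$, no judgment is a conclusion in $\mathscr{D}_0$, so every derivable judgment has height $\ge 1$, and the claims about contexts and morphisms of height $0$ follow by unwinding the definitions of $\Ht(\textbf{X})$ and $\Ht(\textbf{f}:\textbf{X} \to \textbf{Y})$ as maxima.

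Properties (b)--(e) are then largely direct. For (b) I would use that both (s-a) and (s-sub) carry the context judgment $\textbf{X} \ctx$ (resp. $\textbf{Y} \ctx$) of the conclusion among their premises, so $\Ht(\textbf{X}) < \Ht(\textbf{X}) + 1 = \Ht(\textbf{X} \ctx) < \Ht(\textbf{X} \vdash U \tp)$; the corollary that $\Ht(\partial_i\textbf{X})$ is strictly increasing in $i$ then follows by a secondary induction on context length. For (c), (d), (e) the reflexivity, axiom, and substitution rules supply the target judgment literally among the premises, while the symmetry/transitivity rules (s2), (s3), (t2), (t3) and the coercion rules (seq/t), (seq/teq) are handled by applying the inductive hypothesis to their (shorter) equality premises; e.g.\ the (seq/t) case of (d) uses (c) applied to the premise $\textbf{X} \vdash U \equiv U' \tp$, which has strictly smaller height.

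The crux of the argument, and where I expect the real work to lie, is (f). Here I would again examine an initial inference of $\textbf{X} \vdash u:U$, the key observation being that the rules (var), (t-a) and (t-sub) all force the concluded sort to be exactly $\Type(u)$, so the only way to obtain a sort $U \neq \Type(u)$ is through a final application of (seq/t). This dichotomy simultaneously yields the existence of $\textbf{X} \vdash u:\Type(u)$ (directly, or from the inductive hypothesis applied to the shorter premise $\textbf{X} \vdash u:U''$ of (seq/t)) and the ``iff'' in the strictness clause: if $U = \Type(u)$ the first inequality is an equality of identical judgments, while if $U \neq \Type(u)$ the final (seq/t) step strictly raises the height above $\Ht(\textbf{X} \vdash u:U'') \ge \Ht(\textbf{X} \vdash u:\Type(u))$. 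For the auxiliary inequality $\Ht(\textbf{X} \vdash u:U) \ge \Ht(\textbf{X} \vdash \Type(u) \equiv U \tp)$ I would chain $\Type(u) \equiv U''$ (from the inductive hypothesis) with the (seq/t) premise $U'' \equiv U$ via (s3); since both have height strictly below $\Ht(\textbf{X} \vdash u:U)$, the single transitivity step keeps the total within the bound. Property (g) is then immediate, since (f) shows the minimum defining $\Ht(\textbf{X} \vdash u \tm)$ is attained at $U = \Type(u)$.

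Finally, (h) and (i) concern the substitution rules. For a sort judgment $\textbf{X} \vdash T(f_1,\dots,f_k)\tp$ that is not an axiom, the initial inference must be (s-sub) (since (s-a) concludes only the canonical axiom), whose premises $J_i = (\textbf{X} \vdash f_i : A_i[\,\dots])$ give $\Ht(\textbf{X} \vdash f_i \tm) = \Ht(\textbf{X} \vdash f_i:\Type(f_i)) \le \Ht(J_i) < \Ht(J)$ by (g); the morphism bound then follows because $\Ht(\textbf{f}:\textbf{X}\to\textbf{A})$ is the maximum of exactly the quantities $\Ht(\textbf{X})$, $\Ht(\textbf{A})$ and $\Ht(J_i)$, each of which is $< \Ht(J)$ by (b) applied to $J$ and to the axiom. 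Property (i) is analogous for (t-a)/(t-sub), with the (seq/t) case handled by the inductive hypothesis, and (j) is immediate from the definitions of $\Ht$ for morphisms and morphism equalities as maxima, together with (g). The only genuine subtlety throughout is the bookkeeping in (f), so I would prove (f) in full before using it as a black box in (g)--(j).
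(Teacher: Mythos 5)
Your proposal matches the paper's proof essentially step for step: strong induction on height, case analysis on initial inferences, the $\Ht(\textbf{X}\ctx)$ premise for (b), the (seq/t)-versus-sort-forming-rule dichotomy carrying the existence and strictness claims in (f), and items (h)--(j) read off from the premises of the substitution rules together with (f) and (g). The only detail you leave implicit is the second inequality of (f) in the case $U = \Type(u)$, where one still needs $\Ht(\textbf{X} \vdash U \equiv U \tp) = \Ht(\textbf{X} \vdash U \tp) + 1 \le \Ht(\textbf{X} \vdash u:U)$ via (s1), (c) and (d) --- a one-line point that the paper does spell out.
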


\begin{proof}
\leavevmode
\begin{enumerate}[label=(\alph*)]
	\item These follow directly from the definition.
	
	\item Note that $\textbf{X} \vdashcustom U \tp$ is the conclusion of an inference step (s-a) or (s-sub). Choosing an initial inference, in either case $\textbf{X} \ctx$ is among the premises, so $\Ht(\textbf{X} \vdashcustom U \tp) > \Ht(\textbf{X} \ctx) > \Ht(\textbf{X})$.
	
	\item We proceed inductively: letting $J = (\textbf{X} \vdashcustom U \equiv U' \tp)$ and $\Ht(J) = h$, let us prove that
	$$
	\Ht(\textbf{X} \vdashcustom U \equiv U' \tp) > \Ht(\textbf{X} \vdashcustom U \tp),
	$$
	$$
	\Ht(\textbf{X} \vdashcustom U \equiv U' \tp) > \Ht(\textbf{X} \vdashcustom U' \tp)
	$$
	hold assuming that the analogous inequalities hold for $\textbf{Y} \vdashcustom V \equiv V' \tp$ whenever $\Ht(\textbf{Y} \vdashcustom V \equiv V' \tp) < h$. Note that $J$ has an initial inference of the form (s1), (s2), (s3), (seq-a), (seq-sub-1), or (seq-sub-2). Let $I$ be one such initial inference.
	
	If $I$ is (s1), (seq-a), (seq-sub-1) or (seq-sub-2), then it has $\textbf{X} \vdashcustom U \tp$ as a premise, so the desired inequalities hold trivially.
	
	If $I$ is (s2), then it has $\textbf{X} \vdashcustom U' \equiv U \tp$ as a premise. But by the induction hypothesis,
	$$
	\Ht(\textbf{X} \vdashcustom U' \equiv U \tp) > \Ht(\textbf{X} \vdashcustom U'), \;\; \Ht(\textbf{X} \vdashcustom U).
	$$
	If $I$ is (s3), it has premises $\textbf{X} \vdashcustom U \equiv U'' \tp$ and $\textbf{X} \vdashcustom U'' \equiv U' \tp$. It follows from the induction hypothesis that
	$$
	\Ht(\textbf{X} \vdashcustom U \equiv U' \tp) > \Ht(\textbf{X} \vdashcustom U \equiv U'' \tp) > \Ht(\textbf{X} \vdashcustom U \tp),
	$$
	$$
	\Ht(\textbf{X} \vdashcustom U \equiv U' \tp) > \Ht(\textbf{X} \vdashcustom U'' \equiv U' \tp) > \Ht(\textbf{X} \vdashcustom U' \tp).
	$$
	
	\item Let $I$ be an initial inference of $\textbf{X} \vdashcustom u:U$.
	
	If $I$ is (seq/t), it has a premise of the form $\textbf{X} \vdashcustom U' \equiv U \tp$. Then by item (c) we have
	$$
	\Ht(\textbf{X} \vdashcustom u:U) > \Ht(\textbf{X} \vdashcustom U' \equiv U \tp) > \Ht(\textbf{X} \vdashcustom U \tp).
	$$
	If $I$ is (var), (t-a) or (t-sub), it has $\textbf{X} \vdashcustom U \tp$ as a premise, so the inequality holds trivially.
	
	\item We will prove by induction on $h$ that the desired inequalities hold for all derivable $\textbf{X} \vdashcustom U \equiv U' \tp$ such that $\Ht(\textbf{X} \vdashcustom U \equiv U' \tp) = h$.
	
	Given $J = (\textbf{X} \vdashcustom U \equiv U' \tp)$, assume that the claim holds for $0$, ..., $\Ht(J) - 1$, and let $I$ be an initial inference of $J$.
	
	If $I$ is (t1), (seq/teq), (teq-a), (teq-sub-1) or (teq-sub-2), it has $\textbf{X} \vdashcustom u:U$ and $\textbf{X} \vdashcustom u':U$ among its premises, so the desired inequalities hold trivially.
	
	If $I$ is (t2), it has $\textbf{X} \vdashcustom u' \equiv u:U$ as a premise. But by the induction hypothesis, $\Ht(\textbf{X} \vdashcustom u' \equiv u:U) > \Ht(\textbf{X} \vdashcustom u':U)$, $\Ht(\textbf{X} \vdashcustom u:U)$.
	
	If $I$ is (t3), it has premises $\textbf{X} \vdashcustom u \equiv u'':U$ and $\textbf{X} \vdashcustom u'' \equiv u':U$, so the induction hypothesis yields
	$$
	\Ht(\textbf{X} \vdashcustom u \equiv u':U) > \Ht(\textbf{X} \vdashcustom u \equiv u'':U) > \Ht(\textbf{X} \vdashcustom u:U),
	$$
	$$
	\Ht(\textbf{X} \vdashcustom u \equiv u':U) > \Ht(\textbf{X} \vdashcustom u'' \equiv u':U) > \Ht(\textbf{X} \vdashcustom u':U).
	$$
	
	\item Let us prove by induction on $h$ that the claim holds whenever $\Ht(\textbf{X} \vdashcustom u:U) = h$.
	
	Given $J = (\textbf{X} \vdashcustom u:U)$ such that $\Ht(J) = h$, assume the statement for $0$, ..., $\Ht(J) - 1$, and let $I$ be an initial inference of $J$.
	
	If $I$ is (var), (t-a) or (t-sub), then $U = \Type(u)$. Hence the first inequality holds trivially, and the second one follows, using items (c) and (d), from
	$$
	\Ht(\textbf{X} \vdashcustom u:U) > \Ht(\textbf{X} \vdashcustom U \tp), \qquad \Ht(\textbf{X} \vdashcustom U \equiv U \tp) = \Ht(\textbf{X} \vdashcustom U \tp) + 1.
	$$
	
	If $I$ is (seq/t), it has premises $\textbf{X} \vdashcustom U' \equiv U \tp$ and $\textbf{X} \vdashcustom u:U'$. Now, the induction hypothesis implies that $\textbf{X} \vdashcustom u:\Type(u)$ is derivable and
	$$
	\Ht(\textbf{X} \vdashcustom u:U) > \Ht(\textbf{X} \vdashcustom u:U') \ge \Ht(\textbf{X} \vdashcustom u:\Type(u)).
	$$
	Also, since $\Ht(\textbf{X} \vdashcustom \Type(u) \equiv U' \tp)$, $\Ht(\textbf{X} \vdashcustom U' \equiv U \tp) < h$, an instance of (s3) yields $\Ht(\textbf{X} \vdashcustom \Type(u) \equiv U \tp) \le h$.
	
	\item It follows directly from (f) and the definition of $\Ht(\textbf{X} \vdashcustom u \tm)$.
	
	\item Let $I$ be an initial inference of $\textbf{X} \vdashcustom T(f_1, ..., f_k) \tp$. Then $I$ is of the form (s-a) or (s-sub) and, in either case, it has a premise $\textbf{X} \vdashcustom f_i:U$ for each $i = 1$, ..., $k$. Now, by item (f) we have
	$$
	\Ht(\textbf{X} \vdashcustom T(f_1, ..., f_k) \tp) > \Ht(\textbf{X} \vdashcustom f_i:U) \ge \Ht(\textbf{X} \vdashcustom f_i:\Type(f_i)).
	$$
	If $\textbf{X} \vdashcustom T(f_1, ..., f_k) \tp$ is not an axiom, then $I$ is (s-sub); if $\textbf{A} \vdashcustom T(a_1, ..., a_k) \tp$ is the axiom that introduces $T$, by using the above inequalities and comparing the premises of (s-sub) with the definition of $\Ht(\textbf{f}:\textbf{X} \rightarrow \textbf{A})$ we obtain
	$$
	\Ht(\textbf{X} \vdashcustom T(f_1, ..., t_k) \tp) > \Ht(\textbf{f}:\textbf{X} \rightarrow \textbf{A}).
	$$
	
	\item We will verify by induction on $h$ that the claim holds whenever $\Ht(\textbf{X} \vdashcustom t(f_1, ..., f_k):U) = h$.
	
	Given $J = (\textbf{X} \vdashcustom t(f_1, ..., f_k):U)$ such that $\Ht(J) = h$, assume the statement for $0$, ..., $\Ht(J) - 1$, and let $I$ be an initial inference of $J$.
	
	If $I$ is (seq/t), then it has a premise $\textbf{X} \vdashcustom t(f_1, ..., f_k):U'$, and by the induction hypothesis we obtain
	$$
	\Ht(\textbf{X} \vdashcustom t(f_1, ..., f_k):U) > \Ht(\textbf{X} \vdashcustom t(f_1, ..., f_k):U') \ge \Ht(\textbf{X} \vdashcustom f_i:\Type(f_i))
	$$
	for $i = 1$, ..., $k$. If $I$ is (t-a) or (t-sub), then it has a premise of the form $\textbf{X} \vdashcustom f_i:V$ for each $i$; by (f) and (g) we conclude that
	$$
	\Ht(\textbf{X} \vdashcustom t(f_1, ..., f_k):U) > \Ht(\textbf{X} \vdashcustom f_i:V) \ge \Ht(\textbf{X} \vdashcustom f_i:\Type(f_i)) = \Ht(\textbf{X} \vdashcustom f_i \tm).
	$$
	
	\item Immediate from (f), (g), and the definitions of $\Ht(\textbf{f}:\textbf{X} \rightarrow \textbf{Y})$ and $\Ht(\textbf{f} \equiv \textbf{g}:\textbf{X} \rightarrow \textbf{Y})$.
\end{enumerate}
\end{proof}

\subsection{The substitution property}

\begin{definition}
Let $\bbT$ be a generalized algebraic theory. We say that a derivable standard judgment $\textbf{X} \vdashcustom \blacksquare$ has the \emph{substitution property} if for every context morphism $\textbf{f}:\textbf{Y} \rightarrow \textbf{X}$ the judgment $\textbf{Y} \vdashcustom \blacksquare[\textbf{f}]$ is derivable.\footnote{Here, ``$\blacksquare$" stands for ``$U \tp$", ``$u:U$", ``$U \equiv U' \tp$", or ``$u \equiv u':U$", and ``$\blacksquare[\textbf{f}]$" denotes ``$U[\textbf{f}] \tp$", ``$u[\textbf{f}]:U[\textbf{f}]$", ``$U[\textbf{f}] \equiv U'[\textbf{f}] \tp$", or ``$u[\textbf{f}] \equiv u'[\textbf{f}]: U[\textbf{f}]$", respectively.}
\end{definition}

The following result is essentially the same as the \emph{substitution lemma} from \cite{Car86}:

\begin{proposition}
\label{prop: substitution lemma}
Every derivable standard judgment has the substitution property.
\end{proposition}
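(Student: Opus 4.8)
The plan is to prove the statement by strong induction on the height $\Ht(J)$ of the derivable standard judgment $J = (\textbf{X} \vdash \blacksquare)$, establishing the substitution property for $J$ with respect to \emph{all} context morphisms $\textbf{f}:\textbf{Y} \rightarrow \textbf{X}$ simultaneously. The engine of the argument is the purely syntactic substitution-commutation identity
\[
\big(w[g_1 \mid a_1, \ldots, g_n \mid a_n]\big)[\textbf{f}] = w\big[g_1[\textbf{f}] \mid a_1, \ldots, g_n[\textbf{f}] \mid a_n\big],
\]
valid for any expression $w$ whose variables lie among $a_1, \ldots, a_n$; I would record this first as a lemma by a straightforward induction on the structure of $w$. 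Fixing $J$ and $\textbf{f}$, I would then choose an initial inference of $J$, so that all of its premises have height strictly less than $\Ht(J)$ and hence, by the inductive hypothesis, already enjoy the substitution property, and case on the final rule of that inference.

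The context-preserving logical rules $(s1)$ through $(s3)$, $(t1)$ through $(t3)$, $(seq/t)$ and $(seq/teq)$ are immediate: substitution commutes with each such rule, so I would apply the inductive hypothesis to every premise (all living in context $\textbf{X}$) to obtain the $\textbf{f}$-substituted premises in context $\textbf{Y}$, and then reapply the same rule. For $(var)$, where $J$ is $\textbf{X} \vdash x_i : X_i$, the conclusion $\textbf{Y} \vdash f_i : X_i[\textbf{f}]$ is exactly one of the defining judgments of the morphism $\textbf{f}$, using that $X_i$ involves only $x_1, \ldots, x_{i-1}$ so that $X_i[\textbf{f}]$ agrees with $X_i[f_1 \mid x_1, \ldots, f_{i-1} \mid x_{i-1}]$. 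The axiom-introduction rules $(s\text{-}a)$, $(t\text{-}a)$, $(seq\text{-}a)$, $(teq\text{-}a)$ reduce to the corresponding substitution rules: for instance, if $J$ is the sort axiom $\textbf{X} \vdash T(x_1, \ldots, x_n) \tp$, then $\textbf{Y} \vdash T(f_1, \ldots, f_n) \tp$ is precisely an instance of $(s\text{-}sub)$ whose premises are the axiom itself, $\textbf{Y} \ctx$ (available since $\textbf{f}$ is a morphism), and the defining judgments of $\textbf{f}$; the term and equality cases additionally require a substituted sort premise such as $\textbf{Y} \vdash U[\textbf{f}] \tp$, which the inductive hypothesis supplies from the lower-height premise $\textbf{X} \vdash U \tp$.

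The substantive case, and the one I expect to be the main obstacle, is when $J$ is concluded by one of the substitution rules $(s\text{-}sub)$, $(t\text{-}sub)$, $(seq\text{-}sub\text{-}1/2)$, $(teq\text{-}sub\text{-}1/2)$. Here $\blacksquare$ already has the form $T(g_1, \ldots, g_n)$ (or $t(g_1, \ldots, g_n)$, etc.) for expressions $\textbf{g} = (g_1, \ldots, g_n)$ coming from an axiom $\textbf{A} \vdash T(a_1, \ldots, a_n)$, and the goal $\textbf{Y} \vdash T(g_1[\textbf{f}], \ldots, g_n[\textbf{f}])$ must be produced by a \emph{single} application of the same rule along the composite data $\textbf{g} \circ \textbf{f}$. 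The commutation identity reduces this to showing that $(g_1[\textbf{f}], \ldots, g_n[\textbf{f}])$ is again a morphism $\textbf{Y} \rightarrow \textbf{A}$; I would obtain each required judgment $\textbf{Y} \vdash g_i[\textbf{f}] : A_i[g_1[\textbf{f}] \mid a_1, \ldots, g_{i-1}[\textbf{f}] \mid a_{i-1}]$ by applying the inductive hypothesis to the premise $\textbf{X} \vdash g_i : A_i[g_1 \mid a_1, \ldots, g_{i-1} \mid a_{i-1}]$ (of strictly smaller height) and rewriting its sort via commutation. The delicate point is that I must \emph{not} invoke Corollary \ref{cor: composite is a morphism} (that composites of morphisms are morphisms), since that is downstream of this proposition; instead the morphism property of $\textbf{g} \circ \textbf{f}$ is reconstructed componentwise from the inductive hypothesis, which is exactly what keeps the argument non-circular. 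The equality-valued substitution rules are handled identically, the extra well-formedness premises $J'$ and $J''$ being substituted by the inductive hypothesis as well.
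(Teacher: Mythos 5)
Your proposal is correct and follows essentially the same route as the paper's proof: induction on derivations (equivalently, the height), with the logical rules handled by commuting substitution past the rule, the axiom rules reduced to the corresponding substitution rules, and the substitution rules closed up by re-instantiating them along the composite sequence, whose morphism property is recovered componentwise from the inductive hypothesis rather than from Corollary \ref{cor: composite is a morphism} — exactly as the paper does. Your explicit isolation of the syntactic commutation identity as a preliminary lemma is a presentational difference only; the paper uses it silently.
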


\begin{proof}[Proof sketch]
It suffices to check that for any inference step in the list from Definition \ref{def: GAT inference rules} whose premises are derivable and whose conclusion is a standard judgment (i.e. we do not consider (ctx)), the conclusion has the substitution property provided that the premises do.

We have different approaches depending on the kind of inference rule:

\begin{itemize}
	\item For an inference rule among (s1), (s2), (s3), (t1), (t2), (t3), (seq/t), (seq/teq), we apply substitution to each of the premises in order to obtain an inference step of the same kind. For example, suppose that each premise in
	$$
	\inferrule{\textbf{X} \vdashcustom U \equiv U' \tp \\ \textbf{X} \vdashcustom u \equiv u':U \\ \textbf{X} \vdashcustom u:U' \\ \textbf{X} \vdashcustom u':U'}{\textbf{X} \vdashcustom u \equiv u':U'}\text{(seq/teq)}
	$$
	has the substitution property, and consider a context morphism $\textbf{f}:\textbf{Y} \rightarrow \textbf{X}$. Then the judgments
	$$
	\textbf{Y} \vdashcustom U[\textbf{f}] \equiv U'[\textbf{f}] \tp \qquad \textbf{Y} \vdashcustom u[\textbf{f}] \equiv u'[\textbf{f}]:U[\textbf{f}] \qquad \textbf{Y} \vdashcustom u[\textbf{f}]:U'[\textbf{f}] \qquad \textbf{Y} \vdashcustom u'[\textbf{f}]:U'[\textbf{f}]
	$$
	are derivable, so by (seq/teq) we can derive $\textbf{Y} \vdashcustom u[\textbf{f}] \equiv u'[\textbf{f}]: U'[\textbf{f}]$. The other inference rules are treated analogously.
	
	\item For $\textbf{X} = (x_1:X_1, ..., x_n:X_n)$, suppose that the premise of
	$$
	\inferrule{\textbf{X} \vdashcustom X_i \tp}{\textbf{X} \vdashcustom x_i:X_i}\text{(var)}
	$$
	has the substitution property. For a morphism $\textbf{f} = (f_1, ..., f_n):\textbf{Y} \rightarrow \textbf{X}$, we have that $\textbf{Y} \vdashcustom x_i[\textbf{f}]:X_i[\textbf{f}]$ is $\textbf{Y} \vdashcustom f_i:X_i[\textbf{f}]$, which is derivable by definition of a context morphism.
	
	\item For an inference rule among (s-a), (t-a), (seq-a), (teq-a), applying substitution to its conclusion yields a derivable judgment by (s-sub), (t-sub), (seq-sub-1), (teq-sub-1), respectively. For example, consider a context $\textbf{X} = (x_1:X_1, ..., x_n:X_n)$, an axiom $J = (\textbf{X} \vdashcustom u \equiv u':U)$, and suppose that each premise in
	$$
	\inferrule{\textbf{X} \vdashcustom u:U \\ \textbf{X} \vdashcustom u':U}{\textbf{X} \vdashcustom u \equiv  u':U}\text{(teq-a)}
	$$
	has the substitution property. Given a context morphism $\textbf{f}:\textbf{Y} \rightarrow \textbf{X}$, we obtain derivable judgments
	$$
	J'=(\textbf{Y} \vdashcustom u[\textbf{f}]:U[\textbf{f}]), \qquad J''=(\textbf{Y} \vdashcustom u'[\textbf{f}]:U[\textbf{f}]).
	$$
	Also, for $i = 1$, ..., $n$ let $J_i$ be the (derivable) judgment $\textbf{Y} \vdashcustom f_i: X_i[\textbf{f}]$. Then, by (teq-sub-1), we can use $J$, $J'$, $J''$, $J_1$, ..., $J_n$ to derive $\textbf{Y} \vdashcustom u[\textbf{f}] \equiv u'[\textbf{f}]: U[\textbf{f}]$.
	
	The other kinds of axioms are treated similarly.
	
	\item For an inference rule among (s-sub), (t-sub), (seq-sub-1), (seq-sub-2), (teq-sub-1), (teq-sub-2), an application of substitution to its conclusion can be described by substitution of an axiom along a certain composite premorphism.
	
	To illustrate how this is done, let us work out the cases (teq-sub-1) and (teq-sub-2). The argument for (s-sub), (t-sub), (seq-sub-1) and (seq-sub-2) is analogous.

	For (teq-sub-1), suppose given an axiom $J = (\textbf{X} \vdashcustom u \equiv u':U)$, where $\textbf{X} = (x_1:X_1, ..., x_n:X_n)$, and a context morphism $\textbf{f} = (f_1, ..., f_n):\textbf{Y} \rightarrow \textbf{X}$. For $i = 1$, ..., $n$, let $J_i$ be
	$$
	\textbf{Y} \vdashcustom f_i:X_i[\textbf{f}].
	$$
	Let $J' = (\textbf{Y} \vdashcustom u[\textbf{f}]:U[\textbf{f}])$ and $J'' = (\textbf{Y} \vdashcustom u'[\textbf{f}]:U[\textbf{f}])$.
	
	Assume that each premise in
	$$
	\inferrule{J \\ J' \\ J'' \\ J_1 \\ \cdots \\ J_n}{\textbf{Y} \vdashcustom u[\textbf{f}] \equiv u'[\textbf{f}]:U[\textbf{f}]}\text{(teq-sub-1)}
	$$
	has the substitution property, and consider a morphism $\textbf{g}:\textbf{Z} \rightarrow \textbf{Y}$. Then substitution along $\textbf{g}$ turns
	\begin{itemize}
		\item $J'$ into a derivable judgment $\textbf{Z} \vdashcustom u[\textbf{f}][\textbf{g}]: U[\textbf{f}][\textbf{g}]$, which is
		$$
		\textbf{Z} \vdashcustom u[\textbf{f} \circ \textbf{g}]: U[\textbf{f} \circ \textbf{g}].
		$$
		
		\item $J''$ into a derivable judgment $\textbf{Z} \vdashcustom u'[\textbf{f}][\textbf{g}]: U[\textbf{f}][\textbf{g}]$, which is
		$$
		\textbf{Z} \vdashcustom u'[\textbf{f} \circ \textbf{g}]: U[\textbf{f} \circ \textbf{g}].
		$$
		
		\item $J_i$ into a derivable judgment $\textbf{Z} \vdashcustom f_i[\textbf{g}] :X_i[\textbf{f}][\textbf{g}]$, which is
		$$
		\textbf{Z} \vdashcustom (\textbf{f} \circ \textbf{g})_i : X_i[\textbf{f} \circ \textbf{g}].
		$$
	\end{itemize}
	Now, by (teq-sub-1) we can derive $\textbf{Z} \vdashcustom u[\textbf{f} \circ \textbf{g}] \equiv u'[\textbf{f} \circ \textbf{g}]: U[\textbf{f} \circ \textbf{g}]$. This proves that $\textbf{Y} \vdashcustom u[\textbf{f}] \equiv u'[\textbf{f}]: U[\textbf{f}]$ has the substitution property.
	
	\vspace{0.5em}
	
	For (teq-sub-2), suppose given an axiom $J = (\textbf{X} \vdashcustom u :U)$, where $\textbf{X} = (x_1:X_1, ..., x_n:X_n)$, and premorphisms $\textbf{f} = (f_1, ..., f_n)$, $\textbf{g} = (g_1, ..., g_n): \textbf{Y} \rightarrow \textbf{X}$. For $i = 1$, ..., $n$, let $J_i$ be
	$$
	\textbf{Y} \vdashcustom f_i \equiv g_i:X_i[\textbf{f}].
	$$
	Let $J' = (\textbf{Y} \vdashcustom u[\textbf{f}]: U[\textbf{f}])$ and $J'' = (\textbf{Y} \vdashcustom u[\textbf{g}]: U[\textbf{f}])$.
	
	Assume that each premise in
	$$
	\inferrule{J \\ J' \\ J'' \\ J_1 \\ \cdots \\ J_n}{\textbf{Y} \vdashcustom u[\textbf{f}] \equiv u[\textbf{g}] : U[\textbf{f}]}\text{(teq-sub-2)}
	$$
	has the substitution property, and consider a context morphism $\textbf{h}:\textbf{Z} \rightarrow \textbf{Y}$. Then substitution along $\textbf{h}$ turns
	\begin{itemize}
		\item $J'$ into the derivable judgment $\textbf{Z} \vdashcustom u[\textbf{f}][\textbf{h}]:U[\textbf{f}][\textbf{h}]$, which is $\textbf{Z} \vdashcustom u[\textbf{f} \circ \textbf{h}]: U[\textbf{f} \circ \textbf{h}]$.
		
		\item $J''$ into the derivable judgment $\textbf{Z} \vdashcustom u[\textbf{g}][\textbf{h}]: U[\textbf{f}][\textbf{h}]$, which is $\textbf{Z} \vdashcustom u[\textbf{g} \circ \textbf{h}]: U[\textbf{f} \circ \textbf{h}]$.
		
		\item $J_i$ into the derivable judgment $\textbf{Z} \vdashcustom f_i[\textbf{h}] \equiv g_i[\textbf{h}]:X_i[\textbf{f}][\textbf{h}]$, which is $\textbf{Z} \vdashcustom (\textbf{f} \circ \textbf{h})_i \equiv (\textbf{g} \circ \textbf{h})_i: X_i[\textbf{f} \circ \textbf{h}]$.
	\end{itemize}
	By (teq-sub-2) we can derive $\textbf{Z} \vdashcustom u[\textbf{f} \circ \textbf{h}] \equiv u[\textbf{g} \circ \textbf{h}]: U[\textbf{f} \circ \textbf{h}]$. This proves that $\textbf{Y} \vdashcustom u[\textbf{f}] \equiv u[\textbf{g}]: U[\textbf{f}]$ has the substitution property.
\end{itemize}
\end{proof}

\begin{corollary}
\label{cor: composite is a morphism}
Given context morphisms $\textbf{f}:\textbf{X} \rightarrow \textbf{Y}$ and $\textbf{g}:\textbf{Y} \rightarrow \textbf{Z}$, the premorphism $\textbf{g} \circ \textbf{f}$ (see Definition \ref{def: composite of context morphisms}) is a morphism from $\textbf{X}$ to $\textbf{Z}$.
\end{corollary}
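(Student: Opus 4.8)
The plan is to reduce the statement to the substitution lemma (Proposition \ref{prop: substitution lemma}) together with a purely formal identity expressing compatibility of iterated substitutions. Write $\textbf{Z} = (z_1:Z_1, \ldots, z_p:Z_p)$, $\textbf{f} = (f_1, \ldots, f_n)$, and $\textbf{g} = (g_1, \ldots, g_p)$. By the definition of a context morphism, $\textbf{X} \ctx$ and $\textbf{Z} \ctx$ are derivable (as $\textbf{X}$ and $\textbf{Z}$ are contexts), and for each $i \in \{1, \ldots, p\}$ the judgment
$$\textbf{Y} \vdash g_i : Z_i[g_1 \mid z_1, \ldots, g_{i-1} \mid z_{i-1}]$$
is derivable. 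To conclude that $\textbf{g} \circ \textbf{f}$ is a morphism $\textbf{X} \rightarrow \textbf{Z}$ it suffices, again by the definition of a morphism, to derive for each $i$ the judgment $\textbf{X} \vdash (\textbf{g} \circ \textbf{f})_i : Z_i[(\textbf{g} \circ \textbf{f})_1 \mid z_1, \ldots, (\textbf{g} \circ \textbf{f})_{i-1} \mid z_{i-1}]$, recalling that $(\textbf{g} \circ \textbf{f})_j = g_j[\textbf{f}]$.

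First I would apply Proposition \ref{prop: substitution lemma} to the derivable judgment displayed above, using the context morphism $\textbf{f}:\textbf{X} \rightarrow \textbf{Y}$. This immediately yields derivability of
$$\textbf{X} \vdash g_i[\textbf{f}] : \big(Z_i[g_1 \mid z_1, \ldots, g_{i-1} \mid z_{i-1}]\big)[\textbf{f}].$$
The remaining task is to identify the sort appearing here with the one required, i.e.\ to establish the syntactic identity
$$\big(Z_i[g_1 \mid z_1, \ldots, g_{i-1} \mid z_{i-1}]\big)[\textbf{f}] = Z_i[g_1[\textbf{f}] \mid z_1, \ldots, g_{i-1}[\textbf{f}] \mid z_{i-1}].$$
This I would prove by a routine structural induction on $Z_i$ (more precisely, on an arbitrary expression all of whose variables lie among $z_1, \ldots, z_{i-1}$): on a variable $z_j$ both sides reduce to $g_j[\textbf{f}]$, and the constructor case follows because substitution commutes with the operation symbols $\sigma(\ldots)$. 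The hypothesis that every variable of $Z_i$ is among $z_1, \ldots, z_{i-1}$ (which holds because $\textbf{Z}$ is a precontext, see Definition \ref{def: precontext}) is exactly what makes the inner substitution total on $Z_i$, so no side condition relating the variable names of $\textbf{Y}$ and $\textbf{Z}$ is required.

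Combining the two steps gives $\textbf{X} \vdash (\textbf{g} \circ \textbf{f})_i : Z_i[(\textbf{g} \circ \textbf{f})_1 \mid z_1, \ldots, (\textbf{g} \circ \textbf{f})_{i-1} \mid z_{i-1}]$ for every $i$, which is precisely the defining condition for $\textbf{g} \circ \textbf{f}$ to be a context morphism from $\textbf{X}$ to $\textbf{Z}$. I expect no genuine obstacle: essentially all the content is carried by Proposition \ref{prop: substitution lemma}, and the only thing to check by hand is the elementary substitution-composition identity, whose verification is a short induction. The one point I would state carefully is that the identity is being used with the inner substitution ranging exactly over the variables of $\textbf{Z}$, so that its totality on $Z_i$ lets the two successive substitutions be merged without any freshness or clash assumption.
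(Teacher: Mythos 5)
Your proof is correct and follows essentially the same route as the paper: apply the substitution property (Proposition \ref{prop: substitution lemma}) to each defining judgment $\textbf{Y} \vdash g_i : Z_i[\partial_{i-1}\textbf{g}]$ along $\textbf{f}$, then identify the resulting sort with $Z_i[\partial_{i-1}(\textbf{g}\circ\textbf{f})]$. The only difference is that you spell out the substitution-composition identity (and correctly note that totality of the inner substitution on $Z_i$ makes it unconditional), where the paper simply asserts the equality of the two judgments.
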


\begin{proof}
Writing $\textbf{Z} = (z_1:Z_1, ..., z_n:Z_n)$ and $\textbf{g} = (g_1, ..., g_n)$, by assumption we have that $\textbf{Y} \vdashcustom g_i:Z_i[\textbf{g}]$ is derivable for $1 \le i \le n$. Since every derivable judgment has the substitution property, $\textbf{X} \vdashcustom g_i[\textbf{f}]:Z_i[\textbf{g}][\textbf{f}]$ is derivable. As the latter equals $\textbf{X} \vdashcustom (\textbf{g} \circ \textbf{f})_i: Z_i[\textbf{g} \circ \textbf{f}]$, we conclude that $\textbf{g} \circ \textbf{f}$ is a morphism from $\textbf{X}$ to $\textbf{Z}$.
\end{proof}

\begin{corollary}
\label{cor: sorts in a context are derivable}
For a context $\textbf{X} = (x_1:X_1, ..., x_n:X_n)$, the judgment $\textbf{X} \vdashcustom X_i \tp$ is derivable for $i = 1$, ..., $n$.
\end{corollary}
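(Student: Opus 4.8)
The plan is to derive each $\textbf{X} \vdash X_i \tp$ by \emph{weakening} the corresponding sort judgment from the truncated context $\partial_{i-1}\textbf{X}$ up to the full context $\textbf{X}$, using the substitution property (Proposition \ref{prop: substitution lemma}) along a projection morphism. First I would record the input data: since $\textbf{X} \ctx$ is derivable and the only rule concluding a context judgment is (ctx), each truncation $\partial_i\textbf{X}$ is again a context, and for $1 \le i \le n$ the premise of the (ctx) step producing $\partial_i\textbf{X} = (\partial_{i-1}\textbf{X}, x_i:X_i) \ctx$ is precisely $\partial_{i-1}\textbf{X} \vdash X_i \tp$. Thus $\partial_{i-1}\textbf{X} \vdash X_i \tp$ is derivable for every $i$; what remains is to transport it into the ambient context $\textbf{X}$.

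Next I would prove $\textbf{X} \vdash X_i \tp$ by induction on $i$. For fixed $i$, consider the projection premorphism $\pi_i = (x_1, \ldots, x_{i-1}):\textbf{X} \to \partial_{i-1}\textbf{X}$. To see that $\pi_i$ is a genuine context morphism, I must check that $\textbf{X} \vdash x_j : X_j[x_1 \mid x_1, \ldots, x_{j-1}\mid x_{j-1}]$, i.e. $\textbf{X} \vdash x_j:X_j$, is derivable for $j < i$; since the relevant substitutions are identities, this reduces by the rule (var) to the derivability of $\textbf{X} \vdash X_j \tp$ for $j < i$, which is exactly the induction hypothesis. Granting that $\pi_i$ is a morphism, Proposition \ref{prop: substitution lemma} applied to the derivable judgment $\partial_{i-1}\textbf{X} \vdash X_i \tp$ yields that $\textbf{X} \vdash X_i[\pi_i] \tp$ is derivable; and because $X_i$ contains no variables beyond $x_1, \ldots, x_{i-1}$, each of which $\pi_i$ fixes, we have $X_i[\pi_i] = X_i$, giving $\textbf{X} \vdash X_i \tp$. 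The base case $i = 1$ is the instance $\pi_1 = (\,):\textbf{X} \to \emptyset$, the empty morphism, under which $\emptyset \vdash X_1 \tp$ pulls back to $\textbf{X} \vdash X_1 \tp$; it is subsumed by the general step with an empty hypothesis.

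The one delicate point---and the reason a naive direct appeal to weakening fails---is the apparent circularity: establishing that the projection $\textbf{X} \to \partial_{i-1}\textbf{X}$ is a morphism already requires the variable judgments $\textbf{X} \vdash x_j : X_j$, hence the very sort judgments $\textbf{X} \vdash X_j \tp$ that the corollary asserts. The induction on $i$ is precisely what breaks this loop: at stage $i$ one only needs the conclusion for the strictly smaller indices $j < i$, which are already available, so the morphism $\pi_i$ can be certified before it is used. I expect this bookkeeping to be the main (and essentially only) obstacle; the rest is a direct application of the substitution property together with the observation that $X_i[\pi_i] = X_i$.
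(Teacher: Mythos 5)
Your proof is correct and follows essentially the same route as the paper's: induction on $i$, using (var) and the inductive hypothesis to certify the projection $(x_1,\ldots,x_{i-1}):\textbf{X}\to\partial_{i-1}\textbf{X}$ as a context morphism, and then applying the substitution property to $\partial_{i-1}\textbf{X}\vdash X_i \tp$ (noting $X_i[\pi_i]=X_i$). Your explicit justification that $\partial_{i-1}\textbf{X}\vdash X_i\tp$ is derivable via the (ctx) rule is a detail the paper leaves implicit, but the argument is the same.
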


\begin{proof}
We proceed by induction. Given $i \in \{1, ..., n\}$, suppose that $\textbf{X} \vdashcustom X_j$ is derivable for all $j < i$. By using (var), we derive $\textbf{X} \vdashcustom x_j:X_j$ for each $j < i$. Then we have a context morphism $(x_1, ..., x_{i-1}):\textbf{X} \rightarrow \partial_{i-1}\textbf{X}$ and, as $\partial_{i-1}\textbf{X} \vdashcustom X_i$ has the substitution property, we conclude that $\textbf{X} \vdashcustom X_i$ is derivable.
\end{proof}

\subsection{Some results on equality judgments}

\begin{proposition}
\label{prop: properties morphism equality}
For a pretheory $\bbT$:
\begin{enumerate}[label=(\arabic*)]		
	\item If $\textbf{f} \equiv \textbf{g}:\textbf{X} \rightarrow \textbf{Y}$ is derivable, then, writing $\textbf{Y} = (y_1:Y_1, ..., y_n:Y_n)$, the judgment
	$$
	\textbf{X} \vdashcustom Y_i[\textbf{f}] \equiv Y_i[\textbf{g}] \tp
	$$
	is derivable for $i = 1$, ..., $n$.
	
	\item If $\textbf{f} \equiv \textbf{g}:\textbf{X} \rightarrow \textbf{Y}$ is derivable, then so are $\textbf{f}:\textbf{X} \rightarrow \textbf{Y}$ and $\textbf{g}:\textbf{X} \rightarrow \textbf{Y}$.
	
	\item If $\textbf{f} \equiv \textbf{g}:\textbf{X} \rightarrow \textbf{Y}$ and $\textbf{Y} \vdashcustom U \tp$ are derivable, then so is
	$$
	\textbf{X} \vdashcustom U[\textbf{f}] \equiv U[\textbf{g}] \tp.
	$$
	
	\item If $\textbf{f} \equiv \textbf{g}:\textbf{X} \rightarrow \textbf{Y}$ and $\textbf{Y} \vdashcustom u:U$ are derivable, then so is
	$$
	\textbf{X} \vdashcustom u[\textbf{f}] \equiv u[\textbf{g}]:U[\textbf{f}].
	$$
	
	\item If $\textbf{f} \equiv \textbf{g}:\textbf{X} \rightarrow \textbf{Y}$ and $\textbf{h}:\textbf{Y} \rightarrow \textbf{Z}$ are derivable, then so is $\textbf{h} \circ \textbf{f} \equiv \textbf{h} \circ \textbf{g}:\textbf{X} \rightarrow \textbf{Z}$.
\end{enumerate}
\end{proposition}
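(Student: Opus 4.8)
The plan is to dispose of (2) immediately, reduce (1) and (5) to (3) and (4), and then prove (3) and (4) together by a single induction on height. Statement (2) is nothing but the definition of a derivable morphism equality, which by fiat requires $\textbf{f}:\textbf{X}\rightarrow\textbf{Y}$ and $\textbf{g}:\textbf{X}\rightarrow\textbf{Y}$ to be derivable. For (1), I would observe that if $\textbf{f}\equiv\textbf{g}:\textbf{X}\rightarrow\textbf{Y}$ is derivable then so is the truncated equality $\partial_{i-1}\textbf{f}\equiv\partial_{i-1}\textbf{g}:\textbf{X}\rightarrow\partial_{i-1}\textbf{Y}$ for each $i$ (its component equalities are among those of $\textbf{f}\equiv\textbf{g}$), while $\partial_{i-1}\textbf{Y}\vdash Y_i\tp$ is derivable by Corollary \ref{cor: sorts in a context are derivable}; applying (3) to these data and using $Y_i[\partial_{i-1}\textbf{f}]=Y_i[\textbf{f}]$ yields the claim. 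For (5), writing $\textbf{Z}=(z_1:Z_1,\ldots,z_p:Z_p)$ and $\textbf{h}=(h_1,\ldots,h_p)$, the composites $\textbf{h}\circ\textbf{f}$ and $\textbf{h}\circ\textbf{g}$ are morphisms by Corollary \ref{cor: composite is a morphism}, and applying (4) to each derivable judgment $\textbf{Y}\vdash h_i:Z_i[\partial_{i-1}\textbf{h}]$ together with $\textbf{f}\equiv\textbf{g}$ gives $\textbf{X}\vdash h_i[\textbf{f}]\equiv h_i[\textbf{g}]:Z_i[\partial_{i-1}\textbf{h}][\textbf{f}]$; via associativity of substitution this sort equals $Z_i[\partial_{i-1}(\textbf{h}\circ\textbf{f})]$, so these are exactly the component equalities required for $\textbf{h}\circ\textbf{f}\equiv\textbf{h}\circ\textbf{g}$.

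The core is the joint induction on height establishing (3) and (4). For (3), given a derivable $\textbf{Y}\vdash U\tp$ with $U=T(u_1,\ldots,u_k)$ and introducing axiom $\textbf{A}\vdash T(a_1,\ldots,a_k)\tp$, the tuple $\textbf{u}=(u_1,\ldots,u_k)$ is a morphism $\textbf{Y}\rightarrow\textbf{A}$ (the identity when $\textbf{Y}\vdash U\tp$ is the axiom itself, via (s-a); a genuine morphism when it is derived by (s-sub)), so $\textbf{u}\circ\textbf{f}$ and $\textbf{u}\circ\textbf{g}$ are morphisms $\textbf{X}\rightarrow\textbf{A}$ by Corollary \ref{cor: composite is a morphism}. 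The inductive hypothesis (4) applied to each $\textbf{Y}\vdash u_i:\Type(u_i)$, of strictly smaller height by Proposition \ref{prop: properties height}(h), provides the component equalities making $\textbf{u}\circ\textbf{f}\equiv\textbf{u}\circ\textbf{g}:\textbf{X}\rightarrow\textbf{A}$ derivable, and then the rule (seq-sub-2) applied to the axiom delivers $\textbf{X}\vdash T(\textbf{u}\circ\textbf{f})\equiv T(\textbf{u}\circ\textbf{g})\tp$, which is $\textbf{X}\vdash U[\textbf{f}]\equiv U[\textbf{g}]\tp$; its side premises $\textbf{X}\vdash T(\textbf{u}\circ\textbf{f})\tp$ and $\textbf{X}\vdash T(\textbf{u}\circ\textbf{g})\tp$ come from Proposition \ref{prop: substitution lemma}. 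For (4), I would branch on an initial inference of $\textbf{Y}\vdash u:U$: the case (var) gives the claim directly from a component of $\textbf{f}\equiv\textbf{g}$; the case (seq/t), with premises $\textbf{Y}\vdash U'\equiv U\tp$ and $\textbf{Y}\vdash u:U'$, is handled by the inductive hypothesis at the smaller judgment $\textbf{Y}\vdash u:U'$ followed by a re-sorting using $\textbf{X}\vdash U'[\textbf{f}]\equiv U[\textbf{f}]\tp$ (Proposition \ref{prop: substitution lemma}) and the rule (seq/teq); and the cases (t-a)/(t-sub), where $u=t(u_1,\ldots,u_k)$, mirror (3) but apply (teq-sub-2) to the composite morphism equality $\textbf{u}\circ\textbf{f}\equiv\textbf{u}\circ\textbf{g}$, which is again built from the inductive hypothesis using Proposition \ref{prop: properties height}(i).

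The main obstacle will be the sort bookkeeping rather than the logical skeleton. Each application of (seq-sub-2) or (teq-sub-2) produces an equality at a sort of the form $V[\textbf{u}\circ\textbf{f}]$, and matching this against the target sort $U[\textbf{f}]=V[\textbf{u}][\textbf{f}]$ requires the associativity identity $w[\textbf{u}][\textbf{f}]=w[\textbf{u}\circ\textbf{f}]$ already exploited in the substitution-lemma proof. Moreover, in the (t-sub) case the premise $J''$ of (teq-sub-2) must be furnished at sort $V[\textbf{u}\circ\textbf{f}]$, whereas Proposition \ref{prop: substitution lemma} naturally yields $t(\textbf{u}\circ\textbf{g})$ at sort $V[\textbf{u}\circ\textbf{g}]$; one repairs this by re-sorting along the instance of (3) just established at smaller height, which gives $\textbf{X}\vdash V[\textbf{u}\circ\textbf{f}]\equiv V[\textbf{u}\circ\textbf{g}]\tp$, together with (seq/t). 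A parallel nuisance arises when assembling $\textbf{u}\circ\textbf{f}\equiv\textbf{u}\circ\textbf{g}$: the inductive hypothesis delivers the $i$-th equality at the canonical sort $\Type(u_i)[\textbf{f}]$, which must be transported to the required sort $A_i[\partial_{i-1}\textbf{u}][\textbf{f}]$ using Proposition \ref{prop: properties height}(f) and (seq/teq). Throughout, Proposition \ref{prop: properties height}(d),(h),(i) guarantees that every recursive call is to a judgment of strictly smaller height, so the joint induction is well-founded.
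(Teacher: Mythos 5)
Your proof is correct and follows essentially the same route as the paper's: a height induction whose core is items (3) and (4), driven by factoring the given sort/term judgment through its introducing axiom, assembling the composite morphism equality into the axiom's context from the inductive hypothesis, and concluding with (seq-sub-2)/(teq-sub-2) together with the re-sorting bookkeeping via the substitution lemma and (seq/t)/(seq/teq). The only organizational differences are that the paper runs a simultaneous well-founded induction over all five items (routing (3) and (4) through item (5) applied to the factoring morphism) whereas you inline that call and treat (1), (2), (5) as consequences of the core, and that you make the (var) and (seq/t) cases of (4), as well as the purely definitional nature of (2), explicit where the paper's sketch leaves them implicit.
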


\begin{proof}
We will prove by well-founded induction (with respect to the poset $\omega \times \omega$) that, for all $h$, $n \ge 0$, the above statements hold whenever all the structures involved have height at most $h$, and $\textbf{Y}$ has length-$n$.

Below, for a given $h$ we verify each item under the assumption that all items hold with respect to every $h' < h$.

\begin{enumerate}[label=(\arabic*)]	
	\item If $\textbf{Y} = \varnothing$, the claim holds vacuously. Otherwise, let $i \in \{1, ..., n\}$. Since
	$$
	\Ht(\partial_{i-1}\textbf{f} \equiv \partial_{i-1}\textbf{g}:\textbf{X} \rightarrow \partial_{i-1}\textbf{Y}) \le \Ht(\textbf{f} \equiv \textbf{g}:\textbf{X} \rightarrow \textbf{Y})
	$$
	$$
	\Ht(\partial_{i-1}\textbf{Y} \vdashcustom Y_i) \le \Ht(\textbf{Y}),
	$$
	$$
	\bbl(\partial_{i-1}\textbf{Y}) < \bbl(\textbf{Y}),
	$$
	by item (1) we can derive $\textbf{X} \vdashcustom Y_i[\partial_{i-1}\textbf{f}] \equiv Y_i[\partial_{i-1}\textbf{g}] \tp$, which equals $\textbf{X} \vdashcustom Y_i[\textbf{f}] \equiv Y_i[\textbf{g}] \tp$.
	
	\item Write $\textbf{Y} = (y_1:Y_1, ..., y_n:Y_n)$, $\textbf{f} = (f_1, ..., f_n)$, and $\textbf{g} = (g_1, ..., g_n)$. By assumption, $\textbf{X} \vdashcustom f_i \equiv g_i: Y_i[f_1 \mid x_1, ..., f_{i-1} \mid x_{i-1}]$ is derivable for $i = 1$, ..., $n$. By Proposition \ref{prop: properties height}, so are the judgments $\textbf{X} \vdashcustom f_i:Y_i[f_1 \mid x_1, ..., f_{i-1} \mid x_{i-1}]$, hence also $\textbf{f}:\textbf{X} \rightarrow \textbf{Y}$. On the other hand, $\textbf{g}:\textbf{X} \rightarrow \textbf{Y}$ is obtained from $\textbf{f} \equiv \textbf{g}:\textbf{X} \rightarrow \textbf{Y}$ by using item (1) and (seq/teq).
	
	\item Express $U = V[\textbf{h}]$ for an axiom $\textbf{A} \vdashcustom V \tp$ and a morphism $\textbf{h}:\textbf{Y} \rightarrow \textbf{A}$. Since $\Ht(\textbf{h}:\textbf{Y} \rightarrow \textbf{A}) < \Ht(\textbf{Y} \vdashcustom U \tp)$, by item (5) we can derive $\textbf{h} \circ \textbf{f} \equiv \textbf{h} \circ \textbf{g}:\textbf{X} \rightarrow \textbf{A}$. By item (2), we have context morphisms $\textbf{h} \circ \textbf{f}$, $\textbf{g} \circ \textbf{f}:\textbf{X} \rightarrow \textbf{A}$; it now follows, as $\textbf{A} \vdashcustom V \tp$ has the substitution property, that $\textbf{X} \vdashcustom V[\textbf{h} \circ \textbf{f}] \tp$ and $\textbf{X} \vdashcustom V[\textbf{h} \circ \textbf{g}] \tp$ are derivable. This allows us to obtain from (seq-sub-2) the judgment $\textbf{X} \vdashcustom V[\textbf{h} \circ \textbf{f}] \equiv V[\textbf{h} \circ \textbf{g}] \tp$, which equals $\textbf{X} \vdashcustom U[\textbf{f}] \equiv U[\textbf{g}] \tp$.
	
	\item Express $u = v[\textbf{h}]$ for an axiom $\textbf{A} \vdashcustom v:V \tp$ and a morphism $\textbf{h}:\textbf{Y} \rightarrow \textbf{A}$. Then, by Proposition \ref{prop: properties height},
	$$
	\Ht(\textbf{h}:\textbf{Y} \rightarrow \textbf{A}), \;\; \Ht(\textbf{Y} \vdashcustom V[\textbf{h}] \equiv U \tp) < \Ht(\textbf{Y} \vdashcustom u:U).
	$$
	By item (5), we can derive $\textbf{h} \circ \textbf{f} \equiv \textbf{h} \circ \textbf{g}:\textbf{X} \rightarrow \textbf{A}$, from which (2) yields morphisms $\textbf{h} \circ \textbf{f}$, $\textbf{h} \circ \textbf{g}:\textbf{X} \rightarrow \textbf{A}$. As $\textbf{A} \vdashcustom v:V$ has the substitution property, we can derive $\textbf{X} \vdashcustom v[\textbf{h} \circ \textbf{f}]:V[\textbf{h} \circ \textbf{f}]$ and $\textbf{Y} \vdashcustom v[\textbf{h} \circ \textbf{g}]:V[\textbf{h} \circ \textbf{g}]$.
		
		 Also, by applying (3) to $\textbf{f} \equiv \textbf{g}:\textbf{X} \rightarrow \textbf{Y}$ and $\textbf{Y} \vdashcustom V[\textbf{h}] \tp$ we derive $\textbf{X} \vdashcustom V[\textbf{h}][\textbf{f}] \equiv V[\textbf{h}][\textbf{g}] \tp$; and from the latter, by (s2) and (seq/t), we obtain $\textbf{X} \vdashcustom v[\textbf{h} \circ \textbf{g}]:V[\textbf{h} \circ \textbf{f}]$.
		 
		 Now, note that $\textbf{Y} \vdashcustom V[\textbf{h}] \equiv U \tp$ having the substitution property yields $\textbf{Y} \vdashcustom V[\textbf{h} \circ \textbf{f}] \equiv U[\textbf{f}] \tp$; hence the desired judgment is derived via
		 \small
		 \[
		 \inferrule{\textbf{A} \vdashcustom v:V \\ \textbf{X} \vdashcustom v[\textbf{h} \circ \textbf{f}]:V[\textbf{h}][\textbf{f}] \\ \textbf{X} \vdashcustom v[\textbf{h} \circ \textbf{g}]:V[\textbf{h}][\textbf{f}] \\ \textbf{h} \circ \textbf{f} \equiv \textbf{h} \circ \textbf{g}:\textbf{X} \rightarrow \textbf{A}}{\textbf{X} \vdashcustom v[\textbf{h} \circ \textbf{f}] \equiv v[\textbf{h} \circ \textbf{g}]: V[\textbf{h} \circ \textbf{f}]}\text{(teq-sub-2)},
		 \]
		 \[
		 \inferrule{\textbf{X} \vdashcustom V[\textbf{h} \circ \textbf{f}] \equiv U[\textbf{f}] \tp  \\ \textbf{X} \vdashcustom v[\textbf{h} \circ \textbf{f}] \equiv v[\textbf{h} \circ \textbf{g}]: V[\textbf{h} \circ \textbf{f}] \\ \textbf{X} \vdashcustom v[\textbf{h} \circ \textbf{f}]:U[\textbf{f}] \\ \textbf{X} \vdashcustom v[\textbf{h} \circ \textbf{g}]:U[\textbf{f}]}{\textbf{X} \vdashcustom v[\textbf{h} \circ \textbf{f}] \equiv v[\textbf{h} \circ \textbf{g}]:U[\textbf{f}]}\text{(seq/teq)}.
		 \]
		
		\normalsize
		\item Let $\textbf{Z} = (z_1:Z_1, ..., z_p:Z_p)$ and $\textbf{h} = (h_1, ..., h_p)$. Then for $i = 1$, ..., $p$, the judgment $\textbf{Y} \vdashcustom h_i: Z_i[\textbf{h}]$ is derivable and its height is at most that of $\textbf{h}:\textbf{Y} \rightarrow \textbf{Z}$. By item (4) we obtain $\textbf{X} \vdashcustom h_i[\textbf{f}] \equiv h_i[\textbf{g}]:Z_i[\textbf{h}][\textbf{f}]$, which equals $\textbf{X} \vdashcustom (\textbf{h} \circ \textbf{f})_i \equiv (\textbf{h} \circ \textbf{g})_i: Z_i[\textbf{h} \circ \textbf{f}]$. This means, in turn, that $\textbf{h} \circ \textbf{f} \equiv \textbf{h} \circ \textbf{g}:\textbf{X} \rightarrow \textbf{Z}$ is derivable.
\end{enumerate}
\end{proof}

\subsection{Comparing the derivation rules with Cartmell's}

We now provide the deferred proof that, in a pretheory, a judgment is derivable from the inference rules described above precisely when it is derivable from the ones given in \cite{Car86}.

Written in a different but equivalent form, Cartmell considers the following inference rules (where, when a rule is already in the list considered previously, we only write its name):

\begin{enumerate}
	\item[\textbf{(ctx)}] \hspace{1mm} \textbf{(s1)} \hspace{1mm} \textbf{(s2)} \hspace{1mm} \textbf{(s3)} \hspace{1mm} \textbf{(t1)} \hspace{1mm} \textbf{(t2)} \hspace{1mm} \textbf{(t3)} \hspace{1mm} \textbf{(seq/t)} \hspace{1mm} \textbf{(s-a)} \hspace{1mm} \textbf{(t-a)} \hspace{1mm} \textbf{(seq-a)} \hspace{1mm} \textbf{(teq-a)} \hspace{1mm} \textbf{(s-sub)}\\
	
	\item[\textbf{(seq/teq)'}] $$
	\inferrule{\textbf{X} \vdashcustom U\equiv U' \tp \\ \textbf{X} \vdashcustom u\equiv u':U}{\textbf{X} \vdashcustom u\equiv u':U'}.
	$$
	
	\item[\textbf{(var)'}] $$
	\inferrule{x_1:X_1, ..., x_n:X_n \ctx}{x_1:X_1, ..., x_n:X_n \vdashcustom x_i:X_i}
	$$
	where $n \ge 1$ and $1 \le i \le n$.
	
	\item[\textbf{(t-sub)'}] Suppose given a term axiom $J$, say $\textbf{X} \vdashcustom t(x_1, ..., x_n):U$ where $\textbf{X}$ is $x_1:X_1, ..., x_n:X_n$ (possibly with $n = 0$), a precontext $\textbf{Y}$, and term expressions $f_1$, ..., $f_n$. For $i = 1$, ..., $n$, let $J_i$ be the judgment
	$$
	\textbf{Y} \vdashcustom f_i : X_i[f_1 \mid x_1, ..., f_{i-1} \mid x_{i-1}].
	$$
	We consider
	$$
	\inferrule{J \\ J_1 \\ \cdots \\ J_n}{\textbf{Y} \vdashcustom t(f_1, ..., f_n):U[f_1 \mid x_1, ..., f_n \mid x_n]}.
	$$
	
	\item[\textbf{(seq-sub)'}] Suppose given a sort equality judgment $J$, say $\textbf{X} \vdashcustom U \equiv  U' \tp$ where $\textbf{X} = (x_1:X_1, ..., x_n:X_n)$. Suppose given a precontext $\textbf{Y}$, term expressions $f_1$, ..., $f_n$, $g_1$, ..., $g_n$, and, for $i = 1$, ..., $n$, let $J_i$ be the judgment
	$$
	\textbf{Y} \vdashcustom f_i \equiv  g_i : X_i[f_1 \mid x_1, ..., f_{i-1} \mid x_{i-1}].
	$$
	Then we consider
	$$
	\inferrule{J \\ J_1 \\ \cdots \\ J_n}{\textbf{Y} \vdashcustom U[f_1 \mid x_1, ..., f_n \mid x_n] \equiv  U'[g_1 \mid x_1, ..., g_n \mid x_n] \tp}.
	$$
	
	\item[\textbf{(teq-sub)'}] Suppose given a term equality judgment $J$, say $\Gamma \vdashcustom u \equiv  u' : U$ where $\Gamma$ is $x_1:X_1, ..., x_n:X_n$. Suppose given a precontext $\textbf{Y}$, term expressions $f_1$, ..., $f_n$, $g_1$, ..., $g_n$, and, for $i = 1$, ..., $n$, let $J_i$ be the judgment
	$$
	\textbf{Y} \vdashcustom f_i \equiv  g_i : X_i[f_1 \mid x_1, ..., f_{i-1} \mid x_{i-1}].
	$$
	Then we consider
	$$
	\inferrule{J \\ J_1 \\ \cdots \\ J_n}{\textbf{Y} \vdashcustom u[f_1 \mid x_1, ..., f_n \mid x_n] \equiv  u'[g_1 \mid x_1, ..., g_n \mid x_n] : U[f_1 \mid x_1, ..., f_n \mid x_n]}.
	$$
\end{enumerate}

The idea is that by considering (seq/teq), (var), (t-sub) instead of (seq/teq)', (var)', (t-sub)', more premises are required to derive a given judgment, but the extra premises are themselves proved, inductively, to be derivable. Also, the role of (seq-sub)' is played by (seq-sub-1)' and (seq-sub-2), and that of (teq-sub)' is played by (teq-sub-1) and (teq-sub-2).

In what follows, we will say that a judgment is \emph{C-derivable} if it is derivable from Cartmell's inference rules as listed above. When we say that a judgment is \emph{derivable} we will mean so in the sense of Definition \ref{def: GAT inference rules}.

\begin{proposition}
\label{prop: derivable iff c-derivable}
Let $\bbT$ be a pretheory. Then a judgment is C-derivable if and only if it is derivable.
\end{proposition}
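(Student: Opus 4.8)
The plan is to prove both implications by induction on the height of a derivation, treating the two inference systems in parallel. For the forward implication (every derivable judgment is C-derivable) I would induct on $\Ht(J)$ and examine an initial inference $I$ of $J$, whose premises have strictly smaller height and hence, by the inductive hypothesis, are C-derivable. Since the rules (ctx), (s1), (s2), (s3), (t1), (t2), (t3), (seq/t), (s-a), (t-a), (seq-a), (teq-a) and (s-sub) appear verbatim in both lists, those cases are immediate. The remaining modified rules carry strictly more premises than their Cartmell counterparts, so one simply discards the surplus: modified (seq/teq) and (t-sub) reduce to (seq/teq)$'$ and (t-sub)$'$ by dropping the extra sort/term premises; modified (var) reduces to (var)$'$ after observing that a C-derivation of $\textbf{X} \vdash X_i \tp$ necessarily contains $\textbf{X} \ctx$, as the only sort-forming rules (s-a), (s-sub) list it as a premise; and the split rules (seq-sub-1), (seq-sub-2) (resp. (teq-sub-1), (teq-sub-2)) are recovered from the single rule (seq-sub)$'$ (resp. (teq-sub)$'$) by taking $g_i = f_i$ and invoking (t1), or by first applying (s1) to the relevant sort axiom to manufacture the sort equality that (seq-sub)$'$ requires.

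For the converse (every C-derivable judgment is derivable) I would again induct, this time on the height of a C-derivation, and for each of Cartmell's rules supply the additional premises demanded by the modified rules using the structural results already established. The common rules need no work. For (seq/teq)$'$ I would recover the two extra premises $\textbf{X} \vdash u:U'$ and $\textbf{X} \vdash u':U'$ from $\textbf{X} \vdash u \equiv u':U$ via Proposition \ref{prop: properties height}(e) and an application of (seq/t). For (var)$'$ the missing premise $\textbf{X} \vdash X_i \tp$ is furnished directly by Corollary \ref{cor: sorts in a context are derivable}. For (t-sub)$'$ the extra premise $\textbf{Y} \vdash U[\textbf{f}] \tp$ follows because the term axiom $J$, being a premise of the step, is itself derivable, so that $\textbf{X} \vdash U \tp$ is derivable (Proposition \ref{prop: properties height}(d)); substitution along the morphism $\textbf{f}$ assembled from $J_1, \ldots, J_n$ then yields the claim by Proposition \ref{prop: substitution lemma}.

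The genuinely non-trivial cases are (seq-sub)$'$ and (teq-sub)$'$, where Cartmell substitutes one tuple into the left-hand side and a second, only provably equal, tuple into the right-hand side in a single step. Here I would decompose the combined substitution into two composable pieces joined by transitivity. For (seq-sub)$'$, given a derivable sort equality $\textbf{X} \vdash U \equiv U' \tp$ and a morphism equality $\textbf{f} \equiv \textbf{g}:\textbf{Y} \to \textbf{X}$, I would first derive $\textbf{Y} \vdash U[\textbf{f}] \equiv U'[\textbf{f}] \tp$ from the substitution property of the sort equality (Proposition \ref{prop: substitution lemma}), then derive $\textbf{Y} \vdash U'[\textbf{f}] \equiv U'[\textbf{g}] \tp$ by Proposition \ref{prop: properties morphism equality}(3) applied to $\textbf{f} \equiv \textbf{g}$ and $\textbf{X} \vdash U' \tp$ (the latter derivable by Proposition \ref{prop: properties height}(c)), and finally compose the two via (s3). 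The case (teq-sub)$'$ is identical in spirit, using the substitution property of the term equality, Proposition \ref{prop: properties morphism equality}(4), and (t3). I expect this decomposition-plus-transitivity argument to be the main obstacle, since it is the only place where the asymmetry between the two rule sets is essential rather than cosmetic; every other case is bookkeeping of surplus premises.
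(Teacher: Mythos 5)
Your proposal is correct and follows essentially the same route as the paper: both directions reduce to a rule-by-rule comparison, with the surplus premises of the modified rules discharged via Proposition \ref{prop: properties height}, Corollary \ref{cor: sorts in a context are derivable}, and the substitution lemma, and with the two-sided rules (seq-sub)$'$, (teq-sub)$'$ handled by splitting the substitution into two pieces joined by (s3)/(t3) using Proposition \ref{prop: properties morphism equality}. The only (immaterial) divergence is in (t-sub)$'$, where the paper concludes directly from the substitution property of the axiom rather than first manufacturing the extra premise $\textbf{Y} \vdash U[\textbf{f}] \tp$.
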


\begin{proof}
Let us start by verifying the claim in the simpler direction: that every derivable judgment is C-derivable -- in other words, Cartmell's rules are at least as strong as the ones from Definition \ref{def: GAT inference rules}.

Firstly, each of Cartmell's rules except for (var)', (seq-sub)' and (teq-sub)' is obtained by removing some (or none) of the premises from a rule in Definition \ref{def: GAT inference rules}.

Also, (seq-sub)' is at least as strong as both (seq-sub-1) and (seq-sub-2): (seq-sub-1) corresponds to the case where the lists of expressions $(f_1, ..., f_n)$ and $(g_1, ..., g_n)$ are equal, and (seq-sub-2) to the case where $U$ and $U'$ are equal. In the first case, starting from the premises of (seq-sub-1), we use (t1) to derive the sequence of term equalities $J_1$, ..., $J_n$ required for (seq-sub)'; in the second case, we use (s1) to derive the sort equality $J' = (\textbf{X} \vdashcustom U \equiv U \tp)$ required for (seq-sub)'.

For (var)', the premise $x_1:X_1, ..., x_n:X_n \vdashcustom X_i \tp$ is replaced by $x_1:X_1, ..., x_n:X_n \ctx$; but the former being C-derivable implies that so is the latter.

\vspace{0.5em}

On the other hand, the proof that the rules from Def. \ref{def: GAT inference rules} are as strong as Cartmell's requires more work.

We will verify that for each of Cartmell's derivation steps that are not in Def. \ref{def: GAT inference rules}, if all of its premises are derivable, then so is its conclusion. We have the following cases:

\begin{itemize}
	\item[\textbf{(seq/teq)'}] $$
	\inferrule{\textbf{X} \vdashcustom U\equiv U' \tp \\ \textbf{X} \vdashcustom u\equiv u':U}{\textbf{X} \vdashcustom u\equiv u':U'}.
	$$
	If the premises are derivable, by Proposition \ref{prop: properties height} so is $\textbf{X} \vdashcustom u:U$. Now, by using (seq/t) we derive $\textbf{X} \vdashcustom u:U'$. Similarly, we can derive $\textbf{X} \vdashcustom u':U'$. The desired judgment is then obtained by
	$$
	\inferrule{\textbf{X} \vdashcustom U\equiv U' \tp \\ \textbf{X} \vdashcustom u\equiv u':U \\ \textbf{X} \vdashcustom u:U' \\ \textbf{X} \vdashcustom u':U'}{\textbf{X} \vdashcustom u\equiv u':U'}\text{(seq/teq)}.
	$$
	
	\item[\textbf{(var)'}] $$
	\inferrule{x_1:X_1, ..., x_n:X_n \ctx}{x_1:X_1, ..., x_n:X_n \vdashcustom x_i:X_i}.
	$$
	Here, by Corollary \ref{cor: sorts in a context are derivable} we can derive $x_1:X_1, ..., x_n:X_n \vdashcustom X_i \tp$, so from (var) we obtain $x_1:X_1, ..., x_n:X_n \vdashcustom x_i:X_i$.
	
	\item[\textbf{(t-sub)'}] $$
	\inferrule{J \\ J_1 \\ \cdots \\ J_n}{\textbf{Y} \vdashcustom t(f_1, ..., f_n):U[f_1 \mid x_1, ..., f_n \mid x_n]},
	$$
	where we use the notation of the list above the proposition's statement. If the premises are derivable, then we have a morphism $\textbf{f}:\textbf{Y} \rightarrow \textbf{X}$ and, as $J$ has the substitution property, $\textbf{Y} \vdashcustom t(f_1, ..., f_n): U[\textbf{f}]$ is derivable, as required.
	
	\item[\textbf{(seq-sub)'}] $$
	\inferrule{J \\ J_1 \\ \cdots \\ J_n}{\textbf{Y} \vdashcustom U[f_1 \mid x_1, ..., f_n \mid x_n] \equiv  U'[g_1 \mid x_1, ..., g_n \mid x_n] \tp},
	$$
	where we use the notation of the list above the proposition's statement. Assume that the premises are derivable. Then, letting $\textbf{f} = (f_1, ..., f_n)$ and $\textbf{g} = (g_1, ..., g_n)$, we have that $\textbf{f} \equiv \textbf{g}:\textbf{X} \rightarrow \textbf{Y}$ is derivable. On the one hand, since $\textbf{f}:\textbf{X} \rightarrow \textbf{Y}$ is a morphism (Proposition \ref{prop: properties morphism equality}(2)) and $\textbf{X} \vdashcustom U \tp$ has the substitution property, $\textbf{Y} \vdashcustom U[\textbf{f}] \equiv U'[\textbf{f}]$ is derivable. On the other hand, Proposition \ref{prop: properties morphism equality}(3) yields $\textbf{Y} \vdashcustom U'[\textbf{f}] \equiv U'[\textbf{g}] \tp$. These two judgments allow us to derive $\textbf{Y} \vdashcustom U[\textbf{f}] \equiv U'[\textbf{g}] \tp$ via (s3).
	
	\item[\textbf{(teq-sub)'}]
	$$
	\inferrule{J \\ J_1 \\ \cdots \\ J_n}{\textbf{Y} \vdashcustom u[f_1 \mid x_1, ..., f_n \mid x_n] \equiv  u'[f'_1 \mid x_1, ..., f'_n \mid x_n] : U[f_1 \mid x_1, ..., f_n \mid x_n]},
	$$
	where we use the notation of the list above the proposition's statement. Assume that the premises are derivable. Then, letting $\textbf{f} = (f_1, ..., f_n)$ and $\textbf{g} = (g_1, ..., g_n)$, we have that $\textbf{f} \equiv \textbf{g}:\textbf{X} \rightarrow \textbf{Y}$ is derivable. On the one hand, since $\textbf{f}:\textbf{X} \rightarrow \textbf{Y}$ is a morphism (Proposition \ref{prop: properties morphism equality}(2)) and $\textbf{X} \vdashcustom u \equiv u':U$ has the substitution property, $\textbf{Y} \vdashcustom u[\textbf{f}] \equiv u'[\textbf{f}]:U[\textbf{f}]$ is derivable. On the other hand, Proposition \ref{prop: properties morphism equality}(4) yields $\textbf{Y} \vdashcustom u'[\textbf{f}] \equiv u'[\textbf{f}']:U[\textbf{f}]$. These two judgments allow us to derive $\textbf{Y} \vdashcustom u[\textbf{f}] \equiv u'[\textbf{f}']:U[\textbf{f}]$ via (t3).
\end{itemize}
This concludes the proof that if a judgment is C-derivable, then it is derivable.
\end{proof}

As a consequence, we can use all known concepts and results related to generalized algebraic theories, such as those in \cite{Car78} and \cite{Car86}. In particular, we will use the definition of a morphism of generalized algebraic theories, the corresponding category $\GAT$, the category $\Cont$ of contextual categories and contextual functors, and the equivalence of categories $\mathcal C:\GAT \rightarrow \Cont$ that assigns to each theory $\bbT$ its \emph{syntactic category}.

\nocite{*}

\printbibliography

\end{document}